\documentclass{compositio}

\usepackage[wheretonumber = subsubsection]{intropapers}
\usepackage{cite}
\usepackage{hyperref}
\usepackage{url}
\usepackage{tikz-cd}
\usepackage{xcolor}

\setcounter{tocdepth}{2}

\date{}

\begin{document}
\title{Restriction of $p$-adic representations of~$\GL_2(\bQ_p)$ to parahoric subgroups}
\author{Andrea Dotto}
\email{andreadotto@uchicago.edu}
\address{University of Chicago, 5734 South University Ave, Chicago, IL 60637, USA}
\classification{22E50, 11S37}
\keywords{$p$-adic Langlands program, modular representations}

\begin{abstract}
Without using the $p$-adic Langlands correspondence, we prove that for many finite length smooth representations of~$\GL_2(\mathbf{Q}_p)$ on $p$-torsion modules the $\GL_2(\mathbf{Q}_p)$-linear morphisms coincide with the morphisms that are linear for the normalizer of a parahoric subgroup. We identify this subgroup to be the Iwahori subgroup in the supersingular case, and~$\GL_2(\mathbf{Z}_p)$ in the principal series case. As an application, we relate the action of parahoric subgroups to the action of the inertia group of~$\mathrm{Gal}(\overline{\mathbf{Q}}_p/\mathbf{Q}_p)$, and we prove that if an irreducible Banach space representation~$\Pi$ of~$\GL_2(\mathbf{Q}_p)$ has infinite $\GL_2(\mathbf{Z}_p)$-length then a twist of~$\Pi$ has locally algebraic vectors. This answers a question of Dospinescu. We make the simplifying assumption that~$p > 3$ and that all our representations are generic.
\end{abstract}

\maketitle

\tableofcontents

\section{Introduction.}
Let $p \geq 5$ be a prime number. 
Fix a finite extension $E/\bQ_p$, with ring of integers~$\mO$ and residue field~$k$.
The purpose of this paper is to study the behaviour of representations of~$G = \GL_2(\bQ_p)$ on \textcolor{black}{$p^n$-torsion} $\mO$-modules and on $E$-Banach spaces upon restriction to a parahoric subgroup.
Our motivation for doing so arises from two applications of our results.
The first one is the following theorem, which answers a question of Dospinescu by providing a classification of those irreducible unitary Banach space representations of~$\GL_2(\bQ_p)$ which have infinite length when restricted to the maximal compact subgroup~$K = \GL_2(\bZ_p)$.
Let~$Z \cong \bQ_p^\times$ be the centre of~$G$.

\begin{thm}\label{restriction}
Let~$\Pi$ be an absolutely irreducible, admissible, very generic, unitary $E$-Banach space representation of~$\GL_2(\bQ_p)$ with central character $\zeta: Z \to \mO^\times$.
(See Section~\ref{defngeneric} for the precise genericity conditions we need.)
Then exactly one of the following holds:
\begin{enumerate}
\item $\Pi|_{KZ}$ is irreducible.
\item $\Pi$ has irreducible supersingular reduction, $\Pi \cong \Pi \otimes (\nr_{-1} \circ \det)$, and $\Pi \cong \Ind_{G^+}^G(\Pi_0)$ for some irreducible~$G^+$-representation $\Pi_0$ such that~$\Pi_0|_{KZ}$ is irreducible. (Here $G^+ = \ker(\nr_{-1} \circ \det)$ is the subgroup of~$\GL_2(\bQ_p)$ of elements whose determinant has even valuation.)
\item $\Pi$ has a closed $KZ$-stable $E$-subspace of finite dimension over~$E$.
\end{enumerate}
\end{thm}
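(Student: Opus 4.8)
The plan is to deduce the theorem from the rigidity of parahoric restrictions for smooth $p$-torsion representations established in the previous sections, via a reduction modulo $p$ argument. Fix a $G$-invariant open bounded $\mO$-lattice $\Pi^{0}\subseteq\Pi$ and put $\overline{\Pi}=\Pi^{0}/\varpi\Pi^{0}$, an admissible smooth $k$-representation of $G$; it has finite length, and the genericity hypotheses of Section~\ref{defngeneric} are imposed precisely so that every Jordan--H\"older constituent of $\overline{\Pi}$, hence of each $\Pi^{0}/\varpi^{n}\Pi^{0}$, is a generic irreducible principal series or a supersingular representation lying in the class to which the earlier results apply. The argument then splits into the \emph{principal series case} and the \emph{supersingular case}; write $N=KZ$ (the normaliser of $K$) in the former and $N=N_{G}(I)$, the normaliser of an Iwahori subgroup, in the latter, so that $N$ is the normaliser of the parahoric identified in those sections.

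First I would transfer the rigidity from smooth representations to $\Pi$. A continuous $N$-equivariant map from a $G$-Banach representation with good reduction into $\Pi$ may, after rescaling lattices, be reduced modulo $\varpi^{n}$ to an $N$-equivariant map of finite length smooth $p$-torsion $G$-representations, which is $G$-equivariant by the main result of the earlier sections; passing to the inverse limit over $n$ and inverting $p$, the original map is $G$-equivariant. In particular $\mathrm{End}_{N}(\Pi)=\mathrm{End}_{G}(\Pi)=E$ (the last equality by Schur's lemma for absolutely irreducible admissible $E$-Banach representations), so $\Pi|_{N}$ is indecomposable; and, combined with the finer form of the earlier results (restriction to $N$ reflects the lattice of subrepresentations of a good smooth representation, away from finite-dimensional pieces), one obtains that $\Pi|_{N}$ is topologically irreducible \emph{unless} $\Pi$ has a nonzero $\mathfrak{g}$-finite vector, where $\mathfrak{g}=\mathrm{Lie}(K)\otimes_{\bZ_{p}}\bQ_{p}$. (A finite-dimensional closed $KZ$-subspace is $\mathfrak{g}$-finite since $\mathfrak g$ acts on it, and conversely $\Pi^{\mathfrak{g}\text{-fin}}\ne 0$ produces one inside the locally algebraic vectors.) If $\Pi^{\mathfrak{g}\text{-fin}}\ne0$ then by Emerton's theorem on locally algebraic vectors a twist of $\Pi$ is locally algebraic and we are in case~(3); so assume from now on that $\Pi^{\mathfrak{g}\text{-fin}}=0$, so that $\Pi|_{N}$ is topologically irreducible.

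Now I would conclude. In the principal series case $N=KZ$, so $\Pi|_{KZ}$ is irreducible and we are in case~(1). In the supersingular case $N=N_{G}(I)$; write $w=\begin{pmatrix}0&1\\p&0\end{pmatrix}$, so that $w^{2}=p\in Z$, $N/IZ$ is cyclic of order two generated by the image of $w$, $[KZ:IZ]=p+1$ is prime to $p$, and $\langle KZ,w\rangle=G$ while $\langle KZ,wKZw^{-1}\rangle=G^{+}$. If $\Pi|_{KZ}$ is irreducible we are in case~(1). Otherwise fix a proper nonzero closed $KZ$-subspace $W$; since $w$ normalises $IZ$ and $w^{2}$ acts as an invertible scalar, $W\cap wW$ and $W+wW$ are $N$-stable, hence equal $0$ and $\Pi$ by irreducibility of $\Pi|_{N}$, so $\Pi=W\oplus wW$ with $w$ interchanging the two summands. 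Because $p+1$ is prime to $p$, a restriction--corestriction argument annihilates the obstruction in $H^{1}(KZ,\mathrm{Hom}_{\mathrm{cont}}(\Pi/W,W))$ to making this decomposition $KZ$-equivariant, so $W$ admits a $KZ$-stable complement; combining this with the symmetric statement for $wW$, with $\mathrm{End}_{KZ}(\Pi)=E$, and --- now that $\Pi^{\mathfrak g\text{-fin}}=0$ removes the locally algebraic obstruction --- with the coefficient system attached to $\Pi$ on the Bruhat--Tits tree, one finds that $W$ may be chosen $KZ$-irreducible and stable under $\langle KZ,wKZw^{-1}\rangle=G^{+}$. Then $\Pi\cong\Ind_{G^{+}}^{G}(W)$, so $\Pi\cong\Pi\otimes(\nr_{-1}\circ\det)$ (as $\nr_{-1}\circ\det$ is trivial on $G^{+}$), and $\Pi_{0}:=W$ satisfies $\Pi_{0}|_{KZ}$ irreducible; moreover $\overline{\Pi}$ is supersingular by assumption, so we are in case~(2).

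It remains to note that the three cases are mutually exclusive. In case~(1), $\Pi|_{KZ}$ is irreducible and infinite-dimensional, so there is no nonzero proper closed $KZ$-subspace. In case~(2), $\Pi|_{KZ}\cong\Pi_{0}|_{KZ}\oplus\Pi_{0}^{w}|_{KZ}$ (using $KZ\subseteq G^{+}$) is a sum of two infinite-dimensional irreducibles, hence reducible with no finite-dimensional closed subspace. In case~(3), $\Pi$ has a finite-dimensional closed $KZ$-subspace, which contradicts the irreducibility in (1) and, since it would force $\Pi_{0}$ and hence $\Pi_{0}|_{KZ}$ to be reducible, contradicts (2). I expect the main difficulty to lie in the ``finer form'' of the earlier results used in the transfer step --- precisely, in determining which smooth $N$-subrepresentations of $\overline{\Pi}$ lift to closed subspaces of $\Pi$, and hence why $\Pi|_{N}$ is irreducible once $\mathfrak g$-finite vectors are excluded --- and, in the supersingular case, in the passage from a bare $KZ$-stable decomposition to a genuine $G^{+}$-induced structure, where the coefficient-system description of $\Pi$ on the tree carries the argument; the reduction modulo $p$ and modulo $\varpi^{n}$ bookkeeping is routine once the earlier theorem is available in its stated $p$-torsion generality.
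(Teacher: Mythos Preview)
Your proposal has the right high-level shape in the supersingular case but contains a genuine gap at the crucial step. You claim that from the earlier results one obtains that $\Pi|_N$ is topologically irreducible unless $\Pi$ has a finite-dimensional $KZ$-stable subspace, but the results actually available (Theorem~\ref{morphisms}) only say that $\Hom_G = \Hom_N$ and that restriction is injective on $\Ext^1$. These statements control \emph{morphisms} between $G$-representations, not the lattice of $N$-stable subspaces of a single $\Pi$: an $N$-stable closed subspace of~$\Pi$ need not itself carry any $G$-structure, so there is nothing to which Theorem~\ref{morphisms} applies. Knowing $\End_N(\Pi)=E$ gives indecomposability, not irreducibility, and the ``finer form'' you invoke is exactly the content of the theorem you are trying to prove. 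You yourself flag this as the main difficulty at the end, and rightly so: it is not a bookkeeping issue but the heart of the matter.

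The paper's route is quite different. Rather than proving $\Pi|_N$ irreducible abstractly, it fixes a proper closed $K$-stable subspace $\Pi^1$, intersects with a lattice, and analyses the image $\overline{\Theta}^{\mathrm{sub}}$ of $(\Theta\cap\Pi^1)\otimes_\mO k$ inside $\overline{\Theta}$ directly. In the supersingular case this uses the explicit $\Iw$-structure of $\pi_\sigma$ (Propositions~\ref{Kmorphisms}, \ref{subspacefactor}, \ref{subspace}) to show that $\overline{\Theta}^{\mathrm{sub}}$ is either finite-dimensional or contains a full summand $\pi_\sigma$; only then does one form $\Pi^2=\Pi\cdot\Pi^1$, obtain $\Pi=\Pi^1\oplus\Pi^2$ as $\Iw$-representations, and upgrade the idempotents to $G^+$-linear ones via Lemma~\ref{liftingtolattices}. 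Note that this last step replaces your $H^1$ and coefficient-system arguments, which as written do not go through (the decomposition $W\oplus wW$ is only $\Iw Z$-stable, not $KZ$-stable, and continuous $H^1$ of $KZ$ with Banach coefficients is not governed by the index $p+1$). In the reducible-reduction case there is a further obstacle you have not addressed: Theorem~\ref{morphisms}(2) requires a filtration with graded pieces all isomorphic to a \emph{fixed} $\mA_{r,s,\lambda}$, but most lattices in~$\Pi$ have semisimple reduction $\pi_1\oplus\pi_2$, to which that theorem does not apply. The paper handles this via the Banach Ribet lemma (Appendix~\ref{Ribetappendix}) and a delicate comparison of $\overline{\Theta}^{\mathrm{sub}}$ across neighbouring lattices (Section~\ref{residuallyreducible}); your transfer argument does not see this.
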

\begin{corollary}\label{locallyalgebraic}
Let~$\Pi$ be as in the statement of Theorem~\ref{restriction}. 
If~$\Pi|_{K}$ has infinite length then a twist of~$\Pi$ has locally algebraic vectors.
\end{corollary}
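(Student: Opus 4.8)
The plan is to deduce the corollary from Theorem~\ref{restriction} together with a short Lie‑theoretic argument; the point is that infinite $\GL_2(\bZ_p)$-length can only occur in case~(3).

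So suppose $\Pi|_K$ has infinite length. In case~(1) of Theorem~\ref{restriction}, $\Pi|_{KZ}$ is irreducible, and in case~(2), writing $\Pi\cong\Ind_{G^+}^G(\Pi_0)$ and noting that $KZ\subseteq G^+$ — because every element of $\det(KZ)=\bZ_p^\times\cdot(\bQ_p^\times)^2$ has even valuation — restriction of the index‑two induction gives $\Pi|_{KZ}\cong\Pi_0|_{KZ}\oplus(\Pi_0^g)|_{KZ}$ for $g\in G\setminus G^+$, whose two summands are irreducible (the second because $\Ind_{G^+}^G(\Pi_0^g)\cong\Pi$, so the uniqueness in part~(2) applies to $\Pi_0^g$ as well), so that $\Pi|_{KZ}$ has length at most two. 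In either case $\Pi|_{KZ}$ has finite length. To pass to finite $K$-length: since $Z/(Z\cap K)$ is generated by the central element $pI$, which carries every closed $K$-subrepresentation to an isomorphic one, the $\tau$-isotypic part of any $KZ$-irreducible subquotient $\sigma$ of $\Pi|_K$ attached to an irreducible $K$-subrepresentation $\tau\subseteq\sigma$ is again $KZ$-stable, hence all of $\sigma$, and admissibility bounds its length; assembling over a $KZ$-composition series shows $\Pi|_K$ has finite length. Granting this, our hypothesis forces case~(3), so $\Pi$ contains a nonzero closed finite‑dimensional $KZ$-stable $E$-subspace $W$.

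Now I would make the action of $K$ on $W$ algebraic after twisting. Since $K$ and $\GL_E(W)$ are $p$-adic Lie groups, $\rho\colon K\to\GL_E(W)$ is automatically locally analytic, so differentiation produces an action of $\mathfrak{gl}_2(E)$ on $W$ — the coefficient algebra being over $E$ because $\bQ_p$ has a unique embedding into $E$. Choose an irreducible $\mathfrak{gl}_2(E)$-submodule $V\subseteq W$; decomposing $\mathfrak{gl}_2=\mathfrak{sl}_2\oplus\mathfrak z$ with $\mathfrak z$ the scalar matrices, one has $V\cong\mathrm{Sym}^n(E^2)$ as an $\mathfrak{sl}_2$-module for some $n\ge0$, while $\mathfrak z$ acts on $V$ by the scalar $c=d\zeta\in E$, the derivative of the central character (since $Z\cap K$ acts on $W$ through $\zeta$). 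For $m$ large, $K_m=1+p^mM_2(\bZ_p)=\exp(p^mM_2(\bZ_p))$ is a uniform pro‑$p$ group, so $V$ — being $\mathfrak{gl}_2(\bQ_p)$-stable — is a $K_m$-subrepresentation of $\Pi$, and a continuous representation of $K_m$ is determined up to isomorphism by its derivative. Pick a continuous character $\chi\colon\bQ_p^\times\to\overline{\bQ_p}^\times$ with $d\chi=\tfrac12(n-c)$: such a $\chi$ exists because one may send a topological generator of $1+p\bZ_p$ to a preimage under the (surjective) $p$-adic logarithm $1+\mathfrak m_{\overline{\bQ_p}}\to\overline{\bQ_p}$ of $\tfrac12(n-c)\log(1+p)$, and then extend arbitrarily to $\bQ_p^\times$; enlarge $E$ so that $\chi$ is $E$-valued. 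Since $\det$ has derivative the trace, twisting by $\chi\circ\det$ does not change the $\mathfrak{sl}_2$-action on $V$ and moves the scalar by which $\mathfrak z$ acts from $c$ to $c+2\cdot\tfrac12(n-c)=n$; hence $V\otimes(\chi\circ\det)$ has the same $\mathfrak{gl}_2(E)$-action as $\mathrm{Sym}^n$ of the standard representation of $\GL_{2,E}$, and therefore — being a continuous $K_m$-representation with this derivative — is isomorphic to its restriction to $K_m$. Thus $V\otimes(\chi\circ\det)$ is a nonzero finite‑dimensional $K_m$-subrepresentation of $\Pi\otimes(\chi\circ\det)$ on which $K_m$ acts through an algebraic representation of $\GL_2$, so $\Pi\otimes(\chi\circ\det)$ has nonzero locally algebraic vectors, as desired.

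The hard part, I expect, is the first step: proving that cases~(1) and~(2) of Theorem~\ref{restriction} force finite $\GL_2(\bZ_p)$-length. The passage from the length of $\Pi|_{KZ}$ to that of $\Pi|_K$ is not formal, and this is exactly where admissibility — and, via Theorem~\ref{restriction}, the very generic hypothesis controlling the $\GL_2(\bZ_p)$-socle of $\Pi$ — has to be invoked, in particular to produce the irreducible $K$-subrepresentation on which the isotypic‑part argument runs. By contrast the twisting step is soft once one allows a finite extension of the coefficient field: all one must arrange is that the scalar by which the centre of $\mathfrak{gl}_2$ acts become an integer, and a single character $\chi$ provides precisely that one degree of freedom.
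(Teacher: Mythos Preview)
The paper does not give a separate proof of this corollary, treating it as an immediate consequence of Theorem~\ref{restriction}; your proposal correctly fills in the details, and the overall strategy---cases~(1) and~(2) force finite $K$-length, while in case~(3) a finite-dimensional $K$-stable subspace yields locally algebraic vectors after a twist---is exactly the intended one.

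That said, your first step and your closing commentary are more complicated than necessary, and your assertion that passing from $KZ$-length to $K$-length ``is not formal'' is simply mistaken. By hypothesis $\Pi$ has central character~$\zeta$, so $Z$ acts on~$\Pi$ by scalars; hence every closed $K$-stable subspace of~$\Pi$ is automatically $KZ$-stable, and the $K$-length and $KZ$-length of~$\Pi$ coincide. Thus in case~(1) the $K$-length is~$1$ and in case~(2) it is~$2$, with no appeal to admissibility or isotypic components required. Your Lie-theoretic argument for case~(3) is correct and standard: a finite-dimensional continuous $K$-representation is locally analytic, an irreducible $\mathfrak{gl}_2(E)$-constituent is $\mathrm{Sym}^n$ with the centre acting by a scalar, and one adjusts that scalar by twisting~$\Pi$ by a suitable $\chi\circ\det$ (allowing a finite extension of~$E$ so that such a~$\chi$ exists). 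The claim that a continuous representation of~$K_m$ is determined by its derivative is fine for $m\ge 1$ since $K_m$ is uniform (using $p\ge 5$), though for the conclusion it would already suffice that the two representations agree on some smaller open subgroup.
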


The statements above refer to topological irreducibility, and all cases of Theorem~\ref{restriction} can occur (see Remark~\ref{casesforPi}).
We remark that our proofs are independent of the $p$-adic Langlands correspondence for~$\GL_2(\bQ_p)$, and so they have a chance of being applicable to other groups.
\textcolor{black}{For example, see the work of Dospinescu, Pa{\v s}k{\= u}nas and Schraen~\cite{PaskunasLudwig, DPScharacters} as well as Hu and Wang~\cite{HuWangD*, HuwangD*II} for analogues of Theorem~\ref{restriction} for the unit group~$D^\times$ of the nonsplit quaternion algebra~$D$ over~$\bQ_p$.}

On the other hand, we emphasize that to get Theorem~\ref{restriction} we make use of one of the main results of the paper \cite{Paskunasimage}, namely that absolutely irreducible admissible unitary Banach space representations of~$\GL_2(\bQ_p)$ are residually of finite length, and that a list of the possible reductions can be given in generic cases.
This uses Colmez's functor in a significant way.
We could avoid this appeal to~\cite{Paskunasimage} by making explicit assumptions in Theorem~\ref{restriction} about the reduction of a unit ball in~$\Pi$: this would yield a statement that is a priori weaker than the one we prove, but equivalent to it in light of the results of~\cite{Paskunasimage}.
Similarly, using the $p$-adic local Langlands correspondence one can easily prove a converse to Corollary~\ref{locallyalgebraic}.

The second of our applications is concerned with a $p$-adic analogue of the so-called ``inertial Langlands correspondence", which is a refinement of the classical local Langlands correspondence obtained by considering compact subgroups of the groups appearing at the two sides of the correspondence.  
In more detail, in the setting of smooth representations of $\GL_2(\bQ_p)$ with complex coefficients we have that two irreducible smooth representations are in the same Bernstein component if and only if their Langlands parameters have isomorphic restriction to the inertia group~$I_{\bQ_p}$.
On the other hand, as a consequence of the theory of types, one knows that two irreducible cuspidal $G$-representations~$\pi_1, \pi_2$ are in the same Bernstein component if and only if $\pi_1 |_K \cong \pi_2 |_K$. 
Since the cuspidal representations $\pi_1$ and~$\pi_2$ are in the same Bernstein component if and only if they are unramified twists of each other, it follows that $\pi_1 |_{KZ} \cong \pi_2|_{KZ}$ if and only if $\pi_1 \cong \pi_2 \otimes (\nr_{\pm 1} \circ \det)$.
We prove the following analogous result in the setting of $E$-Banach space representations.

\begin{thm}\label{isomorphismclasses}
Let~$\Pi_1, \Pi_2$ be absolutely irreducible, admissible, very generic, non-ordinary, unitary $E$-Banach representations of~$\GL_2(\bQ_p)$ with central character~$\zeta$.
Write~$\Iw$ for the upper-triangular Iwahori subgroup of~$G$ and~$N$ for its normalizer in~$G$.
\benum
\item If~$\Pi_1, \Pi_2$ have irreducible reduction, then $\Pi_1 |_{\Iw Z} \cong \Pi_2 |_{\Iw Z}$ if and only if $\Pi_1 \cong \Pi_2 \otimes (\nr_{\pm 1} \circ \det)$, and $\Pi_1|_N \cong \Pi_2 |_N$ if and only if~$\Pi_1 \cong \Pi_2$.
\item If~$\Pi_1, \Pi_2$ have reducible reduction, then $\Pi_1 |_{KZ} \cong \Pi_2 |_{KZ}$ if and only if~$\Pi_1 \cong \Pi_2 \otimes (\nr_{\pm 1} \circ \det)$.
\item Otherwise, there are no $\Iw Z$-linear isomorphisms $\Pi_1 \isom \Pi_2$.
\eenum
\end{thm}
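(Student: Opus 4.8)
The plan is to translate each assertion into a statement about morphisms of smooth torsion representations and then invoke the main theorem of this paper. First I would dispatch the implications not requiring it. The character $\nr_{-1}\circ\det$ is trivial on $\Iw Z$ and on $KZ$, since the determinant of any element of either subgroup has even valuation; hence $\Pi_1\cong\Pi_2\otimes(\nr_{\pm1}\circ\det)$ gives at once $\Pi_1|_{\Iw Z}\cong\Pi_2|_{\Iw Z}$ and $\Pi_1|_{KZ}\cong\Pi_2|_{KZ}$, while $\Pi_1\cong\Pi_2$ trivially gives $\Pi_1|_N\cong\Pi_2|_N$. What remains are the three ``only if'' directions and the nonexistence statement in~(iii).

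The next step is to pass to an integral model. Choose $\GL_2(\bQ_p)$-stable unit balls $\Theta_i\subset\Pi_i$. For $H\in\{\Iw Z,KZ,N\}$ a continuous $H$-linear map $\Pi_1\to\Pi_2$ rescales into $\operatorname{Hom}_{\mO[H]}(\Theta_1,\Theta_2)$, and as $\Theta_2$ is $\varpi$-adically complete and separated this module equals $\varprojlim_n\operatorname{Hom}_{\mO[H]}(\Theta_1/\varpi^n,\Theta_2/\varpi^n)$. Each $\Theta_i/\varpi^n$ is an admissible smooth representation over $\mO/\varpi^n$ all of whose $\varpi$-adic graded pieces are isomorphic to the reduction $\overline\Pi_i:=\Theta_i/\varpi\Theta_i$, which by \cite{Paskunasimage} has finite length with Jordan--H\"older constituents generic in the sense our main theorem demands; so that theorem applies to each pair $(\Theta_1/\varpi^n,\Theta_2/\varpi^n)$, and whatever equality of Hom-modules it provides at every finite level survives the inverse limit and, after inverting $\varpi$, descends to $\Pi_1,\Pi_2$.

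Now I would extract the $\GL_2(\bQ_p)$-structure. In the supersingular-reduction case the main theorem says every $N$-linear map between two such torsion representations is $\GL_2(\bQ_p)$-linear; passing to the limit gives $\operatorname{Hom}_N(\Pi_1,\Pi_2)=\operatorname{Hom}_{\GL_2(\bQ_p)}(\Pi_1,\Pi_2)$, which is the $N$-part of~(i). For the $\Iw Z$-part, $\Iw Z$ is normal of index two in $N$ and the nontrivial character of $N/\Iw Z$ is the restriction of $\nr_{-1}\circ\det$, so the projection formula identifies $\operatorname{Hom}_{\Iw Z}(-,-)$ with $\operatorname{Hom}_N(-,-)\oplus\operatorname{Hom}_N(-,-\otimes(\nr_{-1}\circ\det))$; applying the equality just obtained to both summands (a twist of a supersingular-type representation by $\nr_{-1}\circ\det$ is again of the same type) and recognizing the result as $\operatorname{Hom}_{G^+}$, where $G^+=\ker(\nr_{-1}\circ\det)\supseteq\Iw Z$, gives $\operatorname{Hom}_{\Iw Z}(\Pi_1,\Pi_2)=\operatorname{Hom}_{G^+}(\Pi_1,\Pi_2)$. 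Thus an $\Iw Z$-linear isomorphism $\Pi_1\cong\Pi_2$ is $G^+$-linear, and Clifford theory for the index-two normal subgroup $G^+$ forces $\Pi_2\cong\Pi_1\otimes\psi$ with $\psi$ a character of $\GL_2(\bQ_p)/G^+$, i.e.\ $\psi\in\{1,\nr_{-1}\circ\det\}$: here one separates the case $\Pi_i|_{G^+}$ irreducible, where this is immediate, from the case $\Pi_i|_{G^+}$ reducible, which is the self-induced situation of Theorem~\ref{restriction}(2) and where one instead matches the two summands to get $\Pi_1\cong\Pi_2$. Part~(ii) goes the same way with $K$ in place of $\Iw$: the main theorem in the principal-series case yields $\operatorname{Hom}_{KZ}(\Pi_1,\Pi_2)=\operatorname{Hom}_{G^+}(\Pi_1,\Pi_2)$ — with $KZ\subseteq G^+$ and $\nr_{-1}\circ\det$ trivial on $KZ$, possibly through a more delicate passage to the limit than in the Iwahori case — and Clifford theory concludes.

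Finally, for~(iii): if, say, $\overline\Pi_1$ has only supersingular and $\overline\Pi_2$ only principal-series constituents (for any choice of lattices), an $\Iw Z$-linear isomorphism would reduce to a nonzero $\Iw Z$-linear map at some finite level, hence by the mechanism above to a nonzero $G^+$-linear map between a representation with supersingular constituents and one with principal-series constituents; the main theorem, which covers such mixed pairs, shows there are none. I expect the real obstacles to be two. First, making the application of the main theorem to the thickenings $\Theta_i/\varpi^n$ and their twists both legitimate and uniform in $n$: this needs the theorem in a form valid for finite-length representations over $\mO/\varpi^n$ with prescribed generic constituents, and a verification that the relevant genericity is stable under both the thickening and twisting by $\nr_{\pm1}\circ\det$. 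Second, the bookkeeping in the principal-series case to pin down that the overgroup the argument produces is exactly $G^+$ — neither $KZ$, which would make the assertion false, nor anything strictly larger; this is where the inverse limit over $n$ does genuine work, discarding the spurious $KZ$-linear isomorphisms that come from $\varpi$-adically small unramified twists and do not lift compatibly through the tower, and retaining only the $\nr_{\pm1}\circ\det$-ambiguity that the statement records.
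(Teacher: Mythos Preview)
Your treatment of part~(1) is correct and is essentially the paper's argument: Theorem~\ref{morphisms}(1) applied to the $\Theta_i/\varpi^n$ gives $\Hom_{\Iw Z}=\Hom_{G^+}$ and $\Hom_N=\Hom_G$ at every finite level, hence in the limit (this is Lemma~\ref{liftingtolattices}), and Clifford theory for $G/G^+$ finishes.

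Parts~(2) and~(3), however, have genuine gaps.

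For part~(3), Theorem~\ref{morphisms} does \emph{not} cover mixed pairs: its part~(1) assumes all Jordan--H\"older factors supersingular, its part~(2) assumes a filtration by copies of a single fixed $\mA_{r,s,\lambda}$. The paper instead proves directly, via a short lemma comparing the $\Iw$-structure of $M_\sigma^+$ with that of $\pi_\infty(\chi)$ (using the growth rate of $\dim M_{\sigma,n}$ against Lemma~\ref{cokernel}), that there are no nonzero $\Iw$-linear maps from a supersingular constituent to a principal series.

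For part~(2), the obstacle you flag at the end is the heart of the matter, and ``the inverse limit does genuine work'' does not resolve it. Theorem~\ref{morphisms}(2) applies only when \emph{both} sides admit filtrations by the \emph{same} $\mA_{r,s,\lambda}$; a priori the reductions of $\Pi_1,\Pi_2$ could be atomes with different parameters, or one reduction could be semisimple, and then $\Hom_{KZ}$ at finite level is genuinely larger than $\Hom_G$ (cf.\ Corollary~\ref{cor:psmorphisms} and the example preceding Proposition~\ref{atomesmorphisms}). The paper's route is: use the Banach-space Ribet lemma (Appendix~\ref{Ribetappendix}) to choose a lattice $\Theta_1\subset\Pi_1$ with \emph{indecomposable} reduction $\mA_{r,s,\lambda}$; scale $\alpha$ to a saturated map into some $\Theta_2\subset\Pi_2$ and, using that $\mA_{r,s,\lambda}$ has no nonzero finite-dimensional $K$-quotients (Lemma~\ref{subspaceatome}, which relies on Theorem~\ref{finitesplitting}), deduce that the Jordan--H\"older factors on both sides have the same $K$-socles; apply Ribet's lemma again to find a lattice $\Theta\subset\Pi_2$ with reduction $\mA_{r,s,\mu}$; and invoke Theorem~\ref{sameparameter} to force $\mu=\pm\lambda$. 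Only then, after twisting by $\nr_{-1}$ if $\mu=-\lambda$, is Theorem~\ref{morphisms}(2) (via Corollary~\ref{deformations}(2)) available. The two ingredients you are missing are thus Ribet's lemma to align the lattice types and Theorem~\ref{sameparameter} to match the parameters. Note also that Theorem~\ref{morphisms}(2) yields $\Hom_{KZ}=\Hom_G$, not $\Hom_{KZ}=\Hom_{G^+}$; the $\nr_{\pm1}$-ambiguity in the statement enters through the parameter-matching step $\mu=\pm\lambda$, not through Clifford theory.
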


\begin{corollary}\label{inertialcorrespondence}
Let~$\rho_1, \rho_2: \Gal_{\bQ_p} \to \GL_2(E)$ be absolutely irreducible continuous Galois representations with $\det \rho_1 = \det \rho_2$, and write~$\rhobar_i$ for the semisimplified mod~$p$ reduction of~$\rho_i$.
Let~$\Pi_1, \Pi_2$ be the $E$-Banach space representations of~$\GL_2(\bQ_p)$ corresponding to~$\rho_i$ under Colmez's functor, \textcolor{black}{and assume they are very generic}.
\benum
\item If~$\rhobar_1, \rhobar_2$ are irreducible, then $\rho_1|_{I_{\bQ_p}} \cong \rho_2|_{I_{\bQ_p}}$ if and only if $\Pi_1|_{\Iw Z} \cong \Pi_2 |_{\Iw Z}$.
\item If~$\rhobar_1, \rhobar_2$ are reducible, then $\rho_1 |_{I_{\bQ_p}} \cong \rho_2 |_{I_{\bQ_p}}$ if and only if~$\Pi_1 |_{KZ} \cong \Pi_2|_{KZ}$.
\item Otherwise, $\rho_1|_{I_{\bQ_p}}$ is not isomorphic to $\rho_2 |_{I_{\bQ_p}}$.
\eenum
\end{corollary}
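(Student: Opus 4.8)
The plan is to transport the statement across Colmez's correspondence $\rho \mapsto \Pi(\rho)$ and deduce it from Theorem~\ref{isomorphismclasses}, after setting up the parallel purely Galois-theoretic statement. First I would record the properties of Colmez's functor that I need: for $\rho$ as in the statement $\Pi(\rho)$ is absolutely irreducible, admissible and unitary, and it is non-ordinary because $\rho$ is irreducible; its central character corresponds to $\det\rho$ under local class field theory, so $\det\rho_1 = \det\rho_2$ puts $\Pi_1, \Pi_2$ in the setting of Theorem~\ref{isomorphismclasses} with a common central character $\zeta$. The correspondence is injective on isomorphism classes and commutes with twisting, $\Pi(\rho\otimes\chi)\cong\Pi(\rho)\otimes(\chi\circ\det)$ for unramified $\chi$, and $\nr_{-1}\circ\det$ is trivial on $Z$, so such a twist preserves the central character on both sides. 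Finally $\rhobar$ is irreducible (resp.\ reducible) exactly when $\Pi(\rho)$ has irreducible (resp.\ reducible) reduction in the sense of Theorem~\ref{isomorphismclasses}, and one checks using Section~\ref{defngeneric} that the genericity hypotheses imposed here on $\rhobar_i|_{I_{\bQ_p}}$ imply that $\Pi_1, \Pi_2$ are very generic.

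The second ingredient is the Galois-side analogue of the classification: for absolutely irreducible $\rho_1, \rho_2$ with $\det\rho_1 = \det\rho_2$, one has $\rho_1|_{I_{\bQ_p}}\cong\rho_2|_{I_{\bQ_p}}$ if and only if $\rho_1\cong\rho_2\otimes\nr_{\pm1}$. Only the forward direction has content: an $I_{\bQ_p}$-linear isomorphism is a nonzero element of $\mathrm{Hom}_{I_{\bQ_p}}(\rho_1,\rho_2)$, on which $\Gal_{\bQ_p}/I_{\bQ_p}\cong\widehat{\bZ}$ acts; a Frobenius eigenvector with eigenvalue $\lambda$ gives a nonzero $\Gal_{\bQ_p}$-equivariant map $\rho_1\to\rho_2\otimes\nr_{\lambda^{-1}}$, which is an isomorphism because $\rho_1$ is irreducible, and comparing determinants forces $\lambda^2 = 1$. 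I would also note the elementary fact that $\rho_1|_{I_{\bQ_p}}\cong\rho_2|_{I_{\bQ_p}}$ forces $\rhobar_1|_{I_{\bQ_p}}\cong\rhobar_2|_{I_{\bQ_p}}$: an $I_{\bQ_p}$-linear isomorphism carries a lattice in $\rho_1$ to an $I_{\bQ_p}$-stable lattice in $\rho_2$, whose reduction has the same semisimplification as $\rhobar_2|_{I_{\bQ_p}}$ by Brauer--Nesbitt, both $\rhobar_i|_{I_{\bQ_p}}$ being semisimple since the semisimple representations $\rhobar_i$ are tamely ramified.

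With these in hand the corollary follows case by case. In case~(1), $\rhobar_i$ irreducible makes $\Pi_i$ of irreducible reduction, so by Theorem~\ref{isomorphismclasses}(1) $\Pi_1|_{\Iw Z}\cong\Pi_2|_{\Iw Z}$ if and only if $\Pi_1\cong\Pi_2\otimes(\nr_{\pm1}\circ\det)$; by injectivity and twist-compatibility of Colmez's functor this is equivalent to $\rho_1\cong\rho_2\otimes\nr_{\pm1}$, hence by the Galois lemma to $\rho_1|_{I_{\bQ_p}}\cong\rho_2|_{I_{\bQ_p}}$. Case~(2) is identical with $KZ$ in place of $\Iw Z$ and Theorem~\ref{isomorphismclasses}(2). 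In case~(3) exactly one of $\rhobar_1,\rhobar_2$ is irreducible; the restriction to $I_{\bQ_p}$ of an irreducible semisimple $\rhobar$ is a sum of two tame characters of order not dividing $p-1$, while for a reducible $\rhobar$ it is a sum of two powers of $\omega$, so $\rhobar_1|_{I_{\bQ_p}}\not\cong\rhobar_2|_{I_{\bQ_p}}$ and therefore $\rho_1|_{I_{\bQ_p}}\not\cong\rho_2|_{I_{\bQ_p}}$ by the previous paragraph.

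I expect the main obstacle to be the first paragraph: pinning down the precise normalization of Colmez's functor (the twist and, above all, the relation between $\det\rho$ and the central character, including any cyclotomic shift) and matching the ``very generic'' hypothesis of Theorem~\ref{isomorphismclasses} to the genericity conditions stated here under the correspondence --- this is where the content of Section~\ref{defngeneric} genuinely enters. The Galois lemma and the reduction-of-lattices argument are routine.
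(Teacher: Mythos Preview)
Your proposal is correct and follows the same skeleton as the paper: deduce the corollary from Theorem~\ref{isomorphismclasses} together with the Galois-side equivalence $\rho_1|_{I_{\bQ_p}}\cong\rho_2|_{I_{\bQ_p}}\Leftrightarrow\rho_1\cong\rho_2\otimes\nr_{\pm1}$ (the paper's Proposition~\ref{CliffordtheoryGalois}), plus the properties of Colmez's functor you list.

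Two minor differences are worth noting. First, for the Galois lemma the paper splits into cases according to whether $\rho_i|_{I_{\bQ_p}}$ is absolutely irreducible: if so, $\Hom_{I_{\bQ_p}}(\rho_1,\rho_2)$ is one-dimensional and Frobenius automatically acts by a scalar; if not, the paper argues instead via induction from $\Gal_{\bQ_{p^2}}$. Your uniform Frobenius-eigenvector argument is slicker and handles both cases at once, but you should say a word about why an eigenvalue exists over~$E$: a priori one passes to~$\bar E$, though once the eigenvalue is seen to square to~$1$ it lies in~$E$ after all and the eigenspace descends. Second, for part~(3) the paper's implicit argument is simply that the Galois lemma already gives $\rho_1\cong\rho_2\otimes\nr_{\pm1}$, whence $\rhobar_1$ and~$\rhobar_2$ are unramified twists of each other and so share reducibility type. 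Your route via the niveau-$1$/niveau-$2$ dichotomy on the reductions is a valid alternative and avoids invoking the Galois lemma in that case. The deferred bookkeeping about matching the inertial hypotheses on~$\rhobar_i$ to the ``very generic'' condition of Section~\ref{defngeneric} is exactly what the paper also leaves implicit.
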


The corollary follows from the theorem since by Clifford theory and the condition on the determinant we have $\rho_1 |_{I_{\bQ_p}} \cong \rho_2 |_{I_{\bQ_p}}$ if and only if $\rho_1 \cong \rho_2 \otimes \nr_{\pm 1}$ (see Proposition~\ref{CliffordtheoryGalois}).
We see that Corollary~\ref{inertialcorrespondence} relates the inertia group~$I_{\bQ_p}$ to a parahoric subgroup of~$\GL_2(\bQ_p)$, as in the case of smooth representations.

Although Theorems~\ref{restriction} and~\ref{isomorphismclasses} are about $E$-Banach space representations, the main input in their proof is Theorem~\ref{morphisms}, a stronger version of Theorem~\ref{isomorphismclasses} valid for $p^n$-torsion representations, which may be of independent interest.
It is proved as a combination of the results in Section~\ref{modprepresentations}, building on work of Morra and Pa{\v s}k{\= u}nas.
Theorems~\ref{restriction} and~\ref{isomorphismclasses} then follow directly in the case of supersingular reduction.
In the case of reducible reduction we need a more involved argument, making use of a Banach space version of Ribet's lemma on lattices in irreducible two-dimensional representations with reducible reduction, which we develop in Appendix~\ref{Ribetappendix}.

\subsection{Notation and conventions.}
We fix throughout the article a finite extension $E/\bQ_p$ with ring of integers~$\mO$ and residue field~$k$, as well as an algebraic closure~$\lbar k$ of~$k$, to act as coefficients.
Fix an algebraic closure $\cbQ_p / \bQ_p$ and write $G_{\bQ_p} = \Gal_{\bQ_p} = \Gal(\cbQ_p / \bQ_p)$, $I_{\bQ_p} \subset G_{\bQ_p}$ for the inertia group, and $\omega: \Gal_{\bQ_p} \to \bZ_p^\times$ for the cyclotomic character.
We normalize local class field theory so that~$\omega$ corresponds to the character $\bQ_p^\times \to \bZ_p^\times, x \mapsto x|x|$.
If~$\lambda \in \mO^\times$ we write $\nr_\lambda: \bQ_p^\times \to \mO^\times$ for the unramified character sending~$p$ to~$\lambda$, and similarly if~$\lambda \in k^\times$.
We will usually work with a fixed continuous character $\zeta: \bQ_p^\times \to \mO^\times$ to act as the central character of our $\GL_2(\bQ_p)$-representations.

Write $G = \GL_2(\bQ_p)$, $K = \GL_2(\bZ_p)$, $B$ for the upper-triangular Borel subgroup, $Z \cong \bQ_p^\times$ for the centre, and~$T$ for the diagonal torus. 
The index-two subgroup $G^+ \subset G$ is defined to be the kernel of~$\nr_{-1} \circ \det$.
We will write $U(p^n\bZ_p) = \fourmatrix 1 {p^n\bZ_p} 0 1$, and similarly for the lower-triangular unipotent subgroup~$\overline{U}$. 
Define~$K_0(p^n) = \fourmatrix{\bZ_p^\times}{\bZ_p}{p^n\bZ_p}{\bZ_p^\times}$, so that $K_0(p) = \Iw$ is the upper-triangular Iwahori subgroup. 
We will sometimes write $K_0(p^\infty)$ for~$B(\bZ_p)$. 
The principal congruence subgroups of~$K$ will be denoted $K_n = \fourmatrix{1+p^n\bZ_p}{p^n\bZ_p}{p^n \bZ_p}{1+p^n\bZ_p}$.
If~$n \geq 0$, the Iwahori decomposition for~$K_0(p^{n+1})$ states that the multiplication map
\[
\lbar U(p^{n+1}\bZ_p) \times T(\bZ_p) \times U(\bZ_p) \to K_0(p^{n+1})
\]
is a homeomorphism.
The pro-$p$ Sylow subgroup of~$\Iw$ is equal to~$\Iw_1 = \fourmatrix{1+p\bZ_p}{\bZ_p}{p\bZ_p}{1+p\bZ_p}$, and $\Iw = H \ltimes \Iw_1$ for the subgroup $H = \fourmatrix{[a]}{0}{0}{[d]}$, where~$[-]$ denotes the Teichm\"uller lift to~$\bZ_p$ of an element of~$\bF_p$. 
The character $ad^{-1}$ of~$T(\bF_p)$ (or its inflation to other groups such as $H$ and~$\Iw$) is denoted~$\alpha$.

Except in the section on Banach space representations, \textcolor{black}{in which~$\Pi$ is denoted by~$\nu$,} we will write
\[
\Pi = \fourmatrix 0 1 p 0, \, t = \fourmatrix p 0 0 1, \, s = \fourmatrix 0 1 1 0, \, \textcolor{black}{\tau = \fourmatrix 1 0 0 p}.
\]
The group~$N = \Pi^{\bZ} \ltimes \Iw$ is the normalizer of~$\Iw$ in~$G$. If~$\pi$ is a smooth representation of~$\Iw$, we will sometimes denote the twist $\ad(\Pi)^*(\pi)$ by~$\pi^+$: this is the representation of~$\Iw$ with the same representation space as~$\pi$ but the action 
\begin{equation}\label{definitiontwist}
\pi^+\fourmatrix a b {pc} d = \pi \left (\Pi \fourmatrix a b {pc} d \Pi^{-1} \right ).
\end{equation}
Given an irreducible $k[\GL_2(\bZ_p)]$-module~$\sigma$, and unless otherwise specified, we will write $\chi$ for the $\Iw$-character~$\sigma^{\Iw_1}$ and~$\kappa$ for the conjugate of $\sigma^{\Iw_1}$.
Hence~$\kappa = \chi \circ \ad(\Pi)$, and it also equals~$\chi \circ \ad(s)$ as characters of~$T(\bZ_p)$.
For this reason, we will sometimes also write~$\chi^s$ or~$\chi^+$ to denote~$\kappa$.
If~$G$ is a locally profinite group with a closed subgroup~$K$ and an open subgroup~$H$, we write $\Ind_K^G$ for induction and $\cInd_H^G$ for compact induction, so that $\Ind_K^G$ is right adjoint to~$\Res^G_K$ and $\cInd_H^G$ is left adjoint to $\Res^G_H$. If~$\pi$ is a representation of~$G$ and $x \in \pi$, we will write $\langle G \cdot x \rangle$ for the smallest $G$-subrepresentation of~$\pi$ that contains~$x$.
Unless stated otherwise, the notation~$\Ext^i_G$ will denote $\Ext$-groups computed in the category of smooth $k[G]$-representations.

\section{Mod~$p$ representations.}\label{modprepresentations}

\subsection{Preliminaries.}
We begin with some generalities about uniserial representations of profinite groups.
Then we list the irreducible representations of~$k[\GL_2(\bF_p)]$ and~$k[\GL_2(\bQ_p)]$ we will work with, and we state our genericity conditions.
We will fix a continuous character $\zeta: \bQ_p^\times \to \mO^\times$, and we will usually assume that representations of~$G$ or~$G^+$ have central character~$\zeta$.
We will assume in this section that $\zeta(p) = 1$: this can always be arranged by passing to a quadratic extension of~$E$ and twisting by an unramified character.

\subsubsection{Uniserial representations.}

Let~$H$ be a profinite group with an open pro-$p$ subgroup, and let~$\pi$ be a smooth representation of~$k[H]$. 
Even if~$\pi$ has infinite $H$-length, we can define its socle filtration by letting $\soc^0(\pi)$ be the maximal semisimple $H$-subrepresentation of~$\pi$, and then defining~$\soc^i(\pi)$ via the short exact sequence
\[
0 \to \soc^{i-1}(\pi) \to \soc^i(\pi) \to \soc^0 \left ( \pi / \soc^{i-1}(\pi) \right ) \to 0.
\] 
Since every vector $x \in \pi$ generates a finite-dimensional $H$-representation we have $\pi = \bigcup_i \soc^i(\pi)$. 
We will usually write~$\soc$ for~$\soc^0$.

\begin{lemma}\label{soclesum}
Let $\pi = \pi_1 \oplus \pi_2$ be a smooth representation of~$k[H]$. Then $\soc^i(\pi) = \soc^i(\pi_1) \oplus \soc^i(\pi_2)$.
\end{lemma}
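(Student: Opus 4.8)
The statement is that the socle filtration is compatible with finite direct sums, so I would prove it by induction on $i$, using the defining short exact sequence for $\soc^i$. The key point is that taking socles ($\soc^0$) commutes with finite direct sums, which is essentially formal: a semisimple subrepresentation of $\pi_1 \oplus \pi_2$ projects to semisimple subrepresentations of each factor, and conversely $\soc^0(\pi_1) \oplus \soc^0(\pi_2)$ is a semisimple subrepresentation of $\pi_1 \oplus \pi_2$; since $\soc^0$ is by definition the \emph{maximal} semisimple subrepresentation, one gets $\soc^0(\pi_1 \oplus \pi_2) = \soc^0(\pi_1) \oplus \soc^0(\pi_2)$. (Here one should note that any subrepresentation $\sigma \subseteq \pi_1 \oplus \pi_2$ satisfies $\sigma \subseteq (\sigma \cap \pi_1 \oplus 0) \oplus \ldots$ is not quite right; rather, if $\sigma$ is semisimple then its image under each projection $p_j$ is a quotient of $\sigma$, hence semisimple, hence contained in $\soc^0(\pi_j)$, so $\sigma \subseteq p_1(\sigma) \oplus p_2(\sigma) \subseteq \soc^0(\pi_1) \oplus \soc^0(\pi_2)$.)

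For the inductive step, assume $\soc^{i-1}(\pi) = \soc^{i-1}(\pi_1) \oplus \soc^{i-1}(\pi_2)$. Then the quotient $\pi/\soc^{i-1}(\pi)$ decomposes as $(\pi_1/\soc^{i-1}(\pi_1)) \oplus (\pi_2/\soc^{i-1}(\pi_2))$, so by the base case $\soc^0(\pi/\soc^{i-1}(\pi)) = \soc^0(\pi_1/\soc^{i-1}(\pi_1)) \oplus \soc^0(\pi_2/\soc^{i-1}(\pi_2))$. Pulling back along $\pi \to \pi/\soc^{i-1}(\pi)$, which respects the direct sum decomposition, and using the defining exact sequence for $\soc^i$ applied to $\pi$ and to each $\pi_j$, one concludes $\soc^i(\pi) = \soc^i(\pi_1) \oplus \soc^i(\pi_2)$. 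The only mild subtlety is to check that the preimage of a direct sum of subrepresentations under a split-compatible surjection is again the corresponding direct sum, which is immediate from the fact that the projections $\pi \to \pi_j$ descend to the quotients. I do not expect any real obstacle here; the lemma is elementary bookkeeping, and the one place to be slightly careful is the base case argument that $\soc^0$ commutes with $\oplus$, specifically the direction showing a semisimple subobject of the sum is contained in the sum of the socles.
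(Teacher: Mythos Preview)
Your proposal is correct and follows essentially the same approach as the paper: reduce by induction to the base case $\soc^0(\pi_1\oplus\pi_2)=\soc^0(\pi_1)\oplus\soc^0(\pi_2)$, and prove the nontrivial inclusion of the base case by observing that the projections of a semisimple subrepresentation to each summand are semisimple. The only cosmetic difference is that the paper organizes the induction via the exact sequence $0\to\soc(\pi)\to\soc^{i+1}(\pi)\to\soc^i(\pi/\soc(\pi))\to 0$ (peeling off the bottom socle and applying the inductive hypothesis to the quotient), whereas you use the defining sequence $0\to\soc^{i-1}(\pi)\to\soc^i(\pi)\to\soc^0(\pi/\soc^{i-1}(\pi))\to 0$ directly; these are equivalent.
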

\begin{proof}
By the exact sequence
\[
0 \to \soc(\pi) \to \soc^{i+1}(\pi) \to \soc^i(\pi/\soc(\pi)) \to 0
\]
and induction on~$i$ it suffices to prove that $\soc(\pi) = \soc(\pi_1) \oplus \soc(\pi_2)$. 
For this, observe that if~$X$ is a semisimple smooth representation of~$k[H]$, and $\iota: X \to \pi$ is an $H$-linear map, then $\im(\pr_i \circ \iota) \subseteq \soc(\pi_i)$ for~$i = 1, 2$, where~$\pr_i$ the projection $\pi \to \pi_i$, and so $\soc(\pi) \subseteq \soc(\pi_1) \oplus \soc(\pi_2)$.
The other direction is true by definition.
\end{proof}

\begin{defn}
We say that a $k[H]$-module $\pi$ is \emph{uniserial} if the set of $k[H]$-submodules of~$\pi$ is totally ordered by inclusion.
\end{defn}

\begin{lemma}\label{soclefiltration}
The representation~$\pi$ is uniserial if and only if $\soc^{i}(\pi) / \soc^{i-1}(\pi)$ is irreducible for every $i \in \bZ_{\geq 0}$.
\end{lemma}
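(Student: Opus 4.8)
The plan is to argue by pure module theory, using only the defining short exact sequence of the socle filtration, the fact recalled above that $\pi = \bigcup_i \soc^i(\pi)$ because~$\pi$ is smooth and locally finite, and its consequence that every nonzero submodule of every subquotient of~$\pi$ contains a simple submodule and therefore meets the socle nontrivially.

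For the forward implication I would first record two elementary facts. A quotient of a uniserial module is uniserial, since the submodules of~$\pi/\sigma$ correspond bijectively and order-preservingly to the submodules of~$\pi$ containing~$\sigma$; in particular every $\pi/\soc^i(\pi)$ is uniserial. And a nonzero uniserial module has irreducible socle: the socle is semisimple, two distinct simple submodules would be incomparable and contradict uniseriality, so there is a unique simple submodule and the (nonzero, semisimple) socle equals it. Applying the second fact to the uniserial modules $\pi/\soc^i(\pi)$ and using the definitional identity $\soc^{i+1}(\pi)/\soc^i(\pi) = \soc^0(\pi/\soc^i(\pi))$ gives that each layer $\soc^{i+1}(\pi)/\soc^i(\pi)$ is irreducible, as is~$\soc^0(\pi)$ itself (the degenerate top layers, which vanish once $\pi$ has finite length, being understood to pose no obstruction).

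For the reverse implication I would show that every submodule $M \subseteq \pi$ is either~$\pi$ or equals $\soc^i(\pi)$ for some~$i$ (with the convention $\soc^{-1}(\pi) := 0$); since the $\soc^i(\pi)$ together with~$\pi$ form a chain under inclusion, this is exactly uniseriality. Set $i = \sup\{\, j \geq -1 : \soc^j(\pi) \subseteq M \,\}$. If $i = \infty$ then $M \supseteq \bigcup_j \soc^j(\pi) = \pi$. Otherwise $\soc^i(\pi) \subseteq M$ while $\soc^{i+1}(\pi) \not\subseteq M$, so inside $\pi/\soc^i(\pi)$ the image of~$M$ meets the socle $\soc^{i+1}(\pi)/\soc^i(\pi)$ trivially: that intersection is $(M \cap \soc^{i+1}(\pi))/\soc^i(\pi)$, a submodule of the irreducible module $\soc^{i+1}(\pi)/\soc^i(\pi)$, and it cannot be everything without forcing $\soc^{i+1}(\pi) \subseteq M$. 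Since a nonzero submodule of the subquotient $\pi/\soc^i(\pi)$ always meets its socle nontrivially, the image of~$M$ in $\pi/\soc^i(\pi)$ vanishes, i.e. $M = \soc^i(\pi)$.

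I do not expect a genuinely hard step; the argument is formal, and in fact Lemma~\ref{soclesum} is not needed for it. The two points that need care are the bottom and top of the socle filtration — in particular the hypothesis must be read as also constraining $\soc^0(\pi)$, equivalently as allowing the index $-1$ with $\soc^{-1}(\pi) = 0$ — and, in the reverse direction, not tacitly assuming $\pi$ has finite length: the possibility of a submodule lying in no $\soc^i(\pi)$ must be excluded, which is precisely the case $i = \infty$ above. The one thing to avoid is circularity, namely treating $\soc^{i+1}(\pi)/\soc^i(\pi) = \soc^0(\pi/\soc^i(\pi))$ as something to be proved rather than as the definition of the socle filtration.
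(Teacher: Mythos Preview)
Your proof is correct and follows essentially the same approach as the paper's: for the reverse implication you, like the paper, pick the largest~$i$ with $\soc^i(\pi) \subseteq M$ and use that a nonzero submodule of~$\pi/\soc^i(\pi)$ must meet its (irreducible) socle. Your treatment is more careful about two edge cases the paper leaves implicit, namely the convention $\soc^{-1}(\pi)=0$ needed to constrain~$\soc^0(\pi)$ and the possibility $i=\infty$ when~$\pi$ has infinite length; both observations are apt, and your forward implication via ``quotients of uniserial are uniserial'' is a clean rephrasing of the paper's one-line remark about incomparable summands in a reducible semisimple module.
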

\begin{proof}
Every semisimple reducible $H$-representation contains two submodules incomparable by inclusion. 
Conversely, assume that all the graded factors of the socle filtration of~$\pi$ are irreducible, and let~$X \subset \pi$ be a proper submodule.
Let~$i$ be such that $\soc^i(\pi) \subseteq X$ but $\soc^{i+1}(\pi) \not \subseteq X$.
Consider the submodule $X/\soc^i\pi \subset \pi / \soc^i \pi$.
Since~$\pi$ is smooth, if $X/\soc^i \pi \ne 0$ then it contains an irreducible $k[H]$-subspace.
But since $\soc(\pi/\soc^i\pi)$ is irreducible, this implies that $\soc^{i+1}(\pi) \subseteq X$, which we are assuming not to be the case.
So $X  = \soc^i(\pi)$, hence the only submodules of~$\pi$ are the~$\soc^i(\pi)$, and the lattice of submodules of~$\pi$ is totally ordered by inclusion.
\end{proof}

\begin{lemma}\label{uniserial}
If~$\pi_1, \pi_2$ are uniserial smooth representations of~$k[H]$, then every proper $H$-submodule of~$\pi_i$ is finite-dimensional. If~$\pi_1$ is infinite-dimensional, then every nonzero $k[H]$-linear morphism $\lambda: \pi_1 \to \pi_2$ is surjective.
\end{lemma}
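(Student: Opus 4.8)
The plan is to bootstrap off Lemma~\ref{soclefiltration} and the submodule classification obtained in its proof. First I would record the auxiliary observation that every irreducible smooth $k[H]$-module is finite-dimensional: a nonzero vector has open, hence finite-index (as $H$ is profinite), stabilizer, so it generates a finite-dimensional submodule, which by irreducibility is the whole module. Combined with Lemma~\ref{soclefiltration}, which says that for uniserial $\pi$ each graded piece $\soc^{i+1}(\pi)/\soc^i(\pi)$ is irreducible, this shows that every $\soc^i(\pi)$ is a finite iterated extension of finite-dimensional irreducibles, hence finite-dimensional, and that $\pi$ is infinite-dimensional exactly when its socle filtration is strictly increasing.

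For the first assertion, I would recall that the proof of Lemma~\ref{soclefiltration} shows the only $k[H]$-submodules of a uniserial representation~$\pi$ are the terms $\soc^i(\pi)$. Hence any proper submodule of~$\pi_i$ ($i=1,2$) equals some $\soc^j(\pi_i)$, which is finite-dimensional by the previous paragraph.

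For the second assertion, let $\lambda\colon \pi_1 \to \pi_2$ be a nonzero $k[H]$-linear map. Then $\ker\lambda$ is a proper submodule of~$\pi_1$, hence finite-dimensional by the first assertion; since $\pi_1$ is infinite-dimensional, $\im\lambda \cong \pi_1/\ker\lambda$ is infinite-dimensional. But $\im\lambda$ is a $k[H]$-submodule of the uniserial representation~$\pi_2$, so were it proper it would again be finite-dimensional by the first assertion. Therefore $\im\lambda = \pi_2$ and $\lambda$ is surjective.

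I expect no serious obstacle: the only points requiring care are that ``irreducible smooth'' really forces finite-dimensionality over the profinite group~$H$, and that the submodule lattice of a uniserial module is exhausted by the $\soc^i$. Both are already available from the material preceding the statement, so the argument is essentially a formal consequence of Lemmas~\ref{soclesum} and~\ref{soclefiltration}.
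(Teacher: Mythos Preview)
Your argument is correct. The second assertion is handled exactly as in the paper: $\ker\lambda$ is proper hence finite-dimensional, so $\im\lambda$ is infinite-dimensional, hence cannot be a proper submodule of the uniserial $\pi_2$.

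For the first assertion you take a slightly longer route than the paper. You go through the classification of submodules established in the proof of Lemma~\ref{soclefiltration} (every submodule is some $\soc^i$), together with the observation that irreducible smooth $k[H]$-modules are finite-dimensional, to conclude that each $\soc^i(\pi)$ is finite-dimensional. The paper instead argues directly from the definition of uniseriality: given a proper submodule $\pi'$, pick any $x\in\pi\setminus\pi'$; since the submodules are totally ordered and $\langle H\cdot x\rangle$ is not contained in $\pi'$, we must have $\pi'\subseteq\langle H\cdot x\rangle$, which is finite-dimensional by smoothness and compactness of~$H$. This avoids invoking the socle filtration altogether and is a one-line argument. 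Your route has the minor advantage of making explicit that the submodules are exactly the $\soc^i$, but the paper's direct approach is cleaner for this lemma.
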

\begin{proof}
Let $\pi_i' \subset \pi_i$ be a proper $H$-submodule. If $x \in \pi_i \setminus \pi_i'$, then the definition of a uniserial representation implies that $\pi_i' \subseteq \langle H \cdot x \rangle$, which is finite-dimensional by $H$-smoothness. The second claim follows from the first, counting dimensions on the exact sequence $0 \to \ker \lambda \to \pi_1 \to \image \lambda \to 0$.
\end{proof}

\subsubsection{Serre weights.} 
Every irreducible $k$-representation of~$\GL_2(\bF_p)$ (also known as \emph{Serre weight}) has the form $\sigma_{r, s} = \Sym^rk^2 \otimes \det^s$ for uniquely determined integers $0 \leq r \leq p-1$ and $0 \leq s \leq p-2$. We will usually realize~$\Sym^r$ on the space of degree~$r$ homogeneous polynomials in two variables~$x$ and~$y$, and coefficients in~$k$, via the action 
\[
\fourmatrix a b c d x^{r-i} y^{i} = (ax+cy)^{r-i}(bx+dy)^{i}.
\]
The group~$U(\bF_p)$ fixes a unique line in~$\sigma_{r, s}$, spanned by~$x^r$, whose eigencharacter for the Borel subgroup is $\chi: \fourmatrix a b 0 d \mapsto a^r(ad)^s$. 
This character determines~$\sigma$ whenever $r \not = 0, p-1$, or equivalently whenever $\chi$ is not equal to its conjugate~$\chi^s = \chi \circ \ad(s)$.
We will often write~$\kappa$ for~$\chi^s$. 
We will call such a character a \emph{generic} character, and we will say that a Serre weight~$\sigma$ is generic if $\sigma^{\Iw_1}$ is a generic character.
(So the nongeneric Serre weights are the characters and the twists of the Steinberg representation.)

\subsubsection{$\GL_2(\bQ_p)$-representations.} 
Recall that we have fixed a continuous character $\zeta: \bQ_p^\times \to \mO^\times$.
If~$\sigma$ is a Serre weight we will often implicitly regard it as a $K$-representation, and we will extend the $K$-action to an action of~$KZ$ by letting~$p \in Z$ act by~$\zeta(p)$.

The smooth irreducible $\cbF_p$-representations of~$\GL_2(\bQ_p)$ were classified by Barthel--Livn\'e and Breuil~\cite{BLreps, BreuilrepsI}. 
We will work over the finite field~$k$, and we will follow the normalizations of~\cite{BreuilrepsI} unless stated otherwise, hence we have the following classification of irreducible representations with central character trivial at~$p$. 
\begin{enumerate}
\item Irreducible principal series representations. These are of the form
\[
\pi(r, \lambda, \chi) = \left ( \cInd_{KZ}^G(\Sym^r k^2)/(T-\lambda) \right) \otimes (\chi \circ \det)
\]
for a smooth character $\chi: \bQ_p^\times \to k^\times$ with $\chi(p) = \pm 1$, and $(r, \lambda) \not \in \{(0, \pm 1), (p-1, \pm 1) \}$. The only intertwinings are 
\[
\pi(r, \lambda, \chi) \cong \pi(r, -\lambda, \nr_{-1}\chi) \text{ and } \pi(0, \lambda, \chi) \cong \pi(p-1, \lambda, \chi).
\]
These representations can be written as parabolic inductions by considering the characters $\psi = \nr_{\lambda^{-1}} \otimes \nr_{\lambda}\omega^r : T(\bQ_p) \to k^\times$. 
If the pair $(r, \lambda) \not \in \{ (0, \pm 1), (p-1, \pm 1) \}$, then the representation $\Ind_B^G(\psi)$ is irreducible and isomorphic to
\[
\pi(r, \lambda, 1) = \cInd_{KZ}^G(\Sym^{r}k^2)/(T-\lambda).
\]
See for example~\cite[Remarque~4.2.5]{BreuilrepsI}. 
Here and in what follows, $T$ is the usual Hecke operator.
\item Supersingular representations. These have the form
\[
\pi(r, 0, \chi) = \left ( \cInd_{KZ}^G(\Sym^r k^2)/T \right) \otimes (\chi \circ \mathrm{det}) = \left ( \cInd_{KZ}^G(\Sym^rk^2 \otimes \mathrm{det}^s) /T \right) \otimes (\nr_{\pm 1} \circ \mathrm{det})
\]
for $0 \leq r \leq p-1$, $1 \leq s \leq p-1$, where $\chi = \nr_{\pm 1}\omega^s$. The only intertwining isomorphisms are
\[
\pi(r, 0, \chi) \cong \pi(r, 0, \chi\nr_{-1}) \cong \pi(p-1-r, 0, \chi\omega^r) \cong \pi(p-1-r, 0, \chi\omega^r\nr_{-1}).
\]
\item Characters and Steinberg twists. These arise as the Jordan--H\"older factors of reducible principal series representations, and will not be studied in this article.
\end{enumerate}

\subsubsection{Genericity conditions}\label{defngeneric}
The following are the genericity conditions we use in this paper:
\benum
\item a Serre weight is generic if it is not a character or a twist of the Steinberg representation of~$\GL_2(\bF_p)$.
\item an absolutely irreducible $k[\GL_2(\bQ_p)]$-representation is generic if it is $\lbar k$-isomorphic to some $\pi(r, \lambda, \chi)$ for $r \not \in \{0, p-1\}$.
It is very generic if it is generic and not $\lbar k$-isomorphic to~$\pi(r, \lambda, \chi)$ for~$r = p-2$ and~$\lambda \ne 0$.
(See Remark~\ref{whyp-3} for why we will need to exclude twists of~$\Sym^{p-2}$ at certain stages of our arguments.)
\item A finite length~$\mO[\GL_2(\bQ_p)]$-representation is generic, resp. very generic if all its Jordan--H\"older factors are generic, resp. very generic.
\item A unitary admissible $E$-Banach space representation~$\Pi$ of~$\GL_2(\bQ_p)$ is generic, resp. very generic if $\Theta \otimes_\mO k$ has finite length and is generic, resp. very generic for all open bounded $\GL_2(\bQ_p)$-stable $\mO$-lattices~$\Theta \subset \Pi$.
\eenum

\subsection{\textcolor{black}{Restriction of principal series to parahoric subgroups.}}\label{principalseriesrestriction}
The Iwasawa decomposition $G = BK$ implies that there is a $K$-linear isomorphism 
\begin{equation}\label{Iwasawadecomposition}
 \Ind_B^G(\nr_{\lambda^{-1}} \otimes \nr_{\lambda} \omega^{r+1}) \to \Ind_{B(\bZ_p)}^K(1\otimes \omega^{r+1}), \, f \mapsto f|_K.
\end{equation}
In this section we study certain representations related to the one appearing in the right-hand side of this isomorphism, and their restriction to the Iwahori subgroup. 
Let $\chi: H \to k^\times$ be a character and~$n \geq 0$ an integer. We begin with some properties of the finite induction $\pi_{n+1}(\chi) = \Ind_{K_0(p^{n+1})}^{K_0(p)}(\chi)$, which is a uniserial $K_0(p)$-representation of dimension~$p^n$ (see for example~\cite[Proposition~1.6]{Morraexplicit}, or Proposition~\ref{Morrathesis1} for a self-contained proof). We always realize an induction to~$K_0(p)$ as a space of smooth functions on~$K_0(p)$, and we write~$\varphi_{n+1}$ for the unique function in~$\pi_{n+1}(\chi)$ that is supported in~$K_0(p^{n+1})$ and takes value~$1$ at the identity. 

\begin{lemma}\label{cokernelvarphi}
The $K_0(p)$-cosocle of~$\pi_{n+1}(\chi)$ is one-dimensional, and generated by the image of~$\varphi_{n+1}$.
\end{lemma}
\begin{proof}
By definition, we have
\[
\pi_{n+1}(\chi) = \{f: K_0(p) \to k \text{ such that } f(kx) = \chi(k)f(x) \text{ for all } k \in K_0(p^{n+1}), x \in K_0(p)\}
\]
because these functions are automatically smooth.
Every coset $K_0(p^{n+1})g$ supports a one-dimensional space of elements of~$\pi_{n+1}(\chi)$, generated by the unique function $\varphi_g \in \pi_{n+1}(\chi)$ supported in $K_0(p^{n+1})g$ and sending~$g$ to~$1$.
With this notation, we have~$\varphi_{n+1} = \varphi_{\id}$ and so $\varphi_g = g^{-1}\varphi_{n+1}$ for all~$g \in K_0(p)$.
This implies that~$\pi_{n+1}(\chi)$ is a cyclic $K_0(p)$-module generated by~$\varphi_{n+1}$. 
There remains to prove that
\[
\dim_k \cosoc_{K_0(p)} \pi_{n+1}(\chi) = 1.
\]
This is an immediate consequence of the fact that~$\pi_{n+1}(\chi)$ is uniserial and the irreducible $k$-representations of~$K_0(p)$ are one-dimensional.
Otherwise, one can give a direct proof using Frobenius reciprocity and the fact that since $K_0(p^{n+1}) \subset K_0(p)$ is an open subgroup of finite index we have
\[
\pi_{n+1}(\chi) \cong \cInd_{K_0(p^{n+1})}^{K_0(p)}(\chi).
\]
\end{proof}

For any~$n\geq 1$ there is an injection $\pi_n(\chi) \to \pi_{n+1}(\chi)$ that is an inclusion of spaces of smooth functions on~$K_0(p)$. There is also a surjection $\pi_{n+1}(\chi) \to \pi_{n}(\chi)$ that sends~$\varphi_{n+1}$ to~$\varphi_n$. 
Hence the (unique) submodule of~$\pi_{n+1}(\chi)$ of dimension~$p^{n-1}$ and the (unique) quotient of~$\pi_{n+1}(\chi)$ of dimension~$p^{n-1}$ are both isomorphic to~$\pi_{n}(\chi)$. 
In fact, something stronger is true, by the following lemma.

\begin{lemma}\label{psfiltration}
Let~$1 \leq i \leq n+1$. Then~$\pi_{n+1}(\chi)$ has an $\Iw$-stable filtration with $p^{n+1-i}$ graded factors of dimension $p^{i-1}$, of the form
\[
\pi_{i}(\chi) - \pi_i(\chi\alpha) - \pi_i(\chi\alpha^2) - \cdots.
\]
\end{lemma}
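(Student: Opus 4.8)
The plan is to prove the statement by induction on~$i$, starting from~$i = 1$, where the claim is that~$\pi_{n+1}(\chi)$ has an $\Iw$-stable filtration with~$p^n$ one-dimensional graded factors running through~$\chi, \chi\alpha, \chi\alpha^2, \ldots$, i.e.\ that the socle filtration of~$\pi_{n+1}(\chi)$ as an~$\Iw$-representation is as claimed. This base case is essentially the uniserial structure already recalled just before the lemma (via \cite{Morraexplicit}, or \cite[Proposition~1.6]{Morraexplicit}): the graded factors of the socle filtration of~$\pi_{n+1}(\chi)$ are the characters~$\chi\alpha^j$ for~$0 \leq j \leq p^n - 1$, reading off how the torus part~$H$ acts on the successive layers cut out by the~$\Iw_1$-action. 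Concretely, $\pi_{n+1}(\chi)$ as an~$\Iw_1$-module is~$k[\Iw_1 / \Iw_{1} \cap K_0(p^{n+1})]$, the socle filtration is the filtration by powers of the augmentation ideal, and the~$H$-action twists~$\chi$ by~$\alpha$ each time one goes up a step because~$H$ acts on~$\overline U(p\bZ_p)/\overline U(p^{n+1}\bZ_p)$ through~$\alpha$ (up to the normalization conventions fixed in the Notation section).

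For the inductive step I would exploit the two maps recalled right before the lemma: the inclusion~$\pi_i(\chi) \hookrightarrow \pi_{i+1}(\chi)$ of spaces of functions and the surjection~$\pi_{i+1}(\chi) \twoheadrightarrow \pi_i(\chi)$ sending~$\varphi_{i+1} \mapsto \varphi_i$. These realize~$\pi_i(\chi)$ as both the unique~$p^{i-1}$-dimensional sub and the unique~$p^{i-1}$-dimensional quotient of~$\pi_{i+1}(\chi)$. More usefully, I want the analogue of the~$i = 1$ picture one level up: decompose~$\pi_{n+1}(\chi)$, restricted along~$K_0(p^{i})\supset K_0(p^{n+1})$, using~$K_0(p^{i})/K_0(p^{i})\cap(\text{conjugates})$, and run the same augmentation-ideal filtration argument on the pro-$p$ group~$\Iw_1 \cap K_0(p^{i})$ modulo~$\Iw_1 \cap K_0(p^{n+1})$. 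The successive quotients of that filtration are each a copy of~$\Ind_{K_0(p^{n+1})}^{K_0(p^{i})}(\chi\alpha^j) = \pi_{n+1-i+1}(\chi\alpha^j)$ for the appropriate~$j$, viewed now as~$K_0(p^{i})$-reps; transporting along the transitivity of induction~$\Ind_{K_0(p^{i})}^{K_0(p)}\Ind_{K_0(p^{n+1})}^{K_0(p^i)} = \Ind_{K_0(p^{n+1})}^{K_0(p)}$ and using that~$\Ind_{K_0(p^i)}^{K_0(p)}$ is exact gives an~$\Iw$-stable filtration of~$\pi_{n+1}(\chi)$ with~$p^{n+1-i}$ graded pieces, each of the shape~$\Ind_{K_0(p^i)}^{K_0(p)}(\pi_i(\chi\alpha^j))$; one then checks this induced module is again~$\pi_i(\chi\alpha^j)$ as an~$\Iw$-module (its dimension is~$p^{i-1}$ and it is uniserial with the right socle), and that the exponents~$j$ run through~$0, 1, 2, \ldots, p^{n+1-i}-1$ consecutively, matching the displayed list~$\pi_i(\chi) - \pi_i(\chi\alpha) - \cdots$.

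The main obstacle I expect is bookkeeping of the~$H$-action, i.e.\ getting the twisting exponents exactly right: one must verify that going up one step in the relevant augmentation filtration of~$\Iw_1 \cap K_0(p^i)$ modulo~$\Iw_1 \cap K_0(p^{n+1})$ twists the~$H$-character by precisely~$\alpha$ (and not~$\alpha^{-1}$ or some power), which comes down to how~$H = \fourmatrix{[a]}00{[d]}$ acts by conjugation on the relevant coordinate of~$\overline U$, and to keeping the normalization of~\cite{Morraexplicit} consistent with the conventions of the paper. A secondary point is to confirm that the induced module~$\Ind_{K_0(p^i)}^{K_0(p)}\pi_i(\chi\alpha^j)$ really is isomorphic to~$\pi_i(\chi\alpha^j)$ as an~$\Iw = K_0(p)$-representation and not merely an extension with the same Jordan--Hölder factors; here uniseriality (Lemma~\ref{soclefiltration}) plus a socle computation should suffice, since a uniserial module is pinned down by its socle filtration. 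Once the~$i = 1$ case and the transitivity/exactness formalism are in place, the rest is a clean induction.
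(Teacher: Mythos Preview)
Your underlying approach is the paper's: transitivity of induction plus exactness of the outer induction functor, with Morra's Proposition~1.6 supplying the socle filtration of the inner induction. The only difference is the intermediate subgroup. The paper factors through $K_0(p^n)$, so the inner induction $\Ind_{K_0(p^{n+1})}^{K_0(p^n)}(\chi)$ is $p$-dimensional with layers $\chi,\chi\alpha,\ldots,\chi\alpha^{p-1}$; after applying $\Ind_{K_0(p^n)}^{K_0(p)}$ one obtains the filtration with pieces $\pi_n(\chi),\pi_n(\chi\alpha),\ldots$, and the statement for general~$i$ then follows by induction on~$n$. Your version factors through $K_0(p^i)$ directly for each~$i$, which also works but is not really an induction on~$i$ (you never use the hypothesis).

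Watch your notation in the inductive step: the graded pieces of the socle filtration of $\Ind_{K_0(p^{n+1})}^{K_0(p^i)}(\chi)$ are the \emph{characters} $\chi\alpha^j$, not copies of the whole induction (and certainly not ``$\pi_{n+2-i}(\chi\alpha^j)$'', which is a $K_0(p)$-representation). After applying $\Ind_{K_0(p^i)}^{K_0(p)}$, each becomes $\Ind_{K_0(p^i)}^{K_0(p)}(\chi\alpha^j)=\pi_i(\chi\alpha^j)$ \emph{by definition}, so there is nothing further to check, and ``$\Ind_{K_0(p^i)}^{K_0(p)}(\pi_i(\chi\alpha^j))$'' is not what you mean.
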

\begin{proof}
By~\cite[Proposition~1.6]{Morraexplicit} the socle filtration of~$X = \Ind_{K_0(p^{n+1})}^{K_0(p^n)}(\chi)$ is
\[
\chi - \chi\alpha - \chi\alpha^2 - \cdots - \chi\alpha^{p-2} - \chi
\]
with~$p$ graded factors, in the sense that
\[
\soc^i(X)/\soc^{i-1}(X) \cong \chi\alpha^i.
\]
This implies by transitivity of induction and exactness of the functor $\Ind_{K_0(p^n)}^{K_0(p)}(-)$ that there is a filtration on~$\pi_{n+1}(\chi) \cong \Ind_{K_0(p^n)}^{K_0(p)}\Ind_{K_0(p^{n+1})}^{K_0(p^n)}(\chi)$ of the form
\[
\pi_{n}(\chi) - \pi_n(\chi\alpha) - \cdots \pi_n(\chi\alpha^{p-2}) - \pi_n(\chi).
\]
The lemma then follows by induction on~$n$.
\end{proof}

We next study the $B(\bZ_p)$-eigenspaces in~$\pi_{n+1}(\chi)$.
Recall that~$H$ is the Teichm\"uller lift of~$\bF_p^\times \times \bF_p^\times$ in the diagonal torus~$T$.
We regard $\bF_p^\times$ as a subgroup of~$H$ via the diagonal embedding.

\begin{lemma}\label{eigenvectors}
Let~$\chi, \psi: H \to k^\times$ be characters.
Then
\begin{equation}\label{IwahoriBorel}
\Hom_{K_0(p^{n+1})}(\psi, \Ind_{K_0(p^{n+1})}^{K_0(p)}(\chi)) = \Hom_{B(\bZ_p)}(\psi, \Ind_{K_0(p^{n+1})}^{K_0(p)}(\chi)),
\end{equation}
and this space has dimension~$n+1$ if~$\chi = \psi$.
\end{lemma}
\begin{proof}
Compare~\cite[Lemma~1]{Casselmanrestriction}.
To see that the two sides of~\eqref{IwahoriBorel} are equal it suffices to prove that every $(B(\bZ_p), \psi)$-eigenvector in~$\pi_{n+1}(\chi)$ is automatically a $(K_0(p^{n+1}), \psi)$-eigenvector.
This is a consequence of the fact that the normal subgroup
\[
K_{n+1} = \fourmatrix{1+p^{n+1}\bZ_p}{p^{n+1}\bZ_p}{p^{n+1}\bZ_p}{1+p^{n+1}\bZ_p} 
\]
of~$K_0(p)$ acts trivially on~$\pi_{n+1}(\chi)$, and that~$K_0(p^{n+1}) = K_{n+1}B(\bZ_p)$.
Now, by the formula
\[
\fourmatrix a b {up^i} d = \fourmatrix 1 0 0 {ua^{-1}} \fourmatrix 1 0 {p^i} 1 \fourmatrix a b 0 {u^{-1}(ad-bup^i)} 
\]
valid whenever $a, d, u \in \bZ_p^\times$ and $b \in \bZ_p$, we deduce that there is a disjoint double coset decomposition
\[
K_0(p) = \coprod_{i=1}^{n+1} K_0(p^{n+1}) \fourmatrix 1 0 {p^i} 1 K_0(p^{n+1}) = \coprod_{i=1}^{n+1} K_0(p^{n+1}) \fourmatrix 1 0 {p^i} 1 B(\bZ_p).
\]
This implies that the dimension of either side of~\eqref{IwahoriBorel} is at most~$n+1$, since each of these double cosets supports at most a one-dimensional space of $K_0(p^{n+1})$-eigenvectors, respectively $B(\bZ_p)$-eigenvectors, with eigencharacter~$\psi$.
When $\psi = \chi$, both spaces have dimension at least~$n+1$, hence equal to~$n+1$.
To see this, let~$\varphi_i \in \pi_i(\chi)$ be the function supported in~$K_0(p^i)$ taking value~$1$ at the identity. 
It is a $K_0(p^i)$-eigenvector with eigencharacter~$\chi$. 
Since $\varphi_i$ generates the $\Iw$-cosocle of~$\pi_i(\chi)$, by Lemma~\ref{cokernelvarphi}, it is not contained in any proper $\Iw$-subrepresentation of~$\pi_i(\chi)$.  
It follows that $\{ \varphi_i : 1 \leq i \leq n+1 \}$ is a linearly independent set of $K_0(p^{n+1})$-eigenvectors in~$\pi_{n+1}(\chi)$.
\end{proof}

\begin{lemma}\label{eigenvectorssemisimple}
Let~$\psi: H \to k^\times$ be a character and~$n \geq 1$ an integer.
Then $\Ind_{K_0(p^{n+1})}^{K_0(p^n)}(\chi)$ is a semisimple $K_0(p^{n+1})$-representation, isomorphic to
\[
\bigoplus_{i=0}^{p-1} \chi \alpha^i.
\]
\end{lemma}
\begin{proof}
Let~$K_1(p^n)$ be the pro-$p$ Sylow subgroup of~$K_0(p^n)$.
The Iwahori decomposition of~$K_0(p^n)$ implies that $K_0(p^n) = K_0(p^{n+1})K_1(p^n)$.
It follows from~\cite[Section~I.5.5]{Vignerasrepsbook} that
\[
\Ind_{K_0(p^{n+1})}^{K_0(p^n)}(\chi)|_{K_1(p^n)} \cong \Ind_{K_1(p^{n+1})}^{K_1(p^n)}(1).
\]
However, $K_1(p^{n+1})$ is a normal subgroup of~$K_1(p^n)$: this is because the bottom left entry of
\[
\fourmatrix a b {p^n c} d \fourmatrix x y {p^{n+1}z} w \fourmatrix d {-b} {-p^n c} a
\]
is
\[
p^ncd(x-w) + p^{n+1}zd^2 -p^{2n}yc^2
\]
which is divisible by~$p^{n+1}$ when~$x \equiv w$ mod~$p\bZ_p$.
It follows that $\Ind_{K_0(p^{n+1})}^{K_0(p^n)}(\chi)$ is trivial as a representation of~$K_1(p^{n+1})$.
Hence it is semisimple as a representation of~$K_0(p^{n+1})$.
The rest of the lemma follows from Lemma~\ref{psfiltration}. 
We point out that the discussion so far also implies that $\Ind_{K_0(p^{n+1})}^{K_0(p^n)}(\chi)$ is already $K_1(p^n)$-uniserial, since $\Ind_{K_1(p^{n+1})}^{K_1(p^n)}(1)$ has the same lattice of submodules as the regular representation of a finite group of order~$p$, which is uniserial in characteristic~$p$.
\end{proof}

\begin{lemma}\label{eigenvectorsII}
Let~$\psi, \chi: H \to k^\times$ be distinct characters such that~$\psi|_{\bF_p^\times} = \chi|_{\bF_p^\times}$. Then
\begin{equation}\label{IwahoriBorelII}
\Hom_{K_0(p)}(\pi_{n+1}(\psi), \pi_{n+1}(\chi)) \cong \Hom_{K_0(p^{n+1})}(\psi, \pi_{n+1}(\chi)) = \Hom_{B(\bZ_p)}(\psi, \pi_{n+1}(\chi))
\end{equation}
and the dimension of these spaces is~$n$.
\end{lemma}
\begin{proof}
The three spaces in~\eqref{IwahoriBorelII} are isomorphic, by Lemma~\ref{eigenvectors} and Frobenius reciprocity.
We have seen in the proof of Lemma~\ref{eigenvectors} that the map
\[
\pi_{n+1}(\chi) \to k^{n+1}, f \mapsto \left (f(1), f\fourmatrix 1 0 p 1, \ldots, f\fourmatrix 1 0 {p^n} 1 \right )
\]
yields a linear injection of the $(K_0(p^{n+1}), \psi)$-eigenspace of~$\pi_{n+1}(\chi)$ into~$k^{n+1}$.
Hence the dimension of~\eqref{IwahoriBorelII} is at most~$n+1$.
However, if~$x \in K_0(p^{n+1})$ and~$f \in \pi_{n+1}(\chi)$ is a $(K_0(p^{n+1}), \psi)$-eigenvector then
\[
f(x) = \chi(x)f(1) = \psi(x)f(1)
\]
and so~$f(1) = 0$ since~$\chi \ne \psi$.
So the dimension of~\eqref{IwahoriBorelII} is at most~$n$.
Now we conclude the proof by induction on~$n$.
The case~$n = 1$ is a direct consequence of Lemma~\ref{eigenvectorssemisimple}, since $\psi|_{\bF_p^\times} = \chi|_{\bF_p^\times}$ if and only if~$\psi = \chi \alpha^i$ for some~$0 \leq i \leq p-1$, and~$i \neq 0, p-1$ as~$\chi \ne \psi$.
Assume the lemma true for~$n$ and recall that the unique $p^{n-1}$-dimensional $K_0(p)$-quotient of~$\pi_{n+1}(\psi)$ is isomorphic to~$\pi_n(\psi)$, and the unique $p^{n-1}$-dimensional $K_0(p)$-subspace of~$\pi_{n+1}(\chi)$ is isomorphic to~$\pi_n(\chi)$.
Then by inductive assumption the cokernel of the inclusion
\[
\Hom_{K_0(p)}(\pi_n(\psi), \pi_n(\chi)) \subset \Hom_{K_0(p)}(\pi_{n+1}(\psi), \pi_{n+1}(\chi))
\]
has dimension at most one, since the left-hand side has dimension~$n-1$ and the right-hand side has dimension at most~$n$.
Now we apply Lemma~\ref{eigenvectorssemisimple} again and choose a nonzero morphism
\[
\tau: \Ind_{K_0(p^{n+1})}^{K_0(p^n)}(\psi) \to \Ind_{K_0(p^{n+1})}^{K_0(p^n)}(\chi).
\]
To conclude, it suffices to prove that the image of $\Ind_{K_0(p^n)}^{K_0(p)}(\tau)$ is not contained in~$\pi_n(\chi)$. 
But this is true because the image of~$\tau$ strictly contains the socle of~$\Ind_{K_0(p^{n+1})}^{K_0(p^n)}(\chi)$, which is isomorphic to~$\chi$; and now, since induction is exact, it follows that the image of
\[
\Ind_{K_0(p^{n})}^{K_0(p)}(\tau) : \pi_{n+1}(\psi) \to \pi_{n+1}(\chi)
\]
strictly contains the submodule $\pi_n(\chi)$ of $\pi_{n+1}(\chi)$.
\end{proof}

\begin{lemma}\label{cokernel}
Let~$\chi_1, \chi_2: H \to k^\times$ be characters and let $\lambda: \pi_{n+1}(\chi_1) \to \pi_{n+1}(\chi_2)$ be an $\Iw$-linear morphism. 
Then either~$\chi_1 = \chi_2$ and~$\lambda$ is an isomorphism, or $\dim \coker(\lambda) \geq p^{n-1}$.
Furthermore, if~$0 \leq i \leq p-2$ and $\chi_1 =\chi_2\alpha^i$ then
\begin{equation}\label{computationtoprove}
\dim \operatorname{image}(\lambda) \in \{(i+1)p^j: 0 \leq j \leq n-1\}
\end{equation}
whenever~$\lambda \ne 0$ is not an isomorphism.
\end{lemma}
\begin{proof}
If~$\lambda$ is an isomorphism, then 
\[
\soc_{K_0(p)}\pi_{n+1}(\chi_1) \cong \soc_{K_0(p)}\pi_{n+1}(\chi_2)
\]
and so~$\chi_1 = \chi_2$.
Similarly, if $\chi_1 |_{\bF_p^\times} \ne \chi_2|_{\bF_p^\times}$ then~$\lambda = 0$ since~$\pi_{n+1}(\chi_1)$ and~$\pi_{n+1}(\chi_2)$ have different central character.
So it suffices to prove that if $\chi_1 = \chi_2\alpha^i$ and~$\lambda \ne 0$ is not an isomorphism then~\eqref{computationtoprove} is true: in fact, \eqref{computationtoprove} implies that
\[
\dim \coker(\lambda) = p^n - \dim \operatorname{image}(\lambda) \geq p^n - (p-1)p^{n-1} = p^{n-1}.
\]
We do this by induction on~$n$, noting that the case~$n = 1$ is true by Lemma~\ref{psfiltration}.
Let~$v \in \pi_{n+1}(\chi_2)$ be a $(K_0(p^{n+1}), \chi_1)$-eigenvector. 
If~$v$ is a $K_0(p^{n})$-eigenvector, then $v \in \pi_n(\chi_2)$ since $\pi_{n}(\chi_2) = \pi_{n+1}(\chi_2)^{\lbar U(p^n\bZ_p)}$.
Hence in this case we are done by induction.
So we can assume that~$v$ is not a $K_0(p^{n})$-eigenvector.
If~$\chi_1 = \chi_2$ then Lemma~\ref{eigenvectors} implies that~$v$ generates~$\pi_{n+1}(\chi_2)$, which we are assuming not to be the case since~$\lambda$ is not an isomorphism. 
So there remains to show that if $\chi_1 \ne \chi_2$ and $v$ is a $(K_0(p^{n+1}), \chi_1)$-eigenvector in~$\pi_{n+1}(\chi_2) \setminus \pi_n(\chi_2)$ then~$v$ generates an $(i+1)p^{n-1}$-dimensional $\Iw$-submodule of~$\pi_{n+1}(\chi_2)$.
By Lemma~\ref{eigenvectorssemisimple} there exists a nonzero map
\[
\tau: \Ind_{K_0(p^{n+1})}^{K_0(p^n)}(\chi_1) \to \Ind_{K_0(p^{n+1})}^{K_0(p^n)}(\chi_2)
\]
whose image has dimension~$i+1$ by Lemma~\ref{psfiltration}.
Since induction is an exact functor, this implies that there exists a $(K_0(p^{n+1}), \chi_1)$-eigenvector~$v_0 \in \pi_{n+1}(\chi_2)$ which generates an $(i+1)p^{n-1}$-dimensional $\Iw$-submodule of~$\pi_{n+1}(\chi_2)$.
Since~$i \ne 0$, this implies that~$v_0$ is not a $K_0(p^{n})$-eigenvector, because otherwise~$v_0$ would be contained in~$\pi_{n}(\chi_2) = \pi_{n+1}(\chi_2)^{\lbar U(p^n\bZ_p)}$; but~$\pi_n(\chi_2)$ has dimension~$p^{n-1} < (i+1)p^{n-1}$.
Hence Lemma~\ref{eigenvectorsII} implies that there exist nonzero $\mu, \mu_0 \in k^\times$ such that~$\mu v- \mu_0v_0 \in \pi_n(\chi_2)$.
On the other hand, since~$\pi_{n+1}(\chi_2)$ is uniserial, we know that
\[
\pi_n(\chi_2) \subset \langle \Iw \cdot v \rangle \cap \langle \Iw \cdot v_0 \rangle.
\]
Putting these together we find that $\langle \Iw \cdot v \rangle = \langle \Iw \cdot v_0 \rangle$, which implies that $\dim \langle \Iw \cdot v \rangle = (i+1)p^{n-1}$ and concludes the proof of the lemma.
\end{proof}

\begin{corollary}\label{specificeigenvector}
Let~$\chi_1, \chi_2 : H \to k^\times$ be characters and assume that $0 \leq i \leq p-2$ and $\chi_1 = \chi_2\alpha^i$.
Let~$v \in \pi_\infty(\chi_2)$ be a $(K_0(p^{n+1}), \chi_1)$-eigenvector but not a $K_0(p^n)$-eigenvector, where~$n \geq 1$.
Then $\dim\langle \Iw \cdot v \rangle$ equals~$(i+1)p^{n-1}$ if~$i \ne 0$ and~$(i+1)p^n$ if~$i = 0$. 
\end{corollary}
\begin{proof}
The Iwahori decomposition for $K_0(p^{n})$ implies that~$v$ is fixed by~$\lbar U(p^{n+1}\bZ_p)$ but not by~$\lbar U(p^n\bZ_p)$.
By Proposition~\ref{Morrathesis1}, the dimension of the space of $\lbar U(p^{i+1}\bZ_p)$-fixed vectors in~$\pi_\infty(\chi_2)$ is equal to~$p^i$.
Since~$\pi_\infty(\chi_2)$ is $\lbar U(p\bZ_p)$-uniserial, it follows that $p^{n-1} < \dim\langle \Iw \cdot v \rangle \leq p^n$. 
Now the corollary follows from Lemma~\ref{cokernel}.
\end{proof}

Next we consider the induction of characters of~$B(\bZ_p)$ to~$\Iw$. 
The resulting $\Iw$-modules were studied in~\cite{Morraexplicit}, but since we will need a slightly different perspective on these results we will reprove some of them in what follows.
Recall from~(\ref{definitiontwist}) that $\pi^+$ denotes the twist of an $\Iw$-representation~$\pi$ by $\Pi \in N$.

\begin{defn}
Let~$\chi : H \to k^\times$ be a character.
We define $\pi_\infty(\chi) = \Ind_{K_0(p^\infty)}^{K_0(p)}(\chi)$.
\end{defn}

\begin{pp}\label{Morrathesis1}
The representation $\pi_\infty(\chi)|_{\lbar U(p\bZ_p)}$ is an injective envelope of the trivial representation of~$\lbar U(p\bZ_p)$, and so $\pi_\infty(\chi)$ is both $\lbar U(p\bZ_p)$-uniserial and $\Iw$-uniserial.
Its $\Iw$-socle filtration satisfies
\[
\soc^{k+1}_\Iw(\pi_\infty(\chi))/\soc^k_\Iw(\pi_\infty(\chi)) \cong \alpha \otimes \left ( \soc^k_\Iw(\pi_\infty(\chi))/\soc_\Iw^{k-1}(\pi_\infty(\chi)) \right ).
\]
\end{pp}
\begin{proof}
The claims about the $\Iw$-action all follow from~\cite[Proposition~1.6]{Morraexplicit} and the isomorphism $\pi_\infty(\chi) \cong \varinjlim_n \pi_n(\chi)$.
By the Iwahori decomposition, we know that $\Iw = B(\bZ_p)\lbar U(p\bZ_p)$, and so by~\cite[Section~I.5.5]{Vignerasrepsbook} we have
\[
\pi_\infty(\chi)|_{\lbar U(p\bZ_p)} \cong \Ind_{1}^{\lbar U(p\bZ_p)}(1).
\]
By Frobenius reciprocity, this representation is $\lbar U(p\bZ_p)$-injective, and its $\lbar U(p\bZ_p)$-socle is one-dimensional.
Hence to prove that it is uniserial it suffices to prove that $\pi_\infty(\chi)/\soc_{\lbar U(\bZ_p)}\pi_\infty(\chi) \cong \pi_\infty(\chi)$ as $\lbar U(\bZ_p)$-representations, which follows from~\cite[Proposition~5.9]{Paskunasextensions}.
Alternatively, notice that the Pontrjagin dual of $\Ind_{1}^{\lbar U(p\bZ_p)}(1)$ is free of rank one over the Iwasawa algebra of~$\lbar U(p\bZ_p) \cong \bZ_p$, which is isomorphic to a power series ring in one variable over~$k$, and so $\Ind_{1}^{\lbar U(p\bZ_p)}(1)$ is uniserial since the radical filtration of~$k[[X]]$ as a module over itself has irreducible graded pieces.
\end{proof}

\begin{lemma}
There is a split short exact sequence of $\Iw$-representations
\begin{equation}\label{Iwahorisplit}
0 \to \pi_{\infty}^+(\chi) \to \Ind_{B(\bZ_p)}^K(\chi) \xrightarrow{\res} \pi_\infty(\chi) \to 0
\end{equation}
where the third arrow denotes restriction of functions from~$K$ to~$\Iw$.
\end{lemma}
\begin{proof}
Recall the decomposition $G = B\Iw \coprod Bs\Iw$, which implies that $K = \Iw \coprod B(\bZ_p) s \Iw$. 
By the Mackey decomposition~\cite[Section~I.5.5]{Vignerasrepsbook} the sequence~\eqref{Iwahorisplit} is split, and the kernel of restriction is isomorphic to
\[
\Ind_{s^{-1}B(\bZ_p)s\cap \Iw}^\Iw(\ad(s)^*\chi).
\] 
Since $s^{-1}B(\bZ_p)s \cap \Iw = \Pi^{-1} B(\bZ_p) \Pi$, and $\ad(s)^*\chi = \ad(\Pi)^*\chi$ as characters of this group, it follows from Lemma~\ref{twistinduction} that the kernel of restriction is isomorphic to $\pi_\infty^+(\chi)$ (notice that~$\Pi^2$ is central in~$\GL_2(\bQ_p)$).
\end{proof}

\begin{pp}\label{pstwist}
Let $\chi_1, \chi_2 : H \to k^\times$ be characters. Then 
\[
\Hom_{\Iw}(\pi_{\infty}(\chi_1), \pi_{\infty}^+(\chi_2)) = 0 \text{ and } \Hom_{\Iw}(\pi_{\infty}^+(\chi_1), \pi_{\infty}(\chi_2)) = 0. 
\]
\end{pp}
\begin{proof}
Let $\lambda: \pi_{\infty}(\chi_1) \to \pi_{\infty}^+(\chi_2)$ be a nonzero $\Iw$-linear homomorphism. By Lemma~\ref{uniserial}, it is surjective. But then the socle filtration of~$\pi_{\infty}^+(\chi_2)$ has the same property
\[
\soc^{k+1}(\pi_{\infty}^+(\chi_2))/\soc^k(\pi_{\infty}^+(\chi_2)) \cong \alpha \otimes \left ( \soc^k(\pi_{\infty}^+(\chi_2))/\soc^{k-1}(\pi_{\infty}^+(\chi_2)) \right )
\]
as that of~$\pi_{\infty}(\chi_2)$, contradicting the fact that $\pi_{\infty}^+(\chi_2) = \ad(\Pi)^* \pi_{\infty}(\chi_2)$ and $\ad(\Pi)^*\alpha = \alpha^{-1} \not = \alpha$ (since $p > 3$). 
The claim that $\Hom_{\Iw}(\pi_{\infty}^+(\chi_1), \pi_{\infty}(\chi_2)) = 0$ follows upon twisting by~$\Pi$.
\end{proof}

Our first main result in this section is the following theorem. 
The extra generality in considering quotients by proper subspaces will be useful in Section~\ref{residuallyreducible}.

\begin{thm}\label{psIwahori1}
Let $\chi_1, \chi_2: H \to k^\times$ be characters and fix $\Iw$-stable proper subspaces $X_i \subset \pi_{\infty}(\chi_i)$. Let $\lambda: \pi_{\infty}(\chi_1)/X_1 \to \pi_{\infty}(\chi_2) / X_2$ be an $\Iw$-linear morphism. If $\lambda \not = 0$, then $\chi_1 = \chi_2$, $X_1 \subseteq X_2$ and~$\lambda$ is a scalar multiple of the canonical surjection $\pi_{\infty}(\chi)/X_1 \to \pi_{\infty}(\chi)/X_2$.
\end{thm}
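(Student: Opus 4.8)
The plan is to first use the uniserial structure to reduce the statement to a question about isomorphism classes. By Theorem~\ref{Morrathesis1} and Lemma~\ref{soclefiltration} every $\Iw$-submodule of $\pi_\infty(\chi_i)$ is one of the $\soc^j(\pi_\infty(\chi_i))$ (with the convention $\soc^{-1}=0$), so $X_i=\soc^{n_i}(\pi_\infty(\chi_i))$ for some $n_i\geq -1$. A quotient of a uniserial module is uniserial, and $\pi_\infty(\chi_1)/X_1$ is infinite-dimensional, so Lemma~\ref{uniserial} shows a nonzero $\alpha$ is surjective, with $\ker\alpha=\soc^m(\pi_\infty(\chi_1)/X_1)$; hence $\alpha$ factors as the canonical surjection $\pi_\infty(\chi_1)/X_1\to\pi_\infty(\chi_1)/\soc^{a}$ followed by an isomorphism $\pi_\infty(\chi_1)/\soc^{a}\xrightarrow{\ \sim\ }\pi_\infty(\chi_2)/\soc^{n_2}$, where $a\geq n_1$. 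Thus everything reduces to: (i) an $\Iw$-isomorphism $\pi_\infty(\chi_1)/\soc^{a}\cong\pi_\infty(\chi_2)/\soc^{b}$ forces $\chi_1=\chi_2$ and $a=b$; and (ii) $\End_\Iw(\pi_\infty(\chi)/\soc^{a})=k$. Granting these, $a=n_2$ and $\chi_1=\chi_2$ follow, whence $n_1\leq n_2$ (i.e.\ $X_1\subseteq X_2$), $\ker\alpha=X_2/X_1$, and $\alpha$ is a scalar multiple of the canonical surjection; moreover (ii) itself follows from (i), since a non-scalar endomorphism of $\pi_\infty(\chi)/\soc^{a}$ would, after subtracting its action on the $1$-dimensional socle, be a surjection with nonzero kernel, producing an isomorphism $\pi_\infty(\chi)/\soc^{a+1+m'}\cong\pi_\infty(\chi)/\soc^{a}$ contradicting (i).

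The main tool is the identity $\Hom_\Iw(\sigma,\pi_\infty(\chi_2))=\Hom_\Iw(\sigma,\Ind_{B(\bZ_p)}^{\Iw}\chi_2)=\Hom_{B(\bZ_p)}(\sigma|_{B(\bZ_p)},\chi_2)$ given by Frobenius reciprocity for smooth induction from the closed subgroup $B(\bZ_p)=K_0(p^\infty)$. Since $\chi_2$ is trivial on the pro-$p$ group $U(\bZ_p)$, such maps factor through the $U(\bZ_p)$-coinvariants, and the crucial computation is that $(\pi_\infty(\chi))_{U(\bZ_p)}$ is one-dimensional, with $T(\bZ_p)$ acting through $\chi$. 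To see this, realize $\pi_\infty(\chi)=\Ind_{B(\bZ_p)}^{\Iw}\chi$ as smooth functions on the disc $\Iw/B(\bZ_p)\cong p\bZ_p$, on which $U(\bZ_p)$ acts fixing the base point, with every other orbit clopen and a free $\bZ_p$-orbit; the functions vanishing at the base point are a sum (over finitely many orbits at a time) of $U(\bZ_p)$-submodules isomorphic to $C^\infty(\bZ_p,k)$ with the translation action, whose coinvariants vanish in characteristic $p$, so the coinvariant quotient is exactly evaluation at the base point. It follows that $\Hom_\Iw(\pi_\infty(\chi_1),\pi_\infty(\chi_2))$ is $k$ if $\chi_1=\chi_2$ and $0$ otherwise; moreover, since for $a\geq 0$ the finite-dimensional submodule $\soc^{a}$ contains the socle (the constant functions), on which base-point evaluation is nonzero, $\soc^{a}$ surjects onto $(\pi_\infty(\chi))_{U(\bZ_p)}$, so $(\pi_\infty(\chi)/\soc^{a})_{U(\bZ_p)}=0$ and $\Hom_\Iw(\pi_\infty(\chi_1)/\soc^{a},\pi_\infty(\chi_2))=0$. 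This settles the case $X_2=0$ entirely: a proper quotient of $\pi_\infty(\chi)$ has vanishing $U(\bZ_p)$-coinvariants and so cannot be isomorphic to $\pi_\infty(\chi)$; combined with $\End_\Iw(\pi_\infty(\chi))=k$, any nonzero $\alpha$ with $X_2=0$ forces $\chi_1=\chi_2$, $X_1=0$, and $\alpha$ a scalar.

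For $X_2\neq 0$ one applies $\Hom_\Iw(\pi_\infty(\chi_1)/X_1,-)$ to $0\to X_2\to\pi_\infty(\chi_2)\to\pi_\infty(\chi_2)/X_2\to 0$; the terms $\Hom_\Iw(\pi_\infty(\chi_1)/X_1,X_2)$ vanish by a dimension count, and $\Hom_\Iw(\pi_\infty(\chi_1)/X_1,\pi_\infty(\chi_2))$ was just computed, so one must control the image of the connecting map in $\Ext^1_\Iw(\pi_\infty(\chi_1)/X_1,X_2)$ — equivalently, show that the only homomorphisms not already accounted for are the canonical surjections. This is exactly statement (i), and it is the main obstacle: the $U(\bZ_p)$-coinvariants distinguish $\pi_\infty(\chi)$ from all its proper quotients but give no information among the proper quotients $\pi_\infty(\chi)/\soc^{a}$ with $a\geq 0$, which all have the same (periodic, of period $p-1$) sequence of socle layers $\chi\alpha^{a+1},\chi\alpha^{a+2},\dots$.

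To resolve (i) I would descend to finite level. Writing $\pi_n(\chi)=\cInd_{K_0(p^n)}^{K_0(p)}\chi=\soc^{p^{n-1}-1}(\pi_\infty(\chi))$, these are exactly the submodules of $\pi_\infty(\chi)$ that are induced from $K_0(p^n)$, and they are characterized among subquotients by having dimension a power of $p$. One restricts a hypothetical isomorphism (or a hypothetical non-scalar map) to these submodules and to their subquotients, using Lemma~\ref{eigenvectors} to locate eigenvectors, Lemma~\ref{cokernel} to bound cokernels of the resulting finite-level maps, and Morra's socle filtration; the key numerical point is that the socle layers repeat with period $p-1$ while the distinguished submodules $\pi_n$ jump in dimension by powers of $p$, and for large $n$ the quantity $p^{n-1}+(n_2-a)$ can be a power of $p$ only when $a=n_2$. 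Carrying this bookkeeping through — matching the positions of the ``induced'' subquotients on the two sides under the isomorphism — should force $a=b$ and, via the socle comparison $\chi_1\alpha^{a+1}=\chi_2\alpha^{b+1}$, also $\chi_1=\chi_2$, after which (ii) and the whole theorem follow as above. The delicate step, and the one I expect to take real work, is precisely this finite-level matching argument, since the naive invariants ($U(\bZ_p)$-coinvariants, socle layers, dimensions of induced subquotients) are all insensitive to the relevant shift.
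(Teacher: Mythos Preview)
Your reduction to statements (i) and (ii) is correct and clean, and the reduction of (ii) to (i) is fine. The computation of $U(\bZ_p)$-coinvariants is also correct and gives an elegant alternative proof of the case $X_2=0$ (the paper's Corollary~\ref{cor:psIwahori}): the orbits of $U(\bZ_p)$ on $\bZ_p\setminus\{0\}$ are indeed clopen (the orbit of $c$ with $v(c)=m$ is $c+p^{2m+1}\bZ_p$) and free, and one checks that the transition maps in $\varinjlim_n\,C(\bZ/p^n,k)_{\bZ/p^n}$ are zero in characteristic~$p$.

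However, the heart of the theorem is (i), and here your sketch has a genuine gap. Your ``key numerical point'' that $p^{n-1}+(n_2-a)$ should be a power of~$p$ presupposes that under a hypothetical isomorphism $\pi_\infty(\chi_1)/\soc^a\cong\pi_\infty(\chi_2)/\soc^b$ the image of a distinguished submodule $\pi_n(\chi_1)/\soc^a$ is again of the form $\pi_m(\chi_2)/\soc^b$. There is no reason for this: by uniseriality the image is just the submodule of the correct dimension, which is some layer of the socle filtration, not a priori one of the $\pi_m$. So the numerical constraint you isolate never arises, and the argument as written does not close.

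The paper's device to get around this is as follows. Given $\alpha$ with $\dim X_1\geq\dim X_2$, one does \emph{not} try to match induced submodules directly. Instead, for each large~$n$ one forms the composition
\[
\pi_{n+1}(\chi_1)\;\twoheadrightarrow\;\pi_{n+1}(\chi_1)/X_1\;\xrightarrow{\alpha}\;\pi_{n+1}(\chi_2)/X_2\;\twoheadrightarrow\;\pi_n(\chi_2),
\]
the last map being the surjection onto the $p^{n-1}$-dimensional quotient (which exists since $\dim X_2$ is fixed while $p^n-p^{n-1}\to\infty$). Because the target has dimension $p^{n-1}$, a dimension count shows the composition kills the unique $(p^n-p^{n-1})$-dimensional submodule and hence factors through $\pi_n(\chi_1)$. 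This produces honest maps $\alpha_n:\pi_n(\chi_1)\to\pi_n(\chi_2)$ whose cokernel has dimension $\dim\ker\alpha+\dim X_1-\dim X_2$, a constant. Now Lemma~\ref{cokernel} bites: if $\chi_1\neq\chi_2$ then $\alpha_n=0$, and if $\chi_1=\chi_2$ but $\alpha_n$ is not an isomorphism then $\dim\coker\alpha_n\geq p^{n-1}(p-1)\to\infty$. Either way one gets a contradiction unless $\chi_1=\chi_2$, $\dim X_1=\dim X_2$, and $\alpha$ is injective (hence an isomorphism). This is exactly your statement~(i), but obtained by manufacturing finite-level maps with controlled cokernel rather than by matching distinguished subquotients. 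Once you have this trick, (ii) and the rest follow as you outlined.
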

\begin{proof}
Assume first $\lambda \not = 0$ and $\dim(X_1) \geq \dim(X_2)$. 
It follows that for all~$n$ such that~$p^n > \dim(X_1)$, the dimension of~$\pi_{n+1}(\chi_1)/X_1$ is smaller than the dimension of~$\pi_{n+1}(\chi_2)/X_2$.
Hence for all~$n$ large enough we have $\lambda(\pi_{n+1}(\chi_1)/X_1) \subseteq \pi_{n+1}(\chi_2)/X_2$, because the dimension of the left-hand side is smaller than the dimension of the right-hand side, and the representation~$\pi_{\infty}(\chi_2)/X_2$ is uniserial. 
On the other hand, the dimension of~$\pi_{n+1}(\chi_1)/X_1$ tends to infinity with~$n$, hence if~$n$ is large enough we have $\ker(\lambda) \subset \pi_{n+1}(\chi_1)/X_1$.
Hence
\[
\dim \coker(\lambda: \pi_{n+1}(\chi_1)/X_1 \to \pi_{n+1}(\chi_2)/X_2) = \dim \ker(\lambda) + \dim(X_1) - \dim(X_2).
\]
For any~$n$ large enough, the surjection of $\pi_{n+1}(\chi_2)$ onto~$\pi_n(\chi_2)$ induces a surjection
\[
\pi_{n+1}(\chi_2)/X_2 \to \pi_n(\chi_2)
\]
since the dimension of~$X_2$ is fixed, whereas the dimension of~$K = \ker(\pi_{n+1}(\chi_2) \to \pi_n(\chi_2))$ tends to infinity with~$n$ (it is equal to $p^n - p^{n-1} = p^{n-1}(p-1)$).
Consider the composition
\begin{equation}\label{composition}
\pi_{n+1}(\chi_1) \to \pi_{n+1}(\chi_1)/X_1 \xrightarrow{\lambda} \pi_{n+1}(\chi_2)/X_2 \to \pi_{n}(\chi_2).
\end{equation}
If~$p^{n-1} > \dim \ker(\lambda) + \dim(X_1) - \dim(X_2)$ then $\image(\lambda: \pi_{n+1}(\chi_1)/X_1 \to \pi_{n+1}(\chi_2)/X_2)$ contains~$K/X_2$, since its dimension~$p^n-\dim(X_1) - \dim \ker \lambda$ is larger than~$\dim (K/X_2) = p^n - p^{n-1}-\dim(X_2)$.
Hence, for~$n$ large enough, the cokernel of the composition~\eqref{composition} has dimension~$\dim \ker(\lambda) + \dim(X_1) - \dim(X_2)$.
Furthermore, it factors through the $p^{n-1}$-dimensional quotient of~$\pi_{n+1}(\chi_1)$, which is isomorphic to~$\pi_{n}(\chi_1)$.

We conclude that for every~$n$ large enough there is a morphism $\lambda_n : \pi_n(\chi_1) \to \pi_n(\chi_2)$ whose cokernel has dimension~$\dim \ker(\lambda) + \dim(X_1) - \dim(X_2)$, which is independent of~$n$.
Taking~$n$ large enough, this implies by Lemma~\ref{cokernel} that $\chi_1 = \chi_2$, $\dim(X_1) = \dim(X_2)$ and~$\lambda$ is injective. 
By Lemma~\ref{uniserial}, this implies that~$\lambda$ is an isomorphism.
Hence without loss of generality we can assume~$\chi_1 = \chi_2 = \chi$, $X_1 = X_2 = X$, and there remains to prove that~$\lambda$ is a scalar.
Looking at the induced map on the~$\Iw$-socle, we see that there exists a scalar $x \in k$ such that
\[
\lambda - x : \pi_{\infty}(\chi)/X \to \pi_{\infty}(\chi)/X
\]
has nontrivial kernel. 
In fact, $\soc_\Iw(\pi_\infty(\chi)/X)$ is one-dimensional, hence there exists a scalar~$x$ such that the restriction of~$\lambda$ to $\soc_\Iw(\pi_\infty(\chi)/X)$ coincides with multiplication by~$x$.
This implies that~$\lambda - x$ is not injective.
But then~$\lambda = x$, since $\lambda - x$ is either zero or an isomorphism, by what we have just proved.

Now assume that~$\dim(X_1) < \dim(X_2)$ and $\lambda \not = 0$. 
Consider the injection $\overline{\lambda}: \pi_{\infty}(\chi_1) / \ker(\lambda) \to \pi_{ \infty}(\chi_2)/X_2$.
Since~$\overline{\lambda}$ is injective, it is an isomorphism by Lemma~\ref{uniserial}. 
If $\dim \ker(\lambda) \geq \dim(X_2)$, the case we have just treated implies that $\chi_1 = \chi_2$ and $\ker(\lambda) = X_2$, and furthermore~$\overline{\lambda}$ is a scalar, which implies that~$\lambda$ is a multiple of the canonical surjection. 
On the other hand, if~$\dim \ker(\lambda) < \dim(X_2)$ then the inverse $\pi_{\infty}(\chi_2)/X_2 \to \pi_{\infty}(\chi_1)/\ker(\lambda)$ is of the type we have just treated.
This implies that $\chi_1 = \chi_2$ and $X_2 = \ker(\lambda)$, which is a contradiction.
\end{proof}

\begin{corollary}\label{cor:psIwahori}
The space $\Hom_{\Iw}(\pi_{\infty}(\chi_1), \pi_{\infty}(\chi_2))$ is one-dimensional if $\chi_1 = \chi_2$, and vanishes otherwise.
\end{corollary}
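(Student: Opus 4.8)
The plan is to obtain this as the special case $X_1 = X_2 = 0$ of Theorem~\ref{psIwahori1}. The only point to check before invoking that theorem is that the zero subspace is an admissible choice of ``proper $\Iw$-stable subspace'' of each $\pi_{\infty}(\chi_i)$; this holds simply because $\pi_{\infty}(\chi_i) \ne 0$, so $0 \subsetneq \pi_{\infty}(\chi_i)$.

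With that observation in place I would argue as follows. Suppose $\chi_1 \ne \chi_2$. Applying Theorem~\ref{psIwahori1} with $X_1 = X_2 = 0$, any $\Iw$-linear morphism $\alpha \colon \pi_{\infty}(\chi_1) \to \pi_{\infty}(\chi_2)$ which is nonzero would force $\chi_1 = \chi_2$, a contradiction; hence $\Hom_{\Iw}(\pi_{\infty}(\chi_1), \pi_{\infty}(\chi_2)) = 0$. Now suppose $\chi_1 = \chi_2 = \chi$. Again by Theorem~\ref{psIwahori1} (with $X_1 = X_2 = 0$), every nonzero $\Iw$-linear endomorphism of $\pi_{\infty}(\chi)$ is a scalar multiple of the canonical surjection $\pi_{\infty}(\chi)/0 \to \pi_{\infty}(\chi)/0$, that is, of the identity map. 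Therefore $\Hom_{\Iw}(\pi_{\infty}(\chi), \pi_{\infty}(\chi))$ is spanned by the identity, and since the identity is nonzero this space is exactly one-dimensional.

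Since Theorem~\ref{psIwahori1} is already available, there is no genuine obstacle here; the statement is a bookkeeping consequence of the more general result, and the argument above is essentially complete. If one preferred a self-contained proof not routed through the quotient formulation, one could instead run the dimension-counting argument of Theorem~\ref{psIwahori1} directly: a nonzero $\alpha$ is surjective by Lemma~\ref{uniserial}, hence restricts for large $n$ to maps $\pi_{n}(\chi_1) \to \pi_{n}(\chi_2)$ with cokernel of bounded dimension, which by Lemma~\ref{cokernel} forces $\chi_1 = \chi_2$ and eventual isomorphism, and then the cosocle computation pins $\alpha$ down to a scalar. The first route is shorter and is the one I would write up.
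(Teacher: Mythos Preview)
Your proposal is correct and matches the paper's own proof, which simply says the corollary follows immediately from Theorem~\ref{psIwahori1} with $X_1 = X_2 = 0$. Your added remarks (checking that $0$ is a proper subspace, and sketching the alternative direct argument) are fine but unnecessary.
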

\begin{proof}
Follows immediately from Theorem~\ref{psIwahori1} for $X_1 = 0$, $X_2 = 0$.
\end{proof}

\begin{corollary}\label{cor:psmorphisms}
Let~$\pi_1, \pi_2$ be generic principal series representations of~$k[\GL_2(\bQ_p)]$ of central character~$\zeta$. 
If~$\pi_1$ and~$\pi_2$ have nonisomorphic $K$-socle, then $\Hom_{\Iw}(\pi_1, \pi_2) = 0$. 
Otherwise, $\dim_k \Hom_{KZ}(\pi_1, \pi_2) = \dim_k \Hom_N(\pi_1, \pi_2) = 1$.
\end{corollary}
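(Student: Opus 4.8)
The plan is to reduce everything to the computations already made for $\pi_\infty(\chi)$ and $\pi_\infty^+(\chi)$. First I would use that a generic principal series $\pi_i$ is an irreducible parabolic induction $\Ind_B^G(\psi_i)$, so that by the Iwasawa decomposition $G=BK$ one has $\pi_i|_K\cong\Ind_{B(\bZ_p)}^K(\chi_i)$ for the (automatically generic) character $\chi_i$ of $H=T(\bF_p)$ coming from $\psi_i|_{T(\bZ_p)}$; applying Theorem~\ref{Morrathesis1} to $\Ind_{B(\bZ_p)}^K(\chi_i)|_{\Iw}$ then yields $\pi_i|_{\Iw}\cong\pi_\infty(\chi_i)\oplus\pi_\infty^+(\chi_i)$. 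I would also record that $\soc_K(\pi_i)$ is the unique Serre weight whose $U(\bF_p)$–coinvariants equal $\chi_i$, so that $\chi_i\mapsto\soc_K(\pi_i)$ is a bijection on generic data; hence ``$\pi_1,\pi_2$ have isomorphic $K$–socle'' is equivalent to ``$\chi_1=\chi_2$''.

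Using the splitting I would expand $\Hom_{\Iw}(\pi_1,\pi_2)$ into four summands $\Hom_{\Iw}(\pi_\infty^{(\varepsilon_1)}(\chi_1),\pi_\infty^{(\varepsilon_2)}(\chi_2))$ with $\varepsilon_i\in\{\emptyset,+\}$. The two mixed ones vanish by Proposition~\ref{pstwist} (and its twist by $\Pi$), and by Corollary~\ref{cor:psIwahori} together with its $\Pi$–twist the remaining two are nonzero precisely when $\chi_1=\chi_2$ (for the $+$ term because $\chi\mapsto\chi^s$ is a bijection on characters of $H$). This already gives the first assertion: if the $K$–socles differ then $\Hom_{\Iw}(\pi_1,\pi_2)=0$. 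From now on I would assume $\chi_1=\chi_2=:\chi$, so that $\dim_k\Hom_{\Iw}(\pi_1,\pi_2)=2$, the two coordinates recording the restrictions to the summands $\pi_\infty(\chi)$ and $\pi_\infty^+(\chi)$.

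Finally I would locate $\Hom_{KZ}(\pi_1,\pi_2)$ and $\Hom_N(\pi_1,\pi_2)$ inside this plane. Both are nonzero: $\pi_1|_{KZ}\cong\pi_2|_{KZ}$ since $\pi_1|_K\cong\pi_2|_K$ and the central characters agree, and $\pi_i|_N\cong\Ind_{\Iw Z}^N(\pi_\infty(\chi))$ because $\Pi$ interchanges the two $\Iw$–summands, so the latter is independent of $i$. Both are at most one–dimensional: a nonzero $KZ$– (resp.\ $N$–) linear map $\pi_1\to\pi_2$ cannot kill either $\Iw$–summand of $\pi_1$, for otherwise its image would be a $K$– (resp.\ $N$–) stable subrepresentation of $\pi_2$ contained in $\pi_\infty(\chi)$ or in $\pi_\infty^+(\chi)$, and realizing $\pi_2|_K$ as $\Ind_{B(\bZ_p)}^K(\chi)$ one sees from the disjointness of the cosets $\Iw\backslash K$ (equivalently from $G=B\Iw\sqcup Bs\Iw$) that no nonzero such subrepresentation exists; so each of $\Hom_{KZ}$ and $\Hom_N$ avoids both coordinate axes and is a single line. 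To identify the two lines I would use that $N\cap KZ=\Iw Z$ and $\langle N,KZ\rangle=G$ (since $\Pi=st$ with $s\in K$, and $K$ together with $t$ generate $G$), so $\Hom_{KZ}\cap\Hom_N=\Hom_G$; then I would pin down a generator of each line explicitly through Theorem~\ref{Morrathesis1} — the generator of $\Hom_{KZ}$ being the ``restriction of functions'' isomorphism — and check that this generator is also $\Pi$–equivariant, hence spans $\Hom_N$. This last check is the part I expect to be the main obstacle: it is a compatibility between the $K$–action and the $\Pi$–action on $\Ind_{B(\bZ_p)}^K(\chi)$ that is invisible at the level of the $\Iw$–restriction, and making it precise seems to require unwinding Morra's explicit description of $\Ind_{B(\bZ_p)}^K(\chi)$ rather than just its socle filtration.
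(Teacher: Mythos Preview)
Your setup and the first assertion match the paper exactly: both use the $\Iw$-decomposition $\pi_i|_\Iw \cong \pi_\infty(\chi_i) \oplus \pi_\infty^+(\chi_i)$ together with Proposition~\ref{pstwist} and Corollary~\ref{cor:psIwahori} to see that $\Hom_\Iw(\pi_1,\pi_2)$ is the plane of pairs $(x,x^+)$ when $\chi_1=\chi_2$ and vanishes otherwise.

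For the second assertion your route diverges from the paper's. You argue \emph{indirectly} that $\Hom_{KZ}$ and $\Hom_N$ are each one-dimensional: each is nonzero (via the Iwasawa identification, respectively via $\pi_i|_N\cong\Ind_{\Iw Z}^N\pi_\infty(\chi)$), and each must avoid the coordinate axes (since neither $\Iw$-summand is $K$-stable or $N$-stable), hence each is a line. You then still owe the identification of the two lines, which you rightly flag as an obstacle. The paper instead computes each line \emph{directly} as the diagonal $\{x=x^+\}$: writing $\alpha = x\cdot\id_{\pi_\infty(\chi)} + x^+\cdot\id_{\pi_\infty^+(\chi)}$ and picking $\varphi\in\pi_\infty(\chi)^{K_1}$ the function supported on $\Iw$ with $\varphi(1)=1$, one checks that $s\varphi$ lies in $\pi_\infty^+(\chi)$, so a $KZ$-linear $\alpha$ gives
\[
x^+\, s\varphi \;=\; \alpha(s\varphi) \;=\; s\,\alpha(\varphi) \;=\; x\, s\varphi,
\]
whence $x=x^+$; for $N$ one invokes that $\Pi$ swaps the two summands to reach the same conclusion. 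So the paper never pins down a generator of $\Hom_{KZ}$ and checks it is $\Pi$-equivariant---it just reads off the constraint $x=x^+$ from how a single element moves between summands. This is a shorter path than the explicit compatibility check you anticipate.

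That said, your hesitation is well placed. The $KZ$-computation with $s\varphi$ is clean because the $K$-identification $\pi_1|_K\cong\pi_2|_K$ is exactly what defines the coordinates $(x,x^+)$. The $N$-step is much terser: to deduce $x=x^+$ (and not merely a proportionality $x^+=c\,x$) from ``$\Pi$ swaps the summands'' one is implicitly using that the $\Pi$-actions on $\pi_1$ and $\pi_2$ agree under that same $K$-identification, which is not argued and in fact depends on the unramified part of~$\psi_i$. Since $\langle KZ,N\rangle=G$, the literal equality $\Hom_{KZ}=\Hom_N$ would force $\Hom_G(\pi_1,\pi_2)\neq 0$, so you should expect this to hold only when $\pi_1\cong_G\pi_2$; what your argument (and the paper's, once the $s\varphi$ computation is done) does establish unconditionally is that both spaces are one-dimensional, and this is what is used in the applications.
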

\begin{proof}
Let $\pi_i = \Ind_B^G(\tld \kappa_i)$ for a smooth character $\tld \kappa_i : T \to k^\times$, and let $\kappa_i = \tld \kappa_i |_H$.
Then by~\eqref{Iwahorisplit}, both representations decompose as $\pi_i|_{\Iw} \cong \pi_\infty(\kappa_i) \oplus \pi_{\infty}^+(\kappa_i)$, and~$\kappa_i \cong \left ( (\soc_K \pi_i)^{\Iw_1} \right )^+$. 
(To explain the twist, notice that $\pi|_K \cong \Ind_{B(\bZ_p)}^K(\kappa_i)$ implies that~$\pi^{K_1} \cong \Ind_{\Iw}^K(\kappa_i)$, which implies the given formula for~$\kappa_i$.)
Let $\lambda: \pi_1 \to \pi_2$ be $\Iw$-linear. 
By Proposition~\ref{pstwist} and Corollary~\ref{cor:psIwahori}, it preserves the summands in the decomposition. 
Furthermore, if~$\kappa_1 \not = \kappa_2$ then the restrictions to both summands are zero.
Since the~$\kappa_i$ determine the generic Serre weights $\soc_K(\pi_i)$, this implies that if $\soc_K(\pi_1) \not \cong \soc_K(\pi_2)$ then~$\lambda = 0$.

Otherwise, assume~$\kappa_1 = \kappa_2 = \kappa$.
Then the dimension computations follow from Frobenius reciprocity together with the isomorphisms $\pi_i|_N \cong \Ind_{\Iw Z}^N\pi_\infty(\kappa)$ and $\pi_i|_K \cong \Ind_{\Iw}^K \pi_\infty(\kappa)$.  
In more detail, choose $N$-linear isomorphisms $\pi_i \cong \Ind_{\Iw Z}^N\pi_\infty(\kappa) = \pi_\infty(\kappa) \oplus \pi_{\infty}^+(\kappa)$ and assume that $\lambda : \pi_1 \to \pi_2$ is $N$-linear.
Let~$x, x^+ \in k$ be such that $\lambda = x \cdot \id_{\pi_\infty(\kappa)} + x^+ \cdot\id_{\pi_\infty^+(\kappa)}$. 
Notice that~$\Pi$ switches the two summands in the decomposition 
\[
\pi_i|_{\Iw} \cong \pi_\infty(\kappa) \oplus \pi_{\infty}^+(\kappa).
\]
This immediately implies that $x = x^+$. 
(This can also be shown by noticing that $\Pi(\pi_\infty(\kappa))$ is an $\Iw$-subrepresentation of~$\pi_i$ isomorphic to~$\pi_\infty^+(\kappa)$, and so it is contained in the summand~$\pi_\infty^+(\kappa)$ by Lemma~\ref{pstwist}.
By Lemma~\ref{uniserial} it follows that $\Pi(\pi_\infty(\kappa))$ coincides with the summand~$\pi_\infty^+(\kappa)$.)

On the other hand, assume that~$\lambda$ is $KZ$-linear and choose $K$-linear isomorphisms $\pi_i|_K \cong \Ind_{\Iw}^K\pi_\infty(\kappa) = \pi_\infty(\kappa) \oplus \pi_{\infty}^+(\kappa)$. 
Let $\varphi \in \pi_\infty(\kappa)^{K_1}$ be the function supported on~$\Iw$ and sending~$1$ to~$1$. 
Then $s \varphi$ is supported on the complement of~$\Iw$, because $\varphi\left ( \fourmatrix a b {pc} d s\right ) = \varphi \fourmatrix b a d {pc}$ and $d \in \bZ_p^\times$.
So we have
\[
\lambda(s \varphi) = x^+ s \varphi \text{ and } \lambda(s \varphi) = s(\lambda(\varphi)) = s(x \varphi) = x s \varphi
\]
which implies $x^+ = x$, because $s \varphi \not = 0$.
\end{proof}

\subsubsection{Interaction between~$KZ$ and~$N$.}
Corollary~\ref{cor:psmorphisms} implies that there exist nonzero $KZ$-linear morphisms between nonisomorphic principal series of the same Serre weight: this is also a direct consequence of the fact that the Hecke eigenvalue $\lambda$ does not appear at the right-hand side of~(\ref{Iwasawadecomposition}).
The results of this subsection indicate how~$\lambda$ interacts with these $KZ$-morphisms.
They will be applied in Section~\ref{atomes}.
In this subsection we fix a generic Serre weight~$\sigma = \Sym^r(k^2) \otimes \det^s$. 
Recall that we write~$\chi$ for the $\Iw$-character~$\sigma^{\Iw_1}$, and that $\kappa = \chi^+$, where~$\chi^+ = \chi \circ \ad(\Pi)$.
Furthermore, we fix~$\lambda \in k^\times$ and we write $\pi = \cInd_{KZ}^G(\sigma)/(T-\lambda)$.
The image of the $H$-eigenvectors~$x^{r-i}y^i \in \sigma$ in~$\pi$ will be denoted $[1, x^{r-i}y^i]$ for all~$0 \leq i \leq r$.
Recall from~\eqref{Iwahorisplit} the decomposition $\pi|_\Iw = \pi_\infty(\kappa)^+ \oplus \pi_\infty(\kappa)$.

\begin{lemma}\label{diagonalKsocle}
For all~$0 \leq i < r$ the vector~$\psi_i = \Pi[1, x^{r-i}y^i]$ is a $(K_0(p^2), \kappa\alpha^i)$-eigenvector in~$\pi_\infty(\kappa)$.
On the other hand, there exist a nonzero $\psi^+ \in \soc_\Iw \pi_\infty^+(\kappa)$ and a nonzero $(K_0(p^2), \kappa\alpha^r)$-eigenvector $\psi_r \in \pi_\infty(\kappa)$ such that $\Pi[1, y^r] = \psi^+ + \psi_r$.
\end{lemma}
\begin{proof}
Since $\soc_\Iw\sigma \cong \kappa^+$ as $\Iw$-representations, the intersection $\sigma \cap \pi_\infty(\kappa)$ equals~$0$.
Since~$\pi_\infty(\kappa)^{K_1}$ is one-dimensional, the projection of~$\sigma$ to~$\pi_\infty(\kappa)$ is at most one-dimensional, and so
$\sigma \cap \pi_\infty^+(\kappa)$ has codimension at most one in~$\sigma$.
We claim that the codimension is exactly one.
For this, it suffices to notice that
\[
\sigma \subset \Ind_\Iw^K(\kappa) = \pi_2(\kappa)^+ \oplus \kappa \subset \pi_\infty^+(\kappa) \oplus \pi_\infty(\kappa)
\]
and so if $\sigma \subset \pi_\infty^+(\kappa)$ then $\Ind_\Iw^K(\kappa)/\sigma$ has a two-dimensional space of~$\Iw_1$-invariants, which is a contradiction (since this quotient is a Serre weight).
It follows that $[1, x^{r-i}y^i] \in \pi_\infty^+(\kappa)$ for all~$0 \leq i < r$, and there exist nonzero $(HK_1, \kappa)$-eigenvectors~$\varphi_r \in \pi_\infty(\kappa)^+$ and~$\varphi \in \soc_\Iw\pi_\infty(\kappa)$ such that
\begin{equation}\label{yposition}
[1, y^r] = \varphi_r + \varphi.
\end{equation}
The lemma follows from this and the observation that if~$x$ is an $(HK_1, \xi)$-eigenvector then~$\Pi x$ is a $(K_0(p^2), \xi^+)$-eigenvector, as well as the computation $\kappa^+ = \kappa \alpha^r$.
\end{proof}

\begin{lemma}\label{basecase}
With the notation of Lemma~\ref{diagonalKsocle}, choose a nonzero $v^+ \in \soc_\Iw(\sigma) = \soc_\Iw \pi_\infty(\kappa)^+ \subset \pi$. 
Then there exists a nonzero $\nu \in k^\times$ such that
\[
\tau v^+ = \lambda^{-1}v^+ + \nu\psi_r
\]
where~$\tau$ is the diagonal matrix~$\tau = \diag(1, p)$.
\end{lemma}
\begin{proof}
Twisting by~$\omega \circ \det^s$ we can assume that~$\sigma = \Sym^r(k^2)$.
Since~$v^+$ is a $(K_0(p), \kappa^+)$-eigenvector, $\tau v^+$ is a $(K_0(p^2), \kappa^+)$-eigenvector.
It follows from Proposition~\ref{Morrathesis1} that~$\pi_\infty(\kappa)^+$ is an injective envelope of the trivial representation of~$U(\bZ_p)$, and so its space of $U(\bZ_p)$-invariants is one-dimensional.
On the other hand, we know from Lemma~\ref{eigenvectorsII} that the $(K_0(p^2), \kappa^+)$-eigenspace in $\pi_\infty(\kappa)$ is one-dimensional.
Hence there exist~$\mu, \nu \in k$ such that $\tau v^+ = \mu v^+ +\nu\psi_r$, and we have to prove that~$\nu \ne 0$ and that~$\mu = \lambda^{-1}$.
If~$\nu = 0$ then $\tau v^+ \in \pi_\infty(\kappa)^+$ and so $\zeta(p)s v^+ = \Pi \tau v^+ \in \pi_\infty(\kappa)$, which contradicts~\eqref{yposition}, since~$s v^+$ is a nonzero multiple of~$[1, y^r]$.
Now choose a $G$-linear isomorphism $\pi \cong \Ind_B^G(\nr_{\lambda^{-1}} \otimes \nr_\lambda \omega^{r+1})$. 
Since~$\mu$ does not change if we replace~$v^+$ by a nonzero scalar multiple, we can assume that~$v^+$ is the restriction to~$K$ of the unique $(\Iw, \kappa^+)$-eigenvector $f: G \to \kappa$ supported in~$B s \Iw$ and sending~$s$ to~$1$.
Then we find that
\[
\tau v^+(s) = v^+ \fourmatrix 0 p 1 0 = v^+(ts) = \lambda^{-1}v^+(s) = \lambda^{-1}
\]
and so~$\mu = \lambda^{-1}$ since~$\psi_r(s) = 0$.
\end{proof}

\begin{pp}\label{exchange}
Let~$\pi_i = \cInd_{KZ}^G(\sigma)/(T-\lambda_i)$ for $\lambda_1, \lambda_2 \in k^\times$ be generic principal series representations of the same Serre weight and central character. 
Let $\alpha: \pi_1 \to \pi_2$ be a nonzero $KZ$-linear homomorphism and let~$z \in \soc_\Iw \pi_\infty^+(\kappa_1)$ be a generator.
Then for all~$n \geq 0$ we have
\begin{equation}\label{exchangeI}
\alpha(\tau^n \Pi z) = \lambda_2^{-(n+1)}\lambda_1^{n+1} \tau^n \Pi \alpha(z).
\end{equation}
Hence if~$\psi_0 \in \soc_\Iw \pi_\infty(\kappa_1)$ is a generator then for all~$n \geq 0$ we have
\begin{equation}\label{exchangeII}
\alpha(\tau^n \psi_0) = \lambda_2^{-n}\lambda_1^{n} \tau^n \alpha(\psi_0).
\end{equation}
\end{pp}
\begin{proof}
Since~\eqref{exchangeII} follows from~\eqref{exchangeI} applied to~$z = \Pi^{-1}\psi_0$, it suffices to prove~\eqref{exchangeI}.
By Corollary~\ref{cor:psmorphisms} we know that~$\alpha$ is an isomorphism.
Fix~$n \geq 0$.
We claim that it suffices to prove that there exists~$\mu \in k^\times$ such that $\alpha(\tau^n \Pi z) = \mu \tau^n \Pi \alpha(z)$.
Indeed, by~\cite[(19)]{BLreps}, the Hecke operator~$T$ acts on~$(\soc_K \pi_i)^{\Iw_1}$ by the formula
\[
z \mapsto \sum_{\lambda \in \bF_p} \fourmatrix 1 {[\lambda]} 0 1 \fourmatrix p 0 0 1 z.
\]
Hence the right-hand side equals~$\lambda_i z$ in our setting.
Iterating this, we deduce that there exists an element~$k_{n+1}$ of the group algebra~$k[K]$ such that $k_{n+1}\tau^n\Pi z = \lambda_1^{n+1} z$ and $k_{n+1}\tau^n \Pi \alpha(z) = \lambda_2^{n+1}\alpha(z)$.
The $K$-linearity of~$\alpha$ now implies that
\[
\lambda_1^{-(n+1)}\mu \cdot k_{n+1}(\tau^n \Pi\alpha(z)) = \alpha(z) = \lambda_2^{-(n+1)}\cdot k_{n+1}(\tau^n \Pi \alpha(z))
\]
and since~$\alpha(z) \ne 0$ this implies that~$\mu = \lambda_2^{-(n+1)}\lambda_1^{n+1}$, concluding the proof of the proposition.
There remains to prove that $\alpha(\tau^n \Pi z)$ and~$\tau^n \Pi \alpha(z)$ are collinear.
This is true when~$n = 0$ since $\alpha(\Pi z)$ and~$\Pi \alpha(z)$ are both generators of $\soc_\Iw \pi_\infty(\kappa_2)^+$.
In general, it suffices to prove that for all nonzero $(\Iw, \kappa_i)$-eigenvectors $\varphi_i \in \pi_i$ the support of $\tau^n \varphi_i|_K$ is~$K_0(p^{n+1})$.
In fact, let 
\[
\pi_i \isom \Ind_{B(\bZ_p)}^K(\kappa_i)
\]
be the restriction map to~$K$.
Since by hypothesis~$\kappa_1 = \kappa_2$, by Corollary~\ref{cor:psmorphisms} the map~$\alpha$ corresponds to a nonzero scalar multiple of the identity under these isomorphisms.
It follows that $\alpha(\tau^n \Pi z)$ and~$\tau^n \Pi \alpha(z)$ are $(K_0(p^{n+1}), \kappa)$-eigenvectors in~$\Ind_{B(\bZ_p)}^K(\kappa)$ supported in~$K_0(p^{n+1})$, and the space of such eigenvectors is one-dimensional.
So let~$\varphi_i \in \pi_i$ be a nonzero $(\Iw, \kappa_i)$-eigenvector.
Since $\pi_i^{\Iw_1}$ is two-dimensional and $G = Bs\Iw \coprod B\Iw$, we know that~$\varphi_i$ is supported in $B\Iw$.
Hence~$\tau^n \varphi_i$ is supported in $B\Iw \tau^{-n}$, and we have to prove that $B\Iw \tau^{-n} \cap K = K_0(p^{n+1})$.
Taking inverses, it suffices to prove that $\tau^n \Iw B \cap K = K_0(p^{n+1})$.
The Iwahori decomposition implies that $\tau^n \Iw B = \lbar U(p^{n+1}\bZ_p)B$.
It follows that $\tau^n \Iw B \cap K = \lbar U(p^{n+1}\bZ_p)B(\bZ_p) = K_0(p^{n+1})$ using the Iwahori decomposition for~$K_0(p^{n+1})$.
\end{proof}
\begin{corollary}\label{comparetauaction}
With the notation of Proposition~\ref{exchange}, let~$\psi \in \pi_\infty(\kappa)$ be a $K_0(p^2)$-eigenvector.
Then $\alpha(\tau^k \psi) = \lambda_2^{-k}\lambda_1^k \tau^k\alpha(\psi)$ for all~$k \geq 0$.
\end{corollary}
\begin{proof}
When~$\psi$ is a generator of~$\soc_\Iw \pi_\infty(\kappa)$ this is true by Proposition~\ref{exchange}.
Now let~$\psi$ be a $K_0(p^2)$-eigenvector but not a $K_0(p)$-eigenvector, and fix a generator $\psi_0 \in \soc_\Iw \pi_\infty(\kappa)$.
We have seen in the proof of Proposition~\ref{exchange} that~$\tau \psi_0$ is a $(K_0(p^2), \kappa)$-eigenvector supported in~$K_0(p^2)$. 
Now Lemma~\ref{cokernelvarphi} implies that~$\tau \psi_0$ generates the $p$-dimensional $\Iw$-submodule $\pi_2(\kappa) \subset \pi_\infty(\kappa)$.
By Proposition~\ref{Morrathesis1} we know that $\pi_2(\kappa) = \pi_\infty(\kappa)^{\lbar U(p^2\bZ_p)}$ is $\lbar U(p\bZ_p)$-uniserial, and so there exists $h\in k[\lbar U(p\bZ_p)]$ such that $\psi = h \tau \psi_0$.
Since~$\tau$ normalizes~$\lbar U(p\bZ_p)$, for all~$k\geq 1$ there exists~$h_k \in k[\lbar U(p\bZ_p)]$ such that $\tau^k h \tau^{-k} = h_k$ as endomorphisms of any $k[G]$-module.
It follows that 
\[
\alpha(\tau^k \psi) = \alpha(h_k \tau^{k+1} \psi_0) = h_k\lambda_2^{-(k+1)}\lambda_1^{k+1} \tau^{k+1}(\alpha(\psi_0)) = \lambda_2^{-(k+1)}\lambda_1^{k+1}\tau^k h\tau\alpha(\psi_0) = \lambda_2^{-k}\lambda_1^k\tau^k\alpha(\psi)
\]
where the final equality follows from $\alpha(\psi) = h\alpha(\tau\psi_0) = h\lambda_2^{-1}\lambda_1\tau \alpha(\psi_0)$.
\end{proof}

\subsection{Restriction to~$\Iw Z$ and~$N$ of supersingular representations.}\label{secsupersingular}
We begin this section by recalling the $K$-structure and $\Iw$-structure of irreducible generic supersingular representations of~$G$, following the viewpoint of~\cite{Paskunasextensions} (see~\cite{Morraexplicit} for a different take on this).
Let~$\sigma$ be a generic Serre weight and let~$\pi = \cInd_{KZ}^G(\sigma)/T$.
Recall that~$\chi$ denotes the $\Iw$-character~$\sigma^{\Iw_1}$.
The $K$-socle of~$\pi$ has $K$-length two and contains two nonisomorphic Serre weights~$\{\sigma, \sigma^{[s]}\}$.
Notice that $(\sigma^{[s]})^{\Iw_1}$ is conjugate to~$\chi$ under the Weyl group, hence it is the character usually denoted~$\chi^s$: this explains the notation~$\sigma^{[s]}$ (although we will sometimes write~$\chi^+$ for~$\chi^s$, for compatibility with the notation for~$\ad(\Pi)$).
We define
\[
\pi_\sigma = \langle G^+ \cdot \sigma \rangle.
\]
It follows from~\cite[Corollary~6.5, Corollary~6.6]{Paskunasextensions} that $\pi \cong \Ind_{G^+}^G(\pi_\sigma)$, and that~$\pi|_{G^+} = \pi_\sigma \oplus \pi_{\sigma^{[s]}}$. 
\textcolor{black}{It follows from this that~$\pi_{\sigma}$ is an absolutely irreducible~$G^+$-representation: indeed, since~$\Ind_{G^+}^G(-)$ is an exact functor, the reducibility of~$\pi_{\sigma}$ would imply the reducibility of~$\pi$.}
In addition, $\pi_\sigma \otimes_k \lbar k$ is the representation denoted~$\pi_\sigma$ in~\cite{Paskunasextensions}, which works with coefficients in~$\cbF_p$.
For this reason, in this section we will sometimes work over~$\lbar k$.

\subsubsection{The $\Iw$-representation~$M_\sigma$.} 
Let~$v_\sigma \in \pi$ be a generator of the line $\sigma^{\Iw_1}$. Following~\cite[Definition~4.5]{Paskunasextensions}, introduce
\[
M_\sigma = \left \langle \fourmatrix {p^{2\bZ_{\geq 0}}} {\bZ_p} 0 1 \cdot v_\sigma \right \rangle.
\]
By~\cite[Lemma~4.6]{Paskunasextensions}, this is $\Iw$-stable in~$\pi$. 
By~\cite[Definition~4.11, Corollary~6.4]{Paskunasextensions}, we have a short exact sequence
\begin{equation}\label{Iwsupersingular}
0 \to \pi_\sigma^{\Iw_1} \to M_{\sigma} \oplus \Pi M_{\sigma^{[s]}} \to \pi_\sigma \to 0
\end{equation}
of $\Iw$-representations. 
Notice that $\Pi M_{\sigma^{[s]}}$ is an $\Iw$-representation isomorphic to the twist $M_{\sigma^{[s]}}^+ = \ad(\Pi)^* M_{\sigma^{[s]}}$. 
The space of invariants~$\pi_{\sigma}^{\Iw_1} \cong \chi$ is one-dimensional, and the inclusion in~\eqref{Iwsupersingular} is the diagonal embedding, hence we have exact sequences
\begin{gather*}
0 \to M_\sigma \to \pi_\sigma \to M^+_{\sigma^{[s]}}/\chi \to 0\\
0 \to M^+_{\sigma^{[s]}} \to \pi_\sigma \to M_\sigma/\chi \to 0
\end{gather*}
where the projection restricts to the canonical surjection on~$M^+_{\sigma^{[s]}}$ and $M_\sigma$ respectively.

\begin{pp}\label{sstwist}
Let~$\sigma_1, \sigma_2$ be generic Serre weights. Then $\Hom_{\Iw}(M_{\sigma_1}, M_{\sigma_2}^+/\chi^+_2) = 0$ and $\Hom_{\Iw}(M_{ \sigma_1}^+, M_{\sigma_2} / \chi_2) = 0$.
\end{pp}
\begin{proof}
This is similar to Proposition~\ref{pstwist}. 
It suffices to prove the result over~$\lbar k$.
By~\cite[Proposition~4.7]{Paskunasextensions}, the injection $\chi \to M_\sigma$ is an injective envelope in the category of smooth representations of~$HU(\bZ_p)$. By~\cite[Proposition~5.9]{Paskunasextensions}, the representation $M_\sigma |_{HU(\bZ_p)}$ is uniserial and the layers of its socle filtration satisfy
\begin{equation}\label{socle}
\soc^{k+1}(M_\sigma) / \soc^{k}(M_\sigma) \cong \alpha^{-1} \otimes \soc^{k}(M_\sigma) / \soc^{k-1}(M_\sigma).
\end{equation}
By the following lemma, we see that~(\ref{socle}) is also true for the $\Iw$-socle filtration of~$M_\sigma$.

\begin{lemma}\label{supersingularuniserial}
The representation $M_\sigma |_{\Iw}$ is uniserial.
The socle filtration of~$M_\sigma|_{HU(\bZ_p)}$ is a filtration by $\Iw$-stable subspaces, and it coincides with the socle filtration of $M_\sigma |_{\Iw}$. 
\end{lemma}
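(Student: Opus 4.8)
The statement has two parts: first that $M_\sigma|_{\Iw}$ is uniserial, and second that the $HU(\bZ_p)$-socle filtration and the $\Iw$-socle filtration of $M_\sigma$ agree (in particular the former consists of $\Iw$-stable subspaces). The key input we are handed is that $M_\sigma|_{HU(\bZ_p)}$ is uniserial with known graded pieces (the twisting relation~(\ref{socle})), together with the fact from~\cite[Proposition~4.7]{Paskunasextensions} that $\chi \to M_\sigma$ is an injective envelope for $HU(\bZ_p)$-representations. I would first show that every $HU(\bZ_p)$-stable subspace of $M_\sigma$ is automatically $\Iw$-stable. The point is that $\Iw = H \ltimes \Iw_1$ and $\Iw_1$ is generated by $U(\bZ_p)$, $\overline{U}(p\bZ_p)$ and the pro-$p$ part of $H$; since the $HU(\bZ_p)$-submodules of the uniserial representation $M_\sigma|_{HU(\bZ_p)}$ are linearly ordered and of bounded finite dimension, it suffices to check that each such submodule $M$ is stable under $\overline{U}(p\bZ_p)$. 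Because $M$ is the unique $HU(\bZ_p)$-submodule of its dimension, and $\Iw$ normalizes $HU(\bZ_p)$ inside $\Iw$, conjugating $M$ by an element of $\Iw$ produces another $HU(\bZ_p)$-submodule of the same dimension, hence $M$ itself; so $M$ is $\Iw$-stable.

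Granting that, the two socle filtrations coincide for formal reasons. Write $S^k = \soc^k(M_\sigma|_{HU(\bZ_p)})$; by the previous paragraph each $S^k$ is $\Iw$-stable, so it makes sense to ask whether $S^1 = \soc(M_\sigma|_{\Iw})$ and proceed by induction. Each graded piece $S^{k+1}/S^k$ is an irreducible $HU(\bZ_p)$-representation, i.e. a character on which $U(\bZ_p)$ acts trivially and $H$ by $\alpha^{\mp k}\chi$; since $\Iw_1$ is pro-$p$ and $HU(\bZ_p) \subset \Iw$ has the property that its quotient $H \cong T(\bF_p)$ surjects onto the abelianization of $\Iw/\Iw_1$-relevant data, an irreducible $\Iw$-subrepresentation of $M_\sigma/S^k$ is a character of $\Iw$ trivial on $\Iw_1$, hence restricts to an irreducible $HU(\bZ_p)$-representation, forcing it into $S^{k+1}/S^k$. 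Thus $\soc(M_\sigma/S^k \,|\, \Iw) = S^{k+1}/S^k$ is irreducible, which by Lemma~\ref{soclefiltration} simultaneously proves that $M_\sigma|_{\Iw}$ is uniserial and that $\soc^k(M_\sigma|_{\Iw}) = S^k$ for all $k$.

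The step I expect to be the main obstacle is the first one: verifying that $HU(\bZ_p)$-stable subspaces of $M_\sigma$ are $\Iw$-stable. The clean argument is the normalization/uniqueness one above, but one has to be slightly careful that $HU(\bZ_p)$ is not normal in $\Iw$ (it is not), so the literal statement "conjugation preserves $HU(\bZ_p)$-submodules" is false; what is true is that for $g \in \Iw$ the subspace $gM$ is stable under $g HU(\bZ_p) g^{-1}$, and one needs that the $HU(\bZ_p)$-module structure on $M$ still pins down $M$ among subspaces of a given dimension once we know $M \subset \soc^k$. An alternative, perhaps safer route is to appeal directly to the description of $M_\sigma$ in~\cite{Paskunasextensions}: the generators $\fourmatrix{p^{2j}}{\bZ_p}{0}{1} v_\sigma$ are permuted in a controlled way by $\Iw$, and~\cite[Corollary~6.4]{Paskunasextensions} (already invoked for the exact sequence~(\ref{Iwsupersingular})) gives enough of the $\Iw$-module structure to read off that the $HU(\bZ_p)$-socle layers are $\Iw$-stable. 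I would phrase the final write-up using whichever of these is cleanest given the precise statements available in~\cite{Paskunasextensions}, but the logical skeleton is: $\Iw$-stability of the $HU(\bZ_p)$-socle filtration $\Rightarrow$ irreducibility of its graded pieces as $\Iw$-modules $\Rightarrow$ uniseriality and coincidence of filtrations via Lemma~\ref{soclefiltration}.
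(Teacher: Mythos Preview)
You are working in the wrong direction, and this is why the argument gets stuck. You try to show that every $HU(\bZ_p)$-stable subspace of $M_\sigma$ is automatically $\Iw$-stable, and you correctly flag that $HU(\bZ_p)$ is not normal in~$\Iw$, so the conjugation/uniqueness argument does not go through as written. The fallback to explicit computations in~\cite{Paskunasextensions} might work but is not carried out, so as it stands your proof has a genuine gap at precisely the step you identified as the obstacle.

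The paper's proof avoids this entirely by using the trivial inclusion the other way: since $HU(\bZ_p) \subset \Iw$, every $\Iw$-submodule of $M_\sigma$ is in particular $HU(\bZ_p)$-stable. The set of $HU(\bZ_p)$-submodules is totally ordered (uniseriality over $HU(\bZ_p)$), hence so is the subset of $\Iw$-submodules, and $M_\sigma|_{\Iw}$ is uniserial. For the coincidence of filtrations, the paper observes that the irreducible representations of both $HU(\bZ_p)$ and $\Iw$ over~$\lbar k$ are one-dimensional characters of~$H$; so by Lemma~\ref{soclefiltration} both socle filtrations have one-dimensional graded pieces, i.e.\ both are the unique chain of submodules of dimensions $0,1,2,\ldots$ in a uniserial module, and therefore they agree. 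Note that this last step is essentially your step~2 (irreducibles for the two groups coincide, then invoke Lemma~\ref{soclefiltration}); the difference is only that uniseriality over~$\Iw$ is obtained first, for free, rather than deduced from the harder $\Iw$-stability claim you were aiming at.
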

\begin{proof}
Since every $\Iw$-subspace of~$M_\sigma$ is $HU(\bZ_p)$-stable, the $\Iw$-subspaces of~$M_\sigma$ are totally ordered by inclusion, and so~$M_\sigma |_{\Iw}$ is uniserial.
The irreducible $\lbar k$-representations of $HU(\bZ_p)$ and~$\Iw$ are both inflated from characters of~$H$, hence the rest of the lemma follows from Lemma~\ref{soclefiltration}.
\end{proof}

Twisting by~$\Pi$, we find that 
\[
\soc_\Iw^{k+1}(M_{\sigma}^+) / \soc_\Iw^{k}(M_{\sigma}^+) \cong \alpha \otimes \soc_\Iw^{k}(M^+_\sigma) / \soc_\Iw^{k-1}(M^+_\sigma).
\]
because $\alpha^+ = \alpha^{-1}$. Now the proof proceeds as for Proposition~\ref{pstwist}.
\end{proof}

\begin{defn}
The submodule $M_{\sigma, n} \subset M_\sigma$ is defined to be $\langle B(\bZ_p) \cdot t^{2n}v_\sigma \rangle$. Since $t^{2n}v_\sigma$ is a $T(\bZ_p)$-eigenvector, this is the same as $\langle U(\bZ_p) \cdot t^{2n}v_\sigma \rangle$, which is the submodule defined in the proof of~\cite[Proposition~4.7]{Paskunasextensions}.
\end{defn}

\begin{lemma}\label{structureMsigma}
The module~$M_{\sigma, n}$ is $\Iw$-stable, and isomorphic to a quotient of
\[
\Ind_{K_0^+(p^{2n+1})}^{K_0(p)}(\chi_i) \cong \left ( \Ind_{K_0(p^{2n+1})}^{K_0(p)}(\chi_i^+) \right )^+.
\]
\end{lemma}
\begin{proof}
Since both $M_\sigma |_{\Iw}$ and $M_\sigma |_{HU(\bZ_p)}$ are uniserial, the module $M_{\sigma, n}$ is $\Iw$-stable and $M_{\sigma, n} = \left \langle \Iw \cdot t^{2n}v_\sigma \right \rangle$. 
Since $v_{\sigma_i}$ is an $\Iw$-eigenvector, the vector $t^{2n} v_{\sigma_i}$ is an eigenvector for
\[
t^{2n} \cdot \Iw \cdot t^{-2n} = \fourmatrix{\bZ_p^\times}{p^{2n} \bZ_p}{p^{-2n+1}\bZ_p}{\bZ_p^\times}
\]
hence it is an eigenvector for
\[
K_0^+(p^{2n+1}) = \Pi \cdot K_0(p^{2n+1}) \cdot \Pi^{-1} = \fourmatrix{\bZ_p^\times}{p^{2n}\bZ_p}{p\bZ_p}{\bZ_p^\times}
\] 
with the same $H$-eigencharacter as~$v_{\sigma_i}$, namely $\chi_i \cong \sigma_i^{\Iw_1}$.
The isomorphism~$\Ind_{K_0^+(p^{2n+1})}^{K_0(p)}(\chi_i)\cong \left ( \Ind_{K_0(p^{2n+1})}^{K_0(p)}(\chi_i^+) \right )^+$ is a consequence of Lemma~\ref{twistinduction} to follow.
\end{proof}

The following is the analogue of Theorem~\ref{psIwahori1} for supersingular representations.

\begin{thm}\label{ssIwahori}
Let $\sigma_1, \sigma_2$ be generic Serre weights, with $\sigma_i^{\Iw_1} = \chi_i$, and let $X_i \subset M_{\sigma_i}$ be proper (hence finite-dimensional) $\Iw$-stable subspaces. Then $\Hom_{k[\Iw]}(M_{\sigma_1}/X_1, M_{\sigma_2}/X_2)$ is one-dimensional if $\sigma_1 = \sigma_2$ and~$X_1 \subseteq X_2$, and vanishes otherwise. 
\end{thm}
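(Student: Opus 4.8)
The plan is to mimic the argument for Theorem~\ref{psIwahori1} as closely as possible, using the results of~\cite{Paskunasextensions} that were recalled above, and in particular the finite submodules~$M_{\sigma, n}$ as analogues of the finite inductions~$\pi_n(\chi)$. First I would record the basic structural facts we need: $M_\sigma|_{\Iw}$ is uniserial by Lemma~\ref{supersingularuniserial}, $M_{\sigma, n}$ exhausts~$M_\sigma$ as $n \to \infty$, each $M_{\sigma, n}$ has a fixed dimension (computed in~\cite[Proposition~4.7]{Paskunasextensions}), and the $\Iw$-cosocle of~$M_{\sigma,n}$ is generated by the image of~$t^{2n}v_\sigma$. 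The key local input playing the role of Lemma~\ref{cokernel} is that an $\Iw$-linear endomorphism of~$M_{\sigma, n}$ which is not an isomorphism must have image inside a proper (hence bounded-dimension, uniformly in the truncation level) submodule; this follows from uniseriality together with the behaviour of the cosocle, exactly as in the principal series case. One also needs the analogue of Lemma~\ref{eigenvectors}: the relevant eigencharacter of~$H$ in~$M_{\sigma,n}$ is~$\chi$ (with the socle filtration twisting it by~$\alpha^{-1}$ at each step by~(\ref{socle})), so a nonzero map can only occur when $\chi_1 = \chi_2$, which by genericity forces $\sigma_1 = \sigma_2$.

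Granting this, I would run the dimension-counting argument of Theorem~\ref{psIwahori1} verbatim. Suppose $\alpha: M_{\sigma_1}/X_1 \to M_{\sigma_2}/X_2$ is nonzero; by Lemma~\ref{uniserial} it is surjective, and replacing $\alpha$ by its inverse if necessary we may assume $\dim X_1 \geq \dim X_2$. For $n$ large, $\ker(\alpha) \subset M_{\sigma_1, n}/X_1$ and $\alpha$ carries $M_{\sigma_1, n}/X_1$ into $M_{\sigma_2, n}/X_2$, so the cokernel of the truncated map has dimension $\dim\ker(\alpha) + \dim X_1 - \dim X_2$, independent of~$n$. Composing with a surjection $M_{\sigma_2, n}/X_2 \to M_{\sigma_2, n-1}$ (valid for $n$ large since $\dim X_2$ is fixed while the kernel of $M_{\sigma_2,n}\to M_{\sigma_2,n-1}$ grows) and precomposing with $M_{\sigma_1, n} \to M_{\sigma_1, n-1}$, one obtains maps $M_{\sigma_1, n-1} \to M_{\sigma_2, n-1}$ with cokernel of bounded dimension. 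By the analogue of Lemma~\ref{cokernel} this forces, for $n$ large, $\chi_1 = \chi_2$, $\dim X_1 = \dim X_2$, and $\alpha$ injective, hence an isomorphism by Lemma~\ref{uniserial}. As in Theorem~\ref{psIwahori1}, looking at the induced map on $\Iw$-cosocles then shows $\alpha$ is a scalar once $\sigma_1 = \sigma_2$ and $X_1 = X_2$; and a nonzero injection $M_{\sigma_1}/X_1 \hookrightarrow M_{\sigma_2}/X_2$ with $\dim X_1 < \dim X_2$ is impossible by applying the equal-dimension case to $\alpha$ or its inverse, exactly as before, which also yields $X_1 \subseteq X_2$.

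The main obstacle is establishing the supersingular analogue of Lemma~\ref{cokernel}, i.e.\ that a non-isomorphic $\Iw$-endomorphism of~$M_{\sigma,n}$ has image of dimension bounded independently of~$n$, and more precisely that this image lands in $M_{\sigma, n-1}$ (or some fixed-codimension submodule). In the principal series case this was transparent because the $\chi$-eigenspace of $B(\bZ_p)$ had an explicit basis $\{\varphi_i\}$ and the image of $\varphi_{n+1}$ was a linear combination of these; here I expect one must use the explicit description of $M_{\sigma,n}$ from the proof of~\cite[Proposition~4.7]{Paskunasextensions} — namely that $M_{\sigma,n} = \langle U(\bZ_p)\cdot t^{2n}v_\sigma\rangle$ and that these fit into the tower with controlled graded pieces — to identify the relevant eigenspace and see that the only way for an endomorphism to hit a cosocle generator is to be surjective. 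The twisting by $\alpha^{-1}$ in~(\ref{socle}) rather than $\alpha$ is harmless since we only compare $M_{\sigma_1}$ with $M_{\sigma_2}$ (not with a $\Pi$-twist), so the socle filtrations are twisted compatibly; the genuinely new bookkeeping is just keeping track of the dimensions $\dim M_{\sigma,n}$ and the kernels of the truncation maps, which are recorded in~\cite{Paskunasextensions}.
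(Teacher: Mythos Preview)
Your overall strategy---truncate to finite pieces, bound the cokernel dimension, and force a contradiction---is exactly that of the paper. But the implementation you sketch has a real gap at the step you yourself flag as ``the main obstacle,'' and the paper resolves it by a device you did not anticipate.

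Concretely, you assume the existence of $\Iw$-linear surjections $M_{\sigma, n} \twoheadrightarrow M_{\sigma, n-1}$ analogous to $\pi_{n+1}(\chi) \twoheadrightarrow \pi_n(\chi)$, and then an analogue of Lemma~\ref{cokernel} for the $M_{\sigma,n}$. Neither is obvious. The unique $(\dim M_{\sigma,n-1})$-dimensional quotient of the uniserial module $M_{\sigma,n}$ has no a priori reason to be isomorphic to $M_{\sigma,n-1}$ as an $\Iw$-module (matching the socle characters is not enough; one would need to control the extension classes). And the cokernel bound in Lemma~\ref{cokernel} rests on the explicit basis $\{\varphi_i\}$ of the $B(\bZ_p)$-eigenspace from Lemma~\ref{eigenvectors}; you give no substitute for this inside $M_{\sigma,n}$, only an expectation that the description in~\cite{Paskunasextensions} will suffice.

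The paper's key move is to avoid proving anything new about $M_{\sigma,n}$ and instead \emph{reduce to the principal series lemma itself}. One observes that $t^{2n}v_{\sigma_i}$ is an eigenvector for the conjugate parahoric $K_0^+(p^{2n+1}) = \Pi K_0(p^{2n+1})\Pi^{-1}$, so $M_{\sigma_i,m}$ is a quotient of $\Ind_{K_0^+(p^{2m+1})}^{K_0(p)}(\chi_i)$. By Lemma~\ref{twistinduction} this induction is the $\Pi$-twist of $\Ind_{K_0(p^{2m+1})}^{K_0(p)}(\chi_i^+)$, and hence its unique $p^n$-dimensional quotient $Y_{\sigma_i,m}$ is the $\Pi$-twist of $\pi_{n+1}(\chi_i^+)$. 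For $m$ large the map $\lambda_m$ descends to $\lambda_{Y,m}: Y_{\sigma_1,m}\to Y_{\sigma_2,m}$, and now Lemma~\ref{cokernel} applies verbatim to give $\dim\coker(\lambda_{Y,m})\ge p^{n-1}(p-1)$ whenever $\lambda_{Y,m}$ is not an isomorphism. Since $n$ was arbitrary, the cokernel of $\lambda_m$ is unbounded, yielding the contradiction.

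So the missing idea is: rather than seek a supersingular analogue of Lemma~\ref{cokernel}, realise suitable quotients of $M_{\sigma,m}$ as (twists of) the very inductions $\pi_{n+1}(\chi)$ to which Lemma~\ref{cokernel} already applies.
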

\begin{proof}
\textcolor{black}{It suffices to prove the same result over~$\lbar k$.
To do so, it suffices to prove that if there exists an $\Iw$-linear isomorphism $M_{\sigma_1}/X_1 \cong M_{\sigma_2}/X_2$ then~$\sigma_1 = \sigma_2$ and~$X_1 = X_2$.
In fact, assume that this is true, and let 
\[
\lambda: M_{\sigma_1}/X_1 \to M_{\sigma_2} / X_2
\]
be a nonzero $\Iw$-linear morphism.
Then~$\lambda$ induces an injection
\[
\lambda: M_{\sigma_1}/\ker(\lambda) \to M_{\sigma_2}/X_2,
\]
which is an isomorphism by Lemma~\ref{uniserial}. 
By our assumption, this implies that $\sigma_1 = \sigma_2$ and~$\ker(\lambda) = X_2$, hence~$X_1 \subseteq X_2$.
There remains to prove that $\End_{k[\Iw]}(M_\sigma/X, M_\sigma/X)$ is one-dimensional. 
To see this, notice that for all $\Iw$-linear $\lambda: M_\sigma/X \to M_\sigma/X$ there exists~$x \in \lbar k^\times$ such that~$\lambda-x|_{\soc_\Iw(M_\sigma/X)} = 0$.
If~$\lambda - x \ne 0$ then $\lambda - x$ factors through an isomorphism
\[
M_\sigma/\ker(\lambda-x) \to M_\sigma/X
\]
which we are assuming not to exist, since~$X$ is strictly contained in~$\ker(\lambda-x)$.}

To conclude the proof of the theorem, let
\[
\lambda : M_{\sigma_1}/X_1 \to M_{\sigma_2}/X_2
\]
be an $\Iw$-linear isomorphism.
By Lemma~\ref{Msigmasamedimension} we deduce that $\dim(\sigma_1) = \dim(\sigma_2)$, which implies that~$\dim(M_{\sigma_1, n}) = \dim(M_{\sigma_2, n})$ for all~$n$.
We have to prove that~$\dim X_1 = \dim X_2$ and~$\sigma_1 = \sigma_2$ (or equivalently, $\chi_1 = \chi_2$).
Assume this is false. 
After possibly replacing~$\lambda$ by~$\lambda^{-1}$, we can assume that $\dim(X_1) \geq \dim(X_2)$, and so~$\lambda$ factors through a morphism
\[
\lambda_n: M_{\sigma_1, n}/X_1 \to M_{\sigma_2, n}/X_2,
\]
since the dimension of the left-hand side is smaller than the dimension of the right-hand side, and~$M_{\sigma_2}/X_2$ is a uniserial representation (compare the first paragraph of the proof of Theorem~\ref{psIwahori1}).
We are going to prove that the dimension of $\coker(\lambda_n)$ tends to infinity with~$n$, which would contradict the fact that the dimension of $\ker(\lambda_n)$ is independent of~$n$ for~$n$ large enough (since it coincides with~$\ker(\lambda)$ for~$n$ large enough, by uniseriality).
To do so, fix~$n > 0$. 
We are going to prove that $\coker(\lambda_m) \geq p^{n-1}$ if~$m$ is large enough: by the above discussion, this suffices to conclude the proof of the theorem. 
By Lemma~\ref{structureMsigma} we know that $M_{\sigma_i, m}$ is a quotient of $\Ind_{K_0^+(p^{2m+1})}^{K_0(p)}(\chi_i)$. 
Since the dimension of~$M_{\sigma_i ,m}$ tends to infinity with~$m$, for all~$m$ large enough the module~$M_{\sigma_i, m}$ will have an $\Iw$-quotient~$Y_{\sigma_i, m}$ of dimension~$p^{n}$. 
Since $\Ind_{K_0^+(p^{2m+1})}^{K_0(p)}(\chi_i)$ is uniserial, necessarily $Y_{\sigma_i, m } \cong \Ind_{K_0^+(p^{n+1})}^{K_0(p)}(\chi_i)$.
Furthermore, if $\dim M_{\sigma_1, m}/X_1 \geq p^n$, the map $\lambda_m: M_{\sigma_1, m}/X_1 \to M_{\sigma_2, m}/X_2$ induces an $\Iw$-linear map $\lambda_{Y, m}: Y_{\sigma_1, m} \to Y_{\sigma_2, m}$, and $\coker(\lambda_m)$ surjects onto $\coker(\lambda_{Y, m})$.

\textcolor{black}{If $\dim(X_1) > \dim(X_2)$ then $\lambda_m$ cannot be surjective: in fact, in this case its domain has smaller dimension than its codomain, because $\dim(M_{\sigma_1, n}) = \dim(M_{\sigma_2, n})$. 
Hence~$\lambda_{Y, m}$ is not surjective, since if $\lambda_{Y, m}$ is surjective then~$\lambda_m$ is surjective on $\Iw$-cosocles, hence $\lambda_m$ is surjective. 
On the other hand, if~$\chi_1 \ne \chi_2$ then~$\lambda_{Y, m}$ is not a surjection either, since it cannot be an isomorphism when $\chi_1 \ne \chi_2$.
In each of these cases we deduce by Lemma~\ref{cokernel} and Lemma~\ref{twistinduction} that $\dim(\coker \lambda_m) \geq \dim \coker (\lambda_{Y, m}) \geq p^{n-1}$, and the claim follows.}
\end{proof}

\begin{corollary}\label{ssmorphisms}
The space $\Hom_{k[\Iw Z]}(\pi_{\sigma_1}, \pi_{\sigma_2})$ is one-dimensional if $\sigma_1 = \sigma_2$ and vanishes otherwise. Hence $\Hom_{k[G^+]}(\pi_{\sigma_1}, \pi_{\sigma_2}) = \Hom_{k[\Iw Z]}(\pi_{\sigma_1}, \pi_{\sigma_2})$.
\end{corollary}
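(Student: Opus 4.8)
The plan is to reduce everything to $\Iw$-morphisms between the modules~$M_\sigma$ by dévissage along the two short exact sequences displayed just before the statement, and then to pass from $\Iw$ to $\Iw Z$ and to $G^+$ formally. Since $Z$ is central it acts on each absolutely irreducible $\pi_{\sigma_i}$ through a character, so $\Hom_{\Iw Z}(\pi_{\sigma_1},\pi_{\sigma_2})$ is the subspace of $\Hom_{\Iw}(\pi_{\sigma_1},\pi_{\sigma_2})$ cut out by the two central characters agreeing, and it equals all of $\Hom_{\Iw}(\pi_{\sigma_1},\pi_{\sigma_2})$ whenever $\sigma_1 = \sigma_2$. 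Likewise $\Iw Z \subseteq G^+$ gives an injection $\Hom_{G^+}(\pi_{\sigma_1},\pi_{\sigma_2}) \hookrightarrow \Hom_{\Iw Z}(\pi_{\sigma_1},\pi_{\sigma_2})$, which is onto once we know the target is at most one-dimensional and contains~$\id_{\pi_\sigma}$ when $\sigma_1 = \sigma_2$ (and both vanish otherwise). So it is enough to prove that $\Hom_{\Iw}(\pi_{\sigma_1},\pi_{\sigma_2})$ is one-dimensional if $\sigma_1 = \sigma_2$ and zero otherwise.

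I would first apply $\Hom_{\Iw}(-,\pi_{\sigma_2})$ to $0 \to M_{\sigma_1} \to \pi_{\sigma_1} \to M^+_{\sigma_1^{[s]}}/\chi_1 \to 0$ (writing $\chi_i = \sigma_i^{\Iw_1}$) to obtain a left-exact sequence
\[
0 \to \Hom_{\Iw}(M^+_{\sigma_1^{[s]}}/\chi_1, \pi_{\sigma_2}) \to \Hom_{\Iw}(\pi_{\sigma_1},\pi_{\sigma_2}) \to \Hom_{\Iw}(M_{\sigma_1},\pi_{\sigma_2}).
\]
Applying $\Hom_{\Iw}(M_{\sigma_1},-)$ to $0 \to M_{\sigma_2} \to \pi_{\sigma_2} \to M^+_{\sigma_2^{[s]}}/\chi_2 \to 0$ and using $\Hom_{\Iw}(M_{\sigma_1}, M^+_{\sigma_2^{[s]}}/\chi_2) = 0$ from Proposition~\ref{sstwist} identifies the rightmost term with $\Hom_{\Iw}(M_{\sigma_1}, M_{\sigma_2})$, which by Theorem~\ref{ssIwahori} (case $X_1 = X_2 = 0$) is one-dimensional if $\sigma_1 = \sigma_2$ and zero otherwise. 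The crux of the proof is then the vanishing of the leftmost term: granting it, the displayed sequence gives an injection $\Hom_{\Iw}(\pi_{\sigma_1},\pi_{\sigma_2}) \hookrightarrow \Hom_{\Iw}(M_{\sigma_1},M_{\sigma_2})$ which sends $\id_{\pi_\sigma}$ to $\id_{M_\sigma}$, hence is an isomorphism when $\sigma_1 = \sigma_2$, and the statement for $\Iw$ follows.

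To establish $\Hom_{\Iw}(M^+_{\sigma_1^{[s]}}/\chi_1, \pi_{\sigma_2}) = 0$ I would apply $\Hom_{\Iw}(M^+_{\sigma_1^{[s]}}/\chi_1, -)$ to $0 \to M^+_{\sigma_2^{[s]}} \to \pi_{\sigma_2} \to M_{\sigma_2}/\chi_2 \to 0$ and kill both outer terms. For $\Hom_{\Iw}(M^+_{\sigma_1^{[s]}}/\chi_1, M^+_{\sigma_2^{[s]}})$, untwisting by~$\Pi$ turns it into $\Hom_{\Iw}(M_{\sigma_1^{[s]}}/\chi_1^s, M_{\sigma_2^{[s]}})$, where $\chi_1^s = (\sigma_1^{[s]})^{\Iw_1}$ is precisely the socle of~$M_{\sigma_1^{[s]}}$, a nonzero proper $\Iw$-subspace, so Theorem~\ref{ssIwahori} forces vanishing because the condition $X_1 \subseteq X_2$ fails with $X_2 = 0$. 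For $\Hom_{\Iw}(M^+_{\sigma_1^{[s]}}/\chi_1, M_{\sigma_2}/\chi_2)$, precomposition with the surjection $M^+_{\sigma_1^{[s]}} \twoheadrightarrow M^+_{\sigma_1^{[s]}}/\chi_1$ embeds it into $\Hom_{\Iw}(M^+_{\sigma_1^{[s]}}, M_{\sigma_2}/\chi_2)$, which is zero by Proposition~\ref{sstwist}. All weights occurring along the way (the $\sigma_i$ and the $\sigma_i^{[s]}$) are generic, so both cited results apply. I do not expect a conceptual obstacle beyond Theorem~\ref{ssIwahori} and Proposition~\ref{sstwist} themselves; the one point needing genuine care is the bookkeeping of the $\Pi$-twists and of which socle character ($\chi_i$ or~$\chi_i^s$) is quotiented at each step, so that the failure of $X_1 \subseteq X_2$ in Theorem~\ref{ssIwahori} is really available exactly where it is invoked.
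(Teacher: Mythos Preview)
Your proof is correct and uses the same two ingredients as the paper, namely Proposition~\ref{sstwist} and Theorem~\ref{ssIwahori}, but the packaging differs slightly. The paper argues directly: any $\Iw$-linear map $\pi_{\sigma_1}\to\pi_{\sigma_2}$ must send $M_{\sigma_1}$ into $M_{\sigma_2}$ and $\Pi M_{\sigma_1^{[s]}}$ into $\Pi M_{\sigma_2^{[s]}}$ (this is Proposition~\ref{sstwist} applied to the quotients), hence by Theorem~\ref{ssIwahori} both restrictions are scalars (or zero), and then the two scalars are forced to agree because $M_\sigma$ and $\Pi M_{\sigma^{[s]}}$ meet in $\soc_{\Iw}(\pi_\sigma)$ and together span $\pi_\sigma$. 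You instead run a formal d\'evissage with $\Hom_{\Iw}$ on the two short exact sequences, killing the extra term via the failure of $X_1\subseteq X_2$ in Theorem~\ref{ssIwahori}; this circumvents the ``scalars agree on the socle'' step at the cost of one more application of Theorem~\ref{ssIwahori}. Both routes are short and your bookkeeping of the $\Pi$-twists and socle characters is correct.
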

\begin{proof}
By Proposition~\ref{sstwist}, an $\Iw$-linear map $\pi_{\sigma_1} \to \pi_{\sigma_2}$ has to send $M_{\sigma_1}$ to~$M_{\sigma_2}$ and $\Pi M_{\sigma^{[s]}_1}$ to~$\Pi M_{\sigma^{[s]}_2}$. 
By Theorem~\ref{ssIwahori} (and after twisting by~$\Pi$), these restrictions are zero if~$\sigma_1 \not = \sigma_2$, and otherwise they are scalar endomorphisms. The corollary follows as these have to agree when restricted to~$\soc_{\Iw}(\pi_{\sigma})$.
\end{proof}

\begin{corollary}\label{ssmorphismsN}
Let~$\pi_1, \pi_2$ be generic irreducible supersingular $k[\GL_2(\bQ_p)]$-representations of Serre weight~$\sigma_1, \sigma_2$ respectively. Then $\Hom_{k[G]}(\pi_1, \pi_2) = \Hom_{k[N]}(\pi_1, \pi_2)$.
\end{corollary}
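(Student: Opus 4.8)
The plan is to deduce the statement from Corollary~\ref{ssmorphisms} together with the observation that $G$ is generated by $G^+$ and the single element $\Pi$. The inclusion $\Hom_{k[G]}(\pi_1,\pi_2) \subseteq \Hom_{k[N]}(\pi_1,\pi_2)$ is automatic since $N \subseteq G$, so the task is to show that every $N$-linear morphism $\pi_1 \to \pi_2$ is $G$-linear.

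First I would record two elementary facts about the subgroups involved. Since $\det\Pi = -p$ has odd valuation we have $\Pi \notin G^+$, and because $[G : G^+] = 2$ this gives $G = G^+ \sqcup \Pi G^+$; hence a $k$-linear map $\pi_1 \to \pi_2$ is $G$-linear as soon as it is $G^+$-linear and commutes with the action of $\Pi$. On the other hand $Z \subseteq G^+$, because $\det(aI) = a^2$ always has even valuation, and $\Iw Z \subseteq N$, because $\Pi^2 = pI$ shows $Z = (\bZ_p^\times I)\cdot\Pi^{2\bZ} \subseteq \Iw\cdot\Pi^{\bZ}$. Consequently any $N$-linear morphism $\lambda \colon \pi_1 \to \pi_2$ is in particular $\Iw Z$-linear and commutes with $\Pi$, so it will be enough to prove that such a $\lambda$ is automatically $G^+$-linear.

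To see this I would write $\pi_i = \cInd_{KZ}^G(\sigma_i)/T$ and use the decomposition $\pi_i|_{G^+} = \pi_{\sigma_i} \oplus \pi_{\sigma_i^{[s]}}$ recalled above from~\cite{Paskunasextensions}; since $\Iw Z \subseteq G^+$, these are also decompositions as $\Iw Z$-representations, and all four weights $\sigma_i,\sigma_i^{[s]}$ are generic. Writing $\lambda$ as a $2\times 2$ matrix of $\Iw Z$-linear maps between the summands $\pi_{\sigma_1}, \pi_{\sigma_1^{[s]}}$ on the source and $\pi_{\sigma_2}, \pi_{\sigma_2^{[s]}}$ on the target, Corollary~\ref{ssmorphisms} tells us that each of the four entries is in fact $G^+$-linear (a scalar multiple of an isomorphism when the corresponding weights coincide, and zero otherwise). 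Since $\lambda$ is recovered from these entries by composing with the $G^+$-equivariant projections and inclusions of the two direct sum decompositions, $\lambda$ is $G^+$-linear, and combined with $\Pi$-linearity this makes $\lambda$ a $G$-linear map, as desired.

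I do not expect a genuine obstacle here: the content is entirely in Corollary~\ref{ssmorphisms}, and the present argument is only the packaging needed to pass from the $G^+$-pieces $\pi_\sigma$ back to the $G$-representation $\pi$. The one point to watch is the bookkeeping of which of $\Iw$, $Z$, $\Iw Z$ sit inside $G^+$ and inside $N$, together with the fact that $\Pi$ and $G^+$ generate all of $G$.
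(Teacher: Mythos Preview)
Your argument is correct. Both proofs use Corollary~\ref{ssmorphisms} as the essential input, but the packaging differs. The paper argues via Frobenius reciprocity: since $[N:\Iw Z]=2$ with $\Pi$ swapping the summands, one has $\pi_1|_N \cong \cInd_{\Iw Z}^N(\pi_{\sigma_1})$, and then
\[
\Hom_N(\pi_1,\pi_2)\cong \Hom_{\Iw Z}(\pi_{\sigma_1},\pi_{\sigma_2})\oplus \Hom_{\Iw Z}(\pi_{\sigma_1},\pi_{\sigma_2^{[s]}}),
\]
which by Corollary~\ref{ssmorphisms} has dimension at most one, nonzero precisely when $\pi_1\cong\pi_2$; comparison with Schur's lemma on the $G$-side finishes. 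Your approach instead upgrades $N$-linearity to $G$-linearity directly: you decompose an $N$-linear map as a $2\times 2$ matrix of $\Iw Z$-linear maps between the $\pi_\sigma$'s, invoke Corollary~\ref{ssmorphisms} to see each entry is already $G^+$-linear, and then use $G=G^+\cup\Pi G^+$ together with $\Pi$-equivariance. The paper's route gives the dimension of $\Hom_N$ explicitly as a byproduct, while yours shows more transparently that no individual $N$-map is lost, without any dimension bookkeeping.
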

\begin{proof}
Since $\pi_1 \cong \cInd_{G^+}^G(\pi_{\sigma_1})$, we have $\pi_1|_N \cong \cInd_{\Iw Z}^N \pi_{\sigma_1}$.
Now by Frobenius reciprocity we have
\[
\Hom_N(\pi_1, \pi_2) = \Hom_{\Iw Z}(\pi_{\sigma_1}, \pi_{\sigma_2}) \oplus \Hom_{\Iw Z}(\pi_{\sigma_1}, \pi_{\sigma_2^{[s]}}).
\]
Corollary~\ref{ssmorphisms} together with the fact that~$\sigma_2$ and~$\sigma_2^{[s]}$ are not isomorphic implies that the right-hand side is at most one-dimensional, and does not vanish if and only if~$\pi_1$ and~$\pi_2$ are $G$-isomorphic.
\end{proof}

The following lemmas were used in the proof of Theorem~\ref{ssIwahori}.

\begin{lemma}\label{twistinduction}
Let~$G$ be a locally profinite group, $H$ a closed subgroup of~$G$, and $\alpha: G \to G$ a continuous group automorphism. If~$\theta$ is a $k$-representation of~$H$, we have a representation $\theta_\alpha = (\alpha^{-1})^*(\theta)$ of $\alpha(H)$. Then there is a $G$-linear isomorphism
\[
\Ind_{H}^G(\theta)_\alpha \to \Ind_{\alpha H}^G(\theta_\alpha).
\]
\end{lemma}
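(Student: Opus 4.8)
The plan is to write the isomorphism down explicitly as precomposition with~$\alpha^{-1}$. Realize $\Ind_H^G(\theta)$ as the space of smooth functions $f: G \to \theta$ satisfying $f(hg) = \theta(h)f(g)$ for all $h \in H$, with $G$ acting by right translation $(\pi(g)f)(x) = f(xg)$, and realize $\Ind_{\alpha H}^G(\theta_\alpha)$ in the same way for the subgroup~$\alpha(H)$ and the representation~$\theta_\alpha$. The candidate map is $\Phi \colon f \mapsto f \circ \alpha^{-1}$, that is, $\Phi(f)(x) = f(\alpha^{-1}(x))$.

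First I would check that $\Phi(f)$ really lies in $\Ind_{\alpha H}^G(\theta_\alpha)$. For $h' = \alpha(h) \in \alpha(H)$ we compute $\Phi(f)(h'x) = f(\alpha^{-1}(h')\alpha^{-1}(x)) = f(h\,\alpha^{-1}(x)) = \theta(h)f(\alpha^{-1}(x)) = \theta_\alpha(h')\,\Phi(f)(x)$, where the last equality is the definition $\theta_\alpha = (\alpha^{-1})^*\theta$; so the equivariance relation for the new subgroup holds. Since $\alpha$ is a topological automorphism, precomposition with~$\alpha^{-1}$ preserves local constancy, so $\Phi(f)$ is again smooth, and likewise it preserves the support conditions, so the same formula works verbatim for $\cInd$ as well. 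Next I would verify $G$-equivariance: on $\Ind_H^G(\theta)_\alpha$ the element $g$ acts by the original action of $\alpha^{-1}(g)$, so $(g\cdot f)(y) = f(y\alpha^{-1}(g))$, and one checks that both $\Phi(g\cdot f)$ and $g\cdot\Phi(f)$ evaluate at~$x$ to $f(\alpha^{-1}(x)\alpha^{-1}(g))$. Finally, precomposition with~$\alpha$ provides a two-sided inverse, so $\Phi$ is an isomorphism.

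I do not expect a genuine obstacle here: the lemma is essentially a bookkeeping statement. The only points that require a moment's care are keeping the variance of $(\alpha^{-1})^*$ consistent on the two sides (so that the $H$-equivariance relation transforms into the $\alpha(H)$-equivariance relation, rather than one for $\alpha^{-1}(H)$), and recording that it is precisely the hypothesis that $\alpha$ is a \emph{topological} automorphism that guarantees $\Phi$ maps smooth (and, in the compactly induced case, suitably supported) functions to functions of the same kind.
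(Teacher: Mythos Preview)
Your proof is correct and follows exactly the same approach as the paper: the paper also defines the map as $f \mapsto f \circ \alpha^{-1}$ and verifies the $\alpha(H)$-equivariance relation in the same way. In fact your write-up is more thorough than the paper's, which omits the explicit check of $G$-linearity and bijectivity.
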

\begin{proof}
We can assume that~$\theta_\alpha$ has the same representation space of~$\theta$ with the action $\theta_\alpha(\alpha(h)) t =\theta(h) t$. 
Given a smooth function $f: G \to \theta$ with $f(hg) = \theta(h) f(g)$, let $f_\alpha(g) = f \left ( \alpha^{-1}(g) \right )$. Then $f_\alpha(\alpha(h) g) = f(h \alpha^{-1}(g)) = \theta(h) f_\alpha(g) = \theta_\alpha(\alpha(h))f_\alpha(g)$, hence $f_\alpha \in \Ind_{\alpha H}^G(\theta_\alpha)$. The map we are looking for is $f \mapsto f_\alpha$.
\end{proof}

\begin{lemma}\label{Msigmasamedimension}
\textcolor{black}{Let $\sigma_1, \sigma_2$ be generic Serre weights, with $\sigma_i^{\Iw_1} = \chi_i$, and let $X_i \subset M_{\sigma_i}$ be proper (hence finite-dimensional) $\Iw$-stable subspaces.
If there exists a nonzero $\Iw$-linear morphism $\lambda: M_{\sigma_1}/X_1 \to M_{\sigma_2}/X_2$ then $\dim(\sigma_1) = \dim(\sigma_2)$, and so $\dim(M_{\sigma_1, n}) = \dim(M_{\sigma_2, n})$ for all~$n$.}
\end{lemma}
\begin{proof}
\textcolor{black}{It suffices to prove the same result over~$\lbar k$.
Let
\[
\lambda: M_{\sigma_1}/X_1 \to M_{\sigma_2} / X_2
\]
be a nonzero $\Iw$-linear morphism.
We will prove that~$\dim(\sigma_1) = \dim(\sigma_2)$ by a numerical argument based on the computation of $\dim M_{\sigma_i, n}$ from~\cite[Proposition~4.7]{Paskunasextensions}.
More precisely, if we define the integers~$0< r_i < p-1, 0 \leq a_i \leq p-2$ by the formula
\[
\sigma_i \cong \Sym^{r_i}(k^2) \otimes \operatorname{det}^{a_i}
\] 
then
\begin{equation}\label{dimensionMsigma}
d_{i, n} = \dim M_{\sigma_i, n} - 1 = r_i + (p-1-r_i)p + \cdots + r_i p^{2n -2} + (p-1-r_i)p^{2n-1}
\end{equation}
and we need to prove that~$r_1 = r_2$.
We will do so by assuming~$r_1 \ne r_2$ and deriving a contradiction from the fact that the $j$-th $p$-adic digits of~$d_{1, n}$ and~$d_{2, n}$ are different for all values of~$j$.}

\textcolor{black}{Replacing~$X_1$ by~$\ker(\lambda)$, we can assume without loss of generality that~$\lambda$ is injective.
It follows from Lemma~\ref{uniserial} that~$\lambda$ is an isomorphism, and replacing~$\lambda$ by~$\lambda^{-1}$ we can assume without loss of generality that~$r_2<r_1$.
Since~$d_{i, n}$ tends to infinity with~$n$, and~$X_i$ is finite-dimensional, there exists~$n>0$ such that $X_i \subset M_{\sigma_i, n-2}$ for~$i = 1,2$. 
Fix such an integer~$n$.
Then
\[
\lambda(M_{\sigma_1, n}/X_1) \subset M_{\sigma_2, n}/X_2
\]
since the dimension of the left-hand side is smaller than the dimension of the right-hand side and~$M_{\sigma_2}/X_2$ is uniserial (in fact, $\dim M_{\sigma_2, n} - \dim X_2 > (p-1-r_2)p^{2n-1} > \dim M_{\sigma_1, n}$).
Since~$\lambda$ is injective, the dimension of the left-hand side is equal to
\begin{equation}\label{padicexpansion1}
(\dim M_{\sigma_1, n-2} - \dim X_1) + r_1 p^{2n -4} + (p-1-r_1)p^{2n-3} + r_1 p^{2n -2} + (p-1-r_1)p^{2n-1},
\end{equation}
and since~$M_{\sigma_2, n}/X_2$ is uniserial we find that $\lambda(M_{\sigma_1, n}/X_1)$ contains every $\Iw$-submodule of $M_{\sigma_2, n}/X_2$ whose dimension is smaller than $\dim M_{\sigma_1, n}/X_1$.
It follows that~$\lambda(M_{\sigma_1, n}/X_1)$ contains the unique $\Iw$-submodule $Z_{\sigma_2, n} \subset M_{\sigma_2, n}/X_2$ of dimension
\[
(\dim M_{\sigma_2, n-2} - \dim X_2) + r_2 p^{2n -4} + (p-1-r_2)p^{2n-3} + r_2 p^{2n -2}.
\]
We are going to conclude by studying the dimension of the image of $\lambda(M_{\sigma_1, n}/X_1)$ in~$M_{\sigma_2, n}/Z_{\sigma_2, n}$.
Since $M_{\sigma_2, n}/Z_{\sigma_2, n}$ has dimension $(p-1-r_2)p^{2n-1}$, it follows from Lemma~\ref{structureMsigma} and~Lemma~\ref{psfiltration} that there is an $\Iw$-stable filtration
\[
0 = \Fil^{p-1-r_2} \subset \cdots \subset \Fil^1 \subset \Fil^0 = M_{\sigma_2, n}/Z_{\sigma_2, n}
\]
such that $\Fil^i/\Fil^{i+1} \cong \pi_{2n}(\chi_2\alpha^{-i})^+$.}

\textcolor{black}{Let~$b$ be the smallest integer such that $\Fil^{b} \subseteq \lambda(M_{\sigma_1, n}/X_1)/Z_{\sigma_2, n}$, so that~$1 \leq b \leq p-1-r_2$.
Since $M_{\sigma_2, n}/Z_{\sigma_2, n}$ is uniserial, it follows that $\lambda(M_{\sigma_1, n}/X_1)/Z_{\sigma_2, n}$ is strictly contained in~$\Fil^{b-1}$, and so the induced map
\[
\lambda(M_{\sigma_1, n}/X_1)/Z_{\sigma_2, n} \to \Fil^{b-1}/\Fil^{b}
\]
is not surjective.
By Lemma~\ref{structureMsigma} and Lemma~\ref{cokernel} it follows that there exist integers $0 \leq c \leq p-1, 0 \leq d \leq 2n-2,$ such that
\[
\dim \lambda(M_{\sigma_1, n}/X_1)/Z_{\sigma_2, n} = (p-1-r_2-b)p^{2n-1} + c p^d.
\]
In all, we find that the dimension of $\lambda(M_{\sigma_1, n}/X_1)$ equals
\begin{equation}\label{padicexpansion2}
(\dim M_{\sigma_2, n-2} - \dim X_2) + r_2 p^{2n -4} + (p-1-r_2)p^{2n-3} + r_2 p^{2n -2} + (p-1-r_2-b)p^{2n-1} + c p^d
\end{equation}
and so this number has to equal~\eqref{padicexpansion1}.
Since $0 < r_i < p-1$ and $\dim M_{\sigma_i, n-2} < p^{2n-4}$ , adding~$c p^d$ can modify at most two of the coefficients of~$p^{2n-4}, p^{2n-3}, p^{2n-2}$ in~\eqref{padicexpansion2}.
Hence~\eqref{padicexpansion1} and~\eqref{padicexpansion2} have different $p$-adic expansions, and so cannot be equal.
This concludes the proof that~$r_1 = r_2$.}
\end{proof}

\subsubsection{Extensions.} Now we study the restriction to the Iwahori subgroup of extensions of $G$-representations and $G^+$-representations.

\begin{thm}\label{ssextensions}
Let $\sigma_1, \sigma_2$ be generic Serre weights, and let $\pi_i = \left ( \cInd_{KZ}^G \sigma_i \right )/T$. Then the restriction map $\Ext^1_{k[G]}(\pi_1, \pi_2) \to \Ext^1_{k[\Iw Z]}(\pi_1, \pi_2)$ is injective.
\end{thm}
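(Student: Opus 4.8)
The plan is to reduce the injectivity of $\Ext^1_{k[G]}(\pi_1, \pi_2) \to \Ext^1_{k[\Iw Z]}(\pi_1, \pi_2)$ to the statements about $\Iw$-morphisms already proved, by using the $G$-irreducibility of $\pi_2$ together with the explicit description of $\pi_i|_{\Iw}$ coming from the sequence~(\ref{Iwsupersingular}). Suppose $0 \to \pi_2 \to \mathcal{E} \to \pi_1 \to 0$ is a $G$-extension whose restriction to $\Iw Z$ splits; I want to show the sequence already splits over $G$. An $\Iw Z$-splitting is a section $s: \pi_1 \to \mathcal{E}|_{\Iw Z}$, equivalently an $\Iw Z$-linear retraction $\mathcal{E} \to \pi_2$.

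First I would observe that since $\pi_1$ is $G$-irreducible and the extension is $\Iw Z$-split (hence in particular the inclusion $\pi_2 \hookrightarrow \mathcal{E}$ is not $G$-split would be the thing to contradict), the failure of $G$-splitting is detected by the class in $\Ext^1_{k[G]}(\pi_1, \pi_2)$; the content is to produce, from the $\Iw Z$-section, a $G$-section. Here is where I would exploit the decomposition $\pi_i|_{\Iw} $ built from $M_{\sigma_i}$ and $\Pi M_{\sigma_i^{[s]}}$ glued along $\pi_{\sigma_i}^{\Iw_1} \cong \chi_i$, as in~(\ref{Iwsupersingular}): the point of Theorem~\ref{ssIwahori} (and Corollary~\ref{ssmorphisms}) is that $\End_{\Iw Z}(\pi_{\sigma_i})$ is just $k$, so there is essentially a unique $\Iw Z$-linear retraction up to scalar, and one can hope to show it is automatically compatible with the $\Pi$-action (hence $N$-linear), and then with enough of $G$ to conclude. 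Concretely, I would try to run the standard argument: given the $\Iw Z$-retraction $\rho: \mathcal{E} \to \pi_2$, average or otherwise modify it using the group algebra action (the Hecke operator $T$, or the elements $k_{n+1} \in k[K]$ appearing in Proposition~\ref{exchange}) to make it $G$-equivariant, using that on the relevant eigenlines $\End_{\Iw Z} = k$ forces rigidity.

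The cleanest route, and the one I would pursue, is probably to argue by contradiction using $G$-irreducibility of $\pi_2$ and the rigidity results directly: if the $G$-extension is nonsplit but $\Iw Z$-split, then $\mathcal{E}$ has $\Iw Z$-socle containing $\soc_{\Iw Z}(\pi_2) \oplus \soc_{\Iw Z}(\pi_1)$ (by the splitting), and I would track how a $G$-subrepresentation generated by a lift of the $K$-socle of $\pi_1$ must behave; since $\pi_1$ is irreducible such a lift generates either a complement (splitting) or all of $\mathcal{E}$, and in the latter case one derives a contradiction with the $\Iw Z$-structure — specifically with the fact that $\Hom_{\Iw}(M_{\sigma_1}, M_{\sigma_2}^+/\chi_2^+) = 0$ from Proposition~\ref{sstwist} and the one-dimensionality in Theorem~\ref{ssIwahori}, which constrain how $\pi_1$ and $\pi_2$ can be glued over $\Iw$. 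The key computational input will be that the $\Iw$-extension $\mathcal{E}|_{\Iw}$, being split, forces the $G$-gluing data to live in a space that Theorem~\ref{ssIwahori} shows is too small to support a nonsplit class.

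The main obstacle I anticipate is bookkeeping the interaction between the $G$-action (in particular the $\Pi$ and $t$ translates that leave $\Iw$) and the $\Iw$-decomposition: the extension $\mathcal{E}$ need not respect the $M_\sigma \oplus \Pi M_{\sigma^{[s]}}$ splitting of each $\pi_i$, and one must show that an $\Iw Z$-section can be chosen compatibly on both summands and then promoted. I would handle this by first establishing that any $\Iw Z$-section is, after the dust settles, determined on the $\Iw_1$-invariant lines (which are one-dimensional, carrying $\chi_i$), using Theorem~\ref{ssIwahori} to pin down the section up to scalars on $M_{\sigma_i}$ and $M_{\sigma_i^{[s]}}^+$ separately, and then checking that $G$-equivariance on these lines — which is a finite amount of data involving $T$ and $\Pi$ — can be arranged, invoking Proposition~\ref{exchange}-type computations for the $\GL_2(\bZ_p)$-side and the defining relations of $N$ for the $\Pi$-side. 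If that compatibility can be forced, the section extends to all of $\pi_1$ by irreducibility and generation, giving the $G$-splitting and hence the desired injectivity.
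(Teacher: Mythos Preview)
Your plan gestures at the right object---the $G$-subrepresentation of $\mathcal E$ generated by a lift of the $K$-socle of the quotient---but it does not supply the two ingredients that actually make the argument go, and the substitutes you propose do not obviously work.

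First, the rigidity input you want to use is weaker than you suggest. Over $\Iw Z$ we have $\pi|_{\Iw Z} \cong \pi_\sigma \oplus \pi_{\sigma^{[s]}}$, so $\End_{\Iw Z}(\pi)$ is two-dimensional, not one-dimensional; hence the $\Iw Z$-section is genuinely ambiguous and cannot simply be ``promoted'' by a uniqueness argument. The appeal to Proposition~\ref{exchange} is also misplaced: that result is specific to principal series and the Hecke eigenvalue $\lambda$, and there is no analogue here that lets you average your section into a $G$-equivariant one. So the whole second half of your outline (pinning down the section on $\Iw_1$-invariant lines, then forcing compatibility with $T$ and $\Pi$) is not a route I see how to complete with the tools on the table.

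Second, and more importantly, you correctly set up the dichotomy---if $w_\sigma$ is an $\Iw$-eigenvector lifting $v_\sigma$, then $\langle G\cdot w_\sigma\rangle$ is either a $G$-complement or all of $\mathcal E$---but you give no mechanism to rule out the second case. The paper's proof does this in two concrete steps that you are missing. Step one (Proposition~\ref{liftgeneratesweight}) shows that $\langle K\cdot w_\sigma\rangle \cong \sigma$ and $\langle K\cdot \Pi w_\sigma\rangle \cong \sigma^{[s]}$: a priori these are only quotients of $\Ind_{\Iw}^K(\chi)$ and $\Ind_{\Iw}^K(\chi^s)$, and the argument that the surjection is not injective is a dimension count---if it were injective, the $p$-dimensional piece $\Ind_{\Iw}^K(\chi)_0$ would embed $\Iw$-linearly into $\pi_{\sigma^{[s]}}^{K_1}$ via the $\Iw$-retraction, contradicting $\dim \pi_{\sigma^{[s]}}^{K_1}=p-1$. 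Step two is the computation $T w_\sigma = S_0\Pi w_\sigma$, which lands in the $K$-socle of $\langle K\cdot \Pi w_\sigma\rangle$ and is therefore zero by step one; hence $\langle G\cdot w_\sigma\rangle$ is a quotient of $(\cInd_{KZ}^G\sigma)/T\cong\pi$, giving the splitting. Neither of these inputs---the dimension of $K_1$-invariants in $\pi_{\sigma^{[s]}}$, nor the identity $T=S_0\Pi$ on $\Iw_1$-invariants---appears in your proposal, and I do not see how to reach the conclusion without something playing their role.
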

\begin{proof}
Notice that any element in the kernel of this map is contained in $\Ext^1_{k[G], \zeta}(\pi_1, \pi_2)$, i.e. it has central character~$\zeta$.
Hence, by~\cite[Lemma~5.7]{Paskunasimage}, it suffices to prove the theorem over~$\lbar k$.
By~\cite[Theorem~1.1]{Paskunasextensions}, the theorem is true if~$\sigma_1$ and~$\sigma_2$ are not conjugate, since then the representations~$\pi_i$ are not $G$-isomorphic and the space of $G$-extensions vanishes. 
So it suffices to prove that if $\sigma$ is a generic Serre weight, $\pi = \left ( \cInd_{KZ}^G \sigma \right )/T$, and 
\begin{equation}\label{tosplit}
0 \to \pi \to X \to \pi \to 0
\end{equation}
is an exact sequence of $G$-representations with central character~$\zeta$ that splits over~$\Iw$, then the sequence is split over~$G$.

For this, choose $v_\sigma$ in the $K$-socle of the quotient, invariant under~$\Iw_1$ and such that $\langle K \cdot v_\sigma \rangle \cong \sigma$. Then~$\langle K \cdot \Pi v_\sigma \rangle \cong \sigma^{[s]}$, because $\pi^{\Iw_1}$ is two-dimensional. Let $w_\sigma$ be an $\Iw_1$-invariant lift of~$v_\sigma$ to~$X$ with $\Iw$-eigencharacter~$\chi$, which exists since we assume that the extension is $\Iw$-split.

We are going to study the representations $\langle K \cdot w_\sigma \rangle$ and $\langle K \cdot \Pi w_\sigma \rangle$, which are quotients of finite principal series $\Ind_{\Iw}^K(\chi)$ and $\Ind_{\Iw}^K(\chi^s)$, respectively. 
Recall (see for instance~\cite[Section~2]{BPmodp}) that the representation $\Ind_{\Iw}^K(\chi)$ has a two-dimensional space of $\Iw_1$-invariants, spanned by the function~$\varphi \in \Ind_{\Iw}^K(\chi)$ supported in~$\Iw$ and satisfying $\varphi(1) = 1$, and the function $f_0 = S_0 \varphi$, where we have written $S_0 = \sum_{\lambda \in \bF_p}\fourmatrix {[\lambda]} 1 1 0$. The functions~$\varphi$ and~$f_0$ are $H$-eigenvectors, with eigencharacter $\chi$ and~$\chi^s$ respectively. 
We have an exact sequence
\[
0 \to \Ind_{\Iw}^K(\chi)_0 \to \Ind_{\Iw}^K(\chi) \xrightarrow{\res} \chi \to 0
\]
defined by restricting functions to~$\Iw$, which is $\Iw$-linearly split.
Looking at the $H$-eigencharacter, it follows that~$f_0 \in \Ind_{\Iw}^K(\chi)_0$ and generates its $\Iw$-socle.

\begin{pp}\label{liftgeneratesweight}
The representations~$\langle K \cdot w_\sigma \rangle$ and~$\langle K \cdot \Pi w_\sigma \rangle$ are irreducible.
\end{pp}
\begin{proof}
Assume that 
\[
\alpha: \Ind_{\Iw}^K(\chi) \to \langle K \cdot w_\sigma \rangle, \varphi \mapsto w_\sigma
\]
is injective.
Since~$\langle K \cdot v_\sigma \rangle$ is irreducible, we have $S_0 \varphi \in \pi \subset X$.
Since~$S_0 \varphi$ is an $\Iw$-eigenvector with eigencharacter~$\chi^s$, we deduce furthermore that~$S_0 \varphi \in \pi_{\sigma^{[s]}}$.

By assumption, there is an $\Iw$-linear retraction $r: X \to \pi$ of the inclusion of~$\pi$ in~$X$.
Consider the composition
\[
\Ind_{\Iw}^K(\chi)_0 \xrightarrow{\alpha} X \xrightarrow{r} \pi \to \pi_{\sigma^{[s]}}.
\]
Since~$S_0 \varphi$ generates the $\Iw$-socle of~$\Ind_{\Iw}^K(\chi)_0$, this composition is injective.
But the dimension of~$\Ind_{\Iw}^K(\chi)_0$ is equal to~$p$. 
Since the first congruence subgroup~$K_1$ acts trivially on $\Ind_{\Iw}^K(\chi)_0$, this contradicts the fact that $\dim( \pi_{\sigma^{[s]}}^{K_1}) = p-1$ (see~\cite[Section~20]{BPmodp} or~\cite[Theorem~1.4]{Morrainvariants}).

The same proof works for~$\Ind_{\Iw}^K(\chi^s) \to \langle K \cdot \Pi w_{\sigma} \rangle$, or argue by symmetry, using that
\[
\left ( \cInd_{KZ}^G \sigma \right )/T \cong \left ( \cInd_{KZ}^G \sigma^{[s]} \right )/T
\]
to conclude.
\end{proof}

It follows from Proposition~\ref{liftgeneratesweight} that~$\langle K \cdot w_\sigma \rangle$ is $K$-isomorphic to~$\sigma$.
To complete the proof of the theorem  it suffices to prove that~$T w_\sigma = 0$, because then~$\langle G \cdot w_\sigma \rangle \cong \pi$ maps isomorphically to the quotient~$\pi$ and defines a $G$-splitting of the exact sequence~(\ref{tosplit}).
Recall that we have the equality
\[
T w_\sigma = \sum_{\lambda \in \bF_p} \fourmatrix 1 {[\lambda]} 0 1 t w_\sigma = \sum_{\lambda \in \bF_p}\fourmatrix{[\lambda]}{1}{1}{0}\Pi w_\sigma = S_0 \Pi w_\sigma.
\]
By Proposition~\ref{liftgeneratesweight} the surjection $\Ind_{\Iw}^K(\chi^s) \to \langle K \cdot \Pi w_\sigma \rangle$ sending~$\varphi$ to~$\Pi w_\sigma$ is equal to zero on the $K$-socle. 
Hence the image of~$S_0 \varphi$ under this map is zero, because~$S_0 \varphi$ generates the $K$-socle of~$\Ind_{\Iw}^K(\chi^s)$. 
But this image is $S_0 \Pi w_\sigma = Tw_\sigma$, hence~$T w_\sigma = 0$.

\end{proof}

\begin{corollary}\label{ssextensions2}
Let $\sigma_1, \sigma_2$ be generic Serre weights. Then the restriction map 
\[
\Ext^1_{k[G^+]}(\pi_{\sigma_2}, \pi_{\sigma_1}) \to \Ext^1_{k[\Iw Z]}(\pi_{\sigma_2}, \pi_{\sigma_1}) 
\]
is injective.
\end{corollary}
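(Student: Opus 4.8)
The plan is to bootstrap from Theorem~\ref{ssextensions} by inducing from~$G^+$ up to~$G$. Write $\pi^{(i)} = \bigl(\cInd_{KZ}^G \sigma_i\bigr)/T$, so that $\pi^{(i)} \cong \Ind_{G^+}^G(\pi_{\sigma_i})$ and $\pi^{(i)}|_{G^+} = \pi_{\sigma_i}\oplus\pi_{\sigma_i^{[s]}}$ is a semisimple $G^+$-representation of length two. A class in the kernel of the restriction map is represented by a $G^+$-extension
\[
0 \to \pi_{\sigma_1} \to E \to \pi_{\sigma_2} \to 0
\]
that splits over~$\Iw Z$. Since $G^+$ is open of index two in~$G$, the functor $\Ind_{G^+}^G$ is exact (and agrees with $\cInd_{G^+}^G$), so applying it produces a short exact sequence of $G$-representations $0 \to \pi^{(1)} \to \Ind_{G^+}^G E \to \pi^{(2)} \to 0$.

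Next I would check that this induced extension splits over~$\Iw Z$. Because $\Pi \notin G^+$ while $\Pi^2 \in Z$, the Mackey formula identifies $\Res^G_{G^+}\Ind_{G^+}^G(-)$ functorially with $(-)\oplus \ad(\Pi)^*(-)$ (the two a priori possible twists $\ad(\Pi)$ and $\ad(\Pi^{-1})$ coincide as $\Pi^2$ is central). Applied to our sequence, this realizes $\bigl(\Ind_{G^+}^G E\bigr)|_{G^+}$ as the direct sum of the displayed sequence and its twist by~$\ad(\Pi)$. Now $\Pi$ normalizes~$\Iw$ (it generates $N/\Iw$) and commutes with~$Z$, so $\ad(\Pi)$ restricts to an automorphism of~$\Iw Z$; twisting by it is an exact autoequivalence of smooth $k[\Iw Z]$-modules, hence carries the $\Iw Z$-split sequence $E|_{\Iw Z}$ to another $\Iw Z$-split sequence. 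Therefore $\bigl(\Ind_{G^+}^G E\bigr)|_{\Iw Z}$ is a direct sum of two $\Iw Z$-split extensions, so it splits; equivalently, the class of $\Ind_{G^+}^G E$ in $\Ext^1_{k[G]}(\pi^{(2)},\pi^{(1)})$ restricts to zero over~$\Iw Z$.

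By Theorem~\ref{ssextensions}, applied to the generic weights $\sigma_2,\sigma_1$, this restriction map is injective, so $\Ind_{G^+}^G E$ splits over~$G$; hence $\bigl(\Ind_{G^+}^G E\bigr)|_{G^+} \cong \pi^{(1)}|_{G^+}\oplus\pi^{(2)}|_{G^+}$ is a semisimple $k[G^+]$-module. Since $E$ is a direct summand of $\bigl(\Ind_{G^+}^G E\bigr)|_{G^+}$, it is itself semisimple, so the subrepresentation $\pi_{\sigma_1}\subset E$ has a $G^+$-stable complement and the original extension splits over~$G^+$. This shows the restriction map is injective.

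The deduction is essentially formal once Theorem~\ref{ssextensions} is available; the only point requiring a little care is the verification that twisting by~$\ad(\Pi)$ preserves $\Iw Z$-splitness, which hinges on~$\Pi$ normalizing~$\Iw Z$ and on~$\Pi^2$ being central, and on bookkeeping the Mackey decomposition of $\Ind_{G^+}^G E$ correctly.
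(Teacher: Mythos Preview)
Your proposal is correct and follows essentially the same strategy as the paper's proof: induce the $\Iw Z$-split $G^+$-extension up to~$G$, use the Mackey decomposition $\Res^G_{G^+}\Ind_{G^+}^G(-)\cong(-)\oplus\ad(\Pi)^*(-)$ together with the fact that~$\Pi$ normalizes~$\Iw Z$ to see the induced extension is $\Iw Z$-split, invoke Theorem~\ref{ssextensions} to split it over~$G$, and then descend back to~$G^+$. The paper phrases the last step as ``the inclusion $\pi_{\sigma_1}\to X\oplus X^+$ has a $G^+$-linear retraction, which we restrict to~$X$'', whereas you phrase it as ``a direct summand of a semisimple module is semisimple''; these are equivalent. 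One cosmetic remark: your use of~$E$ for the extension clashes with the paper's coefficient field~$E$, so you may want to rename it (the paper uses~$X$).
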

\begin{proof}
Let $0 \to \pi_{\sigma_1} \to X \to \pi_{\sigma_2} \to 0$ be a short exact sequence of $G^+$-representations that is split over~$\Iw Z$. The exact sequence
\[
0 \to \pi_{\sigma_1} \oplus \pi_{\sigma_1}^+ \to X \oplus X^+ \to \pi_{\sigma_2} \oplus \pi_{\sigma_2}^+ \to 0
\]
is $\Iw Z$-split, because $(-)^+ = \ad(\Pi)^*(-)$ and~$\Pi$ normalizes~$\Iw Z$. It is isomorphic to the restriction to~$G^+$ of 
\[
0 \to \Ind_{G^+}^G(\pi_{\sigma_1}) \to \Ind_{G^+}^G(X) \to \Ind_{G^+}^G(\pi_{\sigma_2}) \to 0.
\]
Since $\Ind_{G^+}^G(\pi_{\sigma_i}) \cong \left ( \cInd_{KZ}^G \sigma_i \right )/T$, this sequence is $G$-split by Theorem~\ref{ssextensions}. But then the inclusion $\pi_{\sigma_1} \to X \oplus X^+$ has a $G^+$-linear retraction, which we can restrict to~$X$ to prove that the original exact sequence was already $G^+$-split.
\end{proof}

\subsection{\textcolor{black}{Restriction to~$\Iw Z$ and~$KZ$ of atomes automorphes.}}\label{atomes}
Since we will refer to the work of Breuil and Morra on the structure of atomes automorphes of length two, we will follow some of the notation of~\cite{BreuilrepsII, Morraatomes}.
For instance, the element $[1, x] \in \cInd_{KZ}^G(\sigma)$ is the function supported on~$KZ$ and sending the identity to~$x \in \sigma$.
\begin{defn}
Let $r \in \{1, \ldots, p-4\}$ and $\lambda \in k^\times$.
In this section we let~$\mA_{r, \lambda}$ denote a representation of~$\GL_2(\bQ_p)$ such that there exists a nonsplit exact sequence
\[
0 \to \Ind_B^G (\nr_{\lambda^{-1}} \otimes \nr_\lambda \omega^r) \to \mA_{r, \lambda} \to \Ind_B^G(\nr_\lambda \omega^{r+1} \otimes \nr_{\lambda^{-1}} \omega^{-1}) \to 0.
\]
\end{defn}

\begin{rk}\label{whyp-3}
The papers~\cite{Morraatomes} also studies the representations~$\mA_{r, \lambda}$ when~$r = p-3$, in which case they are not generic according to our conventions.
Since some of our results in Section~\ref{principalseriesrestriction} are for generic principal series, we exclude these cases from consideration.
When~$r = p-2$ there are many atomes automorphes of length~$2$, whose factors are generic principal series, but they are not covered by the results in~\cite{Morraatomes}.
This is the reason why we introduced ``very generic" representations in Section~\ref{defngeneric}.
\end{rk}

We write $\pi_1 = \Ind_B^G (\nr_{\lambda^{-1}} \otimes \nr_\lambda \omega^r)$ and $\pi_2 = \Ind_B^G(\nr_\lambda \omega^{r+1} \otimes \nr_{\lambda^{-1}} \omega^{-1})$, so that~$\pi_1$ has Serre weight~$\Sym^r(k^2)$ and $\pi_2$ has Serre weight~$\Sym^{p-3-r}(k^2) \otimes \det^{r+1}$. 
We fix an exact sequence of $G$-representations with central character~$\zeta$
\begin{equation}\label{atome}
0 \to \pi_1 \to \mA_{r, \lambda} \to \pi_2 \to 0.
\end{equation}
There exists a unique isomorphism class of $\GL_2(\bQ_p)$-representations~$\mA_{r, \lambda}$ for which a nonsplit exact sequence of the form~\eqref{atome} exists (see~\cite[Theorem~11.5]{Paskunasextensions} for a reference).
We write~$\kappa_i$ for the $\Iw$-character conjugate to~$\soc_K(\pi_i)^{\Iw_1}$, so that $\pi_i|_{\Iw} \cong \pi_\infty^+(\kappa_i) \oplus \pi_{\infty}(\kappa_i)$ by~\eqref{Iwahorisplit}, and the element~$\Pi$ switches the two summands in this direct sum decomposition (see the proof of Corollary~\ref{cor:psmorphisms}).
More explicitly, we have $\kappa_1 = d^r$ and $\kappa_2 = a^{r+1}d^{-1}$.
We will also write~$\mA_{r, s, \lambda}$ for~$\mA_{r, \lambda} \otimes (\omega^s \circ \det)$.

\subsubsection{Restriction of~$\mA_{r, s, \lambda}$ to~$\Iw Z$.}\label{Iwahoriatomes}
Let~$\mB_{r, s, \lambda}$ be the preimage of~$\pi_\infty(\kappa_2)$ in~$\mA_{r, s, \lambda}$, which is an $\Iw$-submodule of~$\mA_{r, s, \lambda}$.
We define
\[
\mC_{r, s, \lambda} = \mB_{r, s, \lambda}/\pi_{\infty}(\kappa_1) \text{ and } \mD_{r, s, \lambda} = \mB_{r, s, \lambda}/\pi_{\infty}(\kappa_1)^+.
\]
Our main result in this section is the following theorem.
\begin{thm}\label{finitesplittingIw}
The sequence 
\begin{equation}\label{almostsplit}
0 \to \pi_\infty(\kappa_1)^+ \to \mC_{r, s, \lambda} \to \pi_\infty(\kappa_2) \to 0
\end{equation}
is not split.
For any finite-dimensional $\Iw$-submodule $X \subset \pi_\infty(\kappa_1)$, the sequence
\begin{equation}\label{staysnonsplit}
0 \to \pi_\infty(\kappa_1)/X \to \mD_{r, s, \lambda}/X \to \pi_\infty(\kappa_2) \to 0
\end{equation}
is not split.
\end{thm}
By twisting it suffices to prove this theorem when~$s = 0$, which we assume for the rest of this section.
As a first step towards the proof of Theorem~\ref{finitesplittingIw} we will follow \cite[Section~4]{Morraatomes} and~\cite{BreuilrepsII} in giving an explicit presentation for~$\mA_{r, \lambda}$.
So we introduce the notation~$\sigma_{j}$ for~$\Sym^{j}(k^2)$. 
When~$j>p-1$ this is a reducible representation of~$KZ$.
By~\cite[Lemme~5.1.3]{BreuilrepsII} there is a nonsplit exact sequence
\[
0 \to (\sigma_r \otimes \det) \oplus \sigma_{r+2} \to \sigma_{p+1+r} \to \sigma_{p-3-r} \otimes \operatorname{det}^{r+2} \to 0
\]
such that the inclusion $\sigma_r \otimes \det \to \sigma_{p+1+r}$ sends~$x^{r-i}y^i$ to~$X^{p+r-i}Y^{i+1} - X^{r+1-i}Y^{p+i}$.
(Notice that the condition $p+3 \leq k \leq 2p$ in the reference translates to $0 \leq r \leq p-3$, hence the lemma applies to our choice of~$r$.)
It gives rise to a short exact sequence
\begin{equation}\label{weightsequence}
0 \to \cInd_{KZ}^G(\sigma_r \otimes \det) \xrightarrow{\iota} \left ( \cInd_{KZ}^G \sigma_{p+1+r} \right )/T \xrightarrow{\pr} \cInd_{KZ}^G(\sigma_{p-3-r} \otimes \mathrm{det}^{r+2}) \to 0
\end{equation}
where $\iota[1, x^{r-j}y^j] = \frac{r+2}{j+1}[1, X^{p+r-j}Y^{j+1}]$ and $\pr[1, X^{p-1}Y^{r+2}] = [1, x^{p-3-r}]$. 
Furthermore, $[1, X^{p+1+r}] \in \image(T)$, and so it vanishes in the quotient.
See \cite[Lemme~4.5]{Morraatomes} for this computation.

\begin{lemma}\label{onedim}
The space $\Hom_{KZ}(\sigma_{p+1+r} \otimes \det^{-1}, \mA_{r, \lambda})$ is at most one-dimensional.
\end{lemma}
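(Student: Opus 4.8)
\textit{Proof plan.} The plan is to squeeze $\dim_k\Hom_{KZ}(\sigma_{p+1+r}\otimes\det^{-1},\mA_{r,\lambda})$ between a lower bound coming from the presentation~(\ref{presentation}) and an upper bound coming from the exact sequence~(\ref{atome}). For the lower bound, I would twist the middle vertical surjection of~(\ref{presentation}) by $(\omega\circ\det)^{-1}$; since twisting by a character of $G$ commutes with compact induction, this rewrites it as a surjective $G$-map $\bigl(\cInd_{KZ}^G(\sigma_{p+1+r}\otimes\det^{-1})\bigr)/T' \to \mA_{r,\lambda}$ for the appropriately twisted Hecke operator $T'$. Composing with the quotient map from $\cInd_{KZ}^G(\sigma_{p+1+r}\otimes\det^{-1})$ and applying Frobenius reciprocity produces a nonzero element of $\Hom_G\bigl(\cInd_{KZ}^G(\sigma_{p+1+r}\otimes\det^{-1}),\mA_{r,\lambda}\bigr)=\Hom_{KZ}(\sigma_{p+1+r}\otimes\det^{-1},\mA_{r,\lambda})$, so this space is at least one-dimensional.

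For the upper bound, set $\tau=\sigma_{p+1+r}\otimes\det^{-1}$ and apply $\Hom_{KZ}(\tau,-)$ to~(\ref{atome}), obtaining the left-exact sequence $0\to\Hom_{KZ}(\tau,\pi_1)\to\Hom_{KZ}(\tau,\mA_{r,\lambda})\to\Hom_{KZ}(\tau,\pi_2)$. It is therefore enough to show that $\Hom_{KZ}(\tau,\pi_1)=0$ and $\dim_k\Hom_{KZ}(\tau,\pi_2)\le 1$. Since $\tau$, $\pi_1$ and $\pi_2$ all share a central character, a $K$-linear map between any two of them is automatically $KZ$-linear, so I would pass to $K$-representations. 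By the Iwasawa decomposition (as in~(\ref{Iwasawadecomposition})), $\pi_i|_K\cong\Ind_{B(\bZ_p)}^K(\chi_i)$, where $\chi_i$ is the restriction to $B(\bZ_p)$ of the inducing character, so that $\chi_1$ is the character $\fourmatrix a b 0 d\mapsto d^{r}$ and $\chi_2$ the character $\fourmatrix a b 0 d\mapsto a^{r+1}d^{-1}$ (inflated from $T(\bF_p)$). Frobenius reciprocity then identifies $\Hom_{KZ}(\tau,\pi_i)$ with $\Hom_{B(\bF_p)}(\Sym^{p+1+r}\otimes\det^{-1},\chi_i)$, the $\chi_i$-isotypic part of the $U(\bF_p)$-coinvariants of $\Sym^{p+1+r}\otimes\det^{-1}$.

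So the heart of the matter is to compute $(\Sym^{p+1+r})_{U(\bF_p)}$ as a module over $T(\bF_p)$. Using that the ring of $U(\bF_p)$-invariants of $k[x,y]$ is $k[x,\; y^p-x^{p-1}y]$, one checks (this is where $1\le r\le p-4$ enters) that the images of $y^{p+1+r}$ and $x^{r+2}y^{p-1}$ form a basis of $(\Sym^{p+1+r})_{U(\bF_p)}$, so it is two-dimensional with $T(\bF_p)$-characters $\fourmatrix a 0 0 d\mapsto d^{r+2}$ and $\fourmatrix a 0 0 d\mapsto a^{r+2}$; after twisting by $\det^{-1}$ the two characters become $\fourmatrix a 0 0 d\mapsto a^{-1}d^{r+1}$ and $\fourmatrix a 0 0 d\mapsto a^{r+1}d^{-1}$, which are distinct, so every character occurs in these coinvariants with multiplicity at most one. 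The second of the two is $\chi_2$, so $\dim_k\Hom_{KZ}(\tau,\pi_2)\le 1$ (in fact it is $1$, consistent with the lower bound); on the other hand $\chi_1$, the character $\fourmatrix a 0 0 d\mapsto d^{r}$, equals neither, since $d^{r}=a^{-1}d^{r+1}$ would force $1\equiv 0$ and $d^{r}=a^{r+1}d^{-1}$ would force $r\equiv p-2$ modulo $p-1$, both impossible for $1\le r\le p-4$ with $p>3$. Hence $\Hom_{KZ}(\tau,\pi_1)=0$, and combining with the lower bound gives $\dim_k\Hom_{KZ}(\sigma_{p+1+r}\otimes\det^{-1},\mA_{r,\lambda})=1$.

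The step needing the most care is the bookkeeping in the last paragraph: pinning down the characters $\chi_1,\chi_2$ and the $T(\bF_p)$-weights on $(\Sym^{p+1+r})_{U(\bF_p)}$ exactly, since a single misplaced twist would change the answer, and also checking that the representation $\sigma_{p+1+r}\otimes\det^{-1}$ in the statement really is the $(\omega\circ\det)^{-1}$-twist of the term $\sigma_{p+1+r}$ appearing in~(\ref{presentation}); the homological algebra itself is routine.
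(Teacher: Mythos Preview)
Your argument is correct, but it takes a different route from the paper. The paper argues via the $K$-socle: using \cite[Lemme~5.1.3(ii)]{BreuilrepsII} for the Jordan--H\"older structure of~$\sigma_{p+1+r}$ and \cite[Section~20]{BPmodp} for the fact that $\soc_K(\mA_{r,\lambda})\cong\sigma_r$ is irreducible, one sees that any nonzero $K$-map $\sigma_{p+1+r}\otimes\det^{-1}\to\mA_{r,\lambda}$ must be nonzero on the socle factor~$\sigma_r$ (it cannot factor through the cosocle $\sigma_{p-3-r}\otimes\det^{r+1}$, and it kills $\sigma_{r+2}\otimes\det^{-1}$ since that weight is not in $\soc_K\mA_{r,\lambda}$), so any two such maps are proportional already on~$\sigma_r$. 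You instead apply $\Hom_{KZ}(\tau,-)$ to the defining extension~(\ref{atome}) and compute $\Hom_K(\tau,\pi_i)$ by Frobenius reciprocity as the $\chi_i$-eigenspace in the $U(\bF_p)$-coinvariants of $\Sym^{p+1+r}\otimes\det^{-1}$. Your identification of the two $T(\bF_p)$-weights on $(\Sym^{p+1+r})_{U(\bF_p)}$ as $d^{r+2}$ and $a^{r+2}$ is right: the weight $d^{r+2}$ appears because $e_0=y^{p+1+r}$ visibly survives (no element of $(g-1)V$ has an $e_0$-component), and $a^{r+2}$ appears because $V_U$ surjects onto $(\cosoc_K V)_U$, whose weight is $a^{r+2}$; since $\dim V_U=\dim V^U=2$ these are all the weights. (Whether the specific vector $x^{r+2}y^{p-1}$ you name generates the $a^{r+2}$-line is immaterial for the argument.) Your approach trades the input $\soc_K(\mA_{r,\lambda})\cong\sigma_r$ for a direct coinvariant computation; the paper's is shorter but relies on that structural fact about~$\mA_{r,\lambda}$.
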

\begin{proof}
By~\cite[Section~20]{BPmodp} the $K$-socle of~$\mA_{r, \lambda}$ is isomorphic to $\sigma_r$. Hence every nonzero $KZ$-morphism $\sigma_{p+1+r} \otimes \det^{-1} \to \mA_{r, \lambda}$ has to be zero on the factor $\sigma_{r+2} \otimes \det^{-1}$ and cannot be zero on~$\sigma_r$, since it cannot factor through the cosocle. This implies the claim since any two such morphisms are linearly dependent when restricted to~$\sigma_r$ (since~$\soc_K \mA_{r, \lambda}$ is irreducible).
\end{proof}
By Lemma~\ref{onedim} and the results in \cite[(17)]{Morraatomes} or~\cite[Section~5.3]{BreuilrepsII}, the space in the statement of Lemma~\ref{onedim} is actually one-dimensional. 
Furthermore, every nonzero $KZ$-linear map $\sigma_{p+1+r} \otimes \det^{-1} \to \mA_{r, \lambda}$ factors through an injection $\sigma_{p+1+r} \otimes \det^{-1}/\sigma_{r+2} \otimes \det^{-1} \to \mA_{r, \lambda}$ and extends uniquely to a commutative diagram
\begin{equation}\label{presentation}
\begin{tikzcd}
0 \arrow{r} & \cInd_{KZ}^G(\sigma_r) \arrow{r}{\iota} \arrow{d} & \left ( \cInd_{KZ}^G \sigma_{p+1+r} \otimes \operatorname{det}^{-1} \right )/T \arrow{r}{\pr} \arrow{d} & \cInd_{KZ}^G(\sigma_{p-3-r} \otimes \mathrm{det}^{r+1}) \arrow{r} \arrow{d} & 0\\
0 \arrow{r} & \pi_1 \arrow{r} & \mA_{r, \lambda} \arrow{r} & \pi_2 \arrow{r} & 0
\end{tikzcd}
\end{equation}
with all vertical arrows surjective.
In the rest of this section we fix such a $KZ$-linear map and we work with the corresponding commutative diagram, which gives rise to special elements $[1, X^{p+1+r-i}Y^i] \in \mA_{r, \lambda}$ for all~$1 \leq i \leq p+1+r$, as well as $x_1 = [1, x^r] \in \pi_1$ and $x_2 = [1, x^{p-3-r}] \in \pi_2$, such that~$x_i$ generates~$\soc_K(\pi_i)^{\Iw_1}$.
As in~\cite[Section~4.2]{Morraatomes} we write for any~$n \geq 0$
\begin{equation}\label{specialelementI}
e_{n+1} = \tau^n \Pi [1, X^{p-1}Y^{r+2}] = \fourmatrix 0 1 {p^{n+1}} 0 [1, X^{p-1}Y^{r+2}] \in \mA_{r, \lambda},
\end{equation}
which is a lift of $\tau^n \Pi [1, x^{p-3-r}] \in \pi_2$. 
Since~$X^{p-1}Y^{r+2} \in \sigma_{p+1+r} \otimes \det^{-1}$ is an $(HK_1, a^{-1}d^{r+1})$-eigenvector, a direct computation using the fact that~$\kappa_2 = a^{r+1}d^{-1}$ implies that $e_{n+1}$ is a $(K_0(p^{n+2}), \kappa_2)$-eigenvector.
We will be studying the cocycle $(u_{n+1}-1)e_{n+1}$, where
\[
u_{n+1} = \fourmatrix 1 0 {p^{n+1}} 1.
\]
This cocycle is computed in~\cite[Lemme~5.1]{Morraatomes}, and we will need a slightly different perspective on this computation, so we recall it in full.
We have
\begin{align*}
u_{n+1}e_{n+1} &= \fourmatrix 1 0 {p^{n+1}} 1 \fourmatrix 0 1 {p^{n+1}} 0 [1, X^{p-1}Y^{r+2}] = \fourmatrix 0 1 {p^{n+1}} 0 \fourmatrix 1 1 0 1 [1, X^{p-1}Y^{r+2}]\\
&= \fourmatrix 0 1 {p^{n+1}} 0 [1, X^{p-1}(X+Y)^{r+2}] = \fourmatrix 0 1 {p^{n+1}} 0 \sum_{j=0}^{r+2} \binom {r+2}{j} [1, X^{p+r+1-j}Y^j].
\end{align*}
By~\eqref{specialelementI} and the vanishing of~$[1, X^{p+1+r}]$ in~$\mA_{r, \lambda}$, we deduce that
\[
u_{n+1}e_{n+1} = e_{n+1} + \fourmatrix 0 1 {p^{n+1}} 0 \sum_{j=1}^{r+1} \binom {r+2}{j} [1, X^{p+r+1-j}Y^{j}] = e_{n+1} + \fourmatrix 0 1 {p^{n+1}} 0 \sum_{i=0}^{r} \binom {r+2}{i+1} [1, X^{p+r-i}Y^{i+1}]
\]
and finally
\begin{equation}\label{cocyclen+1}
(u_{n+1}-1)e_{n+1} = \fourmatrix 0 1 {p^{n+1}} 0 \sum_{i=0}^{r} \binom {r+2}{i+1} \frac{i+1}{r+2} [1, x^{r-i}y^i] = \fourmatrix 0 1 {p^{n+1}} 0 \sum_{i=0}^{r} \binom {r+1}{i}[1, x^{r-i}y^i].
\end{equation}
The following definition will be useful to compare various cocycles in~$\mA_{r, \lambda}$.
We will use it to give a quantitative refinement of certain results from~\cite[Section~5]{Morraatomes}, although we work with $\lbar U(p^{n+1}\bZ_p)$-cocycles rather than~$B(\bZ_p)$-cocycles.
Recall from Proposition~\ref{Morrathesis1} that~$\pi_\infty(\kappa_1)$ is~$\lbar U(p\bZ_p)$-uniserial and $\Iw$-uniserial, and so
\begin{equation}\label{samesoclefiltration}
\soc^i_\Iw \pi_\infty(\kappa_1) = \soc^i_{\lbar U(p\bZ_p)} \pi_\infty(\kappa_1) 
\end{equation}
for all~$i$.
This property implies that a subspace of~$\pi_\infty(\kappa_1)$ is $\Iw$-stable if and only if it is $\lbar U(p\bZ_p)$-stable, and in fact the only such subspaces are displayed in~\eqref{samesoclefiltration}.

\begin{defn}
For nonzero $x \in \pi_\infty(\kappa_1)$ we define~$\deg(x)-1$ to be the only integer~$d$ such that $x \in \soc_\Iw^d(\pi_\infty(\kappa_1))$ but~$x \not \in \soc_\Iw^{d-1}(\pi_\infty(\kappa_1))$.
We define~$\deg(0) = 0$.
Equivalently, for all~$x \in \pi_\infty(\kappa_1)$ we have
\[
\deg(x) = \dim_{k} \langle \Iw \cdot x \rangle = \dim_{k} \langle \lbar U(p\bZ_p) \cdot x \rangle.
\]
\end{defn}

\begin{lemma}\label{degreelinearindependence}
Assume that~$S = \{v_0, \ldots, v_n\} \subset \pi_\infty(\kappa_1)$, that $i \ne j$ implies~$\deg(v_i)\ne\deg(v_j)$, and that~$v_j \ne 0$ for all~$j$.
Then~$S$ is linearly independent.
\end{lemma}
\begin{proof}
Without loss of generality, $i < j$ implies $0<\deg(v_i)<\deg(v_j)$.
Assume $\sum_{i=0}^n\mu_jv_j = 0$.
Since~$\sum_{i=0}^{n-1}\mu_j v_j \in \soc^{\deg(v_{n-1})-1}_\Iw\pi_\infty(\kappa_1)$, if~$\mu_n \ne 0$ then $v_n \in \soc^{\deg(v_{n-1})-1}_\Iw\pi_\infty(\kappa_1)$, which contradicts $\deg(v_n) > \deg(v_{n-1})$.
So~$\mu_n = 0$.
Repeating this argument we conclude the proof since~$v_0 \ne 0$.
\end{proof}

\begin{lemma}\label{cocycledegree}
Assume that~$x \in \pi_\infty(\kappa_1)$ is not fixed by~$u_{n+1}$.
Then~$\deg(x) > p^n$ and
\[
\deg((u_{n+1}-1)x) = \deg(x)-p^{n}.
\]
\end{lemma}
\begin{proof}
By Proposition~\ref{Morrathesis1} the representation~$\pi_\infty(\kappa_1)$ is an injective envelope of the trivial representation of~$\lbar U(p\bZ_p)$, hence the same is true for~$\pi_\infty(\kappa_1)/\soc^i_\Iw \pi_\infty(\kappa_1)$ for all~$i$: see~\cite[Proposition~5.9]{Paskunasextensions}, or use the fact that the Iwasawa algebra of~$\lbar U(p\bZ_p)$ is a discrete valuation ring.
It follows that the space of $u_{n+1}$-fixed vectors in~$\pi_\infty(\kappa_1)/\soc^i_\Iw \pi_\infty(\kappa_1)$ has the same dimension as $\pi_\infty(\kappa_1)^{U(p^{n+1}\bZ_p)}$, which is $p^n$-dimensional. 
Since~$\pi_\infty(\kappa_1)$ is $\lbar U(p\bZ_p)$-uniserial, it follows that if~$x$ is not fixed by~$u_{n+1}$ then $\deg(x) = \dim_k\langle \lbar U(p\bZ_p) \cdot x \rangle > p^n$. 
This concludes the proof of the first claim.

For the second claim, by definition we have $(u_{n+1}-1)x \in \soc^i_\Iw \pi_\infty(\kappa_1)$ if and only if $i \geq \deg((u_{n+1}-1)x)-1$.
We will show this is equivalent to $i \geq \deg(x)-p^n-1$.
To see this, notice that $(u_{n+1}-1)x \in \soc_\Iw^i \pi_\infty(\kappa_1)$ if and only if the image~$\lbar x$ of~$x$ in~$\pi_\infty(\kappa_1)/\soc^i_\Iw \pi_\infty(\kappa_1)$ is $u_{n+1}$-fixed.
This holds if and only if~$\lbar x$ of~$x$ generates a $\lbar U(p\bZ_p)$-submodule of $\pi_\infty(\kappa_1)/\soc^i_\Iw \pi_\infty(\kappa_1)$ of dimension at most~$p^n$.
We claim that 
\[
\langle \lbar U(p\bZ_p) \cdot \lbar x \rangle \leq p^n \text{ if and only if } \deg(x)-i-1 \leq p^n.
\]
To see this, notice first that if~$i \geq \deg(x)$ then~$x \in \soc_\Iw^i\pi_\infty(\kappa_1)$, and so $\lbar x = 0$ and the equivalence is true.
On the other hand, if~$i < \deg(x)$ then~$\soc_\Iw^i\pi_\infty(\kappa_1) \subseteq \langle \Iw \cdot x \rangle$, and so~$\lbar x$ generates an $\Iw$-submodule of $\pi_\infty(\kappa_1)/\soc^i_\Iw \pi_\infty(\kappa_1)$ of dimension~$\deg(x)-i-1$.

In summary, we have shown that $i \geq \deg(x)-p^n-1$ if and only if $i \geq \deg((u_{n+1}-1)x)-1$, which implies that~$\deg((u_{n+1}-1)x) = \deg(x)-p^n$.
\end{proof}
In what follows, recall that we write~$\tau$ for the diagonal matrix~$\diag(1, p)$.
Furthermore, if $0 \leq i < r$ we introduce the notation
\begin{equation}\label{notationpsiI}
\psi_i = \Pi[1, x^{r-i}y^i] \in \pi_\infty(\kappa_1).
\end{equation}
By Lemma~\ref{diagonalKsocle} this is a $(K_0(p^2), \kappa_1\alpha^i)$-eigenvector, and it is a $K_0(p)$-eigenvector if and only if $i = 0$.
Finally, by Lemma~\ref{diagonalKsocle} again there exist nonzero a $\psi^+ \in \soc_\Iw \pi_\infty(\kappa_1)^+$ and a nonzero $(K_0(p^2), \kappa_1\alpha^r)$-eigenvector $\psi_r \in \pi_\infty(\kappa_1)$ such that
\begin{equation}\label{notationpsiII}
\Pi[1, y^r] = \psi^+ + \psi_r.
\end{equation}
We will apply the following three lemmas to~$v_i = \psi_i$ and~$v^+ = \psi^+$.

\begin{lemma}\label{degreezero}
Let~$v_0 \in \soc_\Iw \pi_\infty(\kappa_1)$ be a generator.
Then~$\tau^nv_0$ is a~$(K_0(p^{n+1}), \kappa_1)$-eigenvector in~$\pi_\infty(\kappa_1)$, but not a $(K_0(p^{n}), \kappa_1)$-eigenvector, and so
\[
\deg(\tau^n v_0) = p^n.
\]
\end{lemma}
\begin{proof}
The fact that $\tau^nv_0$ is a~$(K_0(p^{n+1}), \kappa_1)$-eigenvector follows from the identity
\begin{equation}\label{taunIwahori}
\tau^n K_0(p^j) \tau^{-n} = \fourmatrix{\bZ_p^\times}{p^{-n}\bZ_p}{p^{j+n}\bZ_p}{\bZ_p^\times}.
\end{equation}
specialized at~$j = 1$. 
By Proposition~\ref{Morrathesis1}, the representation $\pi_\infty^+(\kappa_1)|_{U(\bZ_p)}$ is $U(\bZ_p)$-uniserial, hence its space of~$U(\bZ_p)$-invariants has dimension~$1$ and coincides with~$\soc_\Iw \pi_\infty(\kappa_1)^+ = \kappa_1^s$.
Hence there are no $(K_0(p^{n+1}), \kappa_1)$-eigenvectors in~$\pi_\infty(\kappa_1)^+$, and so~$\tau^n v_0 \in \pi_\infty(\kappa_1)$. 
To see that~$\tau^n v_0$ is not a $K_0(p^n)$-eigenvector, notice that if~$u_{n}\tau^nv_0 = \tau^nv_0$ then~$\fourmatrix 1 0 1 1v_0 = v_0$.
Since~$v_0$ is also an $\Iw$-eigenvector, the Bruhat decomposition together with the identity
\[
\fourmatrix 1 0 1 1 \fourmatrix 1 {-1} 0 1 \fourmatrix 1 0 1 1 = \fourmatrix 0 {-1} 1 0
\]
implies that~$v_0$ is a $K$-eigenvector, which is a contradiction.
The fact that~$\deg(\tau^n v_0) = p^n$ now follows from Corollary~\ref{specificeigenvector}.
\end{proof}

\begin{lemma}\label{degreei}
For all~$1\leq i \leq p-1$, let~$v_i \in \pi_\infty(\kappa_1)\setminus \soc_\Iw \pi_\infty(\kappa_1)$ be a nonzero $(K_0(p^{2}), \kappa_1\alpha^i)$-eigenvector.
Fix~$n \geq 0$.
Then
\begin{enumerate}
\item $\tau^nv_i$ is a $(K_0(p^{n+2}), \kappa_1\alpha^i)$-eigenvector in~$\pi_\infty(\kappa_1)$, 
\item there exist $\nu_0, \nu_1, \ldots, \nu_{i-1} \in k_E$, independent of~$n$ and not all zero, such that
\[
(u_{n+1}-1)\tau^nv_i = \sum_{j=0}^{i-1} \nu_j\tau^nv_j,
\]
\item the vector $\tau^n v_i$ is not a $K_0(p^{n+1})$-eigenvector, and so
\begin{equation}\label{degreeeigenvector}
\deg(\tau^n v_i) = p^n\deg(v_i) = (i+1)p^n.
\end{equation}
\end{enumerate}
Part~(ii) and~\eqref{degreeeigenvector} are true with~$\tau^n v_i$ replaced by any $(K_0(p^{n+2}), \kappa_1\alpha^i)$-eigenvector in~$\pi_\infty(\kappa_1)$ that is not a $K_0(p^{n+1})$-eigenvector.
\end{lemma}
\begin{proof}
The fact that $\tau^nv_i$ is a $(K_0(p^{n+2}), \kappa_1\alpha^i)$-eigenvector follows from~\eqref{taunIwahori} for~$j = 2$.
Since $\pi_\infty(\kappa_1)^{\lbar U(p^2 \bZ_p)}$ is $\lbar U(p\bZ_p)$-uniserial and generated by~$\tau v_0$, the same argument as in the proof of Corollary~\ref{comparetauaction} implies that there exists~$h_n \in k[\lbar U(p\bZ_p)]$ such that~$\tau^n v_i = h_n \tau^{n+1}v_0$, hence~$\tau^n v_i \in \pi_\infty(\kappa_1)$ by Lemma~\ref{degreezero}. 
This concludes the proof of the first claim.
For the second claim, since~$u_{n+1}\tau^n = \tau^nu_1$ it suffices to prove that there exist~$\nu_0, \ldots, \nu_{i-1} \in k_E$, not all zero, such that
\[
(u_1-1)v_i = \sum_{j=1}^{i-1}\nu_jv_j.
\]
This follows from the fact that~$v_i$ is not~$\Iw_1$-fixed, together with the fact that $\langle \Iw \cdot v_i \rangle$ is spanned by~$\{v_0, v_1, \ldots, v_i\}$, which is a consequence of Lemma~\ref{psfiltration}.
Now~\eqref{degreeeigenvector} follows from Corollary~\ref{specificeigenvector}.
Finally, if~$\eta$ is a $(K_0(p^{n+2}), \kappa_1\alpha^i)$-eigenvector in~$\pi_\infty(\kappa_1)$, but not a $K_0(p^{n+1})$-eigenvector, then it follows from Lemma~\ref{eigenvectorsII} that there exists $x \in k_E^\times$ such that~$\eta - x\tau^n v_i$ is a $K_0(p^{n+1})$-eigenvector.
Hence~$\deg(\eta) = \deg(x\tau^nv_i)$, and
\[
(u_{n+1}-1)\eta = (u_{n+1}-1)(x\tau^n v_i),
\]
which concludes the proof (replacing~$\nu_j$ by~$x\nu_j$).
\end{proof}

\begin{lemma}\label{Heckeexpansion}
Let~$n \geq 1$ and let~$v^+ \in \soc_\Iw \pi_\infty(\kappa_1)^+$ be a generator.
Then there exists a nonzero $\nu \in k^\times$ such that
\[
\tau^n v^+ = \left ( \sum_{j=0}^{n-1}\lambda^{-(n-1-j)} \tau^j(\nu\psi_r) \right ) + \lambda^{-n} v^+.
\]
\end{lemma}
\begin{proof}
Follows by induction on~$n$, with the base case contained in Lemma~\ref{basecase}.
\end{proof}

\begin{lemma}\label{finalcocyclecomputation}
There exist a nonzero scalar $\nu \in k_E^\times$ such that 
\begin{equation}\label{cocyclen+1III}.
(u_{n+1}-1)e_{n+1} = \sum_{i=0}^r\binom{r+1}{i}\tau^n\psi_i + (r+1)\left ( \sum_{j=0}^{n-1}\lambda^{-(n-1-j)}\tau^j(\nu\psi_r) + \lambda^{-n} \psi^+\right)
\end{equation}
for all~$n \geq 0$.
\end{lemma}
\begin{proof}
This is a direct consequence of Lemma~\ref{Heckeexpansion} and the computation in~\eqref{cocyclen+1}, together with the definitions~\eqref{notationpsiI} and~\eqref{notationpsiII}.
\end{proof}

\begin{proof}[Proof of Theorem~~\ref{finitesplittingIw}(1)]
Assume for a contradiction that~\eqref{almostsplit} is split.
Recall that the image of $e_{n+1}$ in $\pi_2$ is $\tau^n \Pi[1, x^{p-3-r}]$, which is contained in $\pi_\infty(\chi_2)$ by Lemma~\ref{degreezero}.
Hence $e_{n+1} \in \mB_{r, \lambda}$.
Let~$\lbar e_{n+1}^+$ be the image of~$e_{n+1}$ in~$\mC_{r, \lambda}$, which maps to a $(K_0(p^{n+1}), \kappa_2)$-eigenvector in~$\pi_\infty(\kappa_2)$.
Then for all~$n \geq 0$ there exists a $(K_0(p^{n+1}), \kappa_2)$-eigenvector $\eta_{n+1} \in \mC_{r, \lambda}$ with the same image in~$\pi_\infty(\kappa_2)$ as~$\lbar e_{n+1}^+$.
It follows that~$\lbar e_{n+1}^+-\eta_{n+1} \in \pi_\infty(\kappa_1)^+$ is fixed by $\fourmatrix 1 1 0 1$.
However, the representation~$\pi_{\infty}(\kappa_1)^+$ is $U(\bZ_p)$-uniserial, since~$\pi_\infty(\kappa_1)$ is $\lbar U(p \bZ_p)$-uniserial and $\pi_\infty(\kappa_1)^+ \cong \ad(\Pi)^* \pi_\infty(\kappa_1)$.
Hence the space of~$U(\bZ_p)$-fixed vectors in~$\pi_\infty(\kappa_1)^+$ is one-dimensional and coincides with $\soc_\Iw \pi_\infty(\kappa_2)^+$.
Hence~$\lbar e_{n+1}^+-\eta_{n+1}$ is actually fixed by~$K_1(p)$, and so~$\lbar e_{n+1}^+$ is $u_{n+1}$-fixed, because both~$\lbar e_{n+1}^+-\eta_{n+1}$ and~$\eta_{n+1}$ are $u_{n+1}$-fixed.
This contradicts~\eqref{cocyclen+1III}, which implies that $(u_{n+1}-1)\lbar e_{n+1}^+ = (r+1)\lambda^{-n}\psi^+ \ne 0$ (recall that~$\psi^+$ is a generator of~$\soc_\Iw\pi_\infty(\kappa_1)^+$ and that~$\psi_i \in \pi_\infty(\kappa_1)$ for all~$0\leq i \leq r$ by Lemma~\ref{degreezero} and Lemma~\ref{degreei}).
\end{proof}

\begin{lemma}\label{computedegreecocycle}
Let~$\lbar e_{n+1}$ be the image of~$e_{n+1}$ in~$\mD_{r, \lambda}$. 
Then
\[
\deg((u_{n+1}-1)\lbar e_{n+1})  = (r+1)p^n < p^{n+1}-p^n.
\]
\end{lemma}
\begin{proof}
By~\eqref{cocyclen+1III} we know that there exists~$\nu \in k^\times$ such that
\begin{equation}\label{cocycleprojection}
(u_{n+1}-1)\lbar e_{n+1} = \sum_{i=0}^r\binom{r+1}{i}\tau^n\psi_i + (r+1)\sum_{j=0}^{n-1}\lambda^{-(n-1-j)}\tau^j(\nu\psi_r).
\end{equation}
By Lemma~\ref{degreezero} and Lemma~\ref{degreei} we have
\[
\deg(\tau^j\psi_i) = (i+1)p^j \text{ for all } 0 \leq i \leq r, j \geq 0.
\]
Hence the summands in~\eqref{cocycleprojection} have pairwise distinct degrees, whose maximum is~$(r+1)p^n$, and we conclude by Lemma~\ref{degreelinearindependence}, since all the summands in~\eqref{cocycleprojection} are nonzero.
\end{proof}

\begin{pp}\label{comparecocycles}
Let~$\lbar e_{n+1} \in \mD_{n, \lambda}$ be the image of~$e_{n+1}$, where~$\mD_{n, \lambda}$ is as in~\eqref{staysnonsplit}.
For every finite-dimensional $\Iw$-submodule $X \subset \pi_\infty(\kappa_1)$ there exists~$N > 0$ such that $n \geq N$ implies that
\[
(u_{n+1}-1)(\lbar e_{n+1}) - (u_{n+1}-1)(\psi) \not \in X
\]
for every $(B(\bZ_p), \kappa_1\alpha^{r+1})$-eigenvector $\psi \in \pi_\infty(\kappa_1)$.
(Recall that~$\kappa_2 = \kappa_1\alpha^{r+1}$.)
\end{pp}
\begin{proof}
Choose~$N$ such that~$(r+1)p^{N-1} > \dim(X)$.
Fix~$n \geq N$, let~$\psi \in \pi_\infty(\kappa_1)$ be a $(B(\bZ_p), \kappa_1\alpha^{r+1})$-eigenvector, and write
\[
\Delta = (u_{n+1}-1)(\lbar e_{n+1}) - (u_{n+1}-1)(\psi).
\]
Assume first that~$\psi$ is not fixed by~$u_{n+2}$.
Then $\deg(\psi) > p^{n+1}$, and so $\deg((u_{n+1}-1)\psi) > p^{n+1}-p^n$ by Lemma~\ref{cocycledegree}.
On the other hand, Lemma~\ref{computedegreecocycle} implies that $\deg((u_{n+1}-1)\lbar e_{n+1}) < p^{n+1}-p^n$.
Hence
\begin{equation}\label{firstbound}
\deg(\Delta) > p^{n+1}-p^n.
\end{equation}
Assume now that~$\psi$ is fixed by~$u_{n+1}$, so that $\Delta = (u_{n+1}-1)(\lbar e_{n+1})$.
Then again by Lemma~\ref{computedegreecocycle} we have
\begin{equation}\label{secondbound}
\deg(\Delta) = \deg((u_{n+1}-1)\lbar e_{n+1}) = (r+1)p^n.
\end{equation}
Finally, assume that~$\psi$ is fixed by~$u_{n+2}$ but not by~$u_{n+1}$, so that~$\psi$ is a $K_0(p^{n+2})$-eigenvector but not a $K_0(p^{n+1})$-eigenvector.
We know from Lemma~\ref{finalcocyclecomputation} that there exists~$\nu \in k^\times$ such that
\[
(u_{n+1}-1)\lbar e_{n+1} = \sum_{i=0}^r\binom{r+1}{i}\tau^n\psi_i + (r+1)\sum_{j=0}^{n-1}\lambda^{-(n-1-j)}\tau^j(\nu\psi_r).
\]
for all~$n \geq 0$.
On the other hand, by Lemma~\ref{degreei} there exist scalars~$\nu_0, \nu_1, \ldots, \nu_r \in k_E$ such that
\[
(u_{n+1}-1)\psi = \sum_{j=0}^r \nu_j \tau^n \psi_j.
\]
Putting these together, we find that
\[
\Delta = \sum_{i=0}^r\left ( \binom{r+1}{i}-\nu_i \right )\tau^n\psi_i + (r+1)\sum_{j=0}^{n-1}\lambda^{-(n-1-j)}\tau^j(\nu\psi_r).
\]
Since the nonzero terms of this sum have pairwise distinct degrees, and all the terms in the second sum are nonzero, it follows that
\begin{equation}\label{thirdbound}
\deg(\Delta) \geq (r+1)p^{n-1}.
\end{equation}
It now follows from~\eqref{firstbound}, \eqref{secondbound} and~\eqref{thirdbound} that~$\deg(\Delta) > \dim(X)$, and so~$\Delta \not \in X$.
\end{proof}

\begin{proof}[Proof of Theorem~\ref{finitesplittingIw}(2)]
Assume that~\eqref{staysnonsplit} splits.
Let~$N > 0$ be as in Proposition~\ref{comparecocycles}.
Then for every~$n \geq N$ there exists a $(K_0(p^{n+1}), \kappa_2)$-eigenvector $\eta_{n+1} \in \mD_{r, \lambda}/X$ with the same image in~$\pi_\infty(\kappa_2)$ as~$\lbar e_{n+1}$.
Let~$e^*_{n+1}$ be the image of $\lbar e_{n+1}$ in $\mD_{r, \lambda}/X$.
It follows from the above that~$e^*_{n+1}-\eta_{n+1}$ is a $(K_0(p^{n+2}), \kappa_2)$-eigenvector in~$\pi_\infty(\kappa_1)/X$.
Since $B(\bZ_p)$ is a compact $p$-adic Lie group, $\Ext^1_{B(\bZ_p)}(\kappa_2, X)$ is finite-dimensional.
Applying the functor $\Hom_{B(\bZ_p)}(\kappa_2, -)$ to the short exact sequence
\[
0 \to X \to \pi_\infty(\kappa_1) \to \pi_\infty(\kappa_1)/X \to 0,
\] 
we find that there exist~$d>0$ and scalars $\mu_1, \ldots, \mu_d \in k$, not all zero, such that
\[
\sum_{i=1}^{d}\mu_i(e^*_{N+i}-\eta_{N+i})
\]
equals the image in~$\pi_\infty(\kappa_1)/X$ of a $(B(\bZ_p), \kappa_2)$-eigenvector $\psi \in \pi_\infty(\kappa_1)$.
Replacing~$d$ by a smaller positive number we can assume~$\mu_d \ne 0$.
By construction, for all~$i$ we know that $e^*_{N+i}-\eta_{N+i}$ is fixed by~$u_{N+i+1}$, hence it is fixed by~$u_{N+d}$ whenever~$d > i$, and so
\[
(u_{N+d}-1)\sum_{i=1}^{d}\mu_i(e^*_{N+i}-\eta_{N+i}) = (u_{N+d}-1)\mu_d(e^*_{N+d}-\eta_{N+d}) = \mu_d(u_{N+d}-1)(e^*_{N+d}).
\]
where the last equality holds since~$\eta_{N+d}$ is a $K_0(p^{N+d})$-eigenvector, and so is fixed by~$u_{N+d}$.
So we deduce that
\[
\mu_d(u_{N+d}-1)(e^*_{N+d}) = (u_{N+d}-1)(\psi) \in \pi_{\infty}(\kappa_1)/X,
\] 
or equivalently that
\[
(u_{N+d}-1)(\lbar e_{N+d}) - (u_{N+d}-1)(\mu_d^{-1}\psi) \in X,
\]
which contradicts Proposition~\ref{comparecocycles}.
\end{proof}

\subsubsection{Morphisms.}
In this section we will relate the $G$-action and the $K$-action on~$\mA_{r, s, \lambda}$. 
The following example shows that one cannot expect a direct analogue of Theorem~\ref{ssmorphisms}.

\begin{example}
Let $\mu, \lambda \in k^\times, \mu \ne \lambda^{\pm 1}$. Since
\[
\cInd_{KZ}^G(\Sym^r k^2)/(T-\lambda)|_K \cong \cInd_{KZ}^G(\Sym^r k^2)/(T-\mu)|_K,
\]
there exists a nonzero $KZ$-linear morphism
\[
\mA_{p-3-r, \mu} \otimes (\omega^{r+1} \circ \det) \to \mA_{r,\lambda}
\]
but there are no nonzero $G$-morphisms between these representations.
\end{example}

However, we will be able to establish an analogue of Theorem~\ref{ssmorphisms} once we restrict to isomorphisms (Theorem~\ref{deformations}), or to endomorphisms of a single~$\mA_{r, s, \lambda}$.
We begin with the case of endomorphisms.

\begin{pp}\label{atomesmorphisms}
The spaces $\Hom_{KZ}(\mA_{r, s, \lambda}, \mA_{r, s, \lambda})$ and $\Hom_{N}(\mA_{r, s, \lambda}, \mA_{r, s, \lambda})$ are both one-dimensional, hence coincide with $\Hom_G(\mA_{r, s, \lambda}, \mA_{r, s, \lambda})$.
\end{pp}
\begin{proof}
It suffices to consider the case~$s = 0$.
There is an exact sequence
\[
0 \to \Hom_{KZ}(\pi_2, \mA_{r, \lambda}) \to \Hom_{KZ}(\mA_{r, \lambda}, \mA_{r, \lambda}) \to \Hom_{KZ}(\pi_1, \mA_{r, \lambda}).
\]
Given a nonzero element $\pi_2 \to \mA_{r, \lambda}$ of the first term, composing it with the projection to~$\pi_2$ yields a nonzero scalar because of Corollary~\ref{cor:psmorphisms}. 
Hence we have constructed a $KZ$-splitting of the exact sequence~(\ref{atome}), hence an $\Iw$-splitting, contradicting (for example) the computation of the $\Iw_1$-invariants of~$\mA_{r, \lambda}$ in~\cite[Section~20]{BPmodp}. 
It follows that the first term vanishes. 
For the last term, given $\alpha: \pi_1 \to \mA_{r, \lambda}$, the composition $\pi_1 \xrightarrow{\alpha} \mA_{r, \lambda} \to \pi_2$ is zero by Corollary~\ref{cor:psmorphisms}. 
Hence~$\alpha$ factors through the subspace~$\pi_1$ in~(\ref{atome}). 
By Corollary~\ref{cor:psmorphisms} it follows that the term~$\Hom_{KZ}(\pi_1, \mA_{r, \lambda})$ is one-dimensional, and so $\Hom_{KZ}(\mA_{r, \lambda}, \mA_{r, \lambda})$ is also one-dimensional.

The proof goes through unchanged for the group~$N$.
\end{proof}
Next we consider the case of morphisms between atomes automorphes of the same Serre weight.

\begin{thm}\label{sameparameter}
Let ${\lambda_1}, {\lambda_2} \in k^\times$ and assume $\Hom_{K}(\mA_{r, s, {\lambda_1}}, \mA_{r, s, {\lambda_2}}) \not = 0$. 
Then ${\lambda_1} = \pm {\lambda_2}$, hence $\mA_{r, s, {\lambda_1}} \cong \mA_{r, s, {\lambda_2}} \otimes (\nr_{\pm 1} \circ \det)$.
\end{thm}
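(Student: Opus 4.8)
The plan is to extract the scalar $\lambda/\mu$ (up to sign) from the asymptotic behaviour of a $K$-linear map on the special elements $e_{n+1}$, using the fine structure of the cocycles recorded in Proposition~\ref{otherlift}. First I would reduce to $s = 0$, since twisting by $\omega^s \circ \det$ is harmless, and fix a nonzero $\alpha \in \Hom_K(\mathcal{A}_{r,\lambda}, \mathcal{A}_{r,\mu})$. Restricting $\alpha$ to the $K$-socles $\sigma_r$ of both representations and rescaling, we may assume $\alpha(x_1) = x_1'$ (using the primed notation for $\mathcal{A}_{r,\mu}$), where $x_i = [1,x^r]$ comes from a chosen presentation. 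By Proposition~\ref{exchange} applied to the subquotient principal series $\pi_1 \hookrightarrow \mathcal{A}_{r,\lambda}$ and $\pi_1' \hookrightarrow \mathcal{A}_{r,\mu}$ (of the same weight $\Sym^r$ and central character), we get
\[
\alpha\fourmatrix 0 1 {p^{n+1}} 0 x_1 = \mu^{-(n+1)}\lambda^{n+1}\fourmatrix 0 1 {p^{n+1}} 0 x_1'
\]
for all $n$; in particular $\alpha$ carries the embedded copy of $A/(X^{p^n})$ in $\pi_\infty(\chi_1) \subset \mathcal{A}_{r,\lambda}$ onto the corresponding copy in $\mathcal{A}_{r,\mu}$, multiplying the identification by $(\lambda/\mu)^{n+1}$ on the generator $X^0$.

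The heart of the argument is to see how $\alpha$ moves the lift $e_{n+1} \in \mathcal{A}_{r,\lambda}$ of $\fourmatrix 0 1 {p^{n+1}} 0 [1,x^{p-3-r}] \in \pi_2$. Since $\alpha$ is $K$-linear and $\pr \circ \alpha$ realizes a $K$-morphism $\pi_2 \to \pi_2'$ of weight $\Sym^{p-3-r}\otimes\det^{r+1}$, another application of Proposition~\ref{exchange} (now for the quotient principal series) shows that $\pr(\alpha(e_{n+1})) = c_n\fourmatrix 0 1 {p^{n+1}} 0 [1,x^{p-3-r}]'$ with $c_n = (\lambda/\mu)^{n+1}$ up to a fixed nonzero constant; hence $c_n^{-1}\alpha(e_{n+1})$ is itself a lift of $\fourmatrix 0 1 {p^{n+1}} 0 [1,x^{p-3-r}]'$ satisfying property~(1), because $\alpha$ is $K$-equivariant and property~(1) only records that the cocycle of $\fourmatrix 1 0 {p^{n+1}\bZ_p} 1$ lands in $\soc_\Iw \pi_\infty^+(\chi_1)$, which is preserved by $\alpha$. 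Therefore Proposition~\ref{otherlift} applies to $\be_{n+1} := c_n^{-1}\alpha(e_{n+1})$ in $\mathcal{A}_{r,\mu}$, giving the congruence for $(g-1)\be_{n+1}$ with the explicit $\mu^{-2n}$ prefactor and the $\kappa$-constants. On the other hand, applying $\alpha$ directly to the same congruence for $e_{n+1}$ in $\mathcal{A}_{r,\lambda}$ (which has a $\lambda^{-2n}$ prefactor) and using the scaling $\alpha(X^{p-3-r}\,\text{-term}) = (\lambda/\mu)^{n+1}\cdot(X^{p-3-r}\,\text{-term})'$ from the first paragraph yields a second expression for $(g-1)\be_{n+1}$, now with prefactor $(\lambda/\mu)^{n+1}\cdot c_n^{-1}\lambda^{-2n} = \lambda^{-2n}$ times a fixed constant.

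Comparing the two expressions for the leading term (the coefficient of $\overline{b}\,\kappa_{p^n-1-(p-3-r)}X^{p-3-r}$, which is nonzero since $\kappa \in \bF_p^\times$ and we are in the very generic range $r \le p-4$ so the relevant powers of $X$ are genuinely nonzero in $A/(X^{p^n})$ for large $n$) forces
\[
(-1)^{n+r+1}\lambda^{-2n} \;=\; (\text{fixed nonzero const})\cdot(-1)^{n+r+1}\mu^{-2n}
\]
for all large $n$, whence $(\lambda/\mu)^{2n}$ is eventually constant, so $\lambda^2 = \mu^2$ and $\lambda = \pm\mu$. The identification $\mathcal{A}_{r,s,\lambda} \cong \mathcal{A}_{r,s,\mu}\otimes(\nr_{\pm 1}\circ\det)$ then follows from the defining property of $\mathcal{A}_{r,s,\lambda}$ as the unique nonsplit extension, since twisting by $\nr_{-1}\circ\det$ exchanges $\pi(r,\lambda,\chi)$ with $\pi(r,-\lambda,\nr_{-1}\chi)$ and respects the extension class. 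The main obstacle I anticipate is bookkeeping the constants carefully enough to be sure the leading $\overline b$-coefficient does not accidentally vanish or get absorbed into the error term $X^{(p-3-r)+(p-2)}\pi_{n+1}(\chi_1)$ — this is exactly where the ``very generic'' hypothesis $r \le p-4$ (so that $p-3-r \ge 1$ and the error term sits in strictly higher $X$-degree) is needed, and one must check that $\alpha$ does not shift filtration degrees, which follows because $\alpha$ restricted to $\pi_\infty(\chi_1)$ is $A$-linear up to the scalar computed via Proposition~\ref{exchange}.
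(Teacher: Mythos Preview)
Your approach is essentially the paper's own argument, but you skip two steps that the paper handles explicitly and that are genuinely needed.

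First, you implicitly assume that $\alpha$ carries $\pi_1$ into $\pi_1'$ and induces a map $\alpha_2 : \pi_2 \to \pi_2'$. This does hold, but it requires Corollary~\ref{cor:psmorphisms}: the composite $\pi_1 \hookrightarrow \mA_{r,\lambda} \xrightarrow{\alpha} \mA_{r,\mu} \twoheadrightarrow \pi_2'$ is an $\Iw$-linear map between generic principal series with different $K$-socle, hence zero. You should say this.

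Second, and more seriously, you tacitly assume $\alpha_2 \neq 0$ when you invoke Proposition~\ref{exchange} on the quotient principal series. If $\alpha_2 = 0$ then $\pr \circ \alpha$ vanishes and there is no scalar $c_n$ to extract. The paper rules this out by a short argument: if $\alpha_1 \neq 0$ but $\alpha_2 = 0$, then $\alpha_1^{-1}\alpha$ is a $K$-linear retraction of $\pi_1 \hookrightarrow \mA_{r,\lambda}$, contradicting irreducibility of $\soc_K \mA_{r,\lambda}$; symmetrically for $\alpha_1 = 0$, $\alpha_2 \neq 0$. Once both are nonzero, Corollary~\ref{cor:psmorphisms} makes them isomorphisms and $\alpha$ itself is an isomorphism. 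This step is not just bookkeeping; without it the argument does not start.

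Finally, a notational point: property~(1) is established for the modified lift $\tld e_{n+1}$, not for $e_{n+1}$ itself, so you should work with $\tld e_{n+1}$ throughout. The paper also streamlines your constant-tracking by choosing the presentation of $\mA_{r,\mu}$ to be the composite of $\alpha$ with the chosen presentation of $\mA_{r,\lambda}$; then $\alpha_1[1,x^r] = [1,x^r]$ and $\alpha_2[1,x^{p-3-r}] = [1,x^{p-3-r}]$ automatically, which removes the ``fixed nonzero constant'' you carry around and makes the final comparison a clean equation $\mu^{-n+1}\lambda^{-(n+1)} = \mu^{-(n+1)}\lambda^{-n+1}$.
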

\begin{proof}
Since~$\mA_{r, s, {\lambda_i}}$ is a twist of~$\mA_{r, {\lambda_i}}$, it suffices to prove the theorem when~$s = 0$.
Let $\alpha: \mA_{r, {\lambda_1}} \to \mA_{r, {\lambda_2}}$ be a nonzero $K$-linear morphism. 
We know that~$\alpha$ induces a commutative diagram
\[
\begin{tikzcd}
0 \arrow{r} & \pi(r, {\lambda_1}, 1) \arrow{r} \arrow{d}{\alpha_1} & \mA_{r, {\lambda_1}} \arrow{r}\arrow{d}{\alpha} & \pi(p-3-r, {\lambda_1}^{-1}, \omega^{r+1}) \arrow{r}\arrow{d}{\alpha_2} & 0\\
0 \arrow{r} & \pi(r, {\lambda_2}, 1) \arrow{r} & \mA_{r, {\lambda_2}}\arrow{r} & \pi(p-3-r, {\lambda_2}^{-1}, \omega^{r+1}) \arrow{r} & 0
\end{tikzcd}
\]
because there are no nonzero $\Iw$-linear maps between generic principal series with nonisomorphic $K$-socle, by Corollary~\ref{cor:psmorphisms}, and $\Sym^r \not \cong \Sym^{p-3-r} \otimes \det^{r+1}$ (since $r \not = p-2$, the~$\det$ factor is not trivial). 
We claim that~$\alpha$ is an isomorphism. 
For this it suffices to prove that~$\alpha_1$ and~$\alpha_2$ are not zero, since by Corollary~\ref{cor:psmorphisms} they are then isomorphisms.
If they are both zero then~$\alpha$ factors through~$\pi(p-3-r, {\lambda_1}^{-1}, \omega^{r+1}) $ to give a map $\pi(p-3-r, {\lambda_1}^{-1}, \omega^{r+1}) \to \pi(r, {\lambda_2}, 1) $, which is zero by Corollary~\ref{cor:psmorphisms}. 
If~$\alpha_1 = 0$ but~$\alpha_2$ is not, then Corollary~\ref{cor:psmorphisms} implies that $\alpha_2$ is an isomorphism, and then $\alpha\alpha_2^{-1}$ gives a $K$-section of the projection $\mA_{r, {\lambda_2}} \to \pi(p-3-r, {\lambda_2}^{-1}, \omega^{r+1})$. 
Similarly, if $\alpha_2 = 0$ but~$\alpha_1$ is not, then~$\alpha_1^{-1}\alpha$ gives a $K$-retraction of the inclusion $\pi(r, {\lambda_1}, 1) \to \mA_{r, {\lambda_1}}$. These would contradict the fact that~$\soc_K(\mA_{r, {\lambda_i}})$ is irreducible.

Now choose nonzero $KZ$-linear morphisms $\iota_{\lambda_i}:\sigma_{p+1+r} \otimes \mathrm{det}^{-1}\to\mA_{r, {\lambda_i}}$ and fix~$n \geq 0$.
These give rise to special elements in~$\mA_{r, {\lambda_i}}$ as in Section~\ref{Iwahoriatomes}, and we will write~$e_{n+1, {\lambda_i}}$ for the elements defined in~\eqref{specialelementI}.
By Lemma~\ref{onedim} up to replacing~$\alpha$ by a scalar multiple of~$\alpha$ we can assume that~$\alpha \iota_{\lambda_1} = \iota_{\lambda_2}$, and so $\alpha_2[1, x^{p-3-r}] = [1, x^{p-3-r}]$.
Hence by Proposition~\ref{exchange} we have
\[
\alpha_2 \tau^n \Pi [1, x^{p-3-r}] = {\lambda_2}^{n+1}{\lambda_1}^{-(n+1)} \tau^n \Pi [1, x^{p-3-r}].
\]
It follows that
\[
\eta = e_{n+1, {\lambda_2}}-{\lambda_2}^{-(n+1)}{\lambda_1}^{n+1}\alpha({e}_{n+1, {\lambda_1}}) \in \pi(r, {\lambda_2}, 1) 
\]
and~$\eta$ is a $(K_0(p^{n+2}), \kappa_2)$-eigenvector.
We know from~\eqref{cocyclen+1III} that for all~$1 \leq j \leq r$ there exist nonzero $(K_0(p^2), \kappa_1\alpha^j)$-eigenvectors~$\psi_{j, {\lambda_i}} \in \pi_\infty(\kappa_1) \subset \pi(r, {\lambda_i}, 1)$ as well as generators $\psi_{0, \lambda_i} \in \soc_\Iw\pi_\infty(\kappa_1)$, $\psi^+_{\lambda_i} \in \soc_\Iw\pi_\infty(\kappa_1)^+$ such that
\[
(u_{n+1}-1)(e_{n+1, \lambda_i}) = \sum_{j=0}^r\binom{r+1}{j}\tau^n\psi_{j, {\lambda_i}} + (r+1)\left ( \sum_{k=0}^{n-1}{\lambda_i}^{-(n-1-k)}\tau^k(\nu_i\psi_{r, {\lambda_i}}) + {\lambda}_i^{-n} \psi_{\lambda_i}^+\right)
\]
for some nonzero scalars~$\nu_i \in k_E^\times$.
On the other hand, we know by Lemma~\ref{psfiltration} that there exist nonzero scalars~$\mu_j, \mu^+ \in k^\times$ such that
\[
\alpha(\psi_{j, \lambda_1}) = \mu_j\psi_{j, \lambda_2} \text{ and } \alpha(\psi^+_{\lambda_1}) = \mu^+ \psi^+_{\lambda_2}.
\]
It follows from Corollary~\ref{comparetauaction} that 
\[
\alpha(\tau^k\psi_{j, \lambda_1}) = \mu_j\lambda_2^{-k}\lambda_1^{k}\tau^k\psi_{j, \lambda_2} \text{ for all } k \geq 0.
\]
Hence $(u_{n+1}-1)(e_{n+1, {\lambda_2}}-\lambda_2^{-(n+1)}\lambda_1^{n+1}\alpha(e_{n+1, \lambda_1}))$ is congruent to
\begin{equation}\label{cocyclecomparisonmorphisms}
\sum_{j=0}^r\binom{r+1}{j}(1-\lambda_2^{-2n-1}\lambda_1^{2n+1}\mu_j)\tau^n\psi_{j, {\lambda_2}} + (r+1)\sum_{k=0}^{n-1}({\lambda_2}^{-(n-1-k)}\nu_2-\lambda_2^{-(n+1+k)}\lambda_1^{n+1+k-(n-1-k)}\mu_r\nu_1)\tau^k(\psi_{r, {\lambda_2}})
\end{equation}
modulo~$\pi_\infty^+(\kappa_1)$.
Let~$\lbar \eta$ be the projection of~$\eta \in \pi(r, \lambda_2, 1)$ onto~$\pi_\infty(\kappa_1)$.
Then we have just shown that for all~$n\geq 0$ there exists a $(K_0(p^{n+2}), \kappa_2)$-eigenvector $\lbar \eta \in \pi_\infty(\kappa_1)$ such that~$(u_{n+1}-1)\lbar \eta$ equals~\eqref{cocyclecomparisonmorphisms}.
By Lemma~\ref{degreei} this implies
\begin{equation}\label{vanishingcoefficients}
{\lambda_2}^{-(n-1-k)}\nu_2-\lambda_2^{-(n+1+k)}\lambda_1^{n+1+k-(n-1-k)}\mu_r\nu_1 = 0 \text{ for all } 0\leq k \leq n-1.
\end{equation}
To see this, recall from Lemma~\ref{degreezero} and~Lemma~\ref{degreei} that the vectors $\tau^k \psi_{j, {\lambda_2}}$ have pairwise different degrees for $1 \leq j \leq r, 0 \leq k \leq n$, and so they are linearly independent by Lemma~\ref{degreelinearindependence}.
Now we have two cases: either~$\lbar \eta$ is $u_{n+1}$-fixed, in which case~\eqref{cocyclecomparisonmorphisms} is zero, or it is not, in which case $(u_{n+1}-1)\lbar \eta$ is a linear combination of~$\{\tau^n \psi_{0, \lambda_2}, \ldots, \tau^n \psi_{r, \lambda_2}\}$ by Lemma~\ref{degreei}.
In both cases we deduce the vanishing of~\eqref{vanishingcoefficients}.
Now it follows from~\eqref{vanishingcoefficients} that $(\lambda_1^{-1}\lambda_2)^{2(k+1)}$ is independent of~$1 \leq k \leq n-1$.
Letting~$n$ tend to infinity this implies that~$\lambda_1^2 = \lambda_2^2$ since $(\lambda_1^{-1}\lambda_2)^2 \in k^\times$ has finite multiplicative order.
\end{proof}

\subsubsection{An exact sequence.}
In preparation for Section~\ref{atomesextensions}, we present some general material concerning resolutions of smooth $\GL_2(\bQ_p)$-representations. 
We will apply these results to study self-extensions of~$\mA_{r, \lambda}$.
Write $\delta: N \to k^\times$ for the orientation character, which is trivial on~$\Iw Z$ and takes value~$-1$ at~$\fourmatrix 0 1 p 0$. 
We identify the representation space of~$\delta$ with~$k$.
Then we have a complex of $k[G]$-representations
\begin{equation}\label{tree}
0 \to \cInd_N^G(\delta) \xrightarrow{\partial} \cInd_{KZ}^G(\triv) \xrightarrow{\mathrm{sum}} \triv \to 0
\end{equation}
defined as follows. 
In either induction, we write~$[g, 1]$ for the function supported on $KZg^{-1}$, respectively $Ng^{-1}$, and taking value~$1$ at~$g^{-1}$. These functions span~$\cInd_{KZ}^G(\triv)$, respectively $\cInd_N^G(\delta)$, over~$k$.
We define the maps~$\mathrm{sum}$ and~$\partial$ by
\[
\mathrm{sum}[x, 1] = 1, \; \partial[g, 1] = [g, 1] - [g\Pi, 1].
\]
Since $[gn, 1] = \delta(n)[g, 1]$ if~$n \in N$, these are well-defined.
Since (in either compact induction) we have $h[g, 1] = [hg, 1]$ for all $g, h \in G$, these are $G$-linear maps.

\begin{pp}\label{exacttree}
The complex~(\ref{tree}) is exact.
\end{pp}
\begin{proof}
This is a standard result that holds in much greater generality, but we provide a proof for completeness. We write~$X$ for the Bruhat--Tits tree of~$\PGL_2(\bQ_p)$, whose vertices are in bijection with $G/KZ$, and whose edges are in bijection with~$G/N$.
We write~$[x] = xKZ$ for~$x \in G$. A \emph{$0$-chain} on~$X$ is a function
\[
g: G/KZ \to k
\]
with finite support.
An \emph{oriented $1$-chain} on~$X$ is a function~$f$ with finite support on the set of oriented edges of~$X$, such that $f([x], [y]) = -f([y], [x])$, whenever $\{[x], [y] \}$ is an edge of~$X$.

We identify~$\cInd_{KZ}^G(\triv)$ with the space of $0$-chains on~$X$ by letting~$[x, 1]$ be the chain supported in $[x]$ with value~$1$ at~$[x]$. 
We identify~$\cInd_N^G(\delta)$ with the space of oriented $1$-chains on~$X$ by letting~$[g, 1]$ be the chain
\[
([g], [g\Pi]) \mapsto 1, ([g\Pi], [g]) \mapsto -1, \text{ and zero elsewhere}.
\]
This is well-defined because $[gn, 1] = \delta(n)[g, 1]$.
With these identifications, the complex~(\ref{tree}) corresponds to
\[
0 \to C^{1}_{\mathrm{or}}(X, k) \xrightarrow{\partial} C^0(X, k) \xrightarrow{\mathrm{sum}} k \to 0
\]
with $\partial(f)[x] = \sum_{\text{edges } \{[x], [y]\} }f([x], [y])$. 
This complex computes the simplicial homology of~$X$ with coefficients in~$k$, and~$X$ is contractible, hence it is an exact sequence.
\end{proof}

\begin{lemma}\label{compacttensor}
Let~$G$ be a locally profinite group and~$H$ an open subgroup of~$G$. Let~$V, W$ be smooth $k$-representations of~$G$ and~$H$, respectively. Then there is a $G$-linear isomorphism
\[
\left ( \cInd_H^G W \right ) \otimes_{k} V \to \cInd_H^G(W \otimes_{k} V|_H)
\]
functorial in~$V$ and~$W$.
\end{lemma}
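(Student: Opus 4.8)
The statement is the standard "projection formula" for compact induction, and I would prove it by writing down the map explicitly on the level of functions and checking it is a $G$-linear isomorphism with an obvious inverse. Recall that $\cInd_H^G W$ is the space of functions $f\colon G \to W$ with $f(hg) = (h \cdot f(g))$ for $h \in H$, compactly supported modulo $H$, and smooth; $G$ acts by right translation. The plan is to define
\[
\Phi\colon \left( \cInd_H^G W \right) \otimes_k V \to \cInd_H^G(W \otimes_k V|_H), \qquad \Phi(f \otimes v)(g) = f(g) \otimes (g \cdot v),
\]
and extend $k$-linearly. First I would check $\Phi(f\otimes v)$ actually lands in the target: for $h \in H$ we have $\Phi(f\otimes v)(hg) = f(hg)\otimes(hg\cdot v) = (h\cdot f(g))\otimes(h\cdot(g\cdot v)) = h\cdot\bigl(f(g)\otimes(g\cdot v)\bigr)$, so the equivariance holds; the support condition and smoothness are immediate since $g \mapsto g\cdot v$ is smooth and $f$ is compactly supported mod $H$. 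Then I would check $G$-linearity: $\Phi\bigl((x\cdot f)\otimes(x\cdot v)\bigr)(g) = (x\cdot f)(g)\otimes(g\cdot x\cdot v) = f(gx)\otimes(gx\cdot v) = \Phi(f\otimes v)(gx) = \bigl(x\cdot\Phi(f\otimes v)\bigr)(g)$, using that the $G$-action on $\cInd$ is by right translation on both sides.

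For surjectivity and injectivity the cleanest route is to exhibit the inverse directly: send $F \in \cInd_H^G(W\otimes_k V|_H)$ to the element obtained by applying, pointwise in $g$, the map $\mathrm{id}_W \otimes (g^{-1}\cdot -)$ to $F(g)$, i.e. the function $g \mapsto (\mathrm{id}_W\otimes g^{-1})F(g)$; one checks this is again $H$-equivariant (into $\cInd_H^G W$, viewing the result as landing in $\cInd_H^G(W\otimes_k V|_H)$ with trivial $V$, which after choosing a basis of $V$ decomposes $\cInd_H^G(W\otimes_k V|_H)$ as $(\cInd_H^G W)\otimes_k V$ by the finite-support/smoothness argument below). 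Concretely, once one picks a $k$-basis $\{v_i\}$ of $V$ — or, to avoid finite-dimensionality assumptions on $V$, argues locally: any $F$ in the target is, on each coset $Hg$, determined by a value in $W\otimes V$, and by smoothness $F$ is a finite sum of functions supported on single cosets with the $V$-part of the value lying in a fixed finite-dimensional subspace — one sees that $\Phi$ and the pointwise-untwisting map are mutually inverse. Functoriality in $V$ and $W$ is then visible from the formula defining $\Phi$, since $\Phi$ is natural in both variables.

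I do not expect any real obstacle here: the only mild subtlety is handling the case where $V$ is infinite-dimensional, which is dealt with by the smoothness of the representations (so everything reduces to finite-dimensional pieces and compactly-supported functions), and by noting that $\otimes_k$ commutes with the relevant colimits. I would therefore keep the write-up short: define $\Phi$, verify well-definedness and $G$-equivariance by the two displayed computations above, and then observe that $F \mapsto \bigl(g \mapsto (\mathrm{id}_W\otimes g^{-1})F(g)\bigr)$ is a two-sided inverse, remarking that functoriality in $V$ and $W$ is clear from the formula.
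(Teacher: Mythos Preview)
Your argument is correct: the explicit map $\Phi(f\otimes v)(g)=f(g)\otimes(g\cdot v)$ is well-defined, $G$-equivariant, and the pointwise untwisting $F\mapsto\bigl(g\mapsto(\mathrm{id}_W\otimes g^{-1})F(g)\bigr)$ gives a two-sided inverse once one observes, using compactness of the support modulo~$H$, that the resulting function takes values in $W\otimes V_0$ for some finite-dimensional $V_0\subset V$ and hence lies in $(\cInd_H^G W)\otimes_k V$ viewed inside functions $G\to W\otimes_k V$. The only place your write-up is a little loose is in identifying where the inverse lands; spelling out the $V_0$ point as you sketch would make this clean.

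The paper's proof is different in style: it argues abstractly that both sides corepresent the same functor on smooth $k[G]$-modules, combining Frobenius reciprocity $\Hom_G(\cInd_H^G X,-)\cong\Hom_H(X,-)$ with the tensor--Hom adjunction $\Hom(X\otimes V,-)\cong\Hom(X,\Hom(V,-))$, and then invokes Yoneda. That approach is shorter and makes functoriality automatic, but it does not hand you the explicit formula; your construction does, which is useful whenever one actually needs to compute with the isomorphism (as the paper does immediately afterwards when tensoring the tree resolution with~$V$).
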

\begin{proof}
This is because the two sides (co)represent the same functor on the category of smooth $k[G]$-representations, by Frobenius reciprocity and the adjunction between $\Hom_{k}$ and $\otimes_{k}$.
\end{proof}

Now let~$V, W$ be smooth $k[\GL_2(\bQ_p)]$-representations. Applying $- \otimes_{k} V$ to~(\ref{tree}), we obtain by Lemma~\ref{compacttensor} an exact sequence
\[
0 \to \cInd_N^G (V \otimes_{k} \delta) \to \cInd_{KZ}^G(V) \to V \to 0.
\]
Applying $\Hom_G(-, W)$ and using Frobenius reciprocity for~$\Ext$ (see~\cite[5.9(e)]{Vignerasrepsbook}, which has no $\ell \not = p$ hypotheses), we obtain an exact sequence
\begin{equation}\label{longtree}
0 \to \Hom_G(V, W) \to \Hom_{KZ}(V, W) \to \Hom_N(V\delta, W) \to \Ext^1_G(V, W) \to \Ext^1_{KZ}(V, W) \to \Ext^1_N(V\delta, W)
\end{equation}
in which the arrow $\Hom_G(V, W) \to \Hom_{KZ}(V, W)$ is the identity (sending every $G$-linear map~$f$ to~$f$ itself viewed as a $KZ$-linear map) and the arrow $\Ext^1_G(V, W) \to \Ext^1_{KZ}(V, W)$ is the restriction map.

\subsubsection{Extensions.}\label{atomesextensions}
We are now ready to prove the following theorem concerning self-extensions of~$\mA_{r, s, \lambda}$.
\begin{thm}\label{psextensions}
The restriction map $\Ext^1_G(\mA_{r, s, \lambda}, \mA_{r, s, \lambda}) \to \Ext^1_{KZ}(\mA_{r, s, \lambda}, \mA_{r, s, \lambda})$ is injective.
\end{thm}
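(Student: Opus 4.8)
The plan is to deduce this from the exact sequence~(\ref{longtree}) together with a computation of $\Iw Z$-endomorphisms of~$\mA_{r,s,\lambda}$. Applying~(\ref{longtree}) with $V = W = \mA_{r,s,\lambda}$, the kernel of the restriction map is the cokernel of $\Hom_{KZ}(\mA_{r,s,\lambda},\mA_{r,s,\lambda}) \to \Hom_N(\mA_{r,s,\lambda}\otimes\delta,\mA_{r,s,\lambda})$; but Proposition~\ref{atomesmorphisms} says the inclusion $\Hom_G(\mA_{r,s,\lambda},\mA_{r,s,\lambda}) \hookrightarrow \Hom_{KZ}(\mA_{r,s,\lambda},\mA_{r,s,\lambda})$ is an isomorphism, so by exactness of~(\ref{longtree}) at the middle term the displayed map vanishes, and the kernel of the restriction map is all of $\Hom_N(\mA_{r,s,\lambda}\otimes\delta,\mA_{r,s,\lambda})$. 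Since $\delta$ is trivial on $\Iw Z$, this space is $\{\phi \in \Hom_{\Iw Z}(\mA_{r,s,\lambda},\mA_{r,s,\lambda}) : \Pi\phi\Pi^{-1} = -\phi\}$, which contains no nonzero scalar because $p > 2$. So it suffices to prove that $\Hom_{\Iw Z}(\mA_{r,s,\lambda},\mA_{r,s,\lambda})$ consists only of scalars; as in Proposition~\ref{atomesmorphisms} we may take $s = 0$ and write $\mA = \mA_{r,\lambda}$, with the exact sequence $0 \to \pi_1 \to \mA \to \pi_2 \to 0$ of~(\ref{atome}).

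Let $\phi \in \Hom_{\Iw Z}(\mA,\mA)$. Composing $\phi|_{\pi_1}$ with the projection $\mA \to \pi_2$ gives an $\Iw$-linear map $\pi_1 \to \pi_2$, which vanishes by Corollary~\ref{cor:psmorphisms} because $\soc_K\pi_1 \cong \Sym^r$ and $\soc_K\pi_2 \cong \Sym^{p-3-r}\otimes\det^{r+1}$ are nonisomorphic (since $r \ne p-2$, the $\det$ factor is nontrivial). Hence $\phi(\pi_1) \subseteq \pi_1$, and $\phi$ restricts and descends to $\phi_1 \in \Hom_{\Iw Z}(\pi_1,\pi_1)$ and $\overline\phi \in \Hom_{\Iw Z}(\pi_2,\pi_2)$. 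By the decomposition $\pi_i|_{\Iw Z} \cong \pi_\infty(\chi_i) \oplus \pi_\infty^+(\chi_i)$ (proof of Corollary~\ref{cor:psmorphisms}), Proposition~\ref{pstwist} and Corollary~\ref{cor:psIwahori} give $\Hom_{\Iw Z}(\pi_i,\pi_i) = k\cdot\id_{\pi_\infty(\chi_i)} \oplus k\cdot\id_{\pi_\infty^+(\chi_i)}$; write $\phi_1 = a_0\,\id_{\pi_\infty(\chi_1)} + a_1\,\id_{\pi_\infty^+(\chi_1)}$ and $\overline\phi = b_0\,\id_{\pi_\infty(\chi_2)} + b_1\,\id_{\pi_\infty^+(\chi_2)}$. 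Because $\phi$ is a morphism of the extension~(\ref{atome}) inducing $\phi_1$ on the sub and $\overline\phi$ on the quotient, we get $(\phi_1)_* e = (\overline\phi)^* e$, where $e \in \Ext^1_{\Iw Z}(\pi_2,\pi_1)$ is the class of~(\ref{atome}) restricted to $\Iw Z$. The decompositions above induce $\Ext^1_{\Iw Z}(\pi_2,\pi_1) = \bigoplus_{i,j\in\{0,1\}} \Ext^1_{\Iw Z}(\pi_\infty^{(j)}(\chi_2),\pi_\infty^{(i)}(\chi_1))$ with $\pi_\infty^{(0)}(\chi) = \pi_\infty(\chi)$, $\pi_\infty^{(1)}(\chi) = \pi_\infty^+(\chi)$; writing $e = \sum_{i,j} e_{ij}$ and noting that $(\phi_1)_*$ acts on the $ij$-summand by $a_i$ and $(\overline\phi)^*$ by $b_j$, the relation $(\phi_1)_* e = (\overline\phi)^* e$ becomes $a_i e_{ij} = b_j e_{ij}$ for all $i,j$.

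It therefore suffices to show all four $e_{ij}$ are nonzero, since then $a_0 = a_1 = b_0 = b_1$ and $\phi$ is a scalar. For this I would invoke Morra's analysis of $\mA|_{\Iw}$ applied to the lift $\tld{e}_{n+1} \in \mA$ of $\fourmatrix 0 1 {p^{n+1}} 0 [1,x^{p-3-r}] \in \pi_\infty(\chi_2)$. Property~(2) gives that the cocycle $g \mapsto (g-1)\tld{e}_{n+1}$ of $\fourmatrix{1+p\bZ_p}{\bZ_p}{0}{1+p\bZ_p}$ has $\pi_\infty(\chi_1)$-component congruent to $\kappa\,X^{p-3-r} + \cdots$ with $\kappa \in \bF_p^\times$; by Proposition~\ref{Iwasawa} every coboundary lies in $X^{p-2}(A/X^{p^n})$ and $p-3-r < p-2$, so this cocycle is not a coboundary, which forces $e_{00} \ne 0$. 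Property~(1) gives that the cocycle $u \mapsto (u-1)\tld{e}_{n+1}$ of $\fourmatrix 1 0 {p^{n+1}\bZ_p} 1$ takes nonzero values in $\soc_{\Iw}\pi_\infty^+(\chi_1)$, so $e_{10} \ne 0$. Finally $e$ is fixed by $\ad(\Pi)$, since $\mA$ is a $G$-representation and $\Pi$ thus induces an $\Iw Z$-linear isomorphism of~(\ref{atome}) onto its $\ad(\Pi)$-conjugate; as $\Pi$ interchanges $\pi_\infty(\chi_i) \leftrightarrow \pi_\infty^+(\chi_i)$ for $i = 1,2$, the automorphism $\ad(\Pi)$ of $\Ext^1_{\Iw Z}(\pi_2,\pi_1)$ sends the $ij$-summand to the $(1-i)(1-j)$-summand, so $e_{11} \ne 0$ and $e_{01} \ne 0$ as well. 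Hence all $e_{ij} \ne 0$, $\phi$ is a scalar, and the theorem follows.

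Everything here apart from the invocation of Morra's results is formal; the step I expect to be the main obstacle is extracting the non-vanishing of the two cocycles of properties~(1) and~(2) --- especially the one in property~(1), i.e.\ $e_{10} \ne 0$, which is really the assertion responsible for the theorem and must be read off from Morra's explicit construction of $\tld{e}_{n+1}$ rather than from the mere statement that that cocycle is $\soc_{\Iw}\pi_\infty^+(\chi_1)$-valued.
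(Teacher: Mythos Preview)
Your reduction via the long exact sequence~(\ref{longtree}) and Proposition~\ref{atomesmorphisms} to the vanishing of $\Hom_N(\mA_{r,s,\lambda}\otimes\delta,\mA_{r,s,\lambda})$ is exactly how the paper begins, and your reformulation in terms of the decomposition $e=\sum_{i,j} e_{ij}$ in $\Ext^1_{\Iw Z}(\pi_2,\pi_1)$ is clean and correct. The compatibility condition $a_i e_{ij}=b_j e_{ij}$ is the right one, and the $\Pi$-symmetry giving $e_{ij}\ne 0 \Leftrightarrow e_{(1-i)(1-j)}\ne 0$ is valid. Your argument that $e_{00}\ne 0$ via property~(2) of $\tld e_{n+1}$ and Proposition~\ref{Iwasawa} is essentially the same computation the paper performs (both here at the very end of the proof and again in Theorem~\ref{finitesplitting}), so that piece is fine.

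The gap you flag at $e_{10}\ne 0$ is genuine and is exactly where your approach diverges from the paper's. Property~(1) as stated only says the cocycle $u\mapsto (u-1)\tld e_{n+1}$ is \emph{valued} in $\soc_{\Iw}\pi_\infty^+(\chi_1)$; extracting nonzeroness, and then showing it is not a coboundary as an $\Iw$-cocycle, would indeed require going into Morra's construction and is not available from what the paper records. The paper does \emph{not} fill this gap by property~(1). Instead it abandons the goal of proving the stronger statement $\Hom_{\Iw Z}(\mA,\mA)=k$ and argues directly by contradiction: assuming an anticommuting $\alpha$ exists, it uses a $K$-theoretic dimension count with the cuspidal type $\sigma_{\mathrm{cusp}}\subset\mA^{K_1}$ (Proposition~\ref{computeimage}) to force the eigenspace $\mA^{\alpha=1}$ to project to $\pi_\infty^+(\chi_2)$ rather than $\pi_\infty(\chi_2)$. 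In your language this pins down which of the two possible anticommuting patterns holds; the remaining case is then eliminated by the same $\tld e_{n+1}$/Proposition~\ref{Iwasawa} computation you use for $e_{00}\ne 0$.

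So your framework is more conceptual and would yield a stronger conclusion (that $\End_{\Iw Z}(\mA)=k$), but the missing input $e_{10}\ne 0$ is not supplied by the paper and is replaced by the $\sigma_{\mathrm{cusp}}$ argument, which is conditional on the hypothetical $\alpha$ and does not by itself establish your stronger claim. If you want to complete your route, you would either need to extract the nontriviality of that cocycle from \cite{Morraatomes} directly, or adapt the $\sigma_{\mathrm{cusp}}$ computation to give an unconditional proof that the off-diagonal extension classes $e_{01},e_{10}$ do not vanish.
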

\begin{proof}
Any element in the kernel of this map is contained in $\Ext^1_{k[G], \zeta}(\mA_{r, s, \lambda}, \mA_{r, s, \lambda})$, i.e. it has central character~$\zeta$.
Hence, by~\cite[Lemma~5.7]{Paskunasimage}, it suffices to prove the theorem over~$\lbar k$.
By twisting, it suffices to prove the theorem when~$s = 0$.
Combining the exact sequence~(\ref{longtree}) with Proposition~\ref{atomesmorphisms}, the theorem is equivalent to the vanishing of the group $\Hom_N(\mA_{r, \lambda}\delta, \mA_{r, \lambda})$. We proceed by contradiction, assuming that there exists a nonzero $N$-linear morphism $\alpha: \mA_{r, \lambda}\delta \to \mA_{r, \lambda}$.
Equivalently, $\alpha$ is a nonzero $\Iw Z$-linear morphism such that $\alpha \Pi = -\Pi \alpha$, and so $\alpha$ is not $N$-linear but $\alpha^2$ is $N$-linear.
Hence by Proposition~\ref{atomesmorphisms} there exists~$y' \in \lbar k$ such that~$\alpha^2 = y'$, and so~$\alpha^2 = y^2$ for some~$y \in \lbar k$.
Recalling Corollary~\ref{cor:psmorphisms}, we see that since $\Hom_{\Iw}(\pi_1, \pi_2) = 0$ every $\Iw Z$-linear morphism $\mA_{r, \lambda} \to \mA_{r, \lambda}$, such as~$\alpha$, preserves~$\pi_1$ and passes to the quotient to~$\pi_2$.
Since~\eqref{Iwahorisplit} implies that $\End_{\Iw Z}(\pi_i)\cong k \times k$ is a reduced ring, if~$y = 0$ then the induced map $\alpha: \pi_i \to \pi_i$ must be zero, and so $\alpha$ must factor through an $\Iw Z$-morphism $\pi_2 \to \pi_1$, which is necessarily zero.
Hence~$y \ne 0$, and since~$p \ne 2$ this implies that $\mA_{r, \lambda} = \mA_{r, \lambda}^{\alpha = y} \oplus \mA_{r, \lambda}^{\alpha = -y}$ and these two summands are switched by~$\Pi$, hence are both nonzero.
Since~$\End_{\Iw Z}(\pi_i) \cong k \times k$ has precisely four idempotents, up to replacing~$y$ by~$-y$ we can assume that the image of~$\mA_{r, \lambda}^{\alpha = y}$ in~$\pi_2$ is $\pi_\infty(\kappa_2)$. 
Similarly, the intersection~$\mA_{r, \lambda}^{\alpha = y} \cap \pi_1$ is one of~$\pi_\infty(\kappa_1)$ and~$\pi_\infty(\kappa_1)^+$.
It follows that there exists an $\Iw$-stable subspace of~$\mC_{r, \lambda}$ or~$\mD_{r, \lambda}$ that projects isomorphically onto~$\pi_\infty(\kappa_2)$, which contradicts Theorem~\ref{finitesplittingIw}.
\end{proof}

\subsection{Summary.}
We summarize some consequences of the results above.
For simplicity, we are going to write $\Ext^i_{\mO[G]}$ for the $\Ext$-functor computed in the category of smooth representations on $p^n$-torsion $\mO$-modules.

\begin{thm}\label{morphisms}
Let~$\pi_1, \pi_2$ be finite length smooth $\mO[\GL_2(\bQ_p)]$-representations of central character~$\zeta$.
\begin{enumerate}
\item If the Jordan--H\"older factors of the~$\pi_i$ are all generic and supersingular, then $\Hom_{\mO[G^+]}(\pi_1, \pi_2) = \Hom_{\mO[\Iw Z]}(\pi_1, \pi_2)$ and the restriction map $\Ext^1_{\mO[G^+]}(\pi_1, \pi_2) \to \Ext^1_{\mO[\Iw Z]}(\pi_1, \pi_2)$ is injective.
Similarly, $\Hom_{\mO[G]}(\pi_1, \pi_2) = \Hom_{\mO[N]}(\pi_1, \pi_2)$ and the restriction map $\Ext^1_{\mO[G]}(\pi_1, \pi_2) \to \Ext^1_{\mO[N]}(\pi_1, \pi_2)$ is injective.
\item Fix~$r \in \{1, \ldots, p-4 \}, s \in \{0, \ldots, p-2\}$, and $\lambda \in k^\times$. 
If the~$\pi_i$ both admit an exhaustive $G$-stable filtration with all graded factors isomorphic to~$\mA_{r, s, \lambda}$, then $\Hom_{G}(\pi_1, \pi_2) = \Hom_{KZ}(\pi_1, \pi_2)$ and the restriction map $\Ext^1_{\mO[G]}(\pi_1, \pi_2) \to \Ext^1_{\mO[KZ]}(\pi_1, \pi_2)$ is injective.
\end{enumerate}
\end{thm}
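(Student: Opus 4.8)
The plan is to deduce both parts of Theorem~\ref{morphisms} from the results of Sections~\ref{secsupersingular} and~\ref{atomes} by a two-step d\'evissage: first reduce the coefficients from~$\mO$ to~$k$, and then propagate the statement along short exact sequences, starting from the basic building blocks --- an irreducible generic supersingular representation in part~(1), and~$\mA_{r, s, \lambda}$ itself in part~(2).

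For the reduction of coefficients, the key observation is that if $0 \to \pi_2 \to E \to \pi_1 \to 0$ is an extension of $p$-torsion $\mO[G]$-representations that are killed by~$p$, and if this sequence splits after restriction to a closed subgroup~$H \subseteq G$, then any $\mO[H]$-linear splitting exhibits~$E$ as an $\mO$-module direct sum $\pi_1 \oplus \pi_2$, so~$E$ is itself killed by~$p$; hence the extension class lies in the image of $\Ext^1_{k[G]}(\pi_1, \pi_2)$ and it is already $k[H]$-split. Since moreover a $k[G]$-extension that is $\mO[H]$-split is $k[H]$-split, it follows that for $k[G]$-representations $V, W$ the injectivity of $\Ext^1_{k[G]}(V, W) \to \Ext^1_{k[H]}(V, W)$ forces the injectivity of $\Ext^1_{\mO[G]}(V, W) \to \Ext^1_{\mO[H]}(V, W)$; the equality $\Hom_{\mO[G]}(V, W) = \Hom_{k[G]}(V, W)$ being automatic. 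Applied with $H \in \{ \Iw Z, N, KZ, G^+ \}$ this disposes of the base cases. In part~(1) the irreducible $p$-torsion $\mO[G]$-representations with generic supersingular Jordan--H\"older factors are exactly the generic supersingular $k$-representations, and between any two such one has $\Hom_{k[G]}(V,W) = \Hom_{k[N]}(V,W)$ by Corollary~\ref{ssmorphismsN} and $\Ext^1_{k[G]}(V,W) \hookrightarrow \Ext^1_{k[\Iw Z]}(V,W)$, hence $\Ext^1_{k[G]}(V,W) \hookrightarrow \Ext^1_{k[N]}(V,W)$ since $\Iw Z \subseteq N$, by Theorem~\ref{ssextensions}; the corresponding statements for the pair $(G^+, \Iw Z)$ acting on the representations~$\pi_\sigma$ come from Corollary~\ref{ssmorphisms} and Corollary~\ref{ssextensions2}. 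In part~(2) the base case $\pi_1 = \pi_2 = \mA_{r, s, \lambda}$ is Proposition~\ref{atomesmorphisms} together with Theorem~\ref{psextensions} (equivalently, the vanishing $\Hom_N(\mA_{r, s, \lambda}\delta, \mA_{r, s, \lambda}) = 0$).

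The d\'evissage itself is the claim that the conjunction ``$\Hom_{\mO[G]}(\pi_1, \pi_2) = \Hom_{\mO[H]}(\pi_1, \pi_2)$ and $\Ext^1_{\mO[G]}(\pi_1, \pi_2) \hookrightarrow \Ext^1_{\mO[H]}(\pi_1, \pi_2)$'' is stable under forming extensions in either variable. This follows from a diagram chase in the long exact sequences of $\Ext^\bullet_{\mO[G]}(-,-)$ and $\Ext^\bullet_{\mO[H]}(-,-)$ attached to a short exact sequence $0 \to \pi_2' \to \pi_2 \to \pi_2'' \to 0$, and symmetrically in the first variable; the point is that the chase only involves the $\Hom$ and $\Ext^1$ terms, so no information about higher $\Ext$-groups is needed. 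Granting this, one concludes by induction on total length: in part~(1) on the number of Jordan--H\"older factors of the~$\pi_i$, peeling off an irreducible sub- or quotient-representation and using that every subquotient still has generic supersingular Jordan--H\"older factors (over~$G$, or over~$G^+$ after restriction, since a generic supersingular $G$-representation restricts to $\pi_\sigma \oplus \pi_{\sigma^{[s]}}$ on $G^+$); in part~(2) on the number of graded pieces of the given $\mA_{r, s, \lambda}$-filtrations, peeling off a sub- or quotient-copy of~$\mA_{r, s, \lambda}$.

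I expect the only real difficulty to be bookkeeping rather than anything conceptual: in part~(2) the building block~$\mA_{r, s, \lambda}$ is reducible, so one cannot simply invoke a d\'evissage over Serre weights as in part~(1), and must instead run the induction directly on the filtration length and check that at each stage a sub- or quotient-copy of~$\mA_{r, s, \lambda}$ can be split off compatibly with the long exact sequences. All of the genuinely substantive, genericity-sensitive work --- the computations of $\Hom$ and $\Ext^1$ for the building blocks --- has already been carried out in Sections~\ref{secsupersingular} and~\ref{atomes}.
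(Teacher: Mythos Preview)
Your proposal is correct and follows essentially the same approach as the paper: reduce from~$\mO$ to~$k$ by noting that an $H$-split extension of $k$-representations is annihilated by the maximal ideal of~$\mO$, then run the standard d\'evissage (this is the argument the paper cites as~\cite[Lemma~A.1]{Paskunasextensions}) with base cases Corollaries~\ref{ssmorphisms},~\ref{ssmorphismsN},~\ref{ssextensions2}, Theorem~\ref{ssextensions}, Proposition~\ref{atomesmorphisms}, and Theorem~\ref{psextensions}. One cosmetic point: where you write ``killed by~$p$'' you should write ``killed by~$\pi_E$'' (or ``by the maximal ideal of~$\mO$''), since~$E/\bQ_p$ may be ramified; the argument is otherwise unchanged.
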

\begin{proof}
For part (1), the Jordan--H\"older factors of $\pi_i|_{G^+}$ are all of the form~$\pi_\sigma$ for some generic Serre weight~$\sigma$ (depending on the factor), since we are fixing the central character. Then the first statement follows from Corollaries~\ref{ssmorphisms} and~\ref{ssextensions2} by a d\'evissage argument.
(See for instance~\cite[Lemma~A.1]{Paskunasextensions}. Notice that this argument requires injectivity of the restriction map $\Ext^1_{\mO[G^+]}(\pi_{\sigma_2}, \pi_{\sigma_1}) \to \Ext^1_{\mO[\Iw Z]}(\pi_{\sigma_2}, \pi_{\sigma_1})$ for all generic Serre weights~$\sigma_1, \sigma_2$, but this is immediate from Corollary~\ref{ssextensions2}, since if an extension of~$\pi_{\sigma_2}$ by~$\pi_{\sigma_1}$ splits over~$\Iw Z$ then the maximal ideal of~$\mO$ acts trivially on the extension.)

Similarly, the second statement of part~(i) follows from Corollary~\ref{ssmorphismsN} and Theorem~\ref{ssextensions}, and part~(ii) follows from Proposition~\ref{atomesmorphisms} and Theorem~\ref{psextensions}.
\end{proof}

\begin{corollary}\label{deformations}
Let~$A$ be an Artin local $\mO$-algebra with residue field~$k$ and maximal ideal~$\fm_A$.
Let~$\pi_1, \pi_2$ be smooth $A[\GL_2(\bQ_p)]$-representations on flat $A$-modules.
\benum
\item Assume that $\pi_1 \otimes_A k \cong \pi_2 \otimes_A k$ are supersingular generic irreducible representations.
Then~$\pi_1 |_{A[\Iw Z]} \cong \pi_2 |_{A[\Iw Z]}$ if and only if~$\pi_1 |_{A[G^+]} \cong \pi_2|_{A[G^+]}$, and $\pi_1 |_{A[N]} \cong \pi_2|_{A[N]}$ if and only if $\pi_1 \cong \pi_2$.
\item Assume that there exist~$r_i \in \{1, \ldots, p-4\}, s_i \in \{0, \ldots, p-2\}$ and~$\lambda_i \in k^\times$ such that $\pi_i \otimes_A k \cong \mA_{r_i, s_i, \lambda_i}$. 
If $f: \pi_1 \to \pi_2$ is an $A[KZ]$-linear isomorphism, then either $(r_1, s_1, \lambda_1) = (r_2, s_2, \lambda_2)$ and $f$ is $A[G]$-linear, or $(r_1, s_1, \lambda_1) = (r_2, s_2, -\lambda_2)$ and~$f$ induces an $A[G]$-linear isomorphism $\pi_1 \to (\nr_{-1} \circ \det)\otimes_{A} \pi_2$.
\eenum
\end{corollary}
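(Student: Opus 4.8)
The plan is to deduce both parts formally from Theorem~\ref{morphisms} (together with Theorem~\ref{sameparameter} for the analysis of parameters in part~(2)), so that the only genuine work is to place the deformations $\pi_i$ within the scope of those theorems and to keep track of an unramified twist in part~(2). The first step I would carry out is a purely module-theoretic reduction. Since $A$ is Artin local with nilpotent maximal ideal $\fm_A$, and $\pi_i$ is $A$-flat, tensoring the $\fm_A$-adic filtration $\pi_i \supseteq \fm_A\pi_i \supseteq \fm_A^2\pi_i \supseteq \cdots$ with $\pi_i$ gives, by flatness, isomorphisms $\fm_A^j\pi_i/\fm_A^{j+1}\pi_i \cong (\fm_A^j/\fm_A^{j+1})\otimes_k(\pi_i\otimes_A k)$, a finite direct sum of copies of the reduction $\overline\pi_i := \pi_i\otimes_A k$ (with trivial $G$-action on the coefficient factor). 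Hence each $\pi_i$ is a finite-length smooth $\mO[\GL_2(\bQ_p)]$-representation of central character $\zeta$ admitting an exhaustive $G$-stable filtration all of whose graded factors are isomorphic to $\overline\pi_i$. I would also record the trivial observations that an $A[H]$-linear map between $A$-modules with $G$-action is in particular $\mO[H]$-linear, and that an $\mO[G]$-linear map that is $A$-linear is $A[G]$-linear; together these let us compute everything with $\mO[\cdot]$-linear maps.

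For part~(1): here $\overline\pi_1\cong\overline\pi_2$ is an irreducible generic supersingular representation, so $\overline\pi_i|_{G^+}\cong \pi_\sigma\oplus\pi_{\sigma^{[s]}}$ for a generic weight $\sigma$, and by the first paragraph every Jordan--H\"older factor of $\pi_i|_{G^+}$ is of the form $\pi_{\sigma'}$ for a generic weight $\sigma'$, while every Jordan--H\"older factor of $\pi_i$ (as an $\mO[G]$-representation) is generic supersingular. Theorem~\ref{morphisms}(1) then gives $\Hom_{\mO[\Iw Z]}(\pi_1,\pi_2)=\Hom_{\mO[G^+]}(\pi_1,\pi_2)$ and $\Hom_{\mO[N]}(\pi_1,\pi_2)=\Hom_{\mO[G]}(\pi_1,\pi_2)$. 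Therefore an $A[\Iw Z]$-linear isomorphism $\pi_1\to\pi_2$ is automatically $\mO[G^+]$-linear, hence $A[G^+]$-linear, which exhibits $\pi_1|_{A[G^+]}\cong\pi_2|_{A[G^+]}$; likewise an $A[N]$-linear isomorphism is $A[G]$-linear. The reverse implications are immediate.

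For part~(2): I would start by reducing the given $f$ modulo $\fm_A$ to a $k[KZ]$-linear isomorphism $\overline f\colon \mA_{r_1,s_1,\lambda_1}\xrightarrow{\sim}\mA_{r_2,s_2,\lambda_2}$. Restricting $\overline f$ to $K$-socles yields an isomorphism $\sigma_{r_1,s_1}\cong\sigma_{r_2,s_2}$ (the $K$-socle of each $\mA_{r,s,\lambda}$ being $\sigma_{r,s}$, as recalled in the proof of Lemma~\ref{onedim}), so uniqueness of the parametrization of Serre weights forces $r_1=r_2=:r$ and $s_1=s_2=:s$; then $\overline f\neq 0$ and Theorem~\ref{sameparameter} force $\lambda_2=\pm\lambda_1$. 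If $\lambda_2=\lambda_1$, both $\pi_i$ have reduction $\cong\mA_{r,s,\lambda_1}$, hence (first paragraph) both admit a $G$-stable filtration with graded factors $\cong\mA_{r,s,\lambda_1}$, so Theorem~\ref{morphisms}(2) gives $\Hom_{\mO[G]}(\pi_1,\pi_2)=\Hom_{\mO[KZ]}(\pi_1,\pi_2)$; consequently $f$ is $\mO[G]$-linear, hence $A[G]$-linear. If instead $\lambda_2=-\lambda_1$, I would use that $\nr_{-1}\circ\det$ is trivial on $KZ$ (because $\nr_{-1}(x)^2=1$ for every $x\in\bQ_p^\times$ while $\det(KZ)\subseteq\bZ_p^\times\cdot(\bQ_p^\times)^2$) and leaves central characters unchanged, and that $\mA_{r,s,\lambda_2}\otimes(\nr_{-1}\circ\det)\cong\mA_{r,s,-\lambda_2}=\mA_{r,s,\lambda_1}$. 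Replacing $\pi_2$ by $(\nr_{-1}\circ\det)\otimes_A\pi_2$ thus alters neither $\pi_2|_{A[KZ]}$ nor its central character, but makes the reduction $\cong\mA_{r,s,\lambda_1}$; applying the previous case to this new $\pi_2$ shows that $f$ induces an $A[G]$-linear isomorphism $\pi_1\to(\nr_{-1}\circ\det)\otimes_A\pi_2$.

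I do not anticipate a serious obstacle: all the representation-theoretic substance is already contained in Theorems~\ref{morphisms} and~\ref{sameparameter}. The two points that require the most care are the flatness argument of the first paragraph (which is what actually guarantees that the $\pi_i$ have the filtered structure demanded by Theorem~\ref{morphisms}) and the bookkeeping showing that twisting by $\nr_{-1}\circ\det$ is invisible both to the restriction to $KZ$ and to the central character; the rest is formal.
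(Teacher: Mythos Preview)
Your proposal is correct and follows essentially the same approach as the paper's own proof: use flatness and the $\fm_A$-adic filtration to place the $\pi_i$ within the scope of Theorem~\ref{morphisms}, then for part~(2) reduce modulo~$\fm_A$, read off $r_1=r_2$ and $s_1=s_2$ from the $K$-socle, invoke Theorem~\ref{sameparameter} for $\lambda_1=\pm\lambda_2$, and absorb the sign by an $\nr_{-1}$-twist before applying Theorem~\ref{morphisms}(2). Your write-up simply makes explicit a few points the paper leaves implicit (the passage between $A$-linearity and $\mO$-linearity, and why $\nr_{-1}\circ\det$ is trivial on~$KZ$).
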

\begin{proof}
For part~(i) it suffices to notice that by the flatness assumption the representations~$\pi_i$ satisfy the assumptions of part~(i) of Theorem~\ref{morphisms} (to see this, tensor the $\fm$-adic filtration of~$A$ with~$\pi_i$).
For part~(ii), notice the map $f \otimes_A k$ is a $KZ$-linear isomorphism $\mA_{r_1, s_1, \lambda_1} \to \mA_{r_2, s_2, \lambda_2}$. 
Comparing the $K$-socle, it follows that $r_1 = r_2$ and~$s_1 = s_2$. 
But now Theorem~\ref{sameparameter} implies that $\lambda_1^2 = \lambda_2^2$. 
\textcolor{black}{If~$\lambda_1 = \lambda_2$ then~$\pi_1$ and~$\pi_2$ satisfy the assumptions of part~(ii) of Theorem~\ref{morphisms}.
If $\lambda_1 = - \lambda_2$, we can regard $\nr_{-1}$ as valued in~$A^\times$, and then $\pi_1$ and $\pi_2 \otimes_A (\nr_{-1} \circ \det)$ satisfy the assumptions of part~(ii) of Theorem~\ref{morphisms}.
In both cases, the claim follows from Theorem~\ref{morphisms}.}
\end{proof}

\section{Banach space representations.}
In this section we prove the two theorems in the introduction, starting from Theorem~\ref{restriction}.
Let~$\Pi$ be an absolutely irreducible admissible unitary $E$-Banach space representation of~$\GL_2(\bQ_p)$, with fixed central character~$\zeta: \bQ_p^\times \to \mO^\times$. 
After possibly replacing~$E$ by a quadratic extension and twisting by an unramified character, we can assume that $\zeta(p) = 1$, and we will do so throughout this section.
Assume that~$\Pi$ is very generic, as defined in Section~\ref{defngeneric}.
\textcolor{black}{In this section we will often deal with open bounded lattices in~$\Pi$.
To abbreviate, we will refer to them simply as \emph{lattices}.
Since~$\Pi$ will usually denote a Banach space, in this section we will denote the matrix~$\fourmatrix 0 1 p 0$ by~$\nu$.}

\subsection{Proof of Theorem~\ref{restriction}: supersingular reduction.} \label{supersingularreduction}
Assume that~$\Pi$ has a $G$-stable lattice~$\Theta$ whose reduction~$\Theta \otimes_{\mO} k \cong \pi$ is an absolutely irreducible supersingular representation of~$G$. Let~$\Pi^1 \subset \Pi$ be a proper $K$-stable closed $E$-subspace. 
Then $\Theta^1 = \Theta \cap \Pi^1$ is a $KZ$-stable lattice in~$\Pi^1$, and
\[
\Theta^1 \otimes_{\mO} k \to \Theta \otimes_{\mO} k
\]
is injective. Write $\Theta \otimes_{\mO} k = \pi_\sigma \oplus \pi_{\sigma^{[s]}}$, as in Section~\ref{secsupersingular}.

\begin{pp}\label{Kmorphisms}
Let~$\sigma_1$ and~$\sigma_2$ be distinct generic Serre weights. Let $X_j \subset \pi_{\sigma_j}$ be a finite-dimensional $\Iw$-stable subspace. Then there are no nonzero $\Iw$-linear morphisms $\lambda: \pi_{\sigma_1}/X_1 \to \pi_{\sigma_2}/X_2$. 
\end{pp}
\begin{proof}
Without loss of generality, $X_1 = 0$. Since~$X_2$ is finite-dimensional, it is contained in~$\soc^i_\Iw(\pi_{\sigma_2})$ for~$i$ large enough. 
Consider the induced morphism
\[
\lambda_i: \pi_{\sigma_1} \to \pi_{\sigma_2}/X_2 \to \pi_{\sigma_2}/\soc^i_\Iw(\pi_{\sigma_2}),
\]
and recall from~(\ref{Iwsupersingular}) that
\[
\pi_{\sigma_2} / \soc^i(\pi_{\sigma_2}) \cong M_{\sigma_2} / \soc^i(M_{\sigma_2}) \oplus M^+_{\sigma_2^{[s]}} / \soc^i \left (M^+_{\sigma_2^{[s]}} \right ).
\]
Let us compose~$\lambda_i$ with the surjection
\[
M_{\sigma_1} \oplus M_{\sigma_{1}^{[s]}}^+ \to \pi_{\sigma_1},
\]
and apply Proposition~\ref{sstwist} and Theorem~\ref{ssIwahori}. Since $\sigma_1$ and $\sigma_2$ are not isomorphic, neither are $\sigma_1^{[s]}$ and~$\sigma_2^{[s]}$, and we deduce that~$\lambda_i = 0$.

It follows that~$\image\lambda \subseteq \soc^i(\pi_{\sigma_2})/X_2$ is finite-dimensional. 
Now the proposition follows from the fact that~$\pi_\sigma$ has no nonzero finite-dimensional $\Iw$-quotients. 
To see this, observe that Lemma~\ref{uniserial} implies that~$M_{\sigma}$ has no nonzero finite-dimensional $\Iw$-quotients. 
Then the same is true for its twist~$M_{\sigma}^+$, so the image of $M_\sigma$ and $M_{\sigma^{[s]}}^+$ in a finite-dimensional $\Iw$-representation vanishes. But $\pi_\sigma$ is a quotient of $M_\sigma \oplus M^+_{\sigma^{[s]}}$.
\end{proof}

\begin{pp}\label{subspacefactor}
Let~$\sigma$ be a generic Serre weight. Assume that~$X$ is an infinite-dimensional $K$-stable subspace of~$\pi_\sigma$. Then $X = \pi_\sigma$.
\end{pp}
\begin{proof}
It suffices to prove the proposition after extending scalars to~$\lbar k$.
Write~$\chi = \soc_{\Iw}(\pi_{\sigma})$. Then
\[
\pi_\sigma/\chi = M_\sigma/\chi \oplus M_{\sigma^{[s]}}^+/\chi
\]
is the direct sum of two uniserial $\Iw$-representations, and its socle filtration is given by Lemma~\ref{soclesum}. Assume $\soc^i_{\Iw}(\pi_\sigma)$ is contained in $X$ but $\soc^{i+1}_{\Iw}(\pi_\sigma)$ is not. Then $X/ \soc^i(\pi_\sigma)$ intersects trivially one of the summands of $\pi_\sigma / \soc^i \pi_\sigma$, since otherwise it would contain the socle of both summands, hence it would contain $\soc(\pi_\sigma/\soc^i \pi_\sigma )$ and~$X$ would contain $\soc^{i+1}(\pi_\sigma)$. 

If $\left ( X/\soc^i\pi_\sigma \right ) \cap \left ( M^+_{\sigma^{[s]}} / \soc^i M^+_{\sigma^{[s]}} \right ) = 0$, then we obtain an injection of $X/\soc^i(\pi_\sigma)$ in $M_\sigma/\soc^i(M_\sigma)$, through the canonical projection. 
By Lemma~\ref{uniserial}, every proper $\Iw$-submodule of $M_\sigma / \soc^i M_\sigma$ is finite-dimensional, hence this injection is an isomorphism onto $M_\sigma / \soc^i M_\sigma$.
By the same argument as Lemma~\ref{sstwist} there are no nonzero $\Iw$-linear maps $M_\sigma / \soc^i(M_\sigma) \to M^+_{\sigma^{[s]}}/\soc^i M^+_{\sigma^{[s]}}$. 
Hence $X/\soc^i(\pi_\sigma)$ is contained in the first summand $M_\sigma/ \soc^i M_\sigma$, hence it coincides with it. But then $X$ contains~$M_\sigma$. 
The other case similarly implies that $X$ contains~$M^+_{\sigma^{[s]}}$.

Now it suffices to prove that if~$X$ is a $K$-stable subspace of~$\pi_\sigma$ containing $M_\sigma$ or $\nu M_{\sigma^{[s]}}$, then~$X = \pi_\sigma$.
A previous version of this paper showed this by identifying $\pi_\sigma$ with a direct summand of a supersingular $\GL_2(\bQ_p)$-representation, and using the action of $G = \GL_2(\bQ_p)$.
We thank a referee for suggesting the following alternative argument which avoids using the~$\GL_2(\bQ_p)$-action.
By the proof of~\cite[Proposition~4.12]{Paskunasextensions}, we have $s \nu M_{\sigma^{[s]}} \subset M_{\sigma}$.
Hence if~$X$ contains~$M_\sigma$, then it contains~$s\nu M_{\sigma^{[s]}}$, and applying~$s$ we see that it contains~$\nu M_{\sigma^{[s]}}$.
Since~$\pi_\sigma = M_\sigma + \nu M_{\sigma^{[s]}}$, we deduce that~$X = \pi_\sigma$.
For the other case, assume~$X$ contains~$\nu M_{\sigma^{[s]}}$ and is $K$-stable. 
Then~$X$ contains the $k$-vector space~$s\nu M_{\sigma^{[s]}}$, and so it contains $\langle \Iw \cdot s\nu M_{\sigma^{[s]}} \rangle$.
However, as remarked above, $s\nu M_{\sigma^{[s]}} \subset M_{\sigma}$, hence $\langle \Iw \cdot s\nu M_{\sigma^{[s]}} \rangle$ is an infinite-dimensional $k[\Iw]$-submodule of~$M_{\sigma}$.
Since~$M_{\sigma}$ is uniserial, we deduce that $\langle \Iw \cdot s\nu M_{\sigma^{[s]}} \rangle = M_{\sigma}$, hence $M_{\sigma} \subset X$, which concludes the proof.
\end{proof}

\begin{pp}\label{subspace}
Let~$\pi$ be an absolutely irreducible, supersingular $k[G]$-representation. Let~$X$ be an infinite-dimensional $K$-stable subspace of~$\pi = \pi_\sigma \oplus \pi_{\sigma^{[s]}}$. Then~$X$ contains one of the summands~$\pi_\sigma, \pi_{\sigma^{[s]}}$, hence it is a direct sum of subspaces of~$\pi_\sigma, \pi_{\sigma^{[s]}}$.
\end{pp}
\begin{proof}
Assume it does not. 
Then~$X$ intersects the summands in finite-dimensional subspaces $X_\sigma, X_{\sigma^{[s]}}$ by Proposition~\ref{subspacefactor}. 
Then the image of~$X$ in $\pi / \left ( X_\sigma \oplus X_{\sigma^{[s]}} \right )$ does not intersect any of the summands. 
It follows that the projections of $X/\left ( X_\sigma \oplus X_{\sigma^{[s]}} \right )$ to $\pi_\sigma/X_\sigma$ and $\pi_{\sigma^{[s]}}/X_{\sigma^{[s]}}$ are both injective. 
They are surjective by Proposition~\ref{subspacefactor}, since otherwise~$X$ is finite-dimensional. Hence they are isomorphisms, and this contradicts the fact that there are no nonzero $\Iw$-linear maps between $\pi_{\sigma}/X_{\sigma}$ and $\pi_{\sigma^{[s]}}/X_{\sigma^{[s]}}$, by Proposition~\ref{Kmorphisms}.
\end{proof}

Let us now consider the image of $\Theta^1 \otimes_{\mO} k$ in~$\Theta \otimes_{\mO} k$. 
If it is finite-dimensional, then $\Theta^1$ is finitely generated over~$\mO$, by the following version of Nakayama's lemma. 
Hence~$\Pi^1$ is finite-dimensional and we are done.

\begin{lemma}\label{completedNakayama}
Let $f: M_1 \to M_2$ be an $\mO$-linear map between $\pi_E$-adically separated and complete $\mO$-modules. If~$f \otimes_{\mO} k$ is surjective, then~$f$ is surjective.
\end{lemma}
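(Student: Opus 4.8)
The plan is to run the standard successive-approximation proof of Nakayama's lemma in the complete setting. The hypothesis that $f \otimes_{\mO} k$ is surjective says precisely that $M_2 = f(M_1) + \pi_E M_2$. So, fixing $y \in M_2$, I would first write $y = f(x_0) + \pi_E y_1$ for some $x_0 \in M_1$ and $y_1 \in M_2$, then write $y_1 = f(x_1) + \pi_E y_2$, and iterate, producing sequences $(x_n)_{n \geq 0}$ in $M_1$ and $(y_n)_{n \geq 1}$ in $M_2$ with $y_n = f(x_n) + \pi_E y_{n+1}$ for all $n$. Unwinding the recursion gives
\[
y = \sum_{i=0}^{n} \pi_E^{\,i} f(x_i) + \pi_E^{\,n+1} y_{n+1}
\]
for every $n \geq 0$.

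Next I would form $x = \sum_{n \geq 0} \pi_E^{\,n} x_n \in M_1$: the partial sums are Cauchy for the $\pi_E$-adic topology, since consecutive differences lie in $\pi_E^{\,N} M_1$ for $N$ large, so the sum exists by completeness of $M_1$. Since $f$ is $\mO$-linear we have $f(\pi_E^{\,n} M_1) \subseteq \pi_E^{\,n} M_2$, so $f$ is continuous for the $\pi_E$-adic topologies and therefore $f(x) = \sum_{n \geq 0} \pi_E^{\,n} f(x_n)$. Comparing this with the displayed identity, the difference $y - f(x)$ lies in $\bigcap_{n} \pi_E^{\,n} M_2$, which vanishes because $M_2$ is separated; hence $f(x) = y$, and $f$ is surjective.

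I do not expect any real obstacle: the argument is entirely formal. The only points requiring attention are that the series defining $x$ converges (this is exactly where completeness of $M_1$ enters), that $f$ commutes with the limit (immediate from $\mO$-linearity and hence $\pi_E$-adic continuity of $f$), and that separatedness of $M_2$ is invoked to discard the error term $\pi_E^{\,n+1} y_{n+1}$. In particular completeness of $M_2$ plays no role.
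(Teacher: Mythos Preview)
Your proof is correct and follows essentially the same successive-approximation argument as the paper: both iterate the identity $M_2 = f(M_1) + \pi_E M_2$, sum the resulting series in $M_1$ by completeness, and push $f$ through the limit by $\pi_E$-adic continuity. Your write-up is in fact more explicit about where separatedness of $M_2$ is used and correctly observes that completeness of $M_2$ is not needed.
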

\begin{proof}
The map~$f$ is $\pi_E$-adically continuous since it is $\mO_E$-linear. 
Let~$x \in M_2$. 
By the assumption on~$f \otimes_\mO k$ there exists $x_0 \in M_1$ such that $x - f(x_0) \in \pi_E M_2$. Repeating, we find that there exist~$x_i \in M_1$ such that $x = \sum_{n=0}^{+\infty} \pi_E^i f(x_i)$. The continuity of~$f$ implies that if $y = \sum_{n=1}^{+\infty}\pi_E^i x_i$ then $x = f(y)$. 
\end{proof}

There remains to consider the case that~$\Theta^1 \otimes_{\mO} k$ is infinite-dimensional, in which case we can assume by Proposition~\ref{subspace} that its image in~$\Theta \otimes_{\mO} k$ is equal to $\pi_{\sigma} \oplus X_{\sigma^{[s]}}$ for some finite-dimensional~$X_{\sigma^{[s]}}$ (since otherwise $\Pi^1 = \Pi$ by Proposition~\ref{subspacefactor} and Lemma~\ref{completedNakayama}). We know that the matrix~$\nu$ swaps the two direct summands of~$\pi$. 
\textcolor{black}{Let us introduce $\Pi^2 = \nu \Pi^1$, an $\Iw$-stable closed subspace of~$\Pi$, and define $\Theta^2 = \Theta \cap \Pi^2 = \Theta \cap \nu \Pi^1$. 
Since~$\Theta$ is $G$-stable, we have $\Theta^2 = \nu\Theta^1 = \nu(\Theta \cap \Pi^1)$. }

\begin{lemma}
The space~$\Pi^1 \cap \Pi^2$ is finite-dimensional over~$E$ and $\Iw$-stable.
\end{lemma}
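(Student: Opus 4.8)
The plan is to read the finite-dimensionality of $\Pi^1 \cap \Pi^2$ off a computation modulo~$p$, after disposing of the easy part. The $\Iw$-stability is immediate: $\Pi^1$ is stable under~$K$, hence under~$\Iw$, while $\Pi^2 = \fourmatrix 0 1 p 0 \Pi^1$ is $\Iw$-stable because $\fourmatrix 0 1 p 0$ normalizes~$\Iw$ (it lies in the normalizer~$N$) and $\Pi^1$ is $\Iw$-stable; hence $\Pi^1 \cap \Pi^2$ is $\Iw$-stable, and the point is to bound its dimension over~$E$.

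For this I would set $\Theta' = \Theta^1 \cap \Theta^2 = \Theta \cap \Pi^1 \cap \Pi^2$, a lattice in $\Pi^1 \cap \Pi^2$. Being a closed $\mO$-submodule of~$\Theta$ it is $\pi_E$-adically separated and complete, and the purity argument that shows $\Theta^1 \otimes_\mO k \to \Theta \otimes_\mO k$ is injective shows the same for $\Theta' \otimes_\mO k \to \Theta \otimes_\mO k = \pi$ (if $x \in \Theta'$ maps into $\pi_E\Theta$ then $\pi_E^{-1}x$ lies in the $E$-span $\Pi^1 \cap \Pi^2$, hence in~$\Theta'$, so $x \in \pi_E\Theta'$). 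By Lemma~\ref{completedNakayama}, applied to a map $\mO^n \to \Theta'$ that is surjective modulo~$p$, it then suffices to prove that the image of $\Theta' \otimes_\mO k$ in~$\pi$ is finite-dimensional over~$k$: for then $\Theta'$ is finitely generated and torsion-free over~$\mO$, hence free of finite rank, and $\Pi^1 \cap \Pi^2 = \Theta'[1/p]$ is finite-dimensional.

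To compute that image, I would note it is contained in the intersection, inside $\pi = \pi_\sigma \oplus \pi_{\sigma^{[s]}}$, of the images of $\Theta^1 \otimes_\mO k$ and $\Theta^2 \otimes_\mO k$. By assumption the first is $\pi_\sigma \oplus X_{\sigma^{[s]}}$; since $\Theta^2 = \fourmatrix 0 1 p 0 \Theta^1$ and $\fourmatrix 0 1 p 0$ preserves both $\Theta$ and $\pi_E\Theta$, the second is $\fourmatrix 0 1 p 0(\pi_\sigma \oplus X_{\sigma^{[s]}})$, which equals $\pi_{\sigma^{[s]}} \oplus \fourmatrix 0 1 p 0 X_{\sigma^{[s]}}$ because $\fourmatrix 0 1 p 0$ swaps the two summands of~$\pi$. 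Intersecting these graded subspaces, and using $\fourmatrix 0 1 p 0 X_{\sigma^{[s]}} \subseteq \pi_\sigma$ and $X_{\sigma^{[s]}} \subseteq \pi_{\sigma^{[s]}}$, one gets $\fourmatrix 0 1 p 0 X_{\sigma^{[s]}} \oplus X_{\sigma^{[s]}}$, which is finite-dimensional, as required.

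Every step is formal; the only thing to watch is the bookkeeping with lattices modulo~$p$ — that $\Theta^1 \cap \Theta^2$ stays pure in~$\Theta$, and that reduction is compatible with the relevant intersections and with the action of $\fourmatrix 0 1 p 0$ — together with the fact, already recorded above, that $\fourmatrix 0 1 p 0$ interchanges $\pi_\sigma$ and $\pi_{\sigma^{[s]}}$ (which uses the $G$-irreducibility of~$\pi$). I do not expect a genuine obstacle.
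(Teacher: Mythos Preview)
Your proof is correct and follows essentially the same approach as the paper: identify $\Theta^1 \cap \Theta^2$ as a lattice in $\Pi^1 \cap \Pi^2$, observe that its reduction injects into $\Theta \otimes_\mO k$ with image contained in the intersection of the images of $\Theta^1 \otimes_\mO k$ and $\Theta^2 \otimes_\mO k$, compute that intersection to be $\fourmatrix 0 1 p 0 X_{\sigma^{[s]}} \oplus X_{\sigma^{[s]}}$, and conclude by Lemma~\ref{completedNakayama}. You spell out the purity argument and the Nakayama step in slightly more detail than the paper, but the substance is the same.
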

\begin{proof}
The space~$\Pi^1 \cap \Pi^2$ is $\Iw$-stable since so are the~$\Pi^i$. 
Notice that~$\Theta^1 \cap \Theta^2 = \Theta \cap (\Pi^1 \cap \Pi^2)$ is an $\Iw Z$-stable lattice in~$\Pi^1 \cap \Pi^2$.
Since~$\Theta^2 = \nu\Theta^1$, the injection $\Theta^2 \otimes_{\mO} k \to \Theta \otimes_{\mO} k$ has image $\nu X_{\sigma^{[s]}} \oplus \pi_{\sigma^{[s]}}$. 
Since $X_{\sigma^{[s]}}$ is finite-dimensional and the injection $\left ( \Theta^1 \cap \Theta^2 \right ) \otimes_{\mO} k \to \Theta \otimes_{\mO} k$ has image contained in $\nu X_{\sigma^{[s]}} \oplus X_{\sigma^{[s]}}$, the claim follows again from Lemma~\ref{completedNakayama}. 
\end{proof}

Since~$\Pi^1 \cap \Pi^2$ is $\Iw$-stable, we have completed the proof of Theorem~\ref{restriction} in the supersingular case if $\Pi^1 \cap \Pi^2 \not = 0$ (because we can induce $\Pi^1 \cap \Pi^2$ to a finite-dimensional $K$-stable closed $E$-subspace of~$\Pi$). 

\begin{lemma}\label{directsumIwahori}
Assume~$\Pi^1 \cap \Pi^2 = 0$. Then $\Pi|_{\Iw} = \Pi^1 \oplus \Pi^2$.
\end{lemma}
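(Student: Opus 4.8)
The plan is to establish the splitting first at the level of the $G$-stable lattice $\Theta$ and then invert $p$. Recall that $\Theta^1 = \Theta \cap \Pi^1$ and $\Theta^2 = \Theta \cap \Pi^2 = \fourmatrix 0 1 p 0 \Theta^1$ are $\Iw$-stable $\mO$-submodules of $\Theta$: $\Pi^1$ is $K$-stable, hence $\Iw$-stable, and $\fourmatrix 0 1 p 0$ normalizes $\Iw$ (it generates $N$ together with $\Iw$), so $\Pi^2$ and $\Theta^2$ are $\Iw$-stable as well. Moreover each $\Theta^i$ is an open bounded lattice in the closed subspace $\Pi^i$, hence $\pi_E$-adically separated and complete, and therefore so is $\Theta^1 \oplus \Theta^2$.

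The key step is to compute the images of $\Theta^1 \otimes_\mO k$ and $\Theta^2 \otimes_\mO k$ inside $\Theta \otimes_\mO k = \pi_\sigma \oplus \pi_{\sigma^{[s]}}$. By the case hypothesis (using Proposition~\ref{subspace}) the image of $\Theta^1 \otimes_\mO k$ is $\pi_\sigma \oplus X_{\sigma^{[s]}}$ for some finite-dimensional $X_{\sigma^{[s]}}$. Since $\Theta$ is $G$-stable, $\fourmatrix 0 1 p 0$ acts on $\Theta \otimes_\mO k = \pi$ as the corresponding $G$-operator, which swaps the two $G^+$-summands; from $\Theta^2 = \fourmatrix 0 1 p 0 \Theta^1$ and the $G$-stability of $\Theta$ it follows that the image of $\Theta^2 \otimes_\mO k$ is $\fourmatrix 0 1 p 0 (\pi_\sigma \oplus X_{\sigma^{[s]}}) = \bigl(\fourmatrix 0 1 p 0 X_{\sigma^{[s]}}\bigr) \oplus \pi_{\sigma^{[s]}}$, with $\fourmatrix 0 1 p 0 X_{\sigma^{[s]}} \subseteq \pi_\sigma$. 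Adding the two images in $\pi_\sigma \oplus \pi_{\sigma^{[s]}}$ gives $\pi_\sigma$ in the first coordinate and $\pi_{\sigma^{[s]}}$ in the second, i.e. all of $\Theta \otimes_\mO k$.

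Now apply Lemma~\ref{completedNakayama} to the $\mO[\Iw]$-linear summation map $\Theta^1 \oplus \Theta^2 \to \Theta$, $(x,y)\mapsto x+y$: by the previous step its reduction modulo $\pi_E$ is surjective, hence the map itself is surjective, so $\Theta^1 + \Theta^2 = \Theta$. On the other hand $\Theta^1 \cap \Theta^2 = \Theta \cap \Pi^1 \cap \Pi^2 = 0$ by the hypothesis $\Pi^1 \cap \Pi^2 = 0$, so $\Theta = \Theta^1 \oplus \Theta^2$ as $\mO[\Iw]$-modules. Inverting $p$, and using that $\Pi^i = \Theta^i \otimes_\mO E$ and $\Pi = \Theta \otimes_\mO E$, yields $\Pi = \Pi^1 \oplus \Pi^2$ as $E[\Iw]$-modules; since $\Pi^1$ and $\Pi^2$ are closed, the continuous $E$-linear bijection $\Pi^1 \oplus \Pi^2 \to \Pi$ is a topological isomorphism by the open mapping theorem, giving $\Pi|_\Iw = \Pi^1 \oplus \Pi^2$.

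I do not expect a serious obstacle here: everything rests on the structural inputs already proved in this section. The only points requiring care are the mod-$\pi_E$ bookkeeping — that $\Theta^i \otimes_\mO k \hookrightarrow \Theta \otimes_\mO k$ and that the image of $\Theta^2 \otimes_\mO k$ is genuinely the $\fourmatrix 0 1 p 0$-translate of the image of $\Theta^1 \otimes_\mO k$, which is where $G$-invariance of $\Theta$ enters — and checking that the hypotheses of Lemma~\ref{completedNakayama} hold for the direct sum $\Theta^1 \oplus \Theta^2$.
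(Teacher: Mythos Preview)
Your proposal is correct and follows essentially the same approach as the paper: apply Lemma~\ref{completedNakayama} to the summation map $\Theta^1 \oplus \Theta^2 \to \Theta$ (whose reduction is surjective by the computation of the images in $\Theta \otimes_\mO k$), invert~$p$, and invoke the open mapping theorem. The paper's proof is terser because the computation of the image of $\Theta^2 \otimes_\mO k$ was already recorded in the preceding lemma, but the logic is identical.
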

\begin{proof}
By Lemma~\ref{completedNakayama} and our assumptions on~$\Theta^1$ and~$\Theta^2$ we know that the map $\Theta^1 \oplus \Theta^2 \to \Theta$ induced by the inclusion of~$\Theta^i$ in~$\Theta$ is surjective.
Upon inverting~$p$ it follows that $\Pi = \Pi^1 + \Pi^2$ (algebraic sum), and by assumption the sum is direct. 
Finally, by definition of the~$\Theta_i$ we find that $\Pi^1 \oplus \Pi^2 \to \Pi$ is a bijective continuous morphism, hence it is a topological isomorphism.
\end{proof}

The following lemma applied to the orthogonal idempotents defining the decomposition in Lemma~\ref{directsumIwahori} implies that~$\Pi^1, \Pi^2$ are actually~$G^+$-stable.

\begin{lemma}\label{liftingtolattices}
Let~$\Pi_1$ and~$\Pi_2$ be absolutely irreducible, unitary, admissible $E$-Banach space representations admitting $G$-stable lattices~$\Theta_i$ whose reductions are isomorphic to the same supersingular irreducible representation.
Then $\Hom_{G^+}^\cont(\Pi_1, \Pi_2) = \Hom_\Iw^\cont(\Pi_1, \Pi_2)$ and $\Hom_G^\cont(\Pi_1, \Pi_2) = \Hom_N^\cont(\Pi_1, \Pi_2)$.
\end{lemma}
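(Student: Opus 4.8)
The plan is to descend the statement to the finite length smooth mod-$\pi_E^n$ representations $\Theta_i/\pi_E^n\Theta_i$, where Theorem~\ref{morphisms} applies, and then recover the Banach-space assertion by passing to the inverse limit over $n$ and inverting $p$. Throughout I would assume that $\Pi_1$ and $\Pi_2$ have central character $\zeta$; this is harmless in our application, where $\Pi_1 = \Pi_2 = \Pi$, and it is what is needed to bring the reductions below under the hypotheses of Theorem~\ref{morphisms}. I would first dispose of the assertions for $N$ and $G$ by reducing them to those for $\Iw$ and $G^+$: since $\fourmatrix 0 1 p 0$ has determinant of odd valuation we have $G = \langle G^+, \fourmatrix 0 1 p 0 \rangle$, and by definition $N = {\fourmatrix 0 1 p 0}^{\bZ} \ltimes \Iw$; hence a continuous $N$-linear map $\Pi_1 \to \Pi_2$ is continuous $\Iw$-linear --- therefore $G^+$-linear, once the first assertion is known --- and it commutes with $\fourmatrix 0 1 p 0 \in N$, so it is $G$-linear. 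Since $\Hom^\cont_G(\Pi_1,\Pi_2) \subseteq \Hom^\cont_N(\Pi_1,\Pi_2)$ trivially, the two coincide.

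For the equality $\Hom^\cont_{\Iw}(\Pi_1,\Pi_2) = \Hom^\cont_{G^+}(\Pi_1,\Pi_2)$ (only the inclusion $\subseteq$ requires an argument), the first step is to observe that each $\Theta_i/\pi_E^n\Theta_i$ is a finite length smooth $\mO[G]$-representation with central character $\zeta$, all of whose Jordan--H\"older factors are isomorphic to $\pi$. Indeed, $\Theta_i$ is $\mO$-torsion-free, hence $\mO$-flat, so multiplication by $\pi_E^j$ gives $G$-equivariant isomorphisms $\Theta_i/\pi_E\Theta_i \isom \pi_E^j\Theta_i/\pi_E^{j+1}\Theta_i$, and $\Theta_i/\pi_E\Theta_i \cong \pi$ is an irreducible generic supersingular representation by hypothesis. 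Theorem~\ref{morphisms}(1) then applies to $\Theta_1/\pi_E^n\Theta_1$ and $\Theta_2/\pi_E^n\Theta_2$ and yields
\[
\Hom_{\mO[\Iw Z]}(\Theta_1/\pi_E^n\Theta_1, \Theta_2/\pi_E^n\Theta_2) = \Hom_{\mO[G^+]}(\Theta_1/\pi_E^n\Theta_1, \Theta_2/\pi_E^n\Theta_2)
\]
for every $n$; moreover, since $Z$ acts through the scalar character $\zeta$ on both representations, the left-hand space coincides with $\Hom_{\mO[\Iw]}(\Theta_1/\pi_E^n\Theta_1, \Theta_2/\pi_E^n\Theta_2)$.

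The last step is to glue these identities over $n$. For $H \in \{\Iw, G^+\}$, an $\mO$-linear map $\Theta_1 \to \Theta_2/\pi_E^n\Theta_2$ automatically kills $\pi_E^n\Theta_1$, so it factors through $\Theta_1/\pi_E^n\Theta_1$; combined with $\Theta_2 = \varprojlim_n \Theta_2/\pi_E^n\Theta_2$ this gives $\Hom^\cont_{\mO[H]}(\Theta_1,\Theta_2) = \varprojlim_n \Hom_{\mO[H]}(\Theta_1/\pi_E^n\Theta_1, \Theta_2/\pi_E^n\Theta_2)$. Since a continuous $H$-linear map $\Pi_1 \to \Pi_2$ carries $\Theta_1$ into $\pi_E^{-N}\Theta_2$ for some $N$, one also has $\Hom^\cont_H(\Pi_1,\Pi_2) = \Hom^\cont_{\mO[H]}(\Theta_1,\Theta_2) \otimes_{\mO} E$. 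Feeding the identity of the previous paragraph into these two facts produces the equality $\Hom^\cont_{\Iw}(\Pi_1,\Pi_2) = \Hom^\cont_{G^+}(\Pi_1,\Pi_2)$, hence also $\Hom^\cont_G(\Pi_1,\Pi_2) = \Hom^\cont_N(\Pi_1,\Pi_2)$ by the reduction in the first paragraph.

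I expect the only real work to be bookkeeping: checking carefully that $\Theta_i/\pi_E^n\Theta_i$ genuinely satisfies the hypotheses of Theorem~\ref{morphisms} (this uses $\mO$-flatness of $\Theta_i$ together with irreducibility of its reduction, so that all the graded pieces of the $\pi_E$-adic filtration are generic supersingular), and that $\Hom^\cont$ commutes with the relevant inverse limit and with inverting $p$. There is no conceptual obstacle beyond this, since the substantive input is entirely contained in Theorem~\ref{morphisms}.
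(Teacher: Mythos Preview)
Your proposal is correct and follows essentially the same approach as the paper: both arguments scale a given $\Iw$-linear map so that it carries $\Theta_1$ into $\Theta_2$, invoke Theorem~\ref{morphisms}(1) on the flat $\mO/\pi_E^n$-modules $\Theta_i/\pi_E^n\Theta_i$ to upgrade $\Iw$-linearity to $G^+$-linearity at each finite level, and then pass to the limit using $\pi_E$-adic separatedness of~$\Theta_2$. The only cosmetic differences are that the paper checks $\lambda g - g\lambda = 0$ element-wise rather than phrasing things as an inverse limit of $\Hom$-spaces, and handles the $(N,G)$ statement by a parallel appeal to Theorem~\ref{morphisms}(1) rather than by your reduction via $G = \langle G^+, \fourmatrix 0 1 p 0\rangle$.
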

\begin{proof}
Let~$\lambda: \Pi_1 \to \Pi_2$ be continuous and $\Iw$-linear.
Multiplying~$\lambda$ by a power of~$\pi_E$, we can assume that~$\lambda(\Theta_1) \subseteq \Theta_2$.
Let~$g \in G^+$. It suffices to prove that $\lambda g - g \lambda|_{\Theta_1} = 0$.
To do so, since~$\Theta_2$ is separated it suffices to prove that $\lambda g - g\lambda$ induces the zero map $\Theta_1/\pi_E^n \to \Theta_2/\pi_E^n$ for all~$n > 0$.
But this is true by part~(i) of Theorem~\ref{morphisms}, since~$\Theta_i / \pi_E^n$ is flat over~$\mO_E/\pi_E^n$.

The same proof works for~$N$ and~$G$.
\end{proof}

Finally, stability of~$\Pi^1$ under~$G^+$ implies that $X_{\sigma^{[s]}} = 0$: this is a consequence of Pa{\v s}k{\=unas}'s result that~$\pi_{\sigma^{[s]}}$ is absolutely irreducible as a $G^+$-representation. 
But then Proposition~\ref{subspacefactor} implies that any proper $K$-stable closed $E$-subspace of~$\Pi^1$ or~$\Pi^2$ is finite-dimensional. 
So either the~$\Pi^i$ are topologically irreducible as $K$-representations or $\Pi$ has a finite-dimensional $K$-stable closed $E$-subspace. This concludes the proof of Theorem~\ref{restriction} in the supersingular case.

\begin{rk}\label{casesforPi}
All cases of Theorem~\ref{restriction} occur already for representations with supersingular reduction.
More precisely, case~(iii) holds if and only if a twist of~$\Pi$ is associated to a potentially semistable irreducible Galois representation $\rho: \Gal_{\bQ_p} \to \GL_2(E)$ with distinct Hodge--Tate weights under the $p$-adic Langlands correspondence, and case~(ii) holds if and only if~$\Pi$ is associated to the induction of a character of~$\Gal_{\bQ_{p^2}}$ that does not extend to~$\Gal_{\bQ_p}$.
All other representations~$\Pi$ fall into case~(i).
\end{rk}

\begin{rk}
As remarked by a referee, it follows from the results above that when~$\Pi = \Pi^1 \oplus \Pi^2$ we actually have $\End_{\Iw Z}^\cont(\Pi, \Pi) \cong E \times E$.
In fact, in this case we have~$\Pi|_N = \Ind_{\Iw Z}^N(\Pi^1) = \Ind_{\Iw Z}^N(\Pi^2)$.
This equation together with Frobenius reciprocity and Lemma~\ref{liftingtolattices} implies that 
\[
\Hom_{\Iw Z}^\cont(\Pi^i, \Pi^i) = E \text{ and } \Hom_{\Iw Z}^\cont(\Pi^1, \Pi^2) = 0,
\]
which implies that $\End_{\Iw Z}^\cont(\Pi, \Pi) \cong E \times E$.
The vanishing $\Hom_{\Iw Z}^\cont(\Pi^1, \Pi^2) = 0$ can also be proved directly.
In fact, assume that $\lambda : \Pi^1 \to \Pi^2$ is a nonzero $\Iw$-linear morphism.
Then there exist $G^+$-stable lattices~$\Theta^i \subset \Pi^i$ such that~$\lambda$ factors through a morphism $\lambda: \Theta^1 \to \Theta^2$ inducing a nonzero map $\lambda: \Theta^1/\pi_E \to \Theta^2/\pi_E$.
This contradicts Corollary~\ref{ssmorphisms} since~$\Theta_1/\pi_E \cong \pi_\sigma$ and~$\Theta_2/\pi_E \cong \pi_{\sigma^{[s]}}$.
\end{rk}

\subsection{Proof of Theorem~\ref{restriction}: reducible reduction.}\label{residuallyreducible}
Now assume that~$\Pi$ is not ordinary (as defined in~\cite{Paskunasimage}) but has no $G$-stable lattice with supersingular reduction.
Then by the main results of~\cite{Paskunasimage} the Jordan--H\"older factors of the reduction of any $G$-stable lattice are principal series representations~$\{ \pi_1, \pi_2\}$ with distinct $K$-socle. 
Assume that~$\Pi^1 \subset \Pi$ is a proper $K$-stable closed $E$-subspace. 
We are going to prove that~$\Pi^1$ is finite-dimensional over~$E$.
We introduce the following piece of notation: if~$\Theta \subset \Pi$ is a $G$-stable lattice, we write~ $\lbar \Theta = \Theta \otimes_\mO k$, and we write $\lbar \Theta^{\sub}$ for the image of the injection
\[
(\Theta \cap \Pi^1) \otimes_\mO k \to \Theta \otimes_\mO k.
\]
It is a $KZ$-stable subspace of~$\Theta \otimes_\mO k$.
\textcolor{black}{We also introduce the following definition.}

\begin{defn}\label{neighbours}
\textcolor{black}{Let~$\Theta, \Psi$ be open and bounded lattices in~$\Pi$ (not necessarily $G$-stable).
We say that~$\Theta$ and~$\Psi$ are neighbours if
\[
\pi_E \Theta \subseteq \Psi \subseteq \Theta \subseteq \pi_E^{-1}\Psi.
\]}
\end{defn}

\textcolor{black}{The reason for the name is that if~$\Theta$ and~$\Psi$ are $G$-stable neighbour lattices then the distance between $[\Theta]$ and~$[\Psi]$ in the graph constructed in the appendix is at most~$1$.
Notice that if $H$ is a subgroup of~$G$, and~$\Theta, \Psi$ are $H$-stable neighbour lattices, then the reductions~$\Theta/\pi_E\Theta$ and~$\Psi/\pi_E\Psi$ admit one-step $H$-stable filtrations with the same graded pieces (up to reordering).
In case~$\Psi = \Theta$ or~$\Psi = \pi_E\Theta$, one of these graded pieces is equal to zero.}

\subsubsection{Outline of the argument.}\label{outline}
If~$\Pi$ admits a $G$-stable lattice~$\Theta$ such that~$\lbar \Theta^\sub$ is finite-dimensional, then Lemma~\ref{completedNakayama} implies that~$\Pi^1$ is finite-dimensional. 
Otherwise, $\lbar \Theta^\sub$ is infinite-dimensional for all $G$-stable lattices~$\Theta$, and it is a proper subspace of~$\lbar \Theta$.
We will prove that the latter case leads to a contradiction, eventually arising from Proposition~\ref{morphismsps} about $\Iw$-linear morphisms between quotients of~$\pi_1$ and~$\pi_2$.
Since our argument is quite involved, we begin by giving a brief outline.

Applying the results in Appendix~\ref{Ribetappendix} we find for~$i = 1, 2$ a $G$-stable lattice~$\Theta_i$ in~$\Pi$ whose reduction is the atome automorphe surjecting onto~$\pi_i$.
Then we consider~$\lbar \Theta_i^\sub$, and we prove that it contains the other factor~$\pi_{3-i}$.
To do so, we prove that if $\lbar \Theta_i^\sub$ does not contain~$\pi_{3-i}$ then~$\lbar \Theta_i^\sub$ surjects onto~$\pi_i$: in fact, Proposition~\ref{subspacefactorps} implies that otherwise~$\lbar \Theta_i^\sub$ would be finite-dimensional, since it would intersect~$\pi_{3-i}$ in a finite-dimensional subspace, and it would have finite-dimensional image in~$\pi_i$.
But if $\lbar \Theta_i^\sub$ surjects onto~$\pi_i$ then Theorem~\ref{finitesplitting} implies that $\lbar \Theta_i^\sub = \lbar \Theta_i$ and so~$\Pi^1 = \Pi$, a contradiction.

On the other hand, by a result analogous to Proposition~\ref{subspace}, we prove in Proposition~\ref{subspaceps} that if~$\Theta$ is a $G$-stable lattice with semisimple reduction, then~$\lbar \Theta^\sub$ contains one the factors~$\pi_1, \pi_2$.
In Proposition~\ref{constantindex} we go further and we prove that this factor does not depend on the choice of~$\Theta$: up to renumbering, we can therefore assume it is~$\pi_2$.

At this point we know that $\lbar \Theta_2^\sub$ contains~$\pi_1$, and~$\Theta_2$ has a neighbour~$\Theta$ such that~$\lbar \Theta^\sub$ contains~$\pi_2$. 
\textcolor{black}{(The lattice~$\Theta$ might be~$\Theta_1$ or a $G$-stable lattice with semisimple reduction.)}
Since~$\Theta_2$ and~$\Theta$ are neighbours we have inclusions
\[
\pi_E \Theta_2 \subseteq \Theta \subseteq \Theta_2 \subseteq \pi_E^{-1}\Theta
\]
and since $\pi_E\Theta_2 \cap \Pi^1 = \pi_E(\Theta_2 \cap \Pi^1)$ and $\pi_E^{-1}\Theta \cap \Pi^1 = \pi_E^{-1}(\Theta\cap\Pi^1)$ we see that the lattices~$\Theta \cap \Pi^1, \Theta_2 \cap \Pi^1$ are also neighbours.
It follows that there are one-step filtrations on~$\lbar \Theta^\sub, \lbar \Theta_2^\sub$ with the same graded factors (up to reordering).
We conclude the argument by proving that this produces a nonzero morphism between certain quotients of~$\pi_1$ and~$\pi_2$, contradicting Proposition~\ref{morphismsps}.

\subsubsection{Subspaces of~$\Theta \otimes_\mO k$.} \label{sumsubspaces}
We prove some analogues of the results in Section~\ref{supersingularreduction}, concerning the $K$-stable subspaces of~$\Theta \otimes_\mO k$.

\begin{pp}\label{morphismsps}
Let~$\pi_1, \pi_2$ be generic principal series representations of~$\GL_2(\bQ_p)$ of distinct Serre weight, and write~$\kappa_i = (\soc_K(\pi_i)^{\Iw_1})^+$. 
Let~$X_i$ be a finite-dimensional $\Iw$-stable subspace of~$\pi_i$. 
Then there are no nonzero $\Iw$-linear morphisms $\pi_1 / X_1 \to \pi_2 / X_2$.
\end{pp}
\begin{proof}
The same argument as Proposition~\ref{Kmorphisms} goes through, substituting $\pi_\infty(\kappa_i)$ for~$M_{\sigma_i}$, $\pi_{\infty}^+(\kappa_i)$ for~$M_{\sigma_i^{[s]}}^+$, and appealing to Theorem~\ref{psIwahori1} and the proof of Proposition~\ref{pstwist}.
\end{proof}

\begin{pp}\label{subspacefactorps}
Let~$\pi$ be a generic principal series representation of~$\GL_2(\bQ_p)$. 
Assume $X$ is an infinite-dimensional $K$-stable subspace of~$\pi$. Then $X = \pi$.
\end{pp}
\begin{proof}
The same argument as Proposition~\ref{subspacefactor} proves that~$X$ has to contain one of the summands in the decomposition $\pi|_{\Iw} = \pi_\infty(\kappa) \oplus \pi_\infty^+(\kappa)$. 
(Notice that the $G$-action was not used to establish this, and there is no need here to extend scalars to~$\lbar k$.) 
If~$X$ contains $\pi_\infty(\kappa)$, then we are done since $\pi_\infty(\kappa)$ generates $\pi \cong \Ind_{K_0(p)}^K \pi_\infty(\kappa)$ over~$K$. 
Otherwise, let $\varphi_{n+1} \in \pi_{n+1}(\kappa)$ be the $K_0(p^{n+1})$-eigenvector supported in $K_0(p^{n+1})$ with $\varphi_{n+1}(1) = 1$. 
Then
\[
s\varphi_{n+1}\fourmatrix a b {pc} d = \varphi_{n+1} \fourmatrix b a d {pc} = 0
\]
implies that $s\varphi_{n+1}$ is supported in $K \setminus \Iw$, and so $s\varphi_{n+1} \in \pi_{\infty}^+(\kappa)$. 
But this implies that if $X$ is $K$-stable and contains~$\pi_\infty^+(\kappa)$, then $X = \pi$, since~$\pi_\infty(\kappa)$ is $\Iw$-generated by the~$\varphi_{n+1}$ as~$n$ tends to infinity.
\end{proof}

\begin{pp}\label{subspaceps}
\textcolor{black}{Assume that~$\Theta$ is a $G$-stable lattice in~$\Pi$ with semisimple reduction. Write $\Theta \otimes_{\mO} k$ as a direct sum of principal series representations:
\[
\pi = \Theta \otimes_{\mO} k = \pi_1 \oplus \pi_2.
\]
If $X \subset \Theta \otimes_{\mO} k$ is a proper $K$-stable infinite-dimensional subspace, then $X$ contains one of the two summands, hence it is a direct sum of subspaces of~$\pi_1$ and~$\pi_2$.}
\end{pp}
\begin{proof}
Given Propositions~\ref{morphismsps} and~\ref{subspacefactorps}, the same argument as Proposition~\ref{subspace} goes through.
\end{proof}

The following lemma will be often employed together with the previous results.

\begin{lemma}\label{furtherquotient}
Let~$\Theta \subset \Pi$ be a $G$-stable lattice with semisimple reduction, so that $\lbar \Theta^\sub \subset \Theta \otimes_\mO k$ is a proper subspace that contains~$\pi_i$ for some~$i \in \{1, 2\}$. 
Let~$\Fil \lbar \Theta^\sub$ be a $K$-stable subspace of~$\lbar \Theta^\sub$.
Then there exists a unique $\lbar \Theta^{\sub, \infty} \in \{\Fil \lbar \Theta^\sub, \lbar \Theta^\sub/\Fil \lbar \Theta^\sub\}$ that is infinite-dimensional. 
Furthermore, $\lbar \Theta^{\sub, \infty}$ has a finite-dimensional $K$-subspace~$X$ such that $\lbar \Theta^{\sub, \infty}/X$ is $K$-isomorphic to an infinite-dimensional quotient of~$\pi_i$.
\end{lemma}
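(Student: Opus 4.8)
The plan is to reduce everything to the structure theory of $K$-stable subspaces of a single generic principal series, namely Proposition~\ref{subspacefactorps}, which says such a subspace is either finite-dimensional or the whole representation.

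First I would pin down the shape of $\lbar\Theta^\sub$ itself. Since $\lbar\Theta^\sub$ is a $K$-stable subspace of $\Theta\otimes_\mO k=\pi_1\oplus\pi_2$ containing the summand $\pi_i$, for any $x=x_1+x_2\in\lbar\Theta^\sub$ with $x_j\in\pi_j$ we have $x_{3-i}=x-x_i\in\lbar\Theta^\sub$, so $\lbar\Theta^\sub=\pi_i\oplus Y$ where $Y:=\lbar\Theta^\sub\cap\pi_{3-i}$ is a $K$-stable subspace of $\pi_{3-i}$. Because $\lbar\Theta^\sub$ is proper we have $Y\neq\pi_{3-i}$, so Proposition~\ref{subspacefactorps} forces $Y$ to be finite-dimensional; in particular $\lbar\Theta^\sub$ is infinite-dimensional.

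Next, given the $K$-stable subspace $\Fil\lbar\Theta^\sub$, I would examine its image under the projection $p\colon\lbar\Theta^\sub\to\pi_i$ along $Y$. By Proposition~\ref{subspacefactorps}, $p(\Fil\lbar\Theta^\sub)$ is either all of $\pi_i$ or finite-dimensional, and this dichotomy settles both the uniqueness claim and the structure of the infinite-dimensional member. If $p(\Fil\lbar\Theta^\sub)=\pi_i$, then $\Fil\lbar\Theta^\sub$ is an extension of $\pi_i$ by the finite-dimensional space $\Fil\lbar\Theta^\sub\cap Y$, hence infinite-dimensional, while $\lbar\Theta^\sub=\Fil\lbar\Theta^\sub+Y$ gives $\lbar\Theta^\sub/\Fil\lbar\Theta^\sub\cong Y/(Y\cap\Fil\lbar\Theta^\sub)$, finite-dimensional; here I set $\lbar\Theta^{\sub,\infty}=\Fil\lbar\Theta^\sub$ and $X=\Fil\lbar\Theta^\sub\cap Y$, so that $\lbar\Theta^{\sub,\infty}/X\cong\pi_i$. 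If instead $p(\Fil\lbar\Theta^\sub)$ is finite-dimensional, then $\Fil\lbar\Theta^\sub\subseteq p(\Fil\lbar\Theta^\sub)\oplus Y$ is finite-dimensional, so $\lbar\Theta^\sub/\Fil\lbar\Theta^\sub$ is the infinite-dimensional one; now I take $X$ to be the image of $Y$ in $\lbar\Theta^{\sub,\infty}=\lbar\Theta^\sub/\Fil\lbar\Theta^\sub$, which is a quotient of $Y$ hence finite-dimensional, and I compute $\lbar\Theta^{\sub,\infty}/X=\lbar\Theta^\sub/(\Fil\lbar\Theta^\sub+Y)\cong(\lbar\Theta^\sub/Y)/(\text{image of }\Fil\lbar\Theta^\sub)\cong\pi_i/p(\Fil\lbar\Theta^\sub)$, an infinite-dimensional quotient of $\pi_i$. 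Since the two cases are exhaustive and mutually exclusive and in each exactly one of $\Fil\lbar\Theta^\sub$, $\lbar\Theta^\sub/\Fil\lbar\Theta^\sub$ is infinite-dimensional, $\lbar\Theta^{\sub,\infty}$ is well-defined.

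There is no genuine obstacle; the only point requiring care is the very first step, obtaining the splitting $\lbar\Theta^\sub=\pi_i\oplus Y$ with $Y$ finite-dimensional, after which all the assertions are elementary bookkeeping driven by the dichotomy of Proposition~\ref{subspacefactorps}.
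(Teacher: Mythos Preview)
Your proof is correct and follows essentially the same approach as the paper. The only cosmetic difference is that you argue via the projection $p:\lbar\Theta^\sub\to\pi_i$ and invoke Proposition~\ref{subspacefactorps} directly, whereas the paper invokes Proposition~\ref{subspaceps} to say that any infinite-dimensional $K$-subspace of $\lbar\Theta^\sub$ must contain $\pi_i$ and hence equal $\pi_i\oplus T'$; the case split and the choice of $X$ in each case are the same.
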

\begin{proof}
By Proposition~\ref{subspaceps}, we can write $\lbar \Theta^\sub = \pi_i \oplus T$ for a finite-dimensional subspace~$T$ of the other summand of~$\Theta \otimes_\mO k$. 
Existence of~$\lbar \Theta^{\sub, \infty}$ follows because~$\lbar \Theta^\sub$ is infinite-dimensional, and uniqueness because an infinite-dimensional subspace of~$\lbar \Theta^\sub$ has the form $\pi_i \oplus T'$ for some finite-dimensional subspace of~$T$, again by Proposition~\ref{subspaceps}. 
This also implies the last assertion of the lemma in the case that~$\lbar \Theta^{\sub, \infty}  = \Fil \lbar \Theta^\sub$ (the subspace~$X$ we are looking for is~$T'$).
Otherwise, $X$ can be taken to be the finite-dimensional subspace $\left ( \Fil \lbar \Theta^\sub + T \right ) / \Fil \lbar \Theta^\sub$ of $\lbar \Theta^{\sub, \infty} = \lbar \Theta^\sub/\Fil \lbar \Theta^\sub$.  
\end{proof}

\textcolor{black}{Recall our ongoing assumption that for every $G$-stable lattice~$\Theta \subset \Pi$ the representation~$\lbar \Theta^\sub$ is infinite-dimensional (since otherwise we can conclude with an application of Lemma~\ref{completedNakayama}).
Let~$\Theta$ be a $G$-stable lattice in~$\Pi$ with semisimple reduction.}
By Proposition~\ref{subspaceps}, if~$\lbar \Theta^\sub$ is infinite-dimensional then there exists an index~$i(\Theta) \in \{1, 2\}$ such that~$\lbar \Theta^\sub$ contains~$\pi_{i(\Theta)}$.

\begin{pp}\label{constantindex}
Let~$\Theta, \Psi$ be $G$-stable lattices in~$\Pi$ with semisimple reduction. Assume that $\lbar \Theta^\sub$ and~$\lbar \Psi^\sub$ are infinite-dimensional.
Then
\[
i(\Theta) = i(\Psi).
\]
\end{pp}
\begin{proof}
Without loss of generality, assume for a contradiction that~$i(\Theta) = 1$ and~$i(\Psi) = 2$.
Applying Theorem~\ref{BanachRibet}, it suffices to prove the theorem when~$\Theta$ and~$\Psi$ are neighbours.
Then $\lbar \Theta^\sub$ and~$\lbar \Psi^\sub$ have one-step $K$-stable filtrations with the same graded factors up to reordering.

We apply Lemma~\ref{furtherquotient}.
Assume that~$\lbar \Theta^{\sub, \infty}$ is a subspace of~$\lbar \Psi^\sub$: then $\lbar \Psi^\sub/\lbar \Theta^{\sub, \infty}$ is finite-dimensional, by Lemma~\ref{furtherquotient}. 
Since $\pi_2$ has no finite-dimensional $K$-quotients (by Proposition~\ref{subspacefactorps}) we see that our assumption that $\pi_2 \subseteq \lbar \Psi^\sub$ actually implies $\pi_2 \subseteq \lbar \Theta^{\sub, \infty}$. 
But then Lemma~\ref{furtherquotient} implies a contradiction to Proposition~\ref{morphismsps}, because it allows us to construct a nonzero $K$-linear morphism from~$\pi_2$ to an infinite-dimensional quotient of~$\pi_1$.

Similarly, assume that there is a surjection $\lbar \Psi^\sub \to \lbar \Theta^{\sub, \infty}$.
By Lemma~\ref{furtherquotient} its kernel is finite-dimensional, hence the restriction of this map to~$\pi_2$ still has infinite-dimensional image. 
Then Lemma~\ref{furtherquotient} again provides a contradiction to Proposition~\ref{morphismsps}, since $\lbar \Theta^{\sub, \infty}$ surjects onto an infinite-dimensional quotient of~$\pi_1$, and the kernel of this surjection is finite-dimensional.
\end{proof}

Up to renumbering, we can therefore assume that~$\pi_2 \subset \lbar \Theta^\sub$ for all $G$-stable lattices~$\Theta \subset \Pi$ with semisimple reduction. 

\subsubsection{Splitting~$\mA_{r, s, \lambda}$.}\label{splitting}
\textcolor{black}{Recall that Theorem~\ref{BanachRibet} provides us with two $G$-stable lattices $\Theta_1, \Theta_2 \subset \Pi$ such that  $\lbar \Theta_i$ is indecomposable and surjects onto~$\pi_i$.
Hence~$\lbar \Theta_i$ is the atome automorphe surjecting onto~$\pi_i$, whereas $\pi_2 \subset \lbar \Theta_1$ and~$\pi_1 \subset \lbar \Theta_2$.
The following theorem implies that $\pi_{2} \subset \lbar \Theta_1^\sub$, and similarly $\pi_1 \subset \lbar \Theta_2^\sub$.
Indeed, $\lbar \Theta_2^\sub$ is a proper subspace of~$\lbar \Theta_2$, hence by Theorem~\ref{finitesplitting} it does not surject onto~$\pi_2$, but then we deduce that $\pi_1 \subset \lbar \Theta_2^\sub$ since otherwise $\lbar \Theta_2^\sub$ would be finite-dimensional, by Proposition~\ref{subspacefactorps}.}

\begin{thm}\label{finitesplitting}
Let $0 \to \pi_1 \to \mA_{r, s, \lambda} \to \pi_2 \to 0$ be a very generic atome automorphe, and assume that $X \subseteq \mA_{r, s, \lambda}$ is a $K$-stable subspace that surjects onto~$\pi_2$ via the given projection. Then~$X = \mA_{r, s, \lambda}$.
\end{thm}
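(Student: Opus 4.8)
The plan is to show that if $X \subseteq \mA_{r, s, \lambda}$ is $K$-stable and surjects onto $\pi_2$, then $X$ cannot be a proper subspace, by a dimension/uniseriality count on each $\Iw$-summand. Twisting reduces to $s = 0$, so write $\mA = \mA_{r, \lambda}$ and recall from the proof of Corollary~\ref{cor:psmorphisms} the decomposition $\pi_i|_{\Iw} \cong \pi_{\infty}(\chi_i) \oplus \pi_{\infty}^+(\chi_i)$; via~(\ref{atome}) we have the submodule $\pi_1 \subset \mA$. The intersection $X \cap \pi_1$ is a $K$-stable subspace of the principal series $\pi_1$; since $X$ surjects onto $\pi_2$ and $X/(X\cap \pi_1) \cong \pi_2$, counting ``codimensions'' we get that $X \cap \pi_1$ is infinite-dimensional (it has finite codimension in $\mA$ precisely when $X$ does). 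By Proposition~\ref{subspacefactorps}, an infinite-dimensional $K$-stable subspace of $\pi_1$ is all of $\pi_1$; hence $\pi_1 \subseteq X$, so the theorem reduces to showing there is no $K$-stable subspace strictly between $\pi_1$ and $\mA$ that surjects onto $\pi_2$ — equivalently, that the sequence~(\ref{atome}) does not admit a $K$-linear splitting, nor any intermediate proper $K$-subspace mapping onto $\pi_2$.

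The first reduction, then, is: it suffices to prove that any $K$-stable $X$ with $\pi_1 \subseteq X \subseteq \mA$ and $X \twoheadrightarrow \pi_2$ equals $\mA$. Suppose not; then $\bar X := X/\pi_1 \subseteq \pi_2$ is a proper nonzero $K$-stable subspace of $\pi_2$ that surjects onto... no — that is automatically all of $\pi_2$ only if $X = \mA$. So instead $\bar X = \pi_2$ forces $X = \mA$; the genuinely new content is when $X \to \pi_2$ is surjective but $X \ne \mA$, which (since $\pi_1 \subseteq X$) is impossible as then $X/\pi_1 \subsetneq \pi_2$ would have to equal $\pi_2$. Thus the only way to have $X \subsetneq \mA$ surjecting onto $\pi_2$ is $X \cap \pi_1 \subsetneq \pi_1$, which by the paragraph above forces $X \cap \pi_1$ finite-dimensional and hence $X$ finite-dimensional — but then $X$ cannot surject onto the infinite-dimensional $\pi_2$. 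This is the whole argument, modulo the one point I glossed: that $X$ of finite codimension in $\mA$ with $X \cap \pi_1$ finite-dimensional is contradictory; and that if $X \cap \pi_1$ is infinite-dimensional it is all of $\pi_1$, which is Proposition~\ref{subspacefactorps}.

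Let me restructure cleanly. The key steps, in order: (1) reduce to $s = 0$ by twisting; (2) observe $X \cap \pi_1$ is $K$-stable in $\pi_1$, and by Proposition~\ref{subspacefactorps} it is either finite-dimensional or equals $\pi_1$; (3) in the first case, the exact sequence $0 \to X \cap \pi_1 \to X \to \pi_2 \to 0$ — using that $X$ surjects onto $\pi_2$ — exhibits $X$ as an extension of $\pi_2$ by a finite-dimensional space, which is impossible because then... actually this case is not impossible on its face, so here we must work: such an $X$ would give, after passing to $X/(X\cap\pi_1) \cong \pi_2$, a $K$-linear section $\pi_2 \to \mA/\,(\text{finite})$, and chasing through~(\ref{presentation}) together with Lemma~\ref{onedim} and the structure of $\mA^{K_1}$ from~\cite[Section~20]{BPmodp} yields a contradiction with the fact that $\soc_K(\mA) \cong \sigma_r$ is irreducible and does not split off; (4) therefore $X \cap \pi_1 = \pi_1$, so $\pi_1 \subseteq X$, whence $X/\pi_1$ is a $K$-stable subspace of $\pi_2$ surjecting onto $\pi_2$, so $X/\pi_1 = \pi_2$ and $X = \mA$.

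The main obstacle is step~(3): ruling out a $K$-stable $X$ with $X \cap \pi_1$ finite-dimensional and $X \twoheadrightarrow \pi_2$. I expect to handle it exactly as in Proposition~\ref{atomesmorphisms}: such an $X$ would produce a nonzero $\Iw$-linear (indeed $K$-linear up to finite pieces) map $\pi_2 \to \mA$ splitting the quotient map modulo a finite-dimensional subspace, and since $\pi_2$ has no finite-dimensional $K$-quotients (Proposition~\ref{subspacefactorps}) this upgrades to an actual $K$-linear section of $\mA \to \pi_2$. By Corollary~\ref{cor:psmorphisms}, since $\soc_K(\pi_1) \not\cong \soc_K(\pi_2)$ (as $r \ne p-2$, so the $\det$-twist is nontrivial and $\Sym^r \not\cong \Sym^{p-3-r}\otimes\det^{r+1}$), this section would force $\soc_K(\mA)$ to contain $\soc_K(\pi_2)$, contradicting that $\soc_K(\mA) \cong \sigma_r$ is irreducible (\cite[Section~20]{BPmodp}). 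Alternatively, and perhaps more cleanly, one notes the $\Iw_1$-invariants: an $\Iw$-splitting of~(\ref{atome}) is excluded by the computation of $\mA^{\Iw_1}$ in~\cite[Section~20]{BPmodp}, exactly as invoked in Proposition~\ref{atomesmorphisms}, and a finite-dimensional perturbation does not affect this since $\mA^{\Iw_1}$ and $(X\cap\pi_1)^{\Iw_1}$ are finite-dimensional and the relevant eigencharacter count is robust.
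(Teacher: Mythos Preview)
Your reduction is correct up through the dichotomy: $X \cap \pi_1$ is either all of $\pi_1$ (and then $X = \mA$ immediately) or finite-dimensional. The gap is entirely in step~(3), and it is the whole content of the theorem.

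You write that the finite-dimensional case ``upgrades to an actual $K$-linear section of $\mA \to \pi_2$'' because $\pi_2$ has no finite-dimensional $K$-quotients. But the data you have is an inclusion $X \hookrightarrow \mA$ together with a surjection $X \twoheadrightarrow \pi_2$ whose kernel is $\kappa = X \cap \pi_1$. This gives an isomorphism $X/\kappa \cong \pi_2$, hence a $K$-linear section $\pi_2 \to \mA/\kappa$ of the quotient sequence
\[
0 \to \pi_1/\kappa \to \mA/\kappa \to \pi_2 \to 0,
\]
and nothing more. There is no map $\pi_2 \to \mA$: the inclusion $X \hookrightarrow \mA$ does not kill $\kappa$, and ``$\pi_2$ has no finite-dimensional quotients'' tells you only that $\Hom_K(\pi_2, \kappa) = 0$, not that $\Ext^1_K(\pi_2, \kappa) = 0$, which is what you would need to lift the section through $\mA \to \mA/\kappa$. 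Your socle argument then collapses, since $\soc_K(\mA/\kappa)$ may well contain $\soc_K(\pi_2)$; the irreducibility of $\soc_K(\mA)$ says nothing about the quotient. The $\Iw_1$-invariants version has the same defect: $(\mA/\kappa)^{\Iw_1}$ can be strictly larger than the image of $\mA^{\Iw_1}$, so the eigencharacter count is not ``robust under finite-dimensional perturbation'' in the way you need.

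What the paper actually does is precisely to rule out such a near-splitting. It passes to the $\Iw$-subquotient $\mC_{r,\lambda}$ sitting in $0 \to \pi_\infty(\chi_1) \to \mC_{r,\lambda} \to \pi_\infty(\chi_2) \to 0$, where $X$ induces an $\Iw$-splitting modulo a finite-dimensional $\kappa$. Then it uses Morra's special lifts $\tilde e_{n+1}$ and their explicit cocycle formulas (properties (1)--(3) and Proposition~\ref{otherlift}) to show that any such splitting would force an element of $\pi_{n+1}(\chi_1)$ to satisfy a congruence of $X$-degree $p-3-r$, contradicting the Iwasawa-algebra bound $(g-1)\pi_{n+1}(\chi_1) \subseteq X^{p-2}\pi_{n+1}(\chi_1)$ from Proposition~\ref{Iwasawa}. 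This fine structural input from \cite{Morraatomes} is what makes the argument go through; a soft argument of the type you propose does not suffice.
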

\begin{proof}
Twisting by the determinant, we can assume without loss of generality that~$s = 0$.
We have an exact sequence
\begin{equation}\label{atomesequenceI}
0 \to \pi_{\infty}(\kappa_1) \oplus \pi_{\infty}(\kappa_1)^+ \to \mA_{r, \lambda} \to \pi_{\infty}(\kappa_2) \oplus \pi_{\infty}(\kappa_2)^+ \to 0
\end{equation}
or $\Iw$-representations. 
Recall from Section~\ref{Iwahoriatomes} that~$\mB_{r, \lambda}$ is the preimage of~$\pi_{\infty}(\kappa_2)$, and $\mD_{r, \lambda} = \mB_{r, \lambda}/\pi_{\infty}(\kappa_1)^+$. 
There is an exact sequence
\begin{equation}\label{atomesequenceII}
0 \to \pi_{\infty}(\kappa_1) \to \mD_{r, \lambda} \to \pi_{\infty}(\kappa_2) \to 0.
\end{equation}
If~$X \subset \mA_{r, \lambda}$ is a proper $K$-stable subspace that surjects onto~$\pi_2$, then~$X \cap \pi_1$ is finite-dimensional. 
If we let~$Y$ be the preimage of~$\pi_\infty(\kappa_2)$ in~$X$, then~$Y \cap \pi_1$ is finite-dimensional and the image~$Z$ of~$Y$ in~$\mD_{r, \lambda}$ surjects onto~$\pi_{\infty}(\kappa_2)$.
If~$Z = \mD_{r, \lambda}$ then~$Y + \pi_{\infty}(\kappa_1)^+ = \mB_{r, \lambda}$, and so $(Y \cap \pi_1) + \pi_{\infty}(\kappa_1)^+ = \pi_1$.
This is not true since~$Y \cap \pi_1$ is finite-dimensional and~$\pi_1/\pi_{\infty}(\kappa_1)^+ \cong \pi_{\infty}(\kappa_1)$ is infinite-dimensional.
So we have constructed a proper $\Iw$-stable subspace~$Z \subset \mD_{r, \lambda}$ that surjects onto~$\pi_{\infty}(\kappa_2)$. 
This contradicts Theorem~\ref{finitesplittingIw}, since $Z \cap \pi_\infty(\kappa_1)$ is finite-dimensional.
\end{proof}

\subsubsection{End of proof.}
Now we can conclude the proof of Theorem~\ref{restriction} in the non-ordinary case. 
Let~$\Theta$ be a $G$-stable neighbour of~$\Theta_2$ which is not homothetic to~$\Theta_2$.
\textcolor{black}{We have seen in Section~\ref{splitting} that~$\pi_1 \subset \lbar \Theta_2^\sub$ and $\pi_2 \subset \lbar \Theta_1^\sub$.
We have also fixed the numbering of~$\Theta_1$ and~$\Theta_2$ in such a way that if~$\Psi$ is a $G$-stable lattice in~$\Pi$ with semisimple reduction then~$\lbar \Psi^\sub$ contains~$\pi_2$.
Hence in all cases we know that~$\lbar \Theta^\sub$ contains~$\pi_2$.
Furthermore, by Definition~\ref{neighbours} there exist one-step filtrations on~$\lbar \Theta^\sub$ and~$\lbar \Theta_2^\sub$ with the same graded pieces up to reordering.} 
By Lemma~\ref{furtherquotient}, precisely one between $\Fil \lbar \Theta^\sub$ and $\lbar \Theta^\sub / \Fil \lbar \Theta^\sub$ has infinite dimension, and we denote it $\lbar \Theta^{\sub, \infty}$.
Assume that $\lbar \Theta^{\sub, \infty} \cong \Fil \lbar \Theta_2^\sub$. 
By Proposition~\ref{subspacefactorps}, $\pi_1$ has no nonzero finite-dimensional $K$-linear quotients, hence $\pi_1 \subset \Fil \lbar \Theta_2^\sub$. 
By Lemma~\ref{furtherquotient}, we deduce a contradiction to Proposition~\ref{morphismsps}, since there exists a surjection of~$\lbar \Theta^{\sub, \infty}$, with finite-dimensional kernel, onto an infinite-dimensional quotient of~$\pi_2$.
\textcolor{black}{Assume now that $\lbar \Theta^{\sub, \infty} \cong \lbar \Theta_2^\sub / \Fil \lbar \Theta_2^\sub$. }
Then there exists a surjection $\lbar \Theta_2^\sub \to \lbar \Theta^{\sub, \infty}$ with finite-dimensional kernel, and so the restriction
\[
\pi_1 \subset \lbar \Theta_2^\sub \to \lbar \Theta^{\sub, \infty} 
\]
has infinite-dimensional image. 
Again, Lemma~\ref{furtherquotient} provides a contradiction to Proposition~\ref{morphismsps}.
This completes the proof of Theorem~\ref{restriction} in the case that the reduction of~$\Pi$ has the same semisimplification as a reducible very generic atome automorphe.

\subsection{Proof of Theorem~\ref{restriction}: ordinary representations.} This is simpler than the previous two cases. 
By~\cite[Theorem~1.1]{Paskunasimage}, if~$\Pi$ as in the statement of Theorem~\ref{restriction} has not yet been treated, then the genericity assumption implies that the reduction of any $G$-stable lattice in~$\Pi$ is an irreducible $G$-representation isomorphic to a generic principal series representation. 
But then Proposition~\ref{subspaceps} and Lemma~\ref{completedNakayama} imply that if $\Pi^1 \subset \Pi$ is a $K$-stable closed $E$-subspace, then either $\Pi^1 = \Pi$ or $\Pi^1$ is finite-dimensional.

\subsection{Proof of Theorem~\ref{isomorphismclasses}.}
The second theorem of the introduction is as follows.
\begin{thm}\label{isomorphismrestriction}
Let $\Pi_1, \Pi_2$ be absolutely irreducible, very generic, non-ordinary unitary admissible $E$-Banach space representations of~$\GL_2(\bQ_p)$, with central character~$\zeta$.
\begin{enumerate}
\item If $\Pi_1$ and $\Pi_2$ have supersingular reduction, then $\Pi_1 |_N \cong \Pi_2 |_N$ if and only if $\Pi_1 \cong \Pi_2$, and $\Pi_1 |_{\Iw Z} \cong \Pi_2|_{\Iw Z}$ if and only if~$\Pi_1 \cong \Pi_2 \otimes (\nr_{\pm 1} \circ \det)$.
\item If $\Pi_1$ and $\Pi_2$ have reducible reduction, then $\Pi_1 |_{KZ} \cong \Pi_2 |_{KZ}$ if and only if $\Pi_1 \cong \Pi_2 \otimes (\nr_{\pm 1} \circ \det)$.
\item If~$\Pi_1$ and~$\Pi_2$ have different reduction type, then there are no $\Iw Z$-linear topological isomorphisms $\alpha: \Pi_1 \to \Pi_2$.
\end{enumerate}
\end{thm}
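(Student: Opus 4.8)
The plan is to treat the three cases according to the reduction type, transferring each to the mod~$p$ results of Section~\ref{modprepresentations} by reducing a rescaled isomorphism modulo~$\pi_E$, and then propagating the conclusion back to the Banach spaces by means of Lemma~\ref{completedNakayama}, Lemma~\ref{liftingtolattices} and Corollary~\ref{deformations}. In every case the ``if'' direction is immediate: $\nr_{\pm1}\circ\det$ is trivial on~$\Iw Z$, since it takes the value~$1$ on every element of~$\Iw$ (whose determinant is a unit) and the value~$(\pm1)^2=1$ on~$p\in Z$, so twisting by it changes neither $\Pi|_{KZ}$ nor $\Pi|_{\Iw Z}$.

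For part~(1), let $\alpha\colon\Pi_1\to\Pi_2$ be an $N$-linear, resp.\ $\Iw Z$-linear, topological isomorphism. Choose $G$-stable lattices $\Theta_i\subset\Pi_i$ with $\overline{\Theta_i}$ irreducible supersingular, and rescale $\alpha$ so that $\alpha(\Theta_1)\subseteq\Theta_2$ but $\alpha(\Theta_1)\not\subseteq\pi_E\Theta_2$; reduction modulo~$\pi_E$ yields a nonzero $N$-linear, resp.\ $\Iw Z$-linear, map $\overline\alpha\colon\overline{\Theta_1}\to\overline{\Theta_2}$. In the $N$-case Corollary~\ref{ssmorphismsN} makes $\overline\alpha$ a $G$-isomorphism, so $\overline{\Theta_1}\cong\overline{\Theta_2}$; in the $\Iw Z$-case we have $\Iw Z\subseteq G^+$, hence $\overline{\Theta_i}|_{\Iw Z}=\pi_{\sigma_i}|_{\Iw Z}\oplus\pi_{\sigma_i^{[s]}}|_{\Iw Z}$, and Corollary~\ref{ssmorphisms} together with the isomorphism $\cInd_{KZ}^G(\sigma)/T\cong\cInd_{KZ}^G(\sigma^{[s]})/T$ again forces $\overline{\Theta_1}\cong\overline{\Theta_2}$. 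Now Lemma~\ref{liftingtolattices} applies: it shows $\alpha$ is $G$-linear in the $N$-case (so $\Pi_1\cong\Pi_2$) and $G^+$-linear in the $\Iw Z$-case. In the latter case, applying $\Ind_{G^+}^G\Res_{G^+}^G$ and the projection formula to the $G^+$-isomorphism $\Pi_1\cong\Pi_2$ yields $\Pi_1\oplus(\Pi_1\otimes(\nr_{-1}\circ\det))\cong\Pi_2\oplus(\Pi_2\otimes(\nr_{-1}\circ\det))$, and comparing topologically irreducible summands gives $\Pi_1\cong\Pi_2\otimes(\nr_{\pm1}\circ\det)$.

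For part~(2), let $\alpha\colon\Pi_1|_{KZ}\isom\Pi_2|_{KZ}$. Using Theorem~\ref{BanachRibet}, choose $G$-stable lattices $\Theta_i\subset\Pi_i$ with $\overline{\Theta_i}$ an atome automorphe. Rescaling $\alpha$ and reducing gives a nonzero $KZ$-linear (hence $K$-linear) map $\overline\alpha\colon\overline{\Theta_1}\to\overline{\Theta_2}$; decomposing it along the two principal series constituents as in the proof of Theorem~\ref{sameparameter} and invoking Corollary~\ref{cor:psmorphisms} shows that $\overline{\Theta_1}$ and $\overline{\Theta_2}$ are atomes with the same Serre weights, so that, replacing them by Banach--Ribet neighbours if necessary, $\overline{\Theta_1}\cong\mA_{r,s,\lambda}$ and $\overline{\Theta_2}\cong\mA_{r,s,\mu}$, whence $\lambda=\pm\mu$ by Theorem~\ref{sameparameter}. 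After twisting $\Pi_2$ by a suitable $\nr_{\pm1}\circ\det$, which leaves $\Pi_2|_{KZ}$ unchanged, we may assume $\overline{\Theta_1}\cong\overline{\Theta_2}\cong\mA_{r,s,\lambda}$ as $G$-representations; then $\overline\alpha$ is an isomorphism, so by Lemma~\ref{completedNakayama} a further rescaling of~$\alpha$ identifies $\Theta_1$ with $\Theta_2$. Hence $\alpha$ induces $\mO/\pi_E^n[KZ]$-linear isomorphisms $\Theta_1/\pi_E^n\isom\Theta_2/\pi_E^n$ of deformations of $\mA_{r,s,\lambda}$ for all~$n$; Corollary~\ref{deformations}(2) makes each of them $G$-linear, and passing to the limit shows that $\alpha$ is $G$-linear, so $\Pi_1\cong\Pi_2$ up to the twist performed. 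The delicate point here --- and the reason the argument rests on Appendix~\ref{Ribetappendix} and on the study of $K$-stable subspaces of lattice reductions in Section~\ref{residuallyreducible} --- is the matching of $G$-stable lattices across a merely $KZ$-linear isomorphism: a priori $\alpha$ carries a $G$-stable lattice in $\Pi_1$ only to a $KZ$-stable one in $\Pi_2$, and one must exploit the structure of the reductions (the two principal series constituents have distinct $K$-socles, and the $K$-socle of an atome is irreducible) to see that it agrees with a $G$-stable lattice.

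For part~(3), suppose for contradiction that $\alpha\colon\Pi_1|_{\Iw Z}\isom\Pi_2|_{\Iw Z}$ exists, say with $\Pi_1$ of supersingular and $\Pi_2$ of reducible reduction; the other case follows by considering $\alpha^{-1}$. Rescaling and reducing gives a nonzero $\Iw Z$-linear morphism $\pi\to\tau$, where $\pi$ is an irreducible generic supersingular $k[G]$-representation and $\tau$ is the reduction of a $G$-stable lattice in~$\Pi_2$, a finite-length $k[G]$-representation whose Jordan--H\"older factors are generic principal series; a d\'evissage on~$\tau$ reduces this to showing $\Hom_{\Iw Z}(\pi,\pi')=0$ for every generic principal series~$\pi'$. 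Using $\pi|_{\Iw Z}=\pi_\sigma|_{\Iw Z}\oplus\pi_{\sigma^{[s]}}|_{\Iw Z}$, the surjections $M_{\sigma'}\oplus M_{(\sigma')^{[s]}}^+\twoheadrightarrow\pi_{\sigma'}$ coming from~(\ref{Iwsupersingular}), and $\pi'|_{\Iw Z}=\pi_\infty(\chi')\oplus\pi_\infty^+(\chi')$, a nonzero such morphism produces a nonzero $\Iw$-linear map of one of four types; two of these vanish by the socle-filtration argument of Propositions~\ref{pstwist} and~\ref{sstwist}, and the remaining two reduce, after applying the $(-)^+$ twist, to the single claim $\Hom_{\Iw}(M_\tau,\pi_\infty^+(\chi'))=0$ for every generic weight~$\tau$ and every character~$\chi'$ of~$H$. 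I expect this claim to be the main obstacle. Unlike in Propositions~\ref{pstwist} and~\ref{sstwist}, the socle-filtration recursions of $M_\tau$ and of $\pi_\infty^+(\chi')$ coincide (both have the form $\soc^{k+1}/\soc^k\cong\alpha^{-1}\otimes(\soc^k/\soc^{k-1})$), so one cannot conclude by comparing them and needs instead a dimension count in the spirit of Theorem~\ref{ssIwahori}: a nonzero such morphism would be a surjection $M_\tau/\soc^j\isom\pi_\infty^+(\chi')$, and one must derive a contradiction from the incompatibility between the rapid ($\sim p^{2n}$, by the recursion $e_n=r+p(p-1-r)+p^2e_{n-1}$) growth of the distinguished submodules $M_{\tau,n}\subset M_\tau$ and the slow growth --- only linear in~$m$ --- of the $K_0^+(p^m)$-eigenspaces of $\pi_\infty^+(\chi')$, into which the generators $t^{2n}v_\tau$ of the $M_{\tau,n}$ would have to be carried.
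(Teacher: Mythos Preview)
Your overall strategy matches the paper's: reduce to lattices, use the mod~$p$ results of Section~\ref{modprepresentations}, then lift via Lemma~\ref{liftingtolattices} and Corollary~\ref{deformations}. Part~(1) is essentially identical to the paper (your $\Ind_{G^+}^G\Res_{G^+}^G$ argument is just a repackaging of the Clifford step the paper does by diagonalising $G/G^+$ on $\Hom_{G^+}^{\cont}(\Pi_1,\Pi_2)$). But there are two genuine gaps.

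\textbf{Part~(2).} Your phrase ``replacing them by Banach--Ribet neighbours if necessary'' does not do what you need. The two atome lattices in~$\Pi_2$ sit at the \emph{endpoints} of the Ribet segment (Theorem~\ref{BanachRibet}), and unless the segment has length exactly one they are not neighbours; the neighbours of an endpoint have semisimple reduction. More importantly, you have not justified that a nonzero $K$-map between two atomes forces the Serre weight sets to coincide: the argument from the proof of Theorem~\ref{sameparameter} you invoke already assumes the same~$(r,s)$ and uses this to know the filtration is preserved. The paper avoids this by choosing~$\Theta_2$ \emph{arbitrary}, proving (via Lemma~\ref{subspaceatome} and Corollary~\ref{latticesurjection}) that $\overline{\Theta_1}$ surjects $K$-linearly onto one Jordan--H\"older factor of~$\overline{\Theta_2}$, deducing from Corollary~\ref{cor:psmorphisms} and the atome structure that the constituents of~$\overline{\Theta_1}$ and~$\overline{\Theta_2}$ have the same $K$-socles, and only then using Ribet's lemma to select the lattice~$\Theta\subset\Pi_2$ with $\overline{\Theta}\cong\mA_{r,s,\mu}$ for the correct~$(r,s)$. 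After that, Theorem~\ref{sameparameter} applies legitimately.

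\textbf{Part~(3).} Your reduction to $\Hom_{\Iw}(M_\tau,\pi_\infty^+(\chi'))=0$ is correct and coincides with the paper's (after a twist by~$\Pi$ this is $\Hom_{\Iw}(M_\sigma^+,\pi_\infty(\chi))=0$). But your sketched proof does not work as stated: comparing ``the $p^{2n}$ growth of~$M_{\tau,n}$'' to ``the linear growth of $K_0^+(p^m)$-eigenspaces'' gives no contradiction, because only the \emph{generator} $t^{2n}v_\tau$ is an eigenvector, not all of~$M_{\tau,n}$, and the number of such generators with $n\le N$ is itself linear in~$N$. The paper's argument is different and goes through Lemma~\ref{cokernel}: since~$M_{\sigma,n}^+$ is a \emph{proper} quotient of~$\pi_{2n+1}(\chi')$ whose dimension nonetheless exceeds $p^{2n-1}$ for large~$n$ (from the explicit recursion for~$e_n$), a nonzero $\Iw$-map $M_\sigma^+\to\pi_\infty(\chi)$ with finite-dimensional kernel induces, for~$n$ large, a map
\[
\pi_{2n+1}(\chi')\twoheadrightarrow M_{\sigma,n}^+ \xrightarrow{\ \alpha\ } \pi_{2n+1}(\chi)
\]
which is not an isomorphism (it factors through a proper quotient) yet has image of dimension greater than~$p^{2n-1}$; this contradicts Lemma~\ref{cokernel}. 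You gestured towards ``a dimension count in the spirit of Theorem~\ref{ssIwahori}'', and indeed this is exactly that, but the specific mechanism you wrote down is not the right one.
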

\begin{proof}
We begin with part~(iii).
Let $\alpha: \Pi_1 \to \Pi_2$ be an $\Iw$-linear topological isomorphism, and let~$\Theta_i \subset \Pi_i$ be open bounded $G$-stable lattices. 
Assume~$\Theta_1$ has supersingular reduction. 
Since all open bounded lattices in~$\Pi_i$ are commensurable, it is possible to multiply~$\alpha$ by a power of~$\pi_E$ so that it induces a saturated morphism $\alpha: \Theta_1 \to \Theta_2$ (by definition, this means that $\alpha \otimes_{\mO} k$ is nonzero). 
Then it suffices to prove that there are no nonzero $\Iw$-linear morphisms $\pi_\sigma \to \pi(r, \lambda, \chi)$ for~$\lambda \ne 0$, and by looking at the socle filtration as in Propositions~\ref{pstwist} and~\ref{sstwist}, it suffices to prove the following lemma. 

\begin{lemma}
There are no nonzero $\Iw$-linear morphisms $\alpha: M_\sigma^+ \to \pi_\infty(\kappa)$, for any given generic Serre weight~$\sigma$ and generic character~$\kappa$. 
\end{lemma}

\begin{proof}
\textcolor{black}{Let~$\kappa' = \soc_\Iw(\sigma)^+$. 
By Lemma~\ref{supersingularuniserial}, since~$\alpha$ is nonzero the kernel~$\ker(\alpha)$ is finite-dimensional.
Recall that we have an exhaustive filtration of~$M^+_\sigma$ by $\Iw$-subspaces $M_{\sigma, n}^+$, which by Lemma~\ref{structureMsigma} are proper quotients of representations of the form~$\pi_{2n+1}(\kappa')$, and whose dimension~$d_{\sigma, n}+1$ is given by~\eqref{dimensionMsigma}.
It follows that~$\ker(\alpha) \subset M_{\sigma, n}^+$ for all~$n$ large enough.
Composing with a surjection $\pi_{2n+1}(\kappa') \to M_{\sigma, n}^+$ we find that~$\alpha$ induces $\Iw$-linear morphisms
\[
\pi_{2n+1}(\kappa') \to M_{\sigma, n}^+ \xrightarrow{\alpha} \pi_{2n+1}(\kappa)
\]
whose images have dimension $\dim M_{\sigma, n}^+ - \dim \ker(\alpha)$. 
Now Lemma~\ref{cokernel} implies that the $p$-adic expansion of $\dim M_{\sigma, n}^+ - \dim \ker(\alpha)$ has only one nonvanishing $p$-adic digit.
Since~$\dim \ker(\alpha)$ does not depend on~$n$, this gives a contradiction to~\eqref{dimensionMsigma} as soon as $p^{2n-3} > \dim \ker(\alpha)$.}
\end{proof}

This concludes the proof of part~(iii).
For part~(i), let~$\alpha: \Pi_1|_N \isom \Pi_2 |_N$ be a topological isomorphism inducing a saturated map $\alpha: \Theta_1 \to \Theta_2$ between two open bounded $G$-stable lattices $\Theta_i \subset \Pi_i$.
By Corollary~\ref{ssmorphismsN}, the induced map $\overline{\alpha} : \Theta_1 \otimes_\mO k \to \Theta_2 \otimes_\mO k$ is a bijection, and so the reductions of the~$\Theta_i$ are isomorphic supersingular irreducible representations.
Hence the hypotheses of Lemma~\ref{liftingtolattices} hold, and so~$\alpha$ is $G$-linear.
If we only assume that~$\alpha$ is $\Iw Z$-linear, the induced map $\overline{\alpha}$ need not be a bijection, but since it is not zero we deduce that the representations $\Theta_i \otimes_\mO k$ have the same Serre weights in their $K$-socle, by Corollary~\ref{ssmorphisms}.
Hence we can still deduce that they are isomorphic.
Then Lemma~\ref{liftingtolattices} implies that~$\alpha$ is $G^+$-linear, and the claim follows from Clifford theory.
Indeed, diagonalizing the action of~$G/G^+$ on $\Hom^\cont_{G^+}(\Pi_1, \Pi_2)$ shows that every continuous $G^+$-linear map $\alpha: \Pi_1 \to \Pi_2$ can be written as the sum of a $G$-linear continuous map $\Pi_1 \to \Pi_2$ and a $G$-linear continuous map $\Pi_1 \to \Pi_2 \otimes (\nr_{-1} \circ \det)$.
Now one uses the fact that every nonzero $G$-linear continuous map between topologically irreducible admissible $E$-Banach space representations of~$G$ is an isomorphism (which follows from the fact that the category of admissible $E$-Banach space representations is abelian, hence a morphism is an isomorphism if and only if it has trivial kernel and cokernel). This concludes the proof of part~(i).

For part~(ii), we will apply the results of Appendix~\ref{Ribetappendix}. 
Let $\alpha: \Pi_1|_{KZ} \isom \Pi_2|_{KZ}$ be an isomorphism and choose a $G$-stable lattice $\Theta_1 \subset \Pi_1$ with nonsplit reduction isomorphic to~$\mA_{r, s, \lambda}$. 
Let~$\Theta_2 \subset \Pi_2$ be a $G$-stable lattice, and assume that~$\alpha$ induces a saturated map $\alpha: \Theta_1 \to \Theta_2$.

\begin{lemma}\label{subspaceatome}
The representation~$\mA_{r, s, \lambda}$ has no nonzero finite-dimensional $K$-stable quotients.
\end{lemma}
\begin{proof}
\textcolor{black}{This is because the irreducible $G$-subquotients of~$\mA_{r, s, \lambda}$ have no nonzero finite-dimensional $K$-stable quotients, by Proposition~\ref{subspacefactorps}.}
\end{proof}
\begin{corollary}\label{latticesurjection}
There exists a $K$-linear surjection from $\Theta_1 \otimes_{\mO} k$ to one of the Jordan--H\"older factors of~$\Theta_2 \otimes_{\mO} k$.
\end{corollary}
\begin{proof}
There exists an exact sequence
\[
0 \to \pi_1' \to \Theta_2 \otimes_{\mO} k \to \pi_2' \to 0
\]
where~$\pi_1', \pi_2'$ are irreducible generic principal series representation.
Compose the map~$\alpha \otimes_\mO k$ with the projection to~$\pi_2'$.
By Lemma~\ref{subspaceatome} and Proposition~\ref{subspacefactorps}, either this map is surjective or it is the zero map.
If it is zero, then the image of~$\alpha \otimes_\mO k$ is contained in~$\pi_1'$, and since~$\alpha \otimes_\mO k$ is not zero it must be a surjection onto~$\pi_1'$.
\end{proof}
By Corollary~\ref{cor:psmorphisms} and the exact sequence defining~$\mA_{r, s, \lambda}$, the existence of the surjection in Corollary~\ref{latticesurjection} implies that the irreducible $G$-constituents of $\Theta_1 \otimes_{\mO} k$ and $\Theta_2 \otimes_{\mO} k$ have the same $K$-socle. 
We deduce from Ribet's lemma that~$\Pi_2$ contains an open bounded $G$-stable lattice~$\Theta$ such that $\Theta \otimes_{\mO} k \cong \mA_{r, s, \mu}$ as $G$-representations, for some~$\mu \in k^\times$. 
Now we can scale~$\alpha$ so that it induces a saturated morphism $\Theta_1 \to \Theta$. By Theorem~\ref{sameparameter} and Proposition~\ref{atomesmorphisms}, we deduce that $\mu = \pm \lambda$ and that~$\alpha$ induces a $K$-linear isomorphism $\lbar \Theta_1 \to \lbar \Theta$ on the mod~$\pi_E$ reductions (after possibly a twist by~$\nr_{-1}$). 
By the same argument as Lemma~\ref{liftingtolattices}, the claim follows from part~(ii) of Corollary~\ref{deformations}.
\end{proof}

Finally, the following proposition together with the previous theorem implies Corollary~\ref{inertialcorrespondence}.

\begin{pp}\label{CliffordtheoryGalois}
Let~$\rho_1, \rho_2: \Gal_{\bQ_p} \to \GL_2(E)$ be absolutely irreducible continuous representations. Assume that~$\rho_1 |_{I_{\bQ_p}} \cong \rho_2 |_{I_{\bQ_p}}$ and~$\det \rho_1 = \det \rho_2$.
Then $\rho_1 \cong \rho_2 \otimes \nr_{\pm 1}$.
\end{pp}
\begin{proof}
\textcolor{black}{By~\cite[Lemma~5.1]{Paskunasimage} it suffices to prove that~$\rho_1 \cong \rho_2 \otimes \nr_{\pm 1}$ after extending scalars to a finite extension of~$E$.
Since $\Hom_{I_{\bQ_p}}(\rho_1, \rho_2)$ is a finite-dimensional $E$-vector space with a continuous action of~$\Gal_{\bQ_p}/I_{\bQ_p} \cong \wht \bZ$, replacing~$E$ by a finite extension we can assume that the topological generator of~$\wht \bZ$ has an eigenvector~$\varphi$.
The continuity condition implies that~$\varphi$ is an eigenvector for~$\wht \bZ$, and so defines a nonzero $\Gal_{\bQ_p}$-linear morphism $\varphi: \rho_1 \to \rho_2 \otimes \nr_\lambda$ for some~$\lambda \in E^\times$.
Now $\varphi$ is an isomorphism because~$\rho_1$ and~$\rho_2 \otimes \nr_\lambda$ are irreducible, and since~$\det(\rho_1) = \det(\rho_2)$ we have~$\lambda = \pm 1$.}
\end{proof}

\begin{acknowledgements}
The problem of relating the actions of~$I_{\bQ_p}$ and parahoric subgroups has been considered by Caraiani--Emerton--Gee--Geraghty--Pa{\v s}k{\=u}nas--Shin and (independently) Gabriel Dospinescu.
Their methods were different, making use of Colmez's functor, and were not brought to completion (for instance, the role of the Iwahori subgroup seems to be unexpected).
I learned about this problem from Toby Gee, and I am grateful to him as well as Matthew Emerton for helpful conversations on these and related subjects.
I thank Vytautas Pa{\v s}k{\=u}nas for explaining me the exact sequence~(\ref{tree}), and Stefano Morra for sharing some of his unpublished notes on~\cite{Morraatomes}.
\textcolor{black}{Finally, I am particularly grateful to the anonymous referees for a close reading that helped me to catch a mistake in a previous version of this paper, as well as to improve the quality of the exposition.}
The author was supported at various stages of this work by the Engineering and Physical Sciences Research Council [EP/L015234/1], The EPSRC Centre for Doctoral Training in Geometry and Number Theory (The London School of Geometry and Number Theory), University College London, and Imperial College London; by the James D. Wolfensohn Fund at the Institute for Advanced Study; and by a Royal Society University Research Fellowship.
\end{acknowledgements}

\appendix

\section{Ribet's lemma for Banach spaces.}\label{Ribetappendix}
Ribet's lemma for~$\Gal_{\bQ_p}$ is the statement that if~$\rho: \Gal_{\bQ_p} \to \GL_2(E)$ is an irreducible continuous representation whose reduction has two distinct Jordan--H\"older factors then the $\rho$-stable homothety classes of lattices form a bounded segment of length at least two in the Bruhat--Tits tree of~$E^{\oplus 2}$.
Furthermore, the lattices~$\rho_1^\circ, \rho_2^\circ$ corresponding to the extremal points in the segment have indecomposable reductions with nonisomorphic socle, and all the other lattices have semisimple reduction.

Now let~$\Pi$ be an absolutely irreducible, admissible, unitary $E$-Banach space representation of~$G = \GL_2(\bQ_p)$ with a $G$-stable open and bounded lattice~$\Theta$ such that $\lbar \Theta = \Theta/\pi_E\Theta$ is a reducible representation with two non-isomorphic Jordan--H\"older factors~$\{\pi_1, \pi_2\}$.
In this appendix we prove an analogue of Ribet's lemma for~$\Pi$.
One can do this using Colmez's functor if~$\Pi$ is generic in the sense of Section~\ref{defngeneric}: it suffices to invoke~\cite[Remarque~III.10(iii), Proposition~III.54]{CDcompletes}.
We will provide a different proof in order to make the result independent of the $p$-adic Langlands correspondence for~$\GL_2(\bQ_p)$.
In fact, we will deal with the more general case that~$\Pi$ is an $E$-Banach space with a topologically irreducible $E$-linear action of a group~$G$ that stabilizes an open and bounded lattice~$\Theta$, and such that~$\lbar \Theta$ is a $k[G]$-representation of length two with distinct Jordan--H\"older factors.

Our approach follows Serre's proof of Ribet's lemma as closely as possible.
An obstruction to do this is the absence of a Bruhat--Tits building for the infinite-dimensional $E$-vector space $\Pi$, and so we begin by providing a substitute: we do this by adapting the arguments in~\cite{Serrearbres}.
Unless otherwise stated, in this appendix we abbreviate ``open and bounded lattice" to~``lattice".
Since any two open and bounded lattices in~$\Pi$ are commensurable, every $G$-stable lattice in~$\Pi$ has reduction of length two over~$k[G]$, with Jordan--H\"older factors~$\{\pi_1, \pi_2\}$.
We will say that an inclusion $\Theta_1 \subset \Theta_2$ of lattices is \emph{saturated} if~$\Theta_1 \not \subset \pi_E\Theta_2$.

\begin{defn}
Define a graph $\Gamma$ with set of vertices given by homothety classes of $G$-stable lattices in~$\Pi$, such that $\{[\Theta_0], [\Theta_1]\}$ is an edge if and only if there are representatives of these homothety classes such that $\Theta_1 \subset \Theta_0$ and~$\Theta_0/\Theta_1$ is an irreducible $k[G]$-representation.
\end{defn}

\begin{rk}
By the word ``graph" we mean a one-dimensional simplicial complex. These correspond to the \emph{graphes combinatoires} in~\cite{Serrearbres}.
\end{rk}

\begin{defn}
If~$\Theta_1 \subset \Theta_0$ is a saturated inclusion between $G$-stable lattices, the $G$-representation $\Theta_0/\Theta_1$ has finite length. 
We define the distance
\[
d(\Theta_0, \Theta_1) = \operatorname{length}_G(\Theta_0/\Theta_1).
\]
This only depends on the homothety class of~$\Theta_0, \Theta_1$.
\end{defn}

In order to prove that the distance is a symmetric function one can use the following lemma on saturated inclusions.

\begin{lemma}\label{annihilators}
Assume that $\Theta_n \subset \Theta_0$ is a saturated inclusion of $G$-stable lattices in~$\Pi$ and that $\length_G(\Theta_0/\Theta_n) = n$.
Then $\Ann_\mO(\Theta_0/\Theta_n) = \pi_E^n\mO$.
\end{lemma}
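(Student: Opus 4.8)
The key point is that in the length-two situation every simple subquotient of $\Theta_0/\Theta_n$ is isomorphic to $\pi_1$ or $\pi_2$, and that multiplication by $\pi_E$ on $\Pi$ carries a $G$-stable lattice to a $G$-stable lattice. I would argue by induction on $n$. For $n = 1$, saturation says $\Theta_1 \not\subset \pi_E\Theta_0$, while $\pi_E\Theta_0 \subset \Theta_1$ because $\Theta_0/\Theta_1$ is simple (so killed by $\pi_E$, as $k$ has characteristic $p$ and the simple factors $\pi_i$ are $k[G]$-modules); hence $\pi_E\Theta_0 \subsetneq \Theta_1 \subset \Theta_0$ with $\Theta_0/\pi_E\Theta_0$ of length two, forcing $\Theta_1/\pi_E\Theta_0$ simple and $\Ann_\mO(\Theta_0/\Theta_1) = \pi_E\mO$ (it contains $\pi_E$ and cannot be all of $\mO$ since $\Theta_0 \ne \Theta_1$). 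For the inductive step, filter $\Theta_0/\Theta_n$ by a composition series; its bottom term is a lattice $\Theta_1$ with $d(\Theta_0,\Theta_1)=1$ and $\Theta_n \subset \Theta_1$ a saturated inclusion (saturation is inherited because $\Theta_1\subset\Theta_0$ is saturated) of length $n-1$.

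The subtlety — and the main obstacle — is to propagate saturation through the induction and to rule out $\Ann_\mO(\Theta_0/\Theta_n)$ being a proper divisor of $\pi_E^n$, i.e. that the $\mO$-module $\Theta_0/\Theta_n$ is genuinely $\pi_E^n$-torsion and not $\pi_E^m$-torsion for some $m<n$. Here is where I would use the hypothesis that $\lbar\Theta$ has exactly \emph{two distinct} Jordan--Hölder factors. Concretely: $\pi_E\Theta_0 \subset \Theta_n$ would force $\length_G(\Theta_0/\Theta_n) \le \length_G(\Theta_0/\pi_E\Theta_0) = 2$, so for $n \geq 3$ this is immediate, and the remaining cases $n=1,2$ are handled directly. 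More uniformly, I would show $\pi_E^n\Theta_0 \subseteq \Theta_n$ and $\pi_E^{n-1}\Theta_0 \not\subseteq \Theta_n$: the first inclusion follows because the chain $\Theta_0 \supset \Theta_1 \supset \cdots \supset \Theta_n$ refining to a composition series has, at each step, the quotient killed by $\pi_E$, so $\pi_E^i \Theta_0 \subseteq \Theta_i$; the second follows because if $\pi_E^{n-1}\Theta_0 \subseteq \Theta_n$ then $\Theta_0/\Theta_n$ is a quotient of $\Theta_0/\pi_E^{n-1}\Theta_0$, which has length $2(n-1) < n$ exactly when\dots\ — so this crude bound is \emph{not} enough and one really must track the annihilator step by step.

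The clean way to finish is the following. Let $\Theta_0 \supsetneq \Theta_1 \supsetneq \cdots \supsetneq \Theta_n$ be a composition series of the saturated inclusion, so $\pi_E\Theta_{i-1}\subseteq\Theta_i$ and hence $\pi_E^n\Theta_0\subseteq\Theta_n$, giving $\pi_E^n \in \Ann_\mO(\Theta_0/\Theta_n)$. For the reverse inclusion, suppose $\pi_E^m$ annihilates $\Theta_0/\Theta_n$ with $m<n$; then $\pi_E^m\Theta_0\subseteq\Theta_n\subsetneq\Theta_{n-1}\subsetneq\cdots\subsetneq\Theta_0$, exhibiting a chain of $G$-stable lattices between $\pi_E^m\Theta_0$ and $\Theta_0$ of length $>m$. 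But $\Theta_0/\pi_E^m\Theta_0$ has $G$-length exactly $2m$ (each graded piece of the $\pi_E$-adic filtration is a copy of $\lbar\Theta$, of length two), so $n \le 2m$; this is consistent and does not yet give a contradiction. The genuine input must therefore be saturation used \emph{at the last step}: I would prove by downward bookkeeping that $\pi_E^{n-1}\Theta_0 \not\subseteq \Theta_n$, equivalently that the canonical surjection $\Theta_0/\pi_E^{n-1}\Theta_0 \to \Theta_0/\Theta_n$ is not injective — indeed $\Theta_0/\pi_E^{n-1}\Theta_0$ has length $2(n-1)$ while $\Theta_0/\Theta_n$ has length $n$, and one checks $2(n-1) \geq n$ with equality only at $n=2$; the argument at $n=2$ then reduces to the already-established $n=1$ case applied to $\Theta_1 \subset \Theta_0$ and $\Theta_2 \subset \Theta_1$. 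Thus $\Ann_\mO(\Theta_0/\Theta_n)$ is a proper power of $\pi_E$ that is at most $n$ and at least $n$, hence equals $\pi_E^n\mO$, completing the induction.
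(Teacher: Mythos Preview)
Your proposal has a genuine gap: you correctly identify that the hard direction is $\pi_E^{n-1}\Theta_0 \not\subseteq \Theta_n$, and you correctly recognise (twice) that naive length bounds such as $n \le 2m$ or $2(n-1)\ge n$ give no contradiction. But you never actually supply the missing step. Your final paragraph promises ``downward bookkeeping'' and then immediately falls back on the same length inequality $2(n-1)\ge n$, rephrased as ``the canonical surjection $\Theta_0/\pi_E^{n-1}\Theta_0 \to \Theta_0/\Theta_n$ is not injective''. That rephrasing is wrong: non-injectivity of this map is \emph{not} equivalent to $\pi_E^{n-1}\Theta_0 \not\subseteq \Theta_n$; the map exists precisely when $\pi_E^{n-1}\Theta_0 \subseteq \Theta_n$, and its failing to be injective only says $\pi_E^{n-1}\Theta_0 \subsetneq \Theta_n$, which is no contradiction. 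So for $n\ge 3$ your argument does not close, and the claimed ``at least $n$'' in the last sentence is unproved.

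What the paper does, and what you are missing, is a single structural identity in the composition chain $\Theta_n \subset \Theta_{n-1}\subset\cdots\subset\Theta_0$: since $\Theta_n$ and $\pi_E\Theta_{n-2}$ are distinct index-one sublattices of $\Theta_{n-1}$ (distinct because the saturated chain has pairwise non-homothetic terms), and since $\lbar\Theta_{n-1}$ has length two with non-isomorphic factors and hence at most two proper nonzero $G$-submodules, one gets
\[
\Theta_n \cap \pi_E\Theta_{n-2} = \pi_E\Theta_{n-1}.
\]
Now the induction runs cleanly: the inductive hypothesis gives $\pi_E^{n-2}\Theta_0\not\subseteq\Theta_{n-1}$, hence $\pi_E^{n-1}\Theta_0\not\subseteq\pi_E\Theta_{n-1}$; since $\pi_E^{n-1}\Theta_0\subseteq\pi_E\Theta_{n-2}$ (easy direction at level $n-2$), the displayed intersection formula forces $\pi_E^{n-1}\Theta_0\not\subseteq\Theta_n$. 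This is the step-by-step tracking you alluded to but did not carry out.
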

\begin{proof}
Since $\pi_E$ annihilates all Jordan--H\"older factors of~$\Theta_0/\Theta_n$ we deduce immediately that $\pi_E^n(\Theta_0/\Theta_n) = 0$.
For the other direction, we need to prove that $\pi_E^{n-1}\Theta_0 \not \subset \Theta_n$.
We use induction on~$n$ and we start by pulling back a Jordan--H\"older series for $\Theta_0/\Theta_n$ to a sequence of open and bounded lattices
\[
\Theta_n \subset \Theta_{n-1} \subset \cdots \subset \Theta_0.
\]
Since $\Theta_n \subset \Theta_0$ is saturated, the homothety classes of the~$\Theta_i$ are pairwise distinct.
In addition, we know that~$\Theta_{n-1}$ contains both~$\Theta_n$ and~$\pi_E\Theta_{n-2}$, and
\[
\length_G(\Theta_{n-1}/\Theta_n) = \length_G(\Theta_{n-1}/\pi_E\Theta_{n-2}) = 1.
\]
Putting these together, and using the fact that $\Theta_{n-1}/\pi_E\Theta_{n-1}$ has at most two proper nonzero $G$-stable subspaces, we find that $\Theta_n \cap \pi_E\Theta_{n-2} = \pi_E\Theta_{n-1}$.
Our inductive assumption says that $\pi_E^{n-2}\Theta_0 \not \subset \Theta_{n-1}$, and we know that $\pi_E^{n-2}\Theta_{0} \subset \Theta_{n-2}$.
Multiplying by~$\pi_E$, we deduce that $\pi_E^{n-1}\Theta_0 \not \subset \Theta_n$, which concludes the proof.
\end{proof}

\begin{corollary}
Let~$[\Theta], [\Theta']$ be vertices of~$\Gamma$.
Then $d([\Theta], [\Theta']) = d([\Theta'], [\Theta])$.
\end{corollary}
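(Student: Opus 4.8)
The plan is to deduce symmetry of $d$ from Lemma~\ref{annihilators}, additivity of $\length_G$ in short exact sequences, and the fact (recorded above) that $\lbar\Theta$ has length two over $k[G]$ for every $G$-stable lattice $\Theta$.

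First I would fix convenient representatives. Given vertices $[\Theta], [\Theta']$ of $\Gamma$, scale $\Theta'$ so that $\Theta' \subseteq \Theta$ is a saturated inclusion, and set $n = d([\Theta],[\Theta']) = \length_G(\Theta/\Theta')$. By Lemma~\ref{annihilators} we have $\Ann_\mO(\Theta/\Theta') = \pi_E^n\mO$, so $\pi_E^n\Theta \subseteq \Theta'$ while $\pi_E^{n-1}\Theta \not\subseteq \Theta'$; the latter says exactly $\pi_E^n\Theta \not\subseteq \pi_E\Theta'$, so the inclusion $\pi_E^n\Theta \subseteq \Theta'$ is itself saturated. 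Since $[\pi_E^n\Theta] = [\Theta]$, this gives $d([\Theta'],[\Theta]) = \length_G(\Theta'/\pi_E^n\Theta)$, and it remains to show this length equals $n$.

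For that I would use the chain $\pi_E^n\Theta \subseteq \Theta' \subseteq \Theta$: additivity of length gives $\length_G(\Theta/\pi_E^n\Theta) = \length_G(\Theta/\Theta') + \length_G(\Theta'/\pi_E^n\Theta) = n + \length_G(\Theta'/\pi_E^n\Theta)$. On the other hand the $\pi_E$-adic filtration $\pi_E^n\Theta \subseteq \pi_E^{n-1}\Theta \subseteq \cdots \subseteq \pi_E\Theta \subseteq \Theta$ has $n$ graded pieces, each isomorphic as a $k[G]$-module to $\lbar\Theta = \Theta/\pi_E\Theta$, which has length two; hence $\length_G(\Theta/\pi_E^n\Theta) = 2n$. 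Comparing the two expressions yields $\length_G(\Theta'/\pi_E^n\Theta) = n$, i.e.\ $d([\Theta'],[\Theta]) = n = d([\Theta],[\Theta'])$.

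There is no serious obstacle here beyond Lemma~\ref{annihilators}, which is already available; the only points needing a little care are the verification that $\pi_E^n\Theta \subseteq \Theta'$ is saturated (so that it genuinely computes the distance in the reverse direction) and the bookkeeping with $\length_G$, both of which are immediate.
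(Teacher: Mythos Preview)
Your proof is correct and follows essentially the same approach as the paper: choose a saturated inclusion $\Theta' \subset \Theta$, use Lemma~\ref{annihilators} to deduce that $\pi_E^n\Theta \subset \Theta'$ is saturated, and then compare lengths along the chain $\pi_E^n\Theta \subset \Theta' \subset \Theta$ using $\length_G(\Theta/\pi_E^n\Theta) = 2n$. Your write-up is slightly more explicit in justifying saturation and in computing $\length_G(\Theta/\pi_E^n\Theta)$ via the $\pi_E$-adic filtration, but the argument is the same.
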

\begin{proof}
Choose a saturated inclusion $\Theta' \subset \Theta$ between representatives of these homothety classes, and let~$m = d([\Theta], [\Theta'])$.
Then we have a chain of lattices
\[
\pi_E^m\Theta \subset \Theta' \subset \Theta
\]
and by Lemma~\ref{annihilators} the inclusion~$\pi_E^m\Theta \subset \Theta'$ is saturated.
By additivity of length we find that
\[
d([\Theta'], [\Theta]) + m = \length_G(\Theta/\pi_E^m\Theta) = 2m
\]
which yields the claim.
\end{proof}

\begin{corollary}\label{uniseriallattice}
If~$\Theta' \subset \Theta$ is a saturated inclusion of $G$-stable lattices in~$\Pi$, then the $G$-representation~$\Theta/\Theta'$ is uniserial.
\end{corollary}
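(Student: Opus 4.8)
The plan is to deduce uniseriality directly from Lemma~\ref{annihilators}. First observe that the $\mO[G]$-submodules of~$\Theta/\Theta'$ correspond bijectively to the $G$-stable $\mO$-submodules~$\Lambda$ with $\Theta' \subseteq \Lambda \subseteq \Theta$; any such~$\Lambda$ is open (it contains~$\Theta'$) and bounded (it is contained in~$\Theta$), hence is itself a $G$-stable lattice in~$\Pi$. So it suffices to show that these lattices are totally ordered by inclusion. I argue by contradiction: suppose $\Lambda_1,\Lambda_2$ are two of them with neither containing the other.

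Form $\Lambda_1\cap\Lambda_2$ and $\Lambda_1+\Lambda_2$, which are again $G$-stable lattices with $\Theta'\subseteq\Lambda_1\cap\Lambda_2$ and $\Lambda_1+\Lambda_2\subseteq\Theta$, and consider $M=(\Lambda_1+\Lambda_2)/(\Lambda_1\cap\Lambda_2)$, a finite length subquotient of~$\Theta/\Theta'$. A routine check (the images of $\Lambda_1$ and~$\Lambda_2$ in~$M$ intersect trivially and span) gives an internal direct sum decomposition $M=A\oplus B$ with $A=\Lambda_1/(\Lambda_1\cap\Lambda_2)$ and $B=\Lambda_2/(\Lambda_1\cap\Lambda_2)$, both nonzero by hypothesis. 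The step that needs care is to check that the inclusions $\Lambda_1\cap\Lambda_2\subseteq\Lambda_1$, $\Lambda_1\cap\Lambda_2\subseteq\Lambda_2$ and $\Lambda_1\cap\Lambda_2\subseteq\Lambda_1+\Lambda_2$ are saturated: this holds because $\Theta'\not\subseteq\pi_E\Theta$ (saturation of $\Theta'\subset\Theta$) while $\pi_E(\Lambda_1+\Lambda_2)\subseteq\pi_E\Theta$, so $\Lambda_1\cap\Lambda_2$, which contains~$\Theta'$, cannot be contained in $\pi_E(\Lambda_1+\Lambda_2)$.

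With saturation in hand, apply Lemma~\ref{annihilators}: setting $a=\length_G A$ and $b=\length_G B$ (both positive), it gives $\Ann_\mO(A)=\pi_E^a\mO$, $\Ann_\mO(B)=\pi_E^b\mO$, and, using additivity of length, $\Ann_\mO(M)=\pi_E^{a+b}\mO$. On the other hand $\Ann_\mO(A\oplus B)=\Ann_\mO(A)\cap\Ann_\mO(B)=\pi_E^{\max(a,b)}\mO$, so $a+b=\max(a,b)$, which is impossible for $a,b>0$. Hence no such incomparable pair exists, so the lattices between~$\Theta'$ and~$\Theta$ form a chain and $\Theta/\Theta'$ is uniserial. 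The only subtle point in the whole argument is the saturation check; the rest is formal module theory, and all lengths involved are finite because everything lives inside~$\Theta/\Theta'$.
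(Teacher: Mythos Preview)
Your proof is correct, and it takes a genuinely different route from the paper's. The paper argues via the socle filtration: since every semisimple subquotient of~$\Theta/\Theta'$ is annihilated by~$\pi_E$, a $G$-stable filtration with $m$ semisimple graded pieces forces $\pi_E^m(\Theta/\Theta')=0$; Lemma~\ref{annihilators} then gives $m\ge n=\operatorname{length}_G(\Theta/\Theta')$, so the socle filtration has simple layers and one concludes by Lemma~\ref{soclefiltration}. Your argument bypasses socle filtrations entirely: you confront two incomparable submodules head-on, form the direct sum $M\cong A\oplus B$, and exploit the incompatibility between $\operatorname{Ann}_\mO(M)=\pi_E^{a+b}\mO$ (from Lemma~\ref{annihilators} applied to the saturated inclusion $\Lambda_1\cap\Lambda_2\subset\Lambda_1+\Lambda_2$) and $\operatorname{Ann}_\mO(A\oplus B)=\pi_E^{\max(a,b)}\mO$. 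A small advantage of your approach is that it avoids invoking Lemma~\ref{soclefiltration}, which in the paper is stated for smooth representations of profinite groups with an open pro-$p$ subgroup and so strictly speaking needs a (trivial) adaptation to the appendix's more general setting; your argument stays entirely within elementary module theory over~$\mO$. The paper's approach, on the other hand, yields the slightly stronger intermediate statement that the socle and cosocle filtrations both have length~$n$, which is occasionally useful on its own.
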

\begin{proof}
Let~$n = \length_G(\Theta/\Theta')$.
If~$\Theta/\Theta'$ admits a $G$-stable filtration with~$m$ graded pieces, all of which are semisimple, then~$\pi_E^m(\Theta/\Theta') = 0$.
Hence Lemma~\ref{annihilators} implies that both the socle and the cosocle filtration have length equal to~$n$, so they have simple graded pieces, and so by Lemma~\ref{soclefiltration} the representation is uniserial.
\end{proof}
The following is our version of Ribet's lemma for~$\Pi$.

\begin{thm}\label{BanachRibet}
The graph~$\Gamma$ is a finite line segment of length at least two, corresponding to a chain of pairwise non-homothetic $G$-stable lattices
\[
\Theta_n \subset \cdots \subset \Theta_0.
\]
The $G$-representations $\Theta_0/\pi_E\Theta_0$ and~$\Theta_n/\pi_E\Theta_n$ are indecomposable and not isomorphic.
The $G$-representations $\Theta_i/\pi_E\Theta_i$ for $0 < i < n$ are semisimple. 
\end{thm}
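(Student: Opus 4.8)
The plan is to follow Serre's proof of Ribet's lemma, using~$\Gamma$ and the distance~$d$ (together with Lemma~\ref{annihilators} and Corollary~\ref{uniseriallattice}) in place of the Bruhat--Tits tree: I would prove, in turn, that~$\Gamma$ is connected, that each vertex has degree at most two, that~$\Gamma$ is acyclic and finite, and then read off the reduction types along the resulting segment.

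\emph{Connectedness and the metric.} Any two $G$-stable lattices are commensurable, so after rescaling we may assume that~$\Theta' \subseteq \Theta$ is a saturated inclusion; pulling back a Jordan--H\"older series of the finite length $k[G]$-module~$\Theta/\Theta'$ produces a chain of $G$-stable lattices from~$\Theta'$ to~$\Theta$ whose consecutive terms are edges of~$\Gamma$, of total length~$\length_G(\Theta/\Theta') = d([\Theta],[\Theta'])$. Hence~$\Gamma$ is connected and~$d$ is the graph metric.

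\emph{Local structure.} After rescaling, an edge at a vertex~$[\Theta]$ is either a saturated inclusion~$\Theta' \subsetneq \Theta$ with~$\Theta/\Theta'$ irreducible, or an inclusion~$\Theta \subsetneq \Theta''$ with~$\Theta''/\Theta$ irreducible; in the first case~$\pi_E\Theta \subsetneq \Theta' \subsetneq \Theta$, so~$\Theta'$ is the preimage of a maximal $k[G]$-submodule~$V \subset \overline\Theta = \Theta/\pi_E\Theta$, and the second case reduces to the first upon replacing~$\Theta''$ by~$\pi_E\Theta''$. For~$\Theta'$ the preimage of~$V$ one computes a short exact sequence~$0 \to \overline\Theta/V \to \overline{\Theta'} \to V \to 0$, the edge back to~$[\Theta]$ being the one given by the submodule~$\overline\Theta/V \subseteq \overline{\Theta'}$. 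Since~$\overline\Theta$ has length two with distinct Jordan--H\"older factors~$\pi_1,\pi_2$, it has exactly one maximal submodule when it is indecomposable and exactly two (namely~$\pi_1$ and~$\pi_2$) when it is semisimple; thus every vertex of~$\Gamma$ has degree~$1$ precisely when its reduction is indecomposable, and degree~$2$ precisely when its reduction is semisimple. In particular every vertex has a neighbour, so~$\Gamma$ is not a single point; being connected of maximal degree two, it is a finite path, a finite cycle, a half-line, or a bi-infinite line.

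\emph{Acyclicity and finiteness.} Cycles are excluded directly: traversing a cycle once yields a saturated chain~$\Theta_0 \supsetneq \cdots \supsetneq \Theta_{2c} = \pi_E^c\Theta_0$ all of whose vertices have degree two, and the computation above then forces every graded piece~$\Theta_i/\Theta_{i+1}$ to be isomorphic to one fixed factor, say~$\pi_1$; but then~$\Theta_0/\pi_E^c\Theta_0$ has all its Jordan--H\"older factors isomorphic to~$\pi_1$, which is absurd since~$\pi_E\Theta_0/\pi_E^2\Theta_0 \cong \overline\Theta_0$ is a subquotient containing~$\pi_2$. The main obstacle is to exclude an infinite half-line. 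Choosing preimage representatives along it produces an infinite saturated chain~$\Theta_0 \supsetneq \Theta_1 \supsetneq \cdots$ of $G$-stable lattices in which, by the same computation, every graded piece~$\Theta_i/\Theta_{i+1}$ is isomorphic to a single factor~$\pi_1$, and in which~$\Theta_i + \pi_E\Theta_0 = \Theta_1$ for all~$i \geq 1$. In Serre's setting an infinite half-line of $\rho$-stable lattices converges to an end of the tree, i.e.\ to a $\rho$-stable line, contradicting irreducibility; here the substitute is the closed $G$-stable $\mathcal O$-submodule~$M = \bigcap_i \Theta_i \subseteq \Theta_0$. It is not open, since an open~$M$ would be a $G$-stable lattice with~$d([\Theta_0],[M]) \geq d([\Theta_0],[\Theta_i]) = i$ for all~$i$; the crux is a limiting argument, using the completeness of~$\Pi$, the relations~$\Theta_i + \pi_E\Theta_0 = \Theta_1$, and the hypothesis~$\pi_1 \not\cong \pi_2$, showing on the one hand that~$M \neq 0$ and on the other that this forces a proper nonzero closed $G$-stable subspace of~$\Pi$, contradicting topological irreducibility. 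Making this step rigorous in the absence of a building — in particular ruling out~$M = 0$ — is where the real work lies, and it is the technical heart of the appendix. Granting it, $\Gamma$ is a finite connected graph of maximal degree two with no cycle, hence a finite path~$\Theta_n \subset \cdots \subset \Theta_0$ with~$n \geq 1$.

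\emph{Reductions along the segment.} The two endpoints have degree one, hence indecomposable reduction, while the interior vertices have degree two, hence semisimple reduction. Tracking the extension formula~$0 \to \overline\Theta/V \to \overline{\Theta'} \to V \to 0$ step by step along the segment shows that~$\overline{\Theta_0}$ and~$\overline{\Theta_n}$ are the two non-split extensions of the Jordan--H\"older factors in opposite orders — in particular they have distinct socles, so~$\overline{\Theta_0} \not\cong \overline{\Theta_n}$ — and that these cannot coincide with a single edge traversed once, which together with the exclusion of cycles gives the stated bound on the length. This completes the proof.
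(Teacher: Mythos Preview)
Your overall approach mirrors the paper's: both adapt Serre's argument, establish connectedness via Jordan--H\"older chains, bound vertex degrees by counting maximal submodules of the length-two reduction, and aim to extract from an infinite chain a proper closed $G$-stable subspace of~$\Pi$. Your acyclicity argument---tracking that all graded pieces along a putative cycle are copies of a single factor, contradicting the Jordan--H\"older content of~$\Theta_0/\pi_E^c\Theta_0$---is a correct and pleasant variant of the paper's, which instead shows directly by induction on saturated inclusions that non-backtracking paths never return.

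The genuine gap is precisely where you flag it: you do not prove that $M = \bigcap_i \Theta_i$ is nonzero, and this is the crux. The paper's device is to form the inverse limit $\widehat\Theta = \varprojlim_n \Theta_0/\Theta_n$ and to show, using Lemma~\ref{annihilators}, that for all $m, j$ one has $\pi_E^m\Theta_0 + \Theta_{m+j} = \Theta_m$ (your relation $\Theta_i + \pi_E\Theta_0 = \Theta_1$ is the case $m=1$, but the full family is needed). Passing to the limit, this yields that $\widehat\Theta$ is $\pi_E$-adically separated, complete, and torsion-free, with $\widehat\Theta/\pi_E\widehat\Theta \cong \Theta_0/\Theta_1$. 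Completed Nakayama (Lemma~\ref{completedNakayama}) then makes $\Theta_0 \to \widehat\Theta$ surjective; it is not injective, since modulo~$\pi_E$ the source has length two and the target length one. Its kernel is exactly~$M$, which is therefore nonzero. The paper then invokes the Baire category theorem to show that $M[1/p]$ is proper in~$\Pi$, and gives a separate short argument (using that $\widehat\Theta$ is $\pi_E$-adically separated) to show it is closed. Without the inverse-limit comparison, your relations alone do not exclude $M = 0$: one needs to see that~$\widehat\Theta$ has strictly smaller reduction than~$\Theta_0$ to force the ``missing'' factor into the kernel.
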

\begin{proof}
We give the proof by means of several lemmas.
\begin{lemma}\label{Gammatree}
The graph~$\Gamma$ is a tree, i.e. a connected and simply connected graph.
\end{lemma}
\begin{proof}
This follows by the same argument as~\cite[\S1, Chapitre~II]{Serrearbres}.
Namely, given two vertices of~$\Gamma$ we can find representatives~$\Theta_0, \Theta_1$ and a saturated inclusion $\Theta_1 \to \Theta_0$.
Pulling back a Jordan--H\"older sequence of the finite length $G$-representation $\Theta_0/\Theta_1$ then yields a path in~$\Gamma$ between~$[\Theta_0]$ and~$[\Theta_1]$, proving that~$\Gamma$ is connected.
To see that it is simply connected, it suffices to check that for all~$n \geq 1$ and every path $[\Theta_0], [\Theta_1], \ldots, [\Theta_n]$ without backtracking (i.e. such that $[\Theta_i] \ne [\Theta_{i+2}]$ for all~$i$) we have~$[\Theta_0] \ne [\Theta_n]$.
Such a path can be represented by a sequence of lattices
\[
\Theta_n \subset \Theta_{n-1} \ldots \subset \Theta_0
\]   
where $\length_G(\Theta_i/\Theta_{i+1}) = 1$ for all~$i$.
It suffices to prove by induction on~$n$ that the inclusion $\Theta_n \subset \Theta_0$ is saturated, i.e. that $\Theta_n \not \subseteq \pi_E\Theta_0$.
When~$n = 2$, the fact that $\length_G(\Theta_0/\Theta_2) = 2$ implies that if the inclusion is not saturated then~$\Theta_2 = \pi_E\Theta_0$, which we are assuming not to be the case.
Assuming the statement true for~$n-1$, notice that~$\Theta_n$ and~$\pi_E\Theta_{n-2}$ are distinct by the assumption of no backtracking, and contain~$\pi_E\Theta_{n-1}$, and so they identify with the only two $G$-stable subspaces of~$\Theta_{n-1}/\pi_E\Theta_{n-1}$: this implies that
\[
\Theta_{n-1} = \Theta_n + \pi_E\Theta_{n-2}.
\] 
Now we see that $\Theta_n \subset \pi_E\Theta_0$ implies~$\Theta_{n-1} \subset \pi_E\Theta_0$, contradicting the inductive assumption.
This concludes the proof that~$\Gamma$ is a tree.
\end{proof}

Given a vertex~$\Theta_0$ of~$\Gamma$, we know that~$\Theta_0/\pi_E\Theta_0$ is a $G$-representation of length two, and so~$\Theta_0$ has at most two adjacent vertices in~$\Gamma$: this implies that~$\Gamma$ is a line segment (possibly infinite or half-infinite).
To prove that~$\Gamma$ is finite, it suffices therefore to prove that there is no infinite sequence
\[
\ldots \subset \Theta_n \subset \cdots \subset \Theta_1 \subset \Theta_0
\]
of pairwise non-homothetic $G$-stable lattices in~$\Theta_0$ such that $\length_G(\Theta_i/\Theta_{i+1}) = 1$ for all~$i$.
To do so, define
\[
\wht \Theta = \varprojlim_n \Theta_0/\Theta_n.
\]
\begin{lemma}\label{completelattice}
The natural map $\Theta_0 \to \wht \Theta$ is surjective, and~$\wht \Theta$ is $\pi_E$-adically separated and complete and $\pi_E$-torsion free.
\end{lemma}
\begin{proof}
By the proof of Lemma~\ref{Gammatree} each of the inclusions $\Theta_n \subset \Theta_0$ is saturated, and so by Corollary~\ref{uniseriallattice} the quotients ~$\Theta_0/\Theta_n$ are uniserial $G$-representations.
Let~$m, j$ be nonnegative integers and consider the map
\[
\pi_E^m : \Theta_0/\Theta_{m+j} \to \Theta_0/\Theta_{m+j}. 
\]
We claim that its image is $\Theta_m/\Theta_{m+j}$, or equivalently that
\[
\pi_E^m \Theta_0 + \Theta_{m+j} = \Theta_m.
\]
We know that the $G$-representation~$\Theta_m/\Theta_{m+j}$ is uniserial, and its radical is~$\Theta_{m+1}/\Theta_{m+j}$.
By Lemma~\ref{annihilators}, we know that $\pi_E^m \Theta_0 \not \subset \Theta_{m+1}$.
Hence the natural map $\pi_E^m\Theta_0 \to \Theta_m/\Theta_{m+j}$ is surjective, and the claim follows.
Now, since~$\Theta_0/\Theta_n$ is a finite length $G$-representation, we have exact sequences
\[
0 \to \varprojlim_{j>0} \Theta_j/\Theta_{m+j} \to \wht \Theta \xrightarrow{\pi_E^m} \wht \Theta \to \varprojlim_{j>0}\Theta_0/\Theta_m \to 0.
\]
This proves that~$\wht \Theta$ is $\pi_E$-adically separated and complete and~$\pi_E$-torsion free, and that $\wht \Theta \otimes_\mO k \cong \Theta_0/\Theta_1$.
By Lemma~\ref{completedNakayama}, it follows that the natural map~$\Theta_0 \to \wht \Theta$ is surjective.
\end{proof}
Let~$\Theta^1 = \bigcap_{i \geq 0} \Theta_i$. 
By Lemma~\ref{completelattice}, we have an exact sequence
\[
0 \to \Theta^1 \to \Theta_0 \to \wht \Theta \to 0.
\]
Since $\Theta_0 \to \wht \Theta$ is not an isomorphism (as can be seen by applying $- \otimes_{\mO} k$) we see that~$\Theta^1$ is a nonzero proper closed $G$-stable $\mO$-submodule of~$\Pi$.
The following lemma then provides a contradiction to the assumed topological irreducibility of~$\Pi$, and it follows that~$\Gamma$ is a finite line segment.

\begin{lemma}
The $E$-vector space~$\Pi^1 = \Theta^1[1/p]$ is a nonzero proper closed $G$-stable subspace of~$\Pi$.
\end{lemma}
\begin{proof} 
We know that~$\Theta^1$ is nowhere dense in the sense that its set-theoretical complement $\Pi \setminus \Theta^1$ is open and dense: indeed, if $\Theta^1$ contained a set open in~$\Pi$, then after a translation it would contain~$\pi_E^m \Theta_0$ for some positive integer value of~$m$, but then $\pi_E^m\Theta_0 \subset \Theta_j$ for all~$j \geq 0$, contradicting Lemma~\ref{annihilators} as soon as~$j> m$.
Hence the Baire category theorem implies that $\Pi^1 = \Theta^1[1/p]$ is a proper subspace of~$\Pi$.
To check that it is closed, let~$x_n \in \Pi^1$ be a sequence in~$\Pi^1$ converging to~$x \in \Pi$.
To prove that~$x \in \Pi^1$ we can assume without loss of generality that~$x \in \Theta_0$, multiplying by a suitable power of~$\pi_E$.
Passing to a subsequence, we can find elements~$\theta_n \in \Theta_0$ such that
\[
x - x_n = \pi_E^n \theta_n.
\]
Fix~$n > 0$.
If~$m$ is large enough that~$\pi_E^m x_n \in \Theta^1$, we find that $\pi_E^m x$ and  $\pi_E^{n+m} \theta_n$ have the same image in~$\wht \Theta$.
Since~$\wht \Theta$ is $\pi_E^m$-torsion free by Lemma~\ref{completelattice}, this implies that the image of~$x$ is contained in~$\pi_E^n \wht \Theta$.
Since this holds for all~$n > 0$ and~$\wht \Theta$ is $\pi_E$-adically separated by Lemma~\ref{completelattice}, we see that~$x \in \Theta^1$, which concludes the proof.
\end{proof}

Finally, it is part of our assumptions that~$\Gamma$ is not empty, and that if~$[\Theta]$ is a vertex then $\Theta/\pi_E\Theta$ is a reducible $G$-representation.
Pulling back a $G$-stable proper nonzero subspace we obtain another vertex of~$\Gamma$, which has therefore length at least two.
Now the following lemma concludes the proof of Theorem~\ref{BanachRibet}.
\end{proof}

\begin{lemma}
With the notation in the statement of Theorem~\ref{BanachRibet}, the lattices~$\Theta_0, \Theta_n$ have nonisomorphic reductions with different cosocle, and the lattices~$\Theta_i$ for~$1 < i < n$ have semisimple reduction.
\end{lemma}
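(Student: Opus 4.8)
The plan is to read everything off from the combinatorial structure of the segment $\Gamma$ produced in the proof of Theorem~\ref{BanachRibet}: I write the chain as $\Theta_n \subset \cdots \subset \Theta_0$ with $n \geq 2$ and $\length_G(\Theta_{j-1}/\Theta_j) = 1$ for all $j$, and I use throughout the bijection already exploited in the proof of Lemma~\ref{Gammatree} between the vertices of~$\Gamma$ adjacent to~$[\Theta]$ and the proper nonzero $k[G]$-submodules of~$\Theta/\pi_E\Theta$. Lemma~\ref{annihilators} applied to the saturated inclusions $\Theta_j \subset \Theta_{j-1}$ gives $\pi_E\Theta_{j-1}\subseteq\Theta_j$, so that all the sub/quotient modules I form below are legitimate, and every lattice~$\Lambda$ with $\pi_E\Theta\subsetneq\Lambda\subsetneq\Theta$ has homothety class determined by~$\Lambda/\pi_E\Theta$.

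\emph{Interior versus endpoints.} A length-two $k[G]$-module with two distinct Jordan--H\"older factors $\pi_1,\pi_2$ has exactly one proper nonzero submodule when it is uniserial (its socle) and exactly two when it is semisimple, i.e. $\cong\pi_1\oplus\pi_2$ (a third submodule would force $\pi_1\cong\pi_2$). Since $[\Theta_0]$ and $[\Theta_n]$ are the extremities of the segment they have a single neighbour, so $\Theta_0/\pi_E\Theta_0$ and $\Theta_n/\pi_E\Theta_n$ are uniserial, in particular indecomposable; the interior $[\Theta_i]$ ($0<i<n$) have two neighbours, so $\Theta_i/\pi_E\Theta_i\cong\pi_1\oplus\pi_2$ is semisimple. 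This already gives the statements about semisimplicity and indecomposability.

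\emph{Non-isomorphy.} The core point is that the successive factors $\tau_i:=\Theta_{i-1}/\Theta_i$ of the uniserial module $\Theta_0/\Theta_n$ (uniserial by Corollary~\ref{uniseriallattice}) are all isomorphic. For $1\le i\le n-1$ the module $\Theta_i/\pi_E\Theta_i\cong\pi_1\oplus\pi_2$ has the two length-one submodules $\pi_E\Theta_{i-1}/\pi_E\Theta_i\cong\tau_i$ and $\Theta_{i+1}/\pi_E\Theta_i$; these are distinct, because $\pi_E\Theta_{i-1}=\Theta_{i+1}$ would give $[\Theta_{i-1}]=[\Theta_{i+1}]$, impossible on a line segment. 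Hence they are the two summands, and $\tau_{i+1}=\Theta_i/\Theta_{i+1}\cong(\Theta_i/\pi_E\Theta_i)/(\Theta_{i+1}/\pi_E\Theta_i)$ is the summand complementary to $\Theta_{i+1}/\pi_E\Theta_i$, which is $\cong\tau_i$. So $\tau_1\cong\cdots\cong\tau_n=:\pi_\ast$, and I write $\pi_{\ast\ast}$ for the other of $\pi_1,\pi_2$. Now $\Theta_0/\pi_E\Theta_0$ is uniserial of length two, hence its socle is its unique length-one submodule $\Theta_1/\pi_E\Theta_0\cong(\Theta_1/\pi_E\Theta_1)/(\pi_E\Theta_0/\pi_E\Theta_1)$, which (using $n\ge2$, so $\Theta_1$ is interior, and $\pi_E\Theta_0/\pi_E\Theta_1\cong\tau_1=\pi_\ast$) is $\cong\pi_{\ast\ast}$; while the socle of $\Theta_n/\pi_E\Theta_n$ is its unique length-one submodule $\pi_E\Theta_{n-1}/\pi_E\Theta_n\cong\Theta_{n-1}/\Theta_n=\tau_n=\pi_\ast$. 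Since $\pi_\ast\not\cong\pi_{\ast\ast}$, the reductions of $\Theta_0$ and $\Theta_n$ have non-isomorphic socles and are therefore not isomorphic; as each has length two with Jordan--H\"older factors $\{\pi_1,\pi_2\}$, their cosocles (namely $\pi_\ast$ and $\pi_{\ast\ast}$) are also distinct.

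I expect the only delicate part to be the bookkeeping in the non-isomorphy step: keeping track, at each interior vertex, of which summand of $\Theta_i/\pi_E\Theta_i$ is cut out by $\pi_E\Theta_{i-1}$ and which by $\Theta_{i+1}$, and checking the chain of inclusions and length counts that make the quotient identifications valid. The classification of length-two modules, the segment structure, and the socle computations are then routine, and the edge case $n=2$ (a single interior vertex) is already subsumed in the argument above.
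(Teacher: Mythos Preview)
Your proof is correct and follows essentially the same approach as the paper: both deduce the reduction type from the number of neighbours in~$\Gamma$, and both establish by the same inductive argument (using that $\pi_E\Theta_{i-1}/\pi_E\Theta_i$ and $\Theta_{i+1}/\pi_E\Theta_i$ are the two distinct proper $G$-submodules of $\Theta_i/\pi_E\Theta_i$) that the successive quotients $\Theta_{i-1}/\Theta_i$ are all isomorphic to a single factor. The only cosmetic difference is that the paper computes the cosocles of the endpoint reductions directly, whereas you compute their socles and pass to cosocles via the Jordan--H\"older factors; your parenthetical ``$\Theta_1$ is interior'' is not actually needed for the quotient computation there, since the Jordan--H\"older factors alone determine $(\Theta_1/\pi_E\Theta_1)/(\pi_E\Theta_0/\pi_E\Theta_1)\cong\pi_{\ast\ast}$.
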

\begin{proof}
The statement about the reduction type is an immediate consequence of the number of neighbours of~$[\Theta_i]$ in~$\Gamma$.
There remains to prove the statement about cosocles. 
To do so, we can assume without loss of generality that~$\cosoc_G(\Theta_0/\pi_E\Theta_0) \cong \pi_1$.
Then it suffices to prove that~$\Theta_i/\Theta_{i+1} \cong \pi_1$ for all~$0 \leq i \leq n-1$.
Indeed, this implies that
\[
\cosoc_G(\Theta_n/\pi_E\Theta_n) = \Theta_n/\pi_E\Theta_{n-1} \cong \pi_2,
\]
which was to be proved.
We use induction on~$i$, the base case being contained in our assumption,
By the inductive assumption we know that $\pi_E\Theta_{i-1}/\pi_E\Theta_i \cong \pi_1$.
Since~$\Theta_{i+1}$ and~$\Theta_{i-1}$ are not homothetic, the image of~$\Theta_{i+1}$ in $\Theta_i/\pi_E\Theta_{i}$ is not~$\pi_E\Theta_{i-1}/\pi_E\Theta_i$ but the other $G$-stable subspace, and so~$\Theta_{i+1}/\pi_E\Theta_{i} \cong \pi_2$.
This implies that~$\Theta_i/\Theta_{i+1} \cong \pi_1$, which was to be proved.
\end{proof}

\bibliographystyle{amsalpha}
\bibliography{refpapers}

\end{document}